\newtheorem{theorem}{Theorem}
\newtheorem{proposition}[theorem]{Proposition}
\newtheorem{assumption}[theorem]{Assumption}
\newtheorem{lemma}[theorem]{Lemma}
\newtheorem{corollary}[theorem]{Corollary}
\theoremstyle{remark}
\newtheorem{remark}[theorem]{Remark}
\newtheorem{example}[theorem]{Example}
\newcommand*{\rom}[1]{\expandafter\@slowromancap\romannumeral #1@}
\newcommand{\ls}{\lesssim}
\renewcommand{\lg}{\langle}
\newcommand{\rg}{\rangle}
\newcommand{\R}{\mathbb{R}}
\newcommand{\C}{\mathbb{C}}
\newcommand{\ep}{\varepsilon}
\newcommand{\al}{\alpha}
\newcommand{\CB}{\mathcal{B}}
\newcommand{\CD}{\mathcal{D}}
\newcommand{\CF}{\mathcal{F}}
\newcommand{\CM}{\mathcal{M}}
\newcommand{\CP}{\mathcal{P}}
\newcommand{\CQ}{\mathcal{Q}}
\newcommand{\CR}{\mathcal{R}}
\newcommand{\CS}{\mathcal{S}}
\newcommand{\CU}{\mathcal{U}}
\newcommand{\CV}{\mathcal{V}}
\newcommand{\CW}{\mathcal{W}}
\newcommand{\CX}{\mathcal{X}}
\newcommand{\CY}{\mathcal{Y}}
\newcommand{\SA}{\mathscr{A}}
\newcommand{\SB}{\mathscr{B}}
\newcommand{\SC}{\mathscr{C}}
\newcommand{\SD}{\mathscr{D}}
\newcommand{\SE}{\mathscr{E}}
\newcommand{\SF}{\mathscr{F}}
\newcommand{\SG}{\mathscr{G}}
\newcommand{\SH}{\mathscr{H}}
\newcommand{\SI}{\mathscr{I}}
\newcommand{\FS}{\mathfrak{S}}
\newcommand{\ga}{\gamma}
\newcommand{\de}{\delta}
\newcommand{\ps}{\psi}
\newcommand{\ph}{\varphi}
\newcommand{\ta}{\tau}
\renewcommand{\th}{\theta}
\newcommand{\lm}{\lambda}
\newcommand{\si}{\sigma}
\newcommand{\ze}{\zeta}
\newcommand{\rh}{\rho}
\newcommand{\ch}{\chi}
\newcommand{\De}{\Delta}
\newcommand{\Ph}{\Phi}
\newcommand{\Ps}{\Psi}
\newcommand{\pl}{\partial}
\newcommand{\wt}{\widetilde}
\newcommand{\wh}{\widehat}
\newcommand{\mx}{{\rm max}}
\newcommand{\Ck}[1]{\left\{#1\right\}}
\newcommand{\Dk}[1]{\left[#1\right]}
\newcommand{\K}[1]{\left(#1\right)}
\newcommand{\No}[1]{\left\| #1 \right\|}
\newcommand{\Bk}[1]{\Big[#1\Big]}
\newcommand{\Bck}[1]{\Big\{#1\Big\}}
\newcommand{\I}{\infty}
\newcommand{\sd}{\langle \hb \nabla \rangle}
\newcommand{\bx}{\langle x \rangle}
\newcommand{\tw}{\frac{1}{2}}
\newcommand{\na}{\nabla}
\newcommand{\hb}{\hbar}
\newcommand{\ov}{\overline}
\newcommand{\bt}{\lg t \rg}
\newcommand{\J}{\mathcal{J}^\hb}
\newcommand{\U}{\CU^\hb}
\newcommand{\W}{\CW^\hb}
\newcommand{\Wig}{\mathbf{Wig}^\hbar}
\newcommand{\Tp}{\mathbf{Toep}^\hb}
\newcommand{\gah}{\ga^\hb}
\newcommand{\rhh}{\rh^\hb}
\newcommand{\M}{\CM^\hb}
\newcommand{\D}{\CD^\hb}
\newcommand{\Id}{\textup{Id}}
\newcommand{\xh}{\frac{x}{\hb}}
\renewcommand{\Tr}{\operatorname{Tr}}
\newcommand{\bta}{\lg\ta\rg}
\newcommand{\sa}{\textup{sa}}
\newcommand{\Sp}{{\FS^2_\hb}}
\newcommand{\Jt}{\langle \J(t) \rangle}
\newcommand{\Jta}{\langle \J(\ta) \rangle}
\definecolor{light-gray1}{gray}{0.90}
\definecolor{light-gray2}{gray}{0.80}
\definecolor{light-gray3}{gray}{0.60}
\numberwithin{equation}{section}
\numberwithin{theorem}{section}
\numberwithin{table}{section}
\numberwithin{figure}{section}
\newcommand\thankssymb[1]{\textsuperscript{\@fnsymbol{#1}}}
\title[Uniform dispersive estimates]{Uniform dispersive estimates for the semi-classical Hartree equation with long-range interaction}
\author[S. HADAMA]{Sonae HADAMA\thankssymb{2}}
\thanks{\thankssymb{2} Research Institute for Mathematical Sciences, Kyoto University, Kita-Shirakawa, Sakyo-ku, Kyoto, Japan 606-8502.
	E-mail address: \texttt{hadama@kurims.kyoto-u.ac.jp}}
\date{}
\begin{document}
\maketitle
\vspace{-0.7cm}
\begin{abstract}
In this paper, we consider the Hartree equation with smooth but long-range interaction in the semi-classical regime, in three-dimensional space. We show that the density function of small-data solution decays at the optimal rate. When the semi-classical parameter $\hbar \in (0,1]$ is fixed, our result is essentially covered by the recent work by Nguyen and You \cite{Nguyen You 2024}; however, the novelty of this paper is the uniformity with respect to $\hbar$. Namely, both smallness condition for initial data and bounds for the solution are independent of $\hbar$. Moreover, the argument in this paper provides a new proof of the modified scattering for the long-range nonlinear Schr\"{o}dinger equation with a Hartree type nonlinearity (Remark \ref{rmk:new proof}). Our proof relies on three main ingredients. First, we prove the boundedness of finite-time wave operators modified by phase corrections (Proposition \ref{prop:wave operator boundedness}). Second, we show an $L^1$--$L^\infty$ dispersive estimate for the modified propagator. Third, we give various kinds of commutator estimates for density operators (see Sections \ref{sec:single} and \ref{sec:double}). By combining them, we can apply the usual bootstrap argument to obtain the main result.
\end{abstract}

\vspace{+5mm}

\noindent \textbf{2020 MSC.} Primary, 35Q55. Secondary, 35B40, 35Q40. \\
\noindent \textbf{Key words.} Hartree equation, Semi-classical limit, Dispersive estimates, Long-range interaction, Modified scattering.

\tableofcontents

\section{Introduction}
\subsection{Setup of the problem}\label{subsec:intro}
	In this paper, we consider the Hartree equation in three-dimensional space:
	\begin{equation}\label{eq:NLH}\tag{NLH}
		i\hb\pl_t\gah=\Dk{-\frac{\hbar^2}{2}\De_x+w\ast \rh^\hb(\gah),\gah},\quad \gah:\R_{\ge 0}\to\CB_\sa(L^2(\R^3)),
	\end{equation}
	where $w:\R^3 \to \R$ is a given function, $\CB_\sa(L^2(\R^3))$ is the space of all bounded self-adjoint operators on $L^2(\R^3)$, $[A,B]=AB-BA$ is the commutator, and $\hb \in(0,1]$ is the reduced Planck constant.
	We often write $A(x,x')$ for the integral kernel of the linear operator $A$ on $L^2(\R^3)$.
	We \textit{formally} define the (semi-classically scaled) density function of $A$ by $\rhh(A)(x)=(2\pi\hb)^{3}A(x,x)$.
	The rigorous definition of $\rhh(A)$ will be given in Section \ref{subsec:definition of density functoin}. 
	
It is well-known that the Hartree equation \eqref{eq:NLH} appears in many-body problems of quantum mechanics, 
and there is a vast body of mathematical literature on this equation.
When the initial data is given by a rank-one projection $\ga_0^\hb = |u_0 \rg \lg u_0|$ and $\hb=1$, \eqref{eq:NLH} is equivalent to
\begin{equation}\label{eq:NLH1}
	i\pl_t u = -\frac{1}{2} \De_x u + (2\pi)^3(w\ast |u|^2)u, \quad u:\R_{\ge 0}\times \R^3 \to \C 
\end{equation}
with initial condition $u(0)=u_0$.
Studying \eqref{eq:NLH1} is one of the most important directions in the mathematical study of \eqref{eq:NLH}.
Since there are numerous studies on \eqref{eq:NLH1}, it is impossible to provide a fully detailed and comprehensive review. We limit ourselves here to referring only to \cite{Hayashi Tsutsumi 1987,Hayashi Naumkin 1998, Hayashi Naumkin 1998 b, Hayashi et al 1998, Nakanishi 2002, Nakanishi 2002 b}.
See Section \ref{subsec:relation} for the relation between the operator formulation \eqref{eq:NLH} and the usual nonlinear Schr\"{o}dinger equation \eqref{eq:NLH1} in the semi-classical regime.
In a different direction, several studies have analyzed \eqref{eq:NLH} around the translation-invariant stationary solutions.
See, for example, \cite{Chen et al 2017, Chen et al 2018, Lewin Sabin 2014, Lewin Sabin 2015, Lewin Sabin 2020, Hadama 2023, Hadama Hong 2024, Smith 2024, You 2024, Nguyen You 2023, Nguyen You 2024}.

The global well-posedness of \eqref{eq:NLH} was shown in \cite{Bove et al 1974, Bove et al 1976, Chadam 1976, Zagatti 1992}.
For long-time behavior of solutions, a small-data scattering result was given in \cite{Pusateri Sigal 2021} for short-range interactions. In \cite{Pusateri Sigal 2021}, the optimal decay of the density function of the solution $\gah(t)$
\begin{equation}\label{eq:density function decay}
	\No{\rhh(\ga^\hb(t))}_{L^\I_x} \ls \frac{1}{\bt^3}
\end{equation}
is given when $\hb=1$ and the initial data is small, where $\bt := (1+|t|^2)^{1/2}$.
However, if we directly apply the argument in \cite{Pusateri Sigal 2021} to general $\hbar\in(0,1]$, then the smallness condition for initial data gets stronger as $\hbar\to 0$. More precisely, for any $\hb\in(0,1]$, we can actually prove \eqref{eq:density function decay} when the initial data $\gah_0$ is small. However, we immediately find that
\begin{equation}\label{eq:vanish}
	\|\gah_0\|_{\CX_\hb}\to 0 \mbox{ as } \hb \to 0
\end{equation}
for any acceptable family of initial data $(\gah_0)_{\hb\in(0,1]}$, where $\|\cdot\|_{\CX_\hb}$ is a natural norm in the semi-classical regime (see Appendix \ref{subsec:natural} for semi-classically natural norms).

The recent work by Smith \cite{Smith 2024} proved \eqref{eq:density function decay} for regular interaction around a certain class of translation-invariant stationary solutions for all $\hb\in (0,1]$ \textit{when $\|\gah_0\|_{\CX_\hb}\le \ep_0$, where $\ep_0>0$ is a small but $\hb$--independent constant.}
Moreover, Hong and the author obtained a similar result in \cite{Hadama Hong 2025} near vacuum when $w(x)=\pm|x|^{-a}$ for $1<a<5/3$, that is, $w$ is a more singular short-range interaction.
The importance of this improvement will be discussed in Section \ref{subsec:importance}.

Although both \cite{Pusateri Sigal 2021} and \cite{Smith 2024,Hadama Hong 2025} dealt with short-range interaction $w$,
a modified scattering result for the Coulomb interactions was proved in \cite{Nguyen You 2024}, and the same optimal decay rate \eqref{eq:density function decay} is given when $\hb = 1$. 
In this paper, we prove \eqref{eq:density function decay} \textit{for long-range interaction $w$ when $\|\gah_0\|_{\CX_\hb}\le \ep_0$, where $\ep_0>0$ is a small but $\hbar$--independent constant.}

\subsection{Main result}
Before stating the main result, we prepare some notation.
First of all, if we write $A\ls B$, then it means $A \le CB$, where $C>0$ is a $\hb$-independent constant.
Let $\CB=\CB(L^2(\R^3))$ be the space of all bounded linear operators.
Define the Schatten--$r$ norm by
\begin{equation}
	\|A\|_{\FS^r} := \begin{dcases}
		&\Ck{\Tr\K{|A|^r}}^{1/r}, \quad \text{for } r \in [1,\I), \\
		&\|A\|_{\CB}, \quad \text{for } r =\I. 
	\end{dcases}
\end{equation}
Moreover, we define the semi-classically scaled Schatten--$r$ norm by
\begin{align}
	\|A\|_{\FS^r_\hb} := (2\pi\hb)^{3/r} \|A\|_{\FS^r}.
\end{align}
Throughout this paper, we use the following class of operators for initial data:
\begin{equation}\label{eq:initial data class}
	\begin{aligned}
		\CX_\hb^\si &:= \Big\{ A \in \CB(L^2(\R^3)) \mid A \mbox{ is self-adjoint, } \\
		&\qquad \qquad \qquad \qquad \qquad \sd^\si A \sd^\si \mbox{ is compact, and } \|A\|_{\CX^\si_\hb} < \I\Big\},
	\end{aligned}
\end{equation}
where the norm $\|\cdot\|_{\CX^\si_\hb}$ is defined by
\begin{equation}\label{eq:initial data}
	\begin{aligned}
		\|A\|_{\CX_\hb^\si} &:= \|A\|_{\FS^1_\hb} + \No{\sd^{\si} A \sd^{\si}}_\CB + \No{\bx^{\si} A \bx^{\si}}_\CB  + \No{\Bk{\na,A}}_{\FS^{1}_\hb} + \No{\Bk{\xh,A}}_{\FS^{1}_\hb} \\
		&\quad + \No{\sd^{\si} \Bk{\na,A}\sd^{\si}}_{\FS^2_\hb}  + \No{\bx^{\si}\Dk{\xh,A}\bx^\si}_{\FS^2_\hb} \\
		&\quad + \No{\sd^{\si} \Bk{\na,A}\sd^{\si}}_\CB  + \No{\bx^{\si}\Dk{\xh,A}\bx^\si}_{\CB} \\
		&\quad + \No{\sd^{\si/2}\Dk{\na,\Dk{\na,A}}\sd^{\si/2}}_{\FS^{2}_\hb}
		+ \No{\bx^{\si/2}\Dk{\xh,\Dk{\xh,A}}\bx^{\si/2}}_{\FS^{2}_\hb} \\
		&\quad + \No{\sd^{\si/2}\Dk{\na,\Dk{\xh,A}}\sd^{\si/2}}_{\FS^{2}_\hb}
		+ \No{\bx^{\si/2}\Dk{\na,\Dk{\xh,A}}\bx^{\si/2}}_{\FS^{2}_\hb}
	\end{aligned}
\end{equation}
for $\si\in(3/2,2)$.

\begin{remark}
	The norm $\|\cdot\|_{\CX^\si_\hb}$ defined in \eqref{eq:initial data} is natural in the semi-classical regime.
	It will be explained in Appendix \ref{subsec:natural}.
\end{remark}

\begin{remark}
	The precise meaning of double commutator terms is, for example, 
	\begin{equation}
		\No{\sd^{\si/2}\Bk{\na,\Bk{\na,A}}\sd^{\si/2}}_{\FS^{2}_\hb}
		= \sum_{j,k=1}^3 \No{\sd^{\si/2}\Bk{\pl_j,\Bk{\pl_k,A}}\sd^{\si/2}}_{\FS^{2}_\hb}.
	\end{equation}
\end{remark}

For interaction $w(x)$, we assume the following condition:
\begin{align}\label{eq:ass} \tag{\textbf{A}}
	w \in C^3(\R^3), \quad \mbox{and} \quad \abs{\pl^\al w(x)} \ls \frac{1}{\bx^{1+|\al|}} \mbox{ for all } |\al|\le 3.
\end{align}

\begin{example}
	The Coulomb interaction $w(x)=\pm|x|^{-1}$ does not satisfy \eqref{eq:ass}.
	However, the Coulomb interaction regularized around the origin $w(x)= \pm\bx^{-1}$ satisfies \eqref{eq:ass}.
\end{example}

Now we state our main result.
\begin{theorem}\label{th:main}
	Let $d = 3$ and $3/2<\si<2$. 
	Assume that $w(x)$ satisfies \eqref{eq:ass}.
	Then, there exists $\ep_0>0$ such that the following holds:
	If the family of self-adjoint initial data $(\gah_0)_{\hb\in(0,1]}\subset \CX^\si_\hb$ satisfies $\sup_{\hb\in(0,1]}\No{\gah_0}_{\CX_\hb^\si} \le \ep_0$, 
	then, there exists a unique global solution $\gah(t)\in C(\R;\FS^1_\hb)$ to \eqref{eq:NLH}  such that
	\begin{equation}\label{eq:main}
		\sup_{\hb \in (0,1]}\sup_{t\ge 0} \Ck{ \No{\rhh(\gah(t))}_{L^1_x} + \bt^3 \No{\rhh(\gah(t))}_{L^\I_x} }\ls 1.
	\end{equation}
\end{theorem}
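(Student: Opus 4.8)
The plan is to close a continuity (bootstrap) argument for \eqref{eq:NLH} in which every constant is kept uniform in $\hb\in(0,1]$. Write the solution as $\gah(t)=\W(t)\,\gah_0\,\W(t)^*$, where $\W(t)$ is the unitary two-parameter propagator generated by the self-adjoint operator $-\tfrac{\hb^2}{2}\De_x+V^\hb(t)$ with mean field $V^\hb(t):=w\ast\rhh(\gah(t))$; once $V^\hb$ is controlled, the global well-posedness theory recalled in Section~\ref{subsec:intro} provides a maximal solution $\gah\in C([0,T_{\max});\FS^1_\hb)$. For $T<T_{\max}$ set
\begin{equation}
\CA_T:=\sup_{0\le t\le T}\Big(\No{\gah(t)}_{\FS^1_\hb}+\bt^{3}\No{\rhh(\gah(t))}_{L^\I_x}+\bt^{-\de}\No{\gah(t)}_{\CY_t}\Big),
\end{equation}
where $\de>0$ is a small parameter and $\No{\cdot}_{\CY_t}$ is the flow-transported analogue of $\No{\cdot}_{\CX^\si_\hb}$ from \eqref{eq:initial data}: the norm obtained by replacing the spatial weights $\bx$ and $\xh$ with $\Jt=\langle\U(t)\,x\,\U(t)^*\rangle$ and $\J(t)/\hb=\U(t)\,\xh\,\U(t)^*$, while leaving the momentum weights $\sd,\na$ (which commute with the free flow $\U(t)$) untouched, so that $\No{\gah(0)}_{\CY_0}=\No{\gah_0}_{\CX^\si_\hb}\le\ep_0$. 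The aim is to show that $\CA_T\le 2C_0\ep_0$ forces $\CA_T\le C_0\ep_0$ for a large absolute constant $C_0$ and $\ep_0$ small; since $t\mapsto\CA_t$ is continuous and finiteness of $\CA$ rules out blow-up, this yields $T_{\max}=\I$, and letting $T\to\I$ gives \eqref{eq:main} (only the $L^1$ and $L^\I$ density norms enter \eqref{eq:main}, so the auxiliary $\bt^{-\de}$ weight is irrelevant there).

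Two of the three pieces of $\CA_T$ are disposed of quickly. The $L^1$ bound is free: $\No{\rhh(\gah(t))}_{L^1_x}\le\No{\gah(t)}_{\FS^1_\hb}=\No{\gah_0}_{\FS^1_\hb}\le\ep_0$, since conjugation by the unitary $\W(t)$ preserves $\FS^1_\hb$. For the $L^\I$ bound I would factor $\W(t)=\M(t)\,\Om^\hb(t)$, where $\M(t)$ is the free evolution $\U(t)$ twisted by the phase correction built from $V^\hb$ along the classical trajectories $x\sim t\hb\xi$ --- a unitary semiclassical Fourier multiplier of the form $e^{-\frac{i}{\hb}\Th^\hb(t,\hb\na)}$ with $\Th^\hb(t,\xi)\approx\int_0^t\big(w\ast\rhh(\gah(s))\big)(s\xi)\,ds$ --- and $\Om^\hb(t):=\M(t)^*\W(t)$ is the finite-time modified wave operator. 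Proposition~\ref{prop:wave operator boundedness} bounds $\Om^\hb(t)$, together with the versions weighted by the operators appearing in \eqref{eq:initial data}, by $\No{\gah_0}_{\CX^\si_\hb}$ plus controlled contributions of $\CA_T$. Inserting $\gah(t)=\M(t)\,[\Om^\hb(t)\gah_0\Om^\hb(t)^*]\,\M(t)^*$ into the $L^1$--$L^\I$ dispersive estimate for $\M(t)$ --- the operator analogue of $\|e^{\frac{it\hb\De}{2}}u_0\|_{L^\I_x}\ls(\hb|t|)^{-3/2}\|u_0\|_{L^1_x}$, in which, at the level of the scaled density $\rhh$, the prefactor $(2\pi\hb)^3$ cancels the $\hb^{-3}$ of the (squared) kernel and leaves the $\hb$-independent gain $\bt^{-3}$ once the spatial localization carried by $\bx^\si(\cdot)\bx^\si$ is spent --- gives $\No{\rhh(\gah(t))}_{L^\I_x}\ls\bt^{-3}\big(C_0\ep_0+(C_0\ep_0)^2\big)$, improving the constant for small $\ep_0$.

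The core of the argument is the propagation estimate $\No{\gah(t)}_{\CY_t}\ls\ep_0\,\bt^{\de}$. Transporting by $\U(t)$, this reduces to controlling quantities such as $\bx^\si\,(\U(t)^*\M(t))\,\Om^\hb(t)\,\gah_0\,\Om^\hb(t)^*\,(\M(t)^*\U(t))\,\bx^\si$ and their analogues with $\xh$ and with the double commutators of \eqref{eq:initial data}. Proposition~\ref{prop:wave operator boundedness} supplies the bounds on the central blocks $\bx^\si\,\Om^\hb(t)\,\gah_0\,\Om^\hb(t)^*\,\bx^\si$ (and their weighted variants); it then remains to commute the weights $\bx^\si$ and $\xh$ past the phase factor $\U(t)^*\M(t)=e^{-\frac{i}{\hb}\Th^\hb(t,\hb\na)}$. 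Since $\na_\xi\Th^\hb(t,\xi)\approx\int_0^t s\,(\na V^\hb)(s,s\xi)\,ds$, and Assumption~\eqref{eq:ass} together with the density decay $\No{\rhh(\gah(t))}_{L^\I_x}\ls\bt^{-3}\CA_T$ from the previous step makes $\na V^\hb(s)=(\na w)\ast\rhh(\gah(s))$ decay like $s^{-2}$ for $s\gtrsim1$ (Young's inequality with $\na w\in L^p$, $p>3/2$, and interpolation of $\No{\rhh(\gah(s))}_{L^{p'}}$ between its $L^1$ and $L^\I$ bounds), so that $s\,\na V^\hb(s)$ is essentially $s^{-1}$ in the relevant region, this commutation costs only a slowly growing factor (logarithmic, or a small power of $\bt$), absorbed by $\bt^{\de}$; the lower-order contributions, governed by the short-range quantities $\pl^\al V^\hb$ with $1\le|\al|\le3$ decaying like $\bx^{-1-|\al|}$, produce time-integrable terms. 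These commutator estimates are precisely the content of Sections~\ref{sec:single} and~\ref{sec:double}; for the double-commutator terms one uses the reduced exponents $\si/2$ and interpolation to absorb the loss. A Gr\"onwall argument then closes the bootstrap.

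The step I expect to be the main obstacle is this last one, and within it the uniformity in $\hb$. The transported weight $\J(t)/\hb=\U(t)\,\xh\,\U(t)^*$ carries a factor $\hb^{-1}$, so its (iterated) commutators with $V^\hb$ and with the phase multiplier $e^{-\frac{i}{\hb}\Th^\hb(t,\hb\na)}$ must be shown to recover exactly that $\hb^{-1}$; since there is no pointwise-in-frequency argument available in the mixed-state (density operator) setting, everything has to go through genuinely semiclassical commutator and pseudodifferential estimates in Schatten norms, which is why the norm \eqref{eq:initial data} is assembled from exactly these weighted single and double commutators. Establishing the constants in Sections~\ref{sec:single}--\ref{sec:double} and in Proposition~\ref{prop:wave operator boundedness} with $\hb$-independent bounds --- in particular estimating the commutators of the semiclassical phase $e^{-\frac{i}{\hb}\Th^\hb(t,\hb\na)}$ with the weights $\bx^\si$ and $\xh$ without losing powers of $\hb$ --- is the technical heart of the argument.
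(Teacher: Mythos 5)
Your overall architecture is essentially the paper's: the representation $\gah(t)=\U_V(t)\gah_0\U_V(t)^*$ with the self-consistent potential, the factorization through the phase-corrected propagator $\U(t)e^{-i\Ps(t,-i\hb\na_x)}$ and the modified wave operator, the $L^1$--$L^\I$ dispersive estimate, Proposition \ref{prop:wave operator boundedness}, the commutator machinery of Sections \ref{sec:single}--\ref{sec:double}, and a continuity argument. The genuine gap is in the choice of bootstrap quantity. Your $\CA_T$ records only $\No{\gah(t)}_{\FS^1_\hb}$, $\bt^3\No{\rhh(\gah(t))}_{L^\I_x}$, and a transported operator norm with $\bt^\de$ slack; it does not contain the decay of $\na\rhh(\gah(t))$ in $L^1_x$, $L^\I_x$ and (with the $\hb^{3/2}$ weight) $\CF L^1$, nor of $\na^2\rhh(\gah(t))$ in $L^2_x$. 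These are precisely the extra components of the paper's space $\CY^{a,b}_T$ in \eqref{eq:density condition}, and they are indispensable for a long-range $w$: the hypotheses of Proposition \ref{prop:wave operator boundedness} demand $\No{\pl^\al V(t)}_{L^\I_x}\ls \bt^{-4+\de}$ for $|\al|=3,4$ and $\CF L^1$ bounds on $\pl^\al V$ up to $|\al|=3$ (including the $\hb$-dependent $\min$ bound), and the basic tool $\No{\Bk{f,A}}_{\FS^r_\hb}\le\No{\na f}_{\CF L^1}\No{\Bk{x,A}}_{\FS^r_\hb}$ forces $\CF L^1$ control of derivatives of $V$ throughout Sections \ref{sec:single}--\ref{sec:double}. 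From $\No{\rhh}_{L^1_x\cap L^\I_x}$ alone, Young's inequality gives only $\No{\pl^\al V(t)}_{L^\I_x}\ls\bt^{-3}$ for $|\al|=3$; the rate $\bt^{-4+a+\ep}$ of Lemma \ref{lem:potential deriv} is reached only by spending $\No{\na\rhh(\gah(t))}_{L^{3/\ep}_x}$, and Lemma \ref{lem:potential deriv 2} spends $\No{\na^2\rhh}_{L^2_x}$ and $\hb^{-3/2}\No{\na\rhh}_{\CF L^1}$. Your sentence attributing time decay of $\pl^\al V$ to the spatial decay $\bx^{-1-|\al|}$ of $\pl^\al w$ conflates these two things; that step, as written, fails.

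One might hope to recover the density-derivative decay from your transported norm $\No{\gah(t)}_{\CY_t}$ via \eqref{eq:density deriv formula} and Corollary \ref{cor:free}, but your weights are not phase-corrected: converting $\No{\bx^\si\Bk{\xh,\W_V(t)\gah_0\W_V(t)^*}\bx^\si}_\CB$ into the phase-corrected quantity that Corollary \ref{cor:free} actually needs requires commuting $e^{i\Ps(t,-i\hb\na_x)}$ past the weights, i.e.\ exactly the content of Proposition \ref{prop:wave operator boundedness} and Lemmas \ref{lem:commutator 1.6}--\ref{lem:commutator 1.7}, whose hypotheses are the very potential bounds you have not yet secured --- a circle. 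The paper breaks it by making the \emph{full density norm} the bootstrap quantity (Proposition \ref{prop:a priori}): smallness of $\No{\ze}_{\CY^{a,b}_T}$ for the potential generator yields all of Lemmas \ref{lem:potential deriv}--\ref{lem:potential deriv 2} at once, and every operator-level bound (wave-operator boundedness, single and double commutators, hence the derivative decay of the output density) is then a consequence rather than an assumption. Your scheme becomes the paper's once $\CA_T$ is enlarged by the four density-derivative components of \eqref{eq:density condition}; as stated, it does not close.
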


\begin{remark}\label{rmk:modified scattering}
	The solution obtained in Theorem \ref{th:main} exhibits modified scattering.
	Namely, there exists $\gah_+\in \FS^1_\hb$ such that
	\begin{equation}\label{eq:modified scattering}
		\lim_{t\to\I}\No{e^{i\Ps(t,-i\hb\na_x)} \U(t)^*\ga^\hb(t) \U(t) e^{-i\Ps(t,-i\hb\na_x)} -\gah_+ }_{\CB} = 0,
	\end{equation}
	where
	\begin{equation}
		\Ps(t,\xi) := \frac{1}{\hb}\int_0^t V(\ta,\ta\xi) d\ta, \quad V(t):= w \ast \rhh(\gah(t)).
	\end{equation}
	However, this paper could not remove the $\hbar$--dependence from the rate of convergence \eqref{eq:modified scattering}.
	See Section \ref{subsec:proof} and Remark \ref{rmk:final}.
\end{remark}

\begin{remark}
	Theorem \ref{th:main} claims the decay
	\begin{align}
		\No{\rhh(\gah(t))}_{L^\I_x} \ls \frac{1}{\bt^3},
	\end{align}
	where the implicit constant is independent of $\hb\in(0,1]$.
	The rate of decay seems natural because the free solution also decays at the same rate (see Lemma \ref{lem:free}).	
	However, the derivatives of the density decay faster than $|t|^{-3}$; for example, we obtain
	$$\No{\na \rhh(\gah(t))}_{L^\I_x}\ls \frac{1}{\bt^{4-\ep}},$$
	where $\ep>0$ is a small positive constant (see Proposition \ref{prop:a priori}).
\end{remark}

\begin{remark}\label{rmk:new proof}
Fix $\hb=1$ and choose rank-one initial data $\ga_0 = |u_0\rg \lg u_0|$ with sufficiently nice $u_0$. Then, as a corollary of Theorem \ref{th:main}, we obtain the modified scattering for the usual nonlinear Schr\"{o}dinger equation \eqref{eq:NLH1} with $u(0)=u_0$.
We can regard the argument in this paper as a new proof of the modified scattering for the usual long-range nonlinear Schr\"{o}dinger equation with a Hartree type nonlinearity. 
\end{remark}

\begin{remark}
	In the main theorem, we did not include the Coulomb interaction $w(x)=\pm|x|^{-1}$.
	However, this is mainly due to technical reasons. 
	Let $\ch(x)\in C_c^\I(\R^3)$ be a smooth cut-off function such that $\ch(x)\equiv1$ around the origin.
	Then, we can decompose
	\begin{align}
		w(x) = w_0(x)+w_1(x) := w(x)(1-\ch(x)) + w(x)\ch(x).
	\end{align}
	On the one hand, $w_0(x)$ satisfies \eqref{eq:ass}. Thus, it can be dealt with by the methods in this paper. 
	On the other hand, $w_1(x)$ is a short-range interaction; hence, in principle, we can handle it as a perturbation.
	However, in practice, treating $w_1(x)$ as a perturbation might require us to use some non-trivial techniques.
\end{remark}

\subsection{Importance of the problem}\label{subsec:importance}
To understand the importance of getting a uniform-in-$\hb$ smallness condition for the initial data, we need to review the semi-classical limit to the Vlasov equation
\begin{equation}\label{eq:Vlasov}
	\pl_t f + p \cdot (\na_q f) -(\na w \ast \rh(f)) \cdot \na_p f = 0, \quad f=f(t,q,p) :\R_{\ge 0}\times \R_q^3 \times \R_p^3 \to \R,
\end{equation}
where the density function is defined by
\begin{equation}
	\rh(f)(t,q) := \int_{\R^3} f(t,q,p) dp.
\end{equation}
It is well-known that the solution of the Hartree equation ``converges'' to the solution of the Vlasov equation as $\hb \to 0$, under various assumptions. More precisely, we have
\begin{equation}\label{eq:Wigner convergence}
	\Wig[\ga_0^\hb] \to f_0 \text{ as } \hb\to0 \implies \Wig[\gah(t)] \to f(t) \text{ as } \hb \to 0
\end{equation}
in appropriate topologies, where $\gah_0$ is the initial data of the \eqref{eq:NLH}, $\ga^\hb(t)$ is a solution to \eqref{eq:NLH}, $f_0$ is the initial data of \eqref{eq:Vlasov}, and $f(t)$ is the solution to \eqref{eq:Vlasov}. Moreover, the Wigner transform is given by
\begin{equation}
	\Wig[\ga(t)](q,p) := \int_{\R^3} \ga \K{t,q+\frac{y}{2}, q-\frac{y}{2}} e^{-ipy/\hb}dy.
\end{equation}
This convergence is called the \textit{semi-classical limit}. Due to the abundance of research on this topic, the following list of references is far from exhaustive; for example, see \cite{Saffirio 2020a, Saffirio 2020b, Lafleche 2019, Lafleche 2021, Lafleche 2024, Chong et al 2022, Chong et al 2023, Chong et al 2024, Amour et al 2013a, Amour et al 2013b, Athanassoulis et al 2011, Benedikter et al 2014, Benedikter et al 2016 derivation, Benedikter et al 2016 semiclassical, Figalli et al 2012, Gasser et al 1998,Golse 2016, Golse Paul 2017, Golse Paul 2019, Lions Paul 1993, Graffi 2003, Lewin Sabin 2020, Markowich Mauser 1993, Pezzotti Pulvirenti 2009, Smith 2024}.
Recently, the following ``commutative diagram'' was shown in \cite{Smith 2024, Hadama Hong 2025}:
\[
\begin{tikzcd}[row sep=large, column sep=large]
	\ga_0^\hb \mbox{ (initial data) }
	\arrow[r, "\textup{solve}"] 
	\arrow[d, "\hb\to0"'] 
	& \ga^\hb(t) \mbox{ (solution) }
	\arrow[r, "t \to \I"] 
	\arrow[d, "\hb\to0"'] 
	\arrow[dl, phantom, "\circlearrowleft" pos=0.45] 
	& \ga_+^\hb \mbox{ (scattering state) }
	\arrow[d, "\hb\to0"'] 
	\arrow[dl, phantom, "\circlearrowleft" pos=0.45] \\
	f_0 \mbox{ (initial data) }
	\arrow[r, "\textup{solve}"] 
	& f(t) \mbox{ (solution) }
	\arrow[r, "t \to \I"] 
	& f_+ \mbox{ (scattering state) }.
\end{tikzcd}
\]
This means that we have the convergence of the scattering states
\begin{equation}\label{eq:convergence of scattering states}
	\Wig[\ga_0^\hb] \to f_0 \text{ as } \hb\to0 \implies \Wig[\ga^\hb_+] \to f_+  \mbox{ as } \hb \to 0
\end{equation}
in appropriate topologies, where $\ga_+^\hb = \lim_{t \to \I} \U(-t)\ga^\hb_0 \U(t)$ and $f_+ := \lim_{t \to \I} \CU(-t) f(t)$. Here, we defined $\CU(t)f_0 := f_0(q-tp,p)$, that is, $f(t):= \CU(t)f_0$ is the solution to the free transport equation
\begin{equation}
	\pl_t f + p \cdot \na_q f = 0.
\end{equation}
In \cite{Hadama Hong 2025, Smith 2024}, the most essential tool for proving \eqref{eq:convergence of scattering states} is the estimate \eqref{eq:density function decay} with uniform-in-$\hbar$ smallness condition for the initial data.
Indeed, if we do not have uniformity with respect to $\hbar$, then as we have already explained in Section \ref{subsec:intro}, we have 
$\gah_0 \to 0 \mbox{ as } \hb \to 0$.
Hence, under standard assumptions, we have $f_0 = \lim_{\hb\to0} \Wig[\gah_0] = 0$. 
This means that we can only include the trivial case on the classical side.

We have to note that this kind of correspondence of scattering states is known only when $w$ is a short-range interaction.
When $w$ is long-range, for example, Coulomb interaction $w(x)=\pm|x|^{-1}$ or Coulomb interaction regularized around the origin $w(x)=\pm \bx^{-1}$, we cannot expect scattering. 
However, we have modified scattering (see, for example, \cite{Nguyen You 2024, Hayashi Naumkin 1998, Hayashi Naumkin 1998 b, Hayashi et al 1998}).
Therefore, it is quite natural to ask whether the following ``commutative diagram'' is true or not when $w$ is long-range:
\[
\begin{tikzcd}[row sep=large, column sep=large]
	\ga_0^\hb \mbox{ (initial data) }
	\arrow[r, "\textup{solve}"] 
	\arrow[d, "\hb\to0"'] 
	& \ga^\hb(t) \mbox{ (solution) }
	\arrow[r, "t \to \I"] 
	\arrow[d, "\hb\to0"'] 
	\arrow[dl, phantom, "\circlearrowleft" pos=0.45] 
	& \ga_+^\hb \mbox{ (modified scattering state) }
	\arrow[d, "\hb\to0"'] 
	\arrow[dl, phantom, "\circlearrowleft" pos=0.45] \\
	f_0 \mbox{ (initial data) }
	\arrow[r, "\textup{solve}"] 
	& f(t) \mbox{ (solution) }
	\arrow[r, "t \to \I"] 
	& f_+ \mbox{ (modified scattering state) }.
\end{tikzcd}
\]
In other words, we would like to know whether the correspondence of the modified scattering states
\begin{equation}
	\Wig[\ga_0^\hb] \to f_0 \text{ as } \hb\to0 \implies \Wig[\ga^\hb_+] \to f_+ \mbox{ as } \hb \to 0
\end{equation}
holds or not. \textit{As a first step to tackle this question, we prove the uniform estimate in this paper for long-range interaction $w$.}

\subsection{Comments on the relation between (\ref{eq:NLH}) and (\ref{eq:NLH1})}\label{subsec:relation}
When we fix $\hb\in(0,1]$, \eqref{eq:NLH} and \eqref{eq:NLH1} are closely connected. Indeed, if $\gah_0=|u_0 \rg \lg u_0|$, we find these two formulations are equivalent.
Considering more classical \eqref{eq:NLH1} is one of the most important ways to understand \eqref{eq:NLH}.
However, from the author's perspective, \eqref{eq:NLH} and \eqref{eq:NLH1} are less related in the setting of this paper.

To explain it, assume that the initial data is given by $\gah_0 = (2\pi\hb)^{-3} |u_0\rg \lg u_0|$.
Then, on the one hand, we have
\begin{equation}\label{eq:Wigner delta}
	\Wig[\gah_0](q,p) = \frac{1}{(2\pi)^3} \int_{\R^3} u_0\K{q+\frac{\hb y}{2}} \ov{u_0\K{q-\frac{\hb y}{2}}} e^{-iyp} dy \to |u_0(q)|^2 \de(p), 
\end{equation}
where $\de$ is the Dirac delta measure.
However, we have
\begin{equation}
	\rhh\K{\U(t)\gah_0 \U(t)}= |\U(t)u_0|^2.
\end{equation}
Therefore, we have no hope to obtain uniform-in-$\hbar$ decay of the density.
Since we are interested in the uniform estimates, we should exclude this kind of initial data from our problem.

\begin{remark}
	The scaling factor for the initial data $(2\pi\hbar)^{-3}$ is the only meaningful choice.
	Indeed, on the one hand, if we choose $\gah_0:= (2\pi\hb)^{\lm} |u_0\rg \lg u_0|$ with $\lm<-3$ as the initial data, then 
	$\Wig[\gah_0]$ diverges as $\hb\to0$. Since there is the semi-classical limit problem explained in Section \ref{subsec:importance} in our mind, we are not interested in the family of the initial data $(\gah_0)_{\hb\in(0,1]}$ such that $\Wig[\gah_0]$ diverges as $\hb\to\I$.
	On the other hand, if we take $\lm>-3$, then the initial data is too small. We easily obtain $\gah_0 \to 0$, $\Wig[\gah_0] \to 0$, and $\gah(t) \to 0$ as $\hb\to0$.
\end{remark}

\subsection{Difficulties of the problem}\label{subsec:difficulty}
First of all, we need to emphasize that the uniformity is not very trivial even in the free case.
When $\hb=1$, there is a lot of freedom in the choice of the norm $\|\cdot\|_\CX$ such that
\begin{equation}\label{eq:h=1}
	\sup_{t\ge 0} |t|^3 \No{\rh^1\K{\CU^1(t) \ga_0 \CU^1(t)^*}}_{L^\I_x} \ls \|\ga_0\|_{\CX}
\end{equation}
for the free case. For example, we can choose $\|\ga_0\|_\CX:= (2\pi)^3\|\bx^\si \ga_0 \bx^\si\|_{\FS^r}$ for any $\si>3/2$ and $r\in [1,\I]$ (see Lemma \ref{lem:free}).
Since simple scaling of \eqref{eq:h=1} implies
\begin{equation}\label{eq:h general}
	\sup_{t\ge 0} |t|^3 \No{\rhh(\U(t) \gah_0\U(t)^*)}_{L^\I_x} \ls (2\pi\hb)^{-3/r} \No{\bx^\si \gah_0 \bx^\si }_{\FS^r_\hb},
\end{equation}
we find that \eqref{eq:h general} is uniform-in-$\hbar$ only when $r=\I$.
In general, the optimality of the Schatten exponent is a necessary condition to obtain the uniform-in-$\hb$ estimates from the scaling;
hence we cannot waste anything about the Schatten exponent.

Moreover, in the nonlinear problem (even in the short-range case), a new difficulty emerges. If we use the Duhamel formula
\begin{align}\label{eq:Duhamel}
	&\text{(NLH) with } \gah(0)=\gah_0 \\
	&\Longleftrightarrow\gah(t) = \U(t)\ga_0^\hb\U(t)^* - \frac{i}{\hb} \int_0^t \U(t-\ta) \Bk{w\ast \rhh(\ga^\hb), \ga^\hb}(\ta) \U(\ta-t)d\ta,
\end{align}
then we find $1/\hb$--factor in front of the Duhamel term.
Since $1/\hbar \to \I$ as $\hb\to0$, this is a very bad factor.
In principle, we can obtain additional $\hbar$ to cancel $1/\hbar$ from the commutator $\Dk{w\ast \rhh(\ga^\hb), \ga^\hb}$.
However, in practice, it is not easy at all to close the argument.
If we extract $\hb$ from the commutator, then we find another commutator term $\Dk{x/\hb, \ga^\hb}$ (see Lemma \ref{lem:commutator basic}).
To control an appropriate norm of $\Dk{x/\hb, \ga^\hb}$, if we use the Duhamel formula again, then we find double commutator term $\Dk{x/\hb,\Dk{x/\hb,\gah(t)}}$. Hence, if we naively repeat using the Duhamel formula, we cannot close the argument.

\begin{remark}
	We find a similarly difficult situation in the so-called orthonormal Strichartz estimates.
It is a family of the inequalities of the form
	\begin{equation}\label{eq:OS}
		\No{\rh^1\K{\CU^1(t)\ga_0 \CU^1(t)^*}}_{L^p_t L^q_x} \ls \|\ga_0\|_{\FS^r}
	\end{equation}
	for suitable $p,q,r\in [1,\I]$ (see \cite{Frank et al 2014, Frank Sabin 2017, Bez et al 2019}).
	We can easily obtain \eqref{eq:OS} with $r=1$ by the triangle inequality and the usual Strichartz estimates, but actually, we can improve $r=1$ to suitable $r>1$.
	From the semi-classical point of view, \eqref{eq:OS} is correct only when $r$ is the optimal exponent, that is, the simple scaling implies
	\begin{equation}\label{eq:OS h}
		\No{\rhh\K{\U(t)\ga_0^\hb\U(t)^*}}_{L^p_t L^q_x} \ls \|\gah_0\|_{\FS^r_\hb}
	\end{equation}
	only when $r$ is the optimal Schatten exponents, where the implicit constant is uniform with respect to $\hb$.
	In this sense, the usual Strichartz estimates are not semi-classically correct,
	but the orthonormal Strichartz estimates are their semi-classically correct generalization.
	
	Recently, there are some studies on generalization of \eqref{eq:OS} to
	\begin{equation}
		\No{\rh(e^{-itH} \ga_0 e^{itH})}_{L^p_t L^q_x} \ls \|\ga_0\|_{\FS^r},
	\end{equation}
	where $H=-\De+V(x)$ (see, for example, \cite{Hoshiya 2023, Hoshiya 2024, Hoshiya 2024 b}). However, to the best of the author's knowledge, no uniform-in-$\hbar$ results are known.
	One of the difficulties is also $1/\hbar$--factor appearing in front of the Duhamel terms.
\end{remark}

Finally, we explain why we cannot apply the usual treatment for the nonlinear Schr\"{o}dinger equations with long-range nonlinearity.
The decomposition of the integral kernel of $e^{it\De/2}$
\begin{equation}\label{eq:decomposition of eitDe}
	\frac{e^{-|x-x'|^2/(2it)}}{(2\pi i t)^{3/2}} = \frac{1}{(2\pi it)^{3/2}} + \frac{1}{(2\pi it)^{3/2}} \K{ e^{-|x-x'|^2/(2it)}-1 } = A(x,x') + B(x,x')
\end{equation}
is one of the most basic but important arguments in the study of the long-range NLS (see, for example, \cite{Nguyen You 2024, Hayashi Naumkin 1998}). 
Since $A(x,x')$ is just a constant, we can easily deal with it.
Moreover, $B(x,x')$ gives us an additional decay from 
\begin{equation}\label{eq:additinal decay from the decomposition}
	\abs{B(x,x')}\le \frac{1}{|2\pi t|^{3/2}} \frac{|x-x'|^2}{2t}.
\end{equation}
However, when we work in the semi-classical regime, we need to replace all the $t$ in \eqref{eq:decomposition of eitDe} and \eqref{eq:additinal decay from the decomposition} by $t\hb$. Hence, additional decay coming from $B(x,x')$ is $|x-x'|^2/(2t\hb)$ and we find $1/\hb$--factor here again.
Since the author does not know how we can deal with this bad factor, 
in this paper, we develop a new method to handle long-range interactions.

\subsection{Ideas and novelties of the proof}\label{subsec:idea}
\subsubsection{Review of the argument in \cite{Hadama Hong 2025}}
Hong and the author proved a similar result of Theorem \ref{th:main} for short-range interaction in \cite{Hadama Hong 2025}. We give a brief review of its ideas.

Let $u(t)=\U_V(t)\phi$ be the solution to the following linear Schr\"{o}dinger equation with potential $V(t,x)$:
\begin{equation}
	i\hb\pl_t u = -\frac{\hb^2}{2}\De_x u+ V(t,x)u, \quad u:\R_{\ge 0}\times\R^3 \to \C
\end{equation}
with initial condition $u(0) = \phi$.
As already explained, if we use the Duhamel formula \eqref{eq:Duhamel} directly, then it is rather difficult to deal with the bad factor $1/\hb$ of the right-hand side. Hence, we use the identity\footnote{We can prove the second identity by an elementary (but a little bit complicated) algebraic calculation.}
\begin{align}\label{eq:f}
	\begin{aligned}
	\gah(t) &= \U(t)\ga_0^\hb\U(t)^* - \frac{i}{\hb} \int_0^t \U(t-\ta) \Dk{w\ast \rhh(\ga^\hb), \ga^\hb}(\ta) \U(\ta-t)d\ta \\
	&= \U_{w \ast \rhh(\gah)}(t) \gah_0 \U_{w \ast \rhh(\gah)}(t)^*.
\end{aligned}
\end{align}
Taking the density function of both sides, we obtain a closed equation for the density function $\rhh(\gah(t))$:
\begin{align}
	\rhh(\gah(t)) = \rhh\K{\U_{w \ast \rhh(\gah)}(t) \gah_0 \U_{w \ast \rhh(\gah)}(t)^*}.
\end{align}
Since we can recover the density operator $\gah(t)$ from its density function $\rhh(\gah(t))$ via the formula \eqref{eq:f}, all the arguments were reduced to the estimates for the density functions.

The crucial estimate in \cite{Hadama Hong 2025} is the following a priori estimate for the density function.
\textit{If $V=w \ast \rhh(\gah)$ belongs to a suitable function space and its norm is small}, then we obtain
\begin{equation}\label{eq:previous key}
	\No{\rhh\K{\U_V(t)\gah_0\U_V(t)^*}}_{L^r_x} \ls \frac{1}{\bt^{3/r'}} \K{\No{\bx^{\si/r'}\gah_0 \bx^{\si/r'}}_{\FS^r_\hb} + \No{\sd^{\si/r'}\gah_0 \sd^{\si/r'}}_{\FS^r_\hb}}
\end{equation}
for any $3/2<\si< 2$ and $r\in[1,\I]$.
The idea of the proof of \eqref{eq:previous key} is simple. 
First, we prove \eqref{eq:previous key} when $V=0$.
After that, we estimate 
\begin{align}
	&\No{\rhh\K{\U_V(t)\gah_0\U_V(t)^*}}_{L^r_x} 
	= \No{\rhh\K{\U(t)\W_V(t)\gah_0\W_V(t)^*\U(t)^*}}_{L^r_x}  \\
	&\ls \frac{1}{\bt^{3/r'}} \bigg(\No{\bx^{\si/r'} \W_V(t)\gah_0\W_V(t)^* \bx^{\si/r'}}_{\FS^r_\hb}  + \No{\sd^{\si/r'}\W_V(t)\gah_0\W_V(t)^* \sd^{\si/r'}}_{\FS^r_\hb}\bigg),
\end{align}
where $\W_V(t) := \U(-t) \U_V(t)$ is the (finite-time) wave operator.
Second, we can prove the boundedness of wave operators
\begin{align}
	&\sup_{\hb\in(0,1]}\No{\bx^{\si/r'} \W_V(t) \bx^{-\si/r'}}_\CB \ls 1, \label{eq:previous wave operator boundedness 1}\\
	&\sup_{\hb\in(0,1]}\No{\sd^{\si/r'} \W_V(t) \sd^{-\si/r'}}_{\CB} \ls 1 \label{eq:previous wave operator boundedness 2}
\end{align}
holds \textit{if $V$ belongs to a suitable function space and its norm is small}. 
Therefore, we obtain a priori estimate
\begin{align}
	\No{\rhh\K{\U_V(t)\gah_0\U_V(t)^*}}_{L^r_x}
	&\ls \frac{1}{\bt^{3/r'}} \No{\bx^{\si/r'} \W_V(t) \bx^{-\si/r'}}_{\CB}^2 \No{\bx^{\si/r'} \gah_0 \bx^{\si/r'}}_{\FS^r_\hb}\\
	&\quad + \frac{1}{\bt^{3/r'}} \No{\sd^{\si/r'}\W_V(t) \sd^{-\si/r'}}_\CB^2 \No{\sd^{\si/r'} \gah_0 \sd^{\si/r'}}_{\FS^r_\hb} \\
	&\ls \frac{1}{\bt^{3/r'}} \K{\No{\bx^{\si/r'}\gah_0 \bx^{\si/r'}}_{\FS^r_\hb} + \No{\sd^{\si/r'}\gah_0 \sd^{\si/r'}}_{\FS^r_\hb}}.
\end{align}
Since local well-posedness of \eqref{eq:NLH} is easy, once we get an a priori estimate, we can go through the standard bootstrap argument and prove the uniform estimate.

\subsubsection{Idea of this article}\label{subsubsec:idea}
Now we will explain the idea of this article.
First of all, we need to note that the argument in \cite{Hadama Hong 2025} works well because we can expect that $V$ belongs to a suitable function space if $w$ is a short-range interaction.
If $w$ is long-range, we can never expect that $V$ belongs to a suitable function space, and we cannot expect the boundedness of the wave operator with weight \eqref{eq:previous wave operator boundedness 1}.
However, by using so-called phase correction, we obtain a similar boundedness.
More precisely, instead of \eqref{eq:previous wave operator boundedness 1}, we prove
\begin{equation}\label{eq:previous wave operator boundedness 1'}
	\sup_{\hb\in(0,1]} \No{\bx^\si e^{i\Ps(t,-i\hb\na_x)} \W_V(t) \bx^{-\si}}_\CB \ls 1
\end{equation}
for a ``worse'' class of potential $V$, where $\Ps(t,\xi):\R_{\ge 0}\times \R^3 \to \R$ is an appropriate function and $e^{i\Ps(t,-i\hb\na_x)}$ is the phase correction.
The most significant contribution of this paper is that we prove \eqref{eq:previous wave operator boundedness 1'} by precise commutator calculations (see Proposition \ref{prop:wave operator boundedness}).

Since we use phase correction, we also need to prove 
\begin{align}\label{eq:free modified}
	&\No{\rhh\K{\U(t)e^{-i\Ps(t,-i\hb\na_x)}\gah_0 e^{i\Ps(t,-i\hb\na_x)}\U(t)^*}}_{L^r_x} \\
	&\quad \ls \frac{1}{\bt^{3/r'}} \K{\No{\bx^{\si/r'}\gah_0 \bx^{\si/r'}}_{\FS^r_\hb} + \No{\sd^{\si/r'}\gah_0 \sd^{\si/r'}}_{\FS^r_\hb}}
\end{align}
in order to apply the same argument as \cite{Hadama Hong 2025}.
Estimate \eqref{eq:free modified} comes from $L^1$--$L^\I$ dispersive estimate
\begin{align}\label{eq:Linfty L1}
	\No{ \U(t) e^{-i\Ps(t,-i\hb\na_x)} }_{L^1_x \to L^\I_x} \ls \frac{1}{|t\hb|^{3/2}}.
\end{align}
Initially, the author suspected that \eqref{eq:Linfty L1} was already known, but he could not find any appropriate references. Hence, we give a self-contained proof of \eqref{eq:Linfty L1} as an application of the stationary phase methods (see Proposition \ref{prop:perturbed dispersive}).

Connecting the above arguments, we have
\begin{align}
	&\No{\rhh\K{\U_V(t)\gah_0\U_V(t)^*}}_{L^r_x} \\
	&\quad =\No{\rhh\K{\U(t)e^{-\Ps(t,-i\hb\na_x)}e^{\Ps(t,-i\hb\na_x)}\W_V(t)\gah_0\W_V(t)^*e^{-\Ps(t,-i\hb\na_x)}e^{\Ps(t,-i\hb\na_x)}\textbf{}\U(t)^*}}_{L^r_x}  \\
	&\quad \ls \frac{1}{\bt^{3/r'}} \bigg(\No{\bx^{\si/r'} e^{\Ps(t,-i\hb\na_x)} \W_V(t)\gah_0\W_V(t)^* e^{-\Ps(t,-i\hb\na_x)}\bx^{\si/r'}}_{\FS^r_\hb} \\
	&\qquad \qquad \qquad + \No{\sd^{\si/r'}e^{\Ps(t,-i\hb\na_x)}\W_V(t)\gah_0\W_V(t)^* e^{-\Ps(t,-i\hb\na_x)}\sd^{\si/r'}}_{\FS^r_\hb}\bigg) \\
	&\quad \ls \frac{1}{\bt^{3/r'}} \bigg(\No{\bx^{\si/r'} e^{\Ps(t,-i\hb\na_x)} \W_V(t) \bx^{-\si/r'}}_\CB^2 \No{\bx^{\si/r'} \gah_0 \bx^{\si/r'}}_{\FS^r_\hb} \\
	&\qquad \qquad \qquad + \No{\sd^{\si/r'}\W_V(t)\sd^{-\si/r'}}_\CB^2 \No{\sd^{\si/r'} \gah_0 \sd^{\si/r'}}_{\FS^r_\hb}\bigg) \\
	&\quad \ls \frac{1}{\bt^{3/r'}} \bigg(\No{\bx^{\si/r'} \gah_0 \bx^{\si/r'}}_{\FS^r_\hb}+ \No{\sd^{\si/r'} \gah_0 \sd^{\si/r'}}_{\FS^r_\hb}\bigg).
\end{align}

In \cite{Hadama Hong 2025}, the above argument was enough to apply the standard bootstrap arguments because we did not need to estimate $\na \rhh(\gah(t))$ nor $\na^2 \rhh(\gah(t))$. However, in this paper, we cannot avoid dealing with them to obtain \eqref{eq:previous wave operator boundedness 1'}. By the basic formula (see \eqref{eq:density deriv formula})
\begin{align}
	\pl_{x_j} \rhh(\U(t)A(t)\U(t)^*) = \rhh\K{\U(t) \Bk{\pl_{x_j}, A(t)} \U(t)^*} = \frac{1}{t}\rhh\K{\U(t)\Bk{\frac{x_j}{i\hb}, A(t)}\U(t)^*} 
\end{align}
for all $j=1,\dots,3$, we find that we cannot close the argument in terms of density functions. Namely, we need to estimate various commutators of density operators, and this requires a bit of hard calculation (see Sections \ref{sec:single} and \ref{sec:double}).
Moreover, since we are studying the global behavior of the solution, we need global-in-time inequalities; however, to the best of the author's knowledge, this type of commutator estimate is not written in the literature. We carefully compute them and finally get the desired bound.

Following the above argument, we derive the main theorem using the standard bootstrap method. 

\begin{remark}
	In Section \ref{subsec:difficulty}, we said we cannot close the commutator estimates if we naively use the Duhamel formula. In this paper, thanks to the identity \eqref{eq:f}, we can avoid the blowup of the number of commutator terms. 
\end{remark}

\begin{remark}
	We should emphasize that we cannot estimate commutator $\|[A,B]\|_\CX$ with a norm $\|\cdot \|_\CX$ as
	\begin{equation}
		\No{\Bk{A,B}}_\CX \le \|AB\|_\CX + \|BA\|_\CX.
	\end{equation}
	Indeed, when we encounter one commutator, we have already lost one $\hbar$-factor. In other words, we do not encounter the commutator $[A,B]$ alone, but we find one commutator with one $1/\hbar$.
	What we need to estimate is not $[A,B]$, but  $\frac{1}{\hbar}[A,B]$.
	One can find this situation in the Duhamel formula \eqref{eq:Duhamel}.
\end{remark}

\subsubsection{Summary of main contributions}
Finally, we summarize the novelties of this article.
\begin{itemize}
	\item First of all, we provide the boundedness of the wave operator with weights \eqref{eq:previous wave operator boundedness 1'} (see Proposition \ref{prop:wave operator boundedness}). Such an estimate might be useful not only for the setting of this article but also for other areas. For example, we might be able to apply Proposition \ref{prop:wave operator boundedness} or its proof to the study of the modified scattering of more general nonlinear Schr\"{o}dinger equations.
	
	For the proof, we choose
\begin{equation}\label{eq:phase correction}
	\Ps(t,\xi):= \frac{1}{\hb}\int_0^t V(\ta,\ta\xi)d\ta
\end{equation}
as the phase correction. One can find this kind of choice of $\Ps$ in, for example, \cite{Nguyen You 2024}.
The advantage of $\Ps$ defined by \eqref{eq:phase correction} is that it explicitly includes $V$. Hence, we can compute the commutator 
$$\Bk{x,e^{i\Ps(t,-i\hb\na_x)}\W_V(t)}$$
precisely, and extract the cancellation between $e^{i\Ps(t,-i\hb\na_x)}$ and $\W_V(t)$.
	\item As explained in Section \ref{subsubsec:idea}, we cannot avoid estimating various kinds of commutators.
	The main (or practically the only) tool to estimate the commutator is the following inequality (see Lemma \ref{lem:commutator basic}):
	\begin{align}
		\No{\Bk{f(x), A}}_{\FS^r_\hbar} \le \No{\nabla f}_{\CF L^1} \No{\Bk{x,A}}_{\FS^r_\hb}.
	\end{align}
	Hence, we often need to estimate $\No{\rhh(\gah(t))}_{\CF L^1}$, not $\No{\rhh(\gah(t))}_{L^\I_x}$. 
	We give a simple but new estimate for the free propagator Lemma \ref{lem:free Fourier}. It enables us to deal with $\CF L^1$--norm, but with some $1/\hbar$--factor. However, fortunately, we sometimes find additional $\hbar$ in front of $\CF L^1$--norm (see, for example, Lemma \ref{lem:key estimate}). 
	\item In Sections \ref{sec:single} and \ref{sec:double}, we give several global-in-time estimates of the commutators. To the best of the author's knowledge, none of these estimates are written in the literature. 
	\item We give an $L^1$--$L^\I$ dispersive estimate for the propagator with phase correction in Proposition \ref{prop:perturbed dispersive}. The author suspects it might be already known, but he was not able to find it.
\end{itemize}

\subsection{Structure of this paper}
This paper is organized as follows.
In Section \ref{sec:preliminaries}, we give basic definitions, notations, and elementary lemmas that will be used later.
In Section \ref{sec:L1LI}, we prove an $L^1$--$L^\I$ dispersive estimate for the modified propagator (Proposition \ref{prop:perturbed dispersive}). As a corollary, we give a uniform-in-$\hbar$ dispersive estimates for the phase-corrected propagators without any potential $V(t,x)$ (Corollary \ref{cor:free}).
Moreover, we give a new bound for $\CF L^1$--norm of the density function (Lemma \ref{lem:free Fourier}).  
In Section \ref{sec:wave}, we deal with the most important ingredient in this paper: the uniform-in-$\hb$ boundedness of the wave operators (Proposition \ref{prop:wave operator boundedness}). 
In Section \ref{sec:single}, we prove various single commutator estimates for the general Schatten--$r$ norm to control the derivatives of the density functions.
In Section \ref{sec:double}, we prove various double commutator estimates for the Hilbert--Schmidt norm.
In Section \ref{sec:proof}, first we prove the key a priori estimate. Then, we prove local well-posedness of \eqref{eq:NLH} and complete the proof of the main theorem by the standard bootstrap argument.

\section{Preliminaries}\label{sec:preliminaries}
In this section, we gather preliminaries. In Section \ref{subsec:definition and notation}, we give some necessary definitions and notations.
In Section \ref{subsec:definition of density functoin}, we give a rigorous definition of the density function $\rhh(\gah(t))$, which was formally given in Introduction. 
In Section \ref{subsec:useful identities}, we collect some useful identities, which are necessary to recover $\hbar$--loss coming from the Duhamel formula. 
In Section \ref{subsec:useful estimates}, we give elementary inequalities for linear operators on $L^2(\R^3)$. 

\subsection{Definitions and notations}\label{subsec:definition and notation}
\subsubsection{Basic notations}
Define the Japanese bracket by $\bx := (1+|x|^2)^{1/2}$.
We write $A \ls B$ if $A \le C B$, where $C>0$ is a $\hb$-independent constant.

\subsubsection{Function and operator spaces}
We write $C_c^\I(\R^N)$ for the space of all $C^\I$--functions with compact support. 
We write $\CS(\R^N)$ for the space of all rapidly decreasing functions (Schwartz functions).
Moreover, $L^{p,q}(\R^N)$ is the Lorentz space. For the definitions and basic properties of the Lorentz spaces, see, for example, \cite{Grafakos book}.
We define the Fourier--Lebesgue space $\CF L^p(\R^N)$ by $\|f\|_{\CF L^p} = \|\wh{f}\|_{L^p_\xi}$.

Let $\CB = \CB(L^2(\R^3))$ be the space of all bounded linear operators on $L^2(\R^3)$.
We define the Schatten--$r$ norm by
$$\|A\|_{\FS^r}:= \Bck{\Tr\Dk{(A^*A)^{r/2}}}^{1/r} \mbox{ for } r \in [1,\I),
\quad \|A\|_{\FS^\I}:= \|A\|_{\CB}.$$
When $r=2$, we have
\begin{align}\label{eq:Hilbert Schmidt}
	\|A\|_{\FS^2} = \|A(x,x')\|_{L^2_{x,x'}}.
\end{align}
See, for example, \cite{Simon book} and \cite[Appendix D]{Hytonen et al book} for more details on this material.
In this paper, we work in the semi-classical regime. Hence, we need to introduce the semi-classically scaled Schatten--$r$ norm
\begin{equation}
	\|A\|_{\FS^r_\hb} := (2\pi\hb)^{3/r} \|A\|_{\FS^r}.
\end{equation}

\subsubsection{Specific functions and operators}
There are some different definitions of the Fourier transform $\CF$ and the inverse Fourier transform $\CF^{-1}$, but in this paper, we use the following definitions:
	\begin{align}
		&\CF[f](\xi) = \wh{f}(\xi) = \frac{1}{(2\pi)^{3/2}}\int_{\R^3} e^{-ix\xi} f(x)dx, 
		\quad \CF^{-1}[f](x) = \check{f}(x) = \frac{1}{(2\pi)^{3/2}}\int_{\R^3} e^{ix\xi} f(\xi)d\xi.
	\end{align}
We define $\U(t):= e^{it\hb\De/2}=\CF^{-1} e^{-it\hb|\xi|^2/2} \CF$ as the propagator of the free Schr\"{o}dinger equation. Moreover, $u(t):= \U_V(t,s)\phi$ for $t, s \ge 0$ is the solution to 
	$$\begin{dcases}
		&i\hb\pl_t u = -\frac{\hb^2}{2}\De_x u + V(t,x)u, \quad u:\R_{\ge 0}\times \R^3 \to \C, \\
		&u(s)=\phi.
	\end{dcases}$$
For simplicity of notation, we write $\U_V(t):= \U_V(t,0)$.
The (finite-time) wave operator, which is one of the most important tools in this paper, is defined by $\W_V(t) := \U(-t)\U_V(t)$.
We write $\J(t) := x + it\hb\na$. 
A direct calculation shows 
\begin{equation}\label{eq:J}
	\J(t)=\U(t)x\U(-t) = \M(t)it\hb\na \M(-t),
\end{equation}
where $\M(t) := e^{-|x|^2/(2it\hb)}$.
Hence, we define 
\begin{equation}\label{eq:f(J)}
\lg\J(t)\rg^s := \U(t)\bx^s\U(-t) = \M(t)\lg t\hb\na \rg^s\M(-t)
\end{equation}
for any $s \in \R$.
The vector field $\J(t)$ is widely used in the literature.  See, for example, \cite{Hayashi Naumkin 1998, Hayashi Naumkin 1998 b, Hayashi Tsutsumi 1987, Hayashi et al 1998}.

\subsection{Density functions of linear operators}\label{subsec:definition of density functoin}
In the introduction, we formally defined $\rhh(A)$; however, we provide a rigorous definition here.
We called the definition $\rhh(A)(x):= (2\pi\hb)^3 A(x,x)$ formal because the diagonal set $\{(x,y)\in\R^{3+3}\mid x=y\}$ is a null set of $\R^{3+3}$.
In this paper, we consider $\rhh(A)$ only for trace class operators $A \in \FS^1$. 
Then, we can use the singular value decomposition
\begin{align}
	A = \sum_{n=1}^\I a_n|u_n\rg \lg v_n|,
\end{align}
where $(a_n)_{n=1}^\I \in \ell^1$ satisfies $a_1 \ge a_2 \ge \cdots \ge a_n \downarrow 0$, and $(u_n)_{n=1}^\I$, $(v_n)_{n=1}^\I$ are orthonormal systems in $L^2_x$. 
We define
\begin{align}\label{eq:densty rigorous definition}
	\rhh(A)(x) := (2\pi\hb)^3\sum_{n=1}^\I a_n u_n(x)\ov{v_n(x)}.
\end{align}
It is easy to see that $\rhh(A)$ is a well-defined mathematical object in $L^1_x$ because 
\begin{align}\label{eq:density L1 estimate}
	\|\rhh(A)\|_{L^1_x} \le  (2\pi\hb)^3\sum_{n=1}^\I |a_n|\|u_n v_n\|_{L^1_x} =  (2\pi\hb)^3\|(a_n)_{n=1}^\I\|_{\ell^1} < \I.
\end{align}
\begin{remark}
	The trivial bound 
	\begin{align}\label{eq:density L1 bound}
		\|\rhh(A)\|_{L^1_x} \le \|A\|_{\FS^1_\hb}
	\end{align}
	follows from the above argument, because $\|A\|_{\FS^1_\hb} =  (2\pi\hb)^3\|(a_n)_{n=1}^\I\|_{\ell^1}$ (see \cite{Simon book, Hytonen et al book}).
\end{remark}

Moreover, this rigorous definition is compatible with the formal definition because
\begin{align}
	A(x,x') =  (2\pi\hb)^3\sum_{n=1}^\infty a_n u_n(x)\ov{v_n(x')}
	\quad \mbox{and}\quad  A(x,x) ``=\mbox{''}  (2\pi\hb)^3\sum_{n=1}^\infty a_n u_n(x)\ov{v_n(x)}.
\end{align}

We sometimes use the following property of the density function, which follows from a direct calculation with the rigorous definition.
\begin{lemma}\label{lem:duality}
	For any $f\in C_c^\I(\R^3)$ and $A \in \FS^1_\hb$, it holds that
	\begin{align}
		\int_{\R^3} f(x) \rhh(A)(x) dx = (2\pi\hb)^3 \Tr\Big[f(x)A\Big].
	\end{align}
\end{lemma}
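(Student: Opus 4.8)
The plan is to expand both sides of the identity using the singular value decomposition $A = \sum_{n=1}^\I a_n |u_n\rg\lg v_n|$ from Section \ref{subsec:definition of density functoin} (so $(a_n)_{n=1}^\I\in\ell^1$ and $(u_n)_{n=1}^\I$, $(v_n)_{n=1}^\I$ are orthonormal systems in $L^2_x$), and to observe that both sides reduce to the same absolutely convergent scalar series $(2\pi\hb)^3\sum_n a_n \int_{\R^3} f(x) u_n(x)\ov{v_n(x)}\,dx$.

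First I would treat the left-hand side. Substituting the rigorous definition \eqref{eq:densty rigorous definition} of $\rhh(A)$ into $\int_{\R^3} f\rhh(A)$ produces $(2\pi\hb)^3 \int_{\R^3} f(x)\sum_n a_n u_n(x)\ov{v_n(x)}\,dx$, and the sum and integral may be interchanged: by Cauchy--Schwarz $\int_{\R^3}|u_n\ov{v_n}|\le \|u_n\|_{L^2_x}\|v_n\|_{L^2_x}=1$, so the series is dominated by $\|f\|_{L^\I_x}\sum_n|a_n|<\I$ and dominated convergence (or Tonelli) applies. This puts the left-hand side into the claimed form.

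Next I would treat the right-hand side. The key points are that $f(x)A = \sum_n a_n|fu_n\rg\lg v_n|$, with the partial sums converging in $\FS^1$ — since multiplication by $f$ is bounded on $L^2_x$ with operator norm $\|f\|_{L^\I_x}$, one has $\|f(x)A\|_{\FS^1}\le\|f\|_{L^\I_x}\|A\|_{\FS^1}$ — and that the trace is continuous and linear on $\FS^1$ with $\Tr\big[|g\rg\lg h|\big]=\lg h,g\rg$. Combining these, $\Tr[f(x)A]=\sum_n a_n\lg v_n, fu_n\rg = \sum_n a_n\int_{\R^3} \ov{v_n(x)} f(x) u_n(x)\,dx$, which agrees with the left-hand side after multiplying by $(2\pi\hb)^3$.

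I do not expect any genuine obstacle here; the computation is routine and the only care needed is the justification of the term-by-term manipulations, which the $\ell^1$-summability of $(a_n)$ together with Cauchy--Schwarz supplies. An alternative route that avoids the singular value decomposition is to check the identity first for finite-rank $A$ with smooth kernel (where it is immediate from Fubini) and then extend by density, using that both sides are continuous in $A$ with respect to $\|\cdot\|_{\FS^1_\hb}$: for the left-hand side via \eqref{eq:density L1 bound}, $\|\rhh(A)\|_{L^1_x}\le\|A\|_{\FS^1_\hb}$, combined with $f\in L^\I_x$, and for the right-hand side via $|\Tr[f(x)A]|\le\|f\|_{L^\I_x}\|A\|_{\FS^1}$.
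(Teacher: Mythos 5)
Your argument is correct, and it is precisely the ``direct calculation with the rigorous definition'' that the paper invokes for this lemma: expanding both sides through the singular value decomposition $A=\sum_n a_n|u_n\rg\lg v_n|$ and matching the resulting absolutely convergent series. The justifications you supply (Cauchy--Schwarz plus $\ell^1$-summability of $(a_n)$, and trace continuity on $\FS^1$) are exactly what is needed, so there is nothing to add.
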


\subsection{Basic identities}\label{subsec:useful identities}
\subsubsection{MDFM decomposition}
We sometimes use the so-called MDFM decomposition 
\begin{equation}\label{eq:MDFM}
	\U(t) = \M(t) \D(t) \CF \M(t),
\end{equation}
where 
\begin{align}
	&\M(t) = e^{-|x|^2/(2it\hb)}, \quad 
	(\D(t)u)(x):= \frac{1}{(it\hb)^{3/2}}u\K{\frac{x}{t\hb}}.
\end{align}
We can obtain \eqref{eq:MDFM} by a direct calculation.
It has been used in a large body of literature. For example, see \cite{Hayashi Naumkin 1998, Hayashi Naumkin 1998 b, Hayashi Tsutsumi 1987, Hayashi et al 1998}.

\subsubsection{Derivatives of the density function}
By the rigorous definition of $\rhh(A)$ and the Leibniz rule, we obtain $\pl_{x_j} \rhh(A)(x) = \rhh\K{\Dk{\pl_{x_j}, A}}$.
In particular, we have
\begin{align}\label{eq:density deriv formula 0}
\pl_{x_j} \rhh\K{\U(t)\gah_0\U(t)^*} = \rhh\K{\U(t)\Bk{\pl_{x_j}, \gah_0}\U(t)^*}.
\end{align}
However, we have another expression
\begin{align}\label{eq:density deriv formula}
	\pl_{x_j} \rhh\K{\U(t)\gah_0\U(t)^*} = \frac{1}{t}\rhh\K{\U(t)\Bk{\frac{x_j}{i\hb}, \gah_0}\U(t)^*}.
\end{align}

\begin{proof}[Proof of \eqref{eq:density deriv formula}]
Note that 
\begin{align}
	\rhh\K{\U(t)\gah_0 \U(t)^*}(x) = \frac{1}{(2\pi)^{3/2}} \iint_{\R^3\times \R^3} e^{ix(\xi-\xi')} e^{-it(|\xi|^2-|\xi'|^2)/2} \wh{\gah_0}(\xi,\xi') d\xi d\xi',
\end{align}
where $\wh{\gah_0}:= \CF \gah_0 \CF^{-1}$.
Hence, the integration by parts implies
\begin{align}
	&\pl_{x_j} \rhh\K{\U(t)\gah_0 \U(t)^*}(x)
	= \frac{1}{(2\pi)^{3/2}} \iint_{\R^3\times\R^3} e^{ix(\xi-\xi')} i(\xi_j-\xi_j') e^{-it\hb(|\xi|^2-|\xi'|^2)/2} \wh{\gah_0}(\xi,\xi') d\xi d\xi' \\
	&\quad = \frac{1}{(2\pi)^{3/2}}\frac{-1}{t\hb} \iint_{\R^3\times\R^3} e^{ix(\xi-\xi')} \K{\pl_{\xi_j} + \pl_{\xi'_j}} \K{ e^{-it\hb(|\xi|^2-|\xi'|^2)/2}} \wh{\gah_0}(\xi,\xi') d\xi d\xi' \\
	&\quad = \frac{1}{(2\pi)^{3/2}}\frac{1}{t\hb} \iint_{\R^3\times\R^3}  e^{-it\hb(|\xi|^2-|\xi'|^2)/2} \K{\pl_{\xi_j} + \pl_{\xi'_j}}\K{ e^{ix(\xi-\xi')} \wh{\gah_0}(\xi,\xi') }d\xi d\xi'\\
	&\quad = \frac{1}{(2\pi)^{3/2}}\frac{1}{it\hb} \iint_{\R^3\times \R^3}  e^{-it\hb(|\xi|^2-|\xi'|^2)/2} e^{ix(\xi-\xi')}\K{i\pl_{\xi_j} + i\pl_{\xi'_j}}\K{  \wh{\gah_0}(\xi,\xi') }d\xi d\xi'\\
	&\quad = \frac{1}{t} \rhh\K{\U(t)\Bk{\frac{x_j}{i\hb},\gah_0} \U(t)^*}(x).
\end{align}
\end{proof}

\subsubsection{Commutator identities}
We use the following commutator formulas of the wave operator:
\begin{lemma}[Lemma 3.3 in \cite{Hadama Hong 2025}]\label{lem:commutator wave}
For any $V(t,x) \in C(\R_{\ge 0};L^\I_x)$ and $f(\xi)$ such that
$$f(-i\hb\na)V(t,x) \in \CB(L^2(\R^3))$$
for all $t \ge 0$,  the commutator formula
\begin{align}\label{eq:commutator f W}
	\Bk{f(-i\hb\nabla), \W_V(t)} = -\frac{i}{\hb} \W_V(t) \int_0^t \U_V(\ta)^* \Big[f(-i\hb\na), V(\ta)\Big] \U_V(\ta)d\ta,
\end{align}
holds. In particular, for any $W \in C(\R_{\ge 0};W^{1,\I}_x)$, we have
\begin{align}\label{eq:commutator nabla W}
	&\Bk{\nabla, \mathcal{W}_V^\hbar(t)}
	=-\frac{i}{\hb}\W_V(t)\int_0^t \U_V(\ta)^*(\nabla V)(\ta)\U_V(\ta) d\ta.
\end{align}
Moreover, for any $V(t,x)\in C(\R_{\ge 0};W^{1,\I}_x)$, we have
	\begin{align}\label{eq:commutator x W}
		&\Bk{x, \mathcal{W}_V^\hbar(t)}
		=\W_V(t) \int_0^t \U_V(\ta)^* \ta(\nabla V)(\ta)\U_V(\ta)d\ta.
	\end{align}
\end{lemma}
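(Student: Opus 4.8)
The plan is to prove all three identities by a single device: record a first-order evolution equation for the wave operator $\W_V(t)$, then differentiate each claimed identity in $t$, observe that both sides solve the same inhomogeneous linear Cauchy problem, and conclude by uniqueness.

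\emph{Evolution equation for $\W_V(t)$.} Writing $\W_V(t)=\U(-t)\U_V(t)$ and combining the free Schr\"odinger equation for $\U(\pm t)$ with $i\hb\pl_t\U_V(t)=(-\frac{\hb^2}{2}\De+V(t))\U_V(t)$, the two $\De$--contributions cancel and one is left with
\[
\pl_t\W_V(t)=\frac{1}{i\hb}\,\U(-t)V(t)\U(t)\,\W_V(t)=:\frac{1}{i\hb}V^{[t]}\W_V(t),\qquad \W_V(0)=\Id,
\]
where $V^{[t]}:=\U(-t)V(t,x)\U(t)$ is the Heisenberg-transported potential; since $V(t)\in L^\I_x$, the family $t\mapsto V^{[t]}$ is norm-continuous with values in $\CB$, so this is a genuine bounded-coefficient linear ODE. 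I will also use the trivial identity $\W_V(t)\U_V(t)^*=\U(-t)$, valid because $\U_V(t)$ is unitary, and the fact that $\frac{1}{i\hb}=-\frac{i}{\hb}$.

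\emph{The multiplier identity and its corollary.} Since $f(-i\hb\na)$ is a Fourier multiplier it commutes with $\U(\pm t)$, so $[f(-i\hb\na),V^{[t]}]=\U(-t)[f(-i\hb\na),V(t)]\U(t)$. Differentiating $Q(t):=[f(-i\hb\na),\W_V(t)]$ and inserting the evolution equation gives
\[
\pl_t Q(t)=\frac{1}{i\hb}V^{[t]}Q(t)+\frac{1}{i\hb}\,\U(-t)[f(-i\hb\na),V(t)]\,\U_V(t),\qquad Q(0)=0.
\]
Differentiating the right-hand side $R(t)$ of \eqref{eq:commutator f W} by the product rule and simplifying the boundary term with $\W_V(t)\U_V(t)^*=\U(-t)$, one finds that $R$ solves exactly the same Cauchy problem; uniqueness forces $Q\equiv R$, which is \eqref{eq:commutator f W}. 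Taking $f(-i\hb\na)=\pl_j$ componentwise, so that $[\pl_j,V(t)]=(\pl_j V)(t)$ is multiplication by $\pl_j V$, yields \eqref{eq:commutator nabla W}.

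\emph{The position identity.} The only new point for \eqref{eq:commutator x W} is that $x$ does not commute with $\U(\pm t)$. From \eqref{eq:J} together with $\J(t)=x+it\hb\na$ one has $\U(-t)x_j\U(t)=x_j-it\hb\pl_j$; these operators commute among themselves and satisfy $[x_j,\U(-t)x_k\U(t)]=it\hb\de_{jk}$, so a short Leibniz-rule computation gives $[x_j,V^{[t]}]=it\hb\,\U(-t)(\pl_j V)(t)\U(t)$ --- it is precisely this commutator that manufactures the factor $\ta$ in the final formula. Hence $P_j(t):=[x_j,\W_V(t)]$ satisfies
\[
\pl_t P_j(t)=\frac{1}{i\hb}V^{[t]}P_j(t)+t\,\U(-t)(\pl_j V)(t)\,\U_V(t),\qquad P_j(0)=0,
\]
and the same verification as before (differentiate the right-hand side of \eqref{eq:commutator x W}, use $\W_V(t)\U_V(t)^*=\U(-t)$) identifies $P_j(t)$ with the claimed expression.

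\emph{Main obstacle.} Conceptually nothing is delicate here; the effort is in bookkeeping --- checking that every operator written down is densely defined and that the $t$--differentiations are legitimate under the stated regularity of $V$ (this is where the hypotheses $f(-i\hb\na)V(t)\in\CB$, and $V\in C(\R_{\ge 0};W^{1,\I}_x)$ for \eqref{eq:commutator x W}, enter). The safe route is to test every identity against Schwartz vectors, carry out the algebra there, and pass to the limit. Since $V^{[t]}$ is bounded and norm-continuous in $t$, the uniqueness step is a one-line Gr\"onwall estimate and causes no trouble.
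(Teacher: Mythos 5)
Your proposal is correct, and it is essentially the argument the paper relies on (the paper defers this lemma to \cite{Hadama Hong 2025} and uses the very same device in its proofs of Lemmas \ref{lem:commutator nabla W}--\ref{lem:commutator mixed}): derive the evolution equation $i\hb\pl_t\W_V(t)=\U(t)^*V(t)\U(t)\W_V(t)$, compute the source terms $[f(-i\hb\na),\U(t)^*V(t)\U(t)]=\U(t)^*[f(-i\hb\na),V(t)]\U(t)$ and $[x_j,\U(t)^*V(t)\U(t)]=it\hb\,\U(t)^*(\pl_jV)(t)\U(t)$ (the latter producing the factor $\ta$ in \eqref{eq:commutator x W}), and integrate via Duhamel, which is the same as your uniqueness-of-the-Cauchy-problem step. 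Your closing remark about verifying the identities on Schwartz vectors is the right way to handle the unboundedness of $x$ and of general $f(-i\hb\na)$, matching the level of rigor intended by the hypotheses of the lemma.
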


\begin{remark}
	In the proof in \cite{Hadama Hong 2025}, the proof for \eqref{eq:commutator x W} and \eqref{eq:commutator nabla W} is only given. However, the same proof works for more general \eqref{eq:commutator f W}.
\end{remark}

To obtain the commutator estimates, we need the following lemmas.
\begin{lemma}\label{lem:commutator nabla W}
	Let $V \in C(\R_{\ge 0}; W^{2,\I}_x)$ and $\ga(t):= \U_V(t)\gah_0\U_V(t)^*$. Then, we have
\begin{equation}\label{eq:nabla single commutator}
	\begin{aligned}
		&\Bk{\pl_{x_j},\W_V(t)\gah_0 \W_V(t)^*}
		= \W_V(t) \Bk{\pl_{x_j},\ga_0^\hb}\W_V(t)^* \\
		&\qquad 
		-\frac{i}{\hb} \W_V(t) \int_0^t \U_V(\ta)^* \Bk{\pl_{x_j} V(\ta), \ga^\hb(\ta)} \U_V(\ta) dt_1 \W_V(t)^*
	\end{aligned}
\end{equation}
and
\begin{align}\label{eq:nabla double commutator}
\begin{aligned}
	&\Bk{\pl_{x_k},\Bk{\pl_{x_j},\W_V(t)\gah_0 \W_V(t)^*}} 
	= \W_V(t)\Bk{\pl_{x_k}, \Bk{\pl_{x_j}, \gah_0 }} \W_V(t)^* \\
	&\qquad - \frac{i}{\hb} \W_V(t)\int_0^t \U_V(\ta)^* \Bk{\pl_{x_j} V(\ta), \Bk{\pl_{x_k}, \gah(\ta)}} \U_V(\ta)d\ta\W_V(t)^* \\
	&\qquad - \frac{i}{\hb} \W_V(t)\int_0^t \U_V(\ta)^* \Bk{\pl_{x_k} V(\ta), \Bk{\pl_{x_j}, \gah(\ta)}} \U_V(\ta)d\ta\W_V(t)^*\\
	&\qquad - \frac{i}{\hb} \W_V(t)\int_0^t \U_V(\ta)^* \Bk{\pl_{x_k,x_j}^2 V(\ta), \gah(\ta)} \U_V(\ta)d\ta\W_V(t)^*.
\end{aligned}
\end{align}
for all $j,k=1,\dots, 3$.
\end{lemma}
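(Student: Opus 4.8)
The plan is to derive both identities from the single-commutator formula \eqref{eq:commutator x W}--\eqref{eq:commutator nabla W} for the wave operator (Lemma \ref{lem:commutator wave}), together with the Leibniz-type rule for commutators, $[\pl_{x_j}, ABC] = [\pl_{x_j},A]BC + A[\pl_{x_j},B]C + AB[\pl_{x_j},C]$. First I would expand $[\pl_{x_j}, \W_V(t)\gah_0\W_V(t)^*]$ using this product rule into three terms and substitute \eqref{eq:commutator nabla W} (and its adjoint version for $\W_V(t)^*$) into the first and third. The key algebraic observation is that the ``integral'' pieces coming from $[\pl_{x_j},\W_V(t)]$ and from $[\pl_{x_j},\W_V(t)^*]$ should recombine: writing $\gah(t) = \U_V(t)\gah_0\U_V(t)^*$, one has $\W_V(t)\gah_0\W_V(t)^* = \U(-t)\gah(t)\U(t)$, and the three terms reassemble into $\W_V(t)[\pl_{x_j},\gah_0]\W_V(t)^*$ plus a single integral term. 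The crucial simplification is that
\begin{align}
	&-\frac{i}{\hb}\W_V(t)\int_0^t \U_V(\ta)^*\Big( (\pl_{x_j}V)(\ta)\,\gah(\ta) - \gah(\ta)\,(\pl_{x_j}V)(\ta)\Big)\U_V(\ta)\,d\ta\,\W_V(t)^* \nonumber
\end{align}
is exactly $-\frac{i}{\hb}\W_V(t)\int_0^t \U_V(\ta)^*[\pl_{x_j}V(\ta),\gah(\ta)]\U_V(\ta)\,d\ta\,\W_V(t)^*$, giving \eqref{eq:nabla single commutator}. The sign bookkeeping between the $\W_V(t)$ on the left and the $\W_V(t)^*$ on the right (note that $[\pl_{x_j},\W_V(t)^*] = -\big([\pl_{x_j},\W_V(t)]\big)^*$ because $\pl_{x_j}$ is anti-self-adjoint) is where one must be careful, but it is routine.

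For the double commutator \eqref{eq:nabla double commutator}, I would apply $[\pl_{x_k},\cdot]$ to both sides of the already-established \eqref{eq:nabla single commutator}. The first term $\W_V(t)[\pl_{x_j},\gah_0]\W_V(t)^*$ is handled exactly as in the single-commutator step, producing $\W_V(t)[\pl_{x_k},[\pl_{x_j},\gah_0]]\W_V(t)^*$ plus an integral term with $[\pl_{x_k}V(\ta),[\pl_{x_j},\gah(\ta)]]$ inside. For the integral term in \eqref{eq:nabla single commutator}, I would differentiate under the integral sign and distribute $[\pl_{x_k},\cdot]$ across the product $\W_V(t)\,\U_V(\ta)^*[\pl_{x_j}V(\ta),\gah(\ta)]\U_V(\ta)\,\W_V(t)^*$. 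Differentiating the operator $\U_V(\ta)^* B \U_V(\ta)$ in $x_k$: using $[\pl_{x_k},\U_V(\ta)^* B\U_V(\ta)] = \U_V(\ta)^*[\pl_{x_k}+\text{(correction)}, B]\U_V(\ta)$ — more precisely one uses \eqref{eq:commutator f W} again, which for $f(\xi)=\xi_k$ gives $[\pl_{x_k},\W_V(\ta)] = -\frac{i}{\hb}\W_V(\ta)\int_0^\ta\U_V(\si)^*(\pl_{x_k}V)(\si)\U_V(\si)d\si$ — one finds the inner bracket contributes $[\pl_{x_k},[\pl_{x_j}V(\ta),\gah(\ta)]]$, and by the Jacobi identity this is $[\pl_{x_j}\pl_{x_k}V(\ta),\gah(\ta)] + [\pl_{x_j}V(\ta),[\pl_{x_k},\gah(\ta)]] + [[\pl_{x_k},\pl_{x_j}V(\ta)]\text{-type leftover}]$. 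Here $[\pl_{x_k},\pl_{x_j}V(\ta)] = \pl_{x_k}\pl_{x_j}V(\ta)$ as a multiplication operator (the second derivative), which is the source of the last line in \eqref{eq:nabla double commutator}.

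I expect the main obstacle to be precisely this last point: carefully tracking how the commutator $[\pl_{x_k},\cdot]$ acts when it hits the conjugating wave operators $\W_V(t),\W_V(t)^*$ and the inner propagators $\U_V(\ta),\U_V(\ta)^*$, versus when it hits the multiplication operators $\pl_{x_j}V(\ta)$ and the density operator $\gah(\ta)$. The terms where $[\pl_{x_k},\cdot]$ hits a wave operator would, at first glance, produce additional $\frac{1}{\hb}\int_0^\ta\cdots$ nested-integral contributions; the whole point of using the identity \eqref{eq:f}/the conjugated form $\W_V\gah_0\W_V^* = \U(-t)\gah(t)\U(t)$ is that these extra nested terms must cancel or reorganize so that only the four clean terms in \eqref{eq:nabla double commutator} survive. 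I would verify this cancellation by working directly with $\gah(t)$ and differentiating $\U(-t)[\pl_{x_k},[\pl_{x_j},\gah(t)]]\U(t)$, using that $\pl_t\gah(t) = -\frac{i}{\hb}[-\frac{\hb^2}{2}\De + V(t),\gah(t)]$ from the equation for $\U_V$, then integrating in $t$ from $0$; the free part $-\frac{\hb^2}{2}\De$ commutes with $\pl_{x_j},\pl_{x_k}$ and with the conjugation by $\U(t)$, so only the $V$-commutators remain, giving exactly the three integral terms. This route cleanly avoids the blow-up of nested commutators that the paper warns against, so I would likely present the proof in that form rather than by brute-force expansion of $[\pl_{x_k},[\pl_{x_j},\W_V(t)\gah_0\W_V(t)^*]]$.
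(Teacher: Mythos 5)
Your first route for \eqref{eq:nabla single commutator} (Leibniz expansion of $[\pl_{x_j},\W_V(t)\gah_0\W_V(t)^*]$ combined with Lemma \ref{lem:commutator wave}) does work, and is essentially the ``elementary algebraic'' proof the paper says it omits: the two integral pieces recombine because $\U_V(\ta)^*[\pl_{x_j}V(\ta),\gah(\ta)]\U_V(\ta)=[\U_V(\ta)^*\,\pl_{x_j}V(\ta)\,\U_V(\ta),\gah_0]$, so the single commutator $[\,\cdot\,,\gah_0]$ appears. But your sign claim is backwards: since $\pl_{x_j}$ is anti-self-adjoint, $[\pl_{x_j},\W_V(t)^*]=+\big([\pl_{x_j},\W_V(t)]\big)^*=\frac{i}{\hb}\big(\int_0^t\U_V(\ta)^*\pl_{x_j}V(\ta)\U_V(\ta)d\ta\big)\W_V(t)^*$. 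With the minus sign you wrote, the two integral pieces would assemble into an anticommutator with $\gah_0$ rather than a commutator, and the identity would not come out; with the correct sign the computation closes.

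The genuine gap is in your final ``clean route'' for \eqref{eq:nabla double commutator}. Set $G(t):=[\pl_{x_k},[\pl_{x_j},\gah(t)]]$. The Jacobi identity gives $i\hb\pl_t G=[-\tfrac{\hb^2}{2}\De+V(t),G]+[\pl_{x_k}V,[\pl_{x_j},\gah]]+[\pl_{x_j}V,[\pl_{x_k},\gah]]+[\pl^2_{x_k,x_j}V,\gah]$, so conjugating by the \emph{free} flow and differentiating $\U(t)^*G(t)\U(t)$ removes only the $[-\tfrac{\hb^2}{2}\De,G]$ part: the term $\U(t)^*[V(t),G(t)]\U(t)$, which contains the unknown $G$ itself, survives. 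Integrating from $0$ therefore produces an implicit integral equation, not the closed formula \eqref{eq:nabla double commutator}; moreover nothing in that computation can generate the outer $\W_V(t)\cdots\W_V(t)^*$ and inner $\U_V(\ta)^*\cdots\U_V(\ta)$ conjugations appearing in the statement. The missing step is exactly the paper's: regard the identity for $B(t)=[\pl_{x_k},[\pl_{x_j},\W_V(t)\gah_0\W_V(t)^*]]$ as an inhomogeneous linear ODE whose homogeneous flow is conjugation by $\W_V$ (equivalently, conjugate $G(t)$ by the \emph{interacting} propagator, i.e.\ consider $\U_V(t)^*G(t)\U_V(t)$), and then apply Duhamel/variation of constants; this is what turns the $[V,G]$ term into the $\W_V(t)\cdots\W_V(t)^*$ wrapper and the $\U_V(\ta)^*\cdots\U_V(\ta)$ inside the integral. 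The paper proves both identities this way (Jacobi identity for $A(t)$ and $B(t)$, then Duhamel), so your middle paragraph is on the right track, but the shortcut you say you would actually present would fail as written.
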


\begin{lemma}\label{lem:commutator x W}
	Let $V \in C(\R_{\ge 0}; W^{2,\I}_x)$ and $\ga(t):= \U_V(t)\gah_0\U_V(t)^*$. Then, we have
\begin{equation}\label{eq:weight single commutator}
	\begin{aligned}
		&\Dk{\frac{x_j}{i\hb},\W_V(t)\gah_0 \W_V(t)^*}
		= \W_V(t) \Dk{\frac{x_j}{i\hb},\ga_0^\hb}\W_V(t)^* \\
		&\qquad 
		-\frac{i}{\hb} \W_V(t) \int_0^t \U_V(\ta)^* \Bk{\ta \pl_{x_j} V(\ta), \ga^\hb(\ta)} \U_V(\ta) dt_1 \W_V(t)^*
	\end{aligned}
\end{equation}
and
\begin{align}\label{eq:weight double commutator}
	\begin{aligned}
		&\Dk{\frac{x_k}{i\hb},\Dk{\frac{x_j}{i\hb},\W_V(t)\gah_0 \W_V(t)^*}} 
		= \W_V(t)\Dk{\frac{x_k}{i\hb}, \Dk{\frac{x_j}{i\hb}, \gah_0 }} \W_V(t)^* \\
		&\qquad + \frac{1}{\hb^2} \W_V(t)\int_0^t \U_V(\ta)^* \Dk{\ta \pl_{x_j} V(\ta), \Dk{\J_k(\ta), \gah(\ta)}} \U_V(\ta)d\ta\W_V(t)^* \\
		&\qquad + \frac{1}{\hb^2} \W_V(t)\int_0^t \U_V(\ta)^* \Dk{\ta \pl_{x_k} V(\ta), \Dk{\J_j(\ta), \gah(\ta)}} \U_V(\ta)d\ta\W_V(t)^*\\
		&\qquad - \frac{i}{\hb} \W_V(t)\int_0^t \U_V(\ta)^* \Bk{\ta^2 \pl_{x_k,x_j}^2 V(\ta), \gah(\ta)} \U_V(\ta)d\ta\W_V(t)^*.
	\end{aligned}
\end{align}
\end{lemma}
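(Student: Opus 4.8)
The plan is to reduce everything to the base commutator formula \eqref{eq:commutator x W}, the relation \eqref{eq:J} between $x$ and $\J(t)$, and the intertwining identity $\U_V(\ta)\gah_0=\ga(\ta)\U_V(\ta)$ (which is immediate from $\ga(\ta)=\U_V(\ta)\gah_0\U_V(\ta)^*$ and unitarity of $\U_V(\ta)$). Throughout I write $B(t):=\W_V(t)\gah_0\W_V(t)^*=\U(-t)\ga(t)\U(t)$ and $H_V(t):=-\tfrac{\hb^2}{2}\De+V(t,x)$.

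For the single commutator \eqref{eq:weight single commutator} I would expand $[x_j,B(t)]$ by the Leibniz rule and insert \eqref{eq:commutator x W}. Setting $P_j(t):=\int_0^t\U_V(\ta)^*\ta(\pl_{x_j}V)(\ta)\U_V(\ta)\,d\ta$, which is self-adjoint with $[x_j,\W_V(t)]=\W_V(t)P_j(t)$ and hence $[x_j,\W_V(t)^*]=-P_j(t)\W_V(t)^*$, the two ``outer'' Leibniz terms collapse, giving $[x_j,B(t)]=\W_V(t)\big([x_j,\gah_0]+[P_j(t),\gah_0]\big)\W_V(t)^*$. The intertwining identity applied to each factor inside $P_j(t)$ turns $[P_j(t),\gah_0]$ into $\int_0^t\U_V(\ta)^*[\ta(\pl_{x_j}V)(\ta),\ga(\ta)]\U_V(\ta)\,d\ta$, and dividing by $i\hb$ yields \eqref{eq:weight single commutator}.

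For the double commutator \eqref{eq:weight double commutator} the most transparent route I see is to pass to $\ga(t)$. Using $\U(t)x_j=\J_j(t)\U(t)$ and $x_j\U(-t)=\U(-t)\J_j(t)$, one has $[x_j,B(t)]=\U(-t)[\J_j(t),\ga(t)]\U(t)$ and $[x_k,[x_j,B(t)]]=\U(-t)[\J_k(t),[\J_j(t),\ga(t)]]\U(t)$. I would then differentiate in $t$: since $\pl_t\ga(t)=\tfrac1{i\hb}[H_V(t),\ga(t)]$, $\pl_t\J_j(t)=i\hb\pl_{x_j}$, and $[\J_j(t),H_V(t)]=\hb^2\pl_{x_j}+it\hb(\pl_{x_j}V)(t)$ (using $[x_j,\De]=-2\pl_{x_j}$ and $[\pl_{x_j},V]=\pl_{x_j}V$), the product rule together with the Jacobi identity gives
\[
	\pl_t[\J_j(t),\ga(t)]=t[(\pl_{x_j}V)(t),\ga(t)]+\frac1{i\hb}\big[H_V(t),[\J_j(t),\ga(t)]\big],
\]
where the ``naive'' term $i\hb[\pl_{x_j},\ga(t)]$ cancels exactly against $\tfrac1{i\hb}\hb^2[\pl_{x_j},\ga(t)]$. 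Running the same computation once more for $[\J_k(t),[\J_j(t),\ga(t)]]$, and using in addition $[\J_k(t),(\pl_{x_j}V)(t)]=it\hb(\pl_{x_k,x_j}^2V)(t)$, produces an inhomogeneous von Neumann equation $\pl_tA=\tfrac1{i\hb}[H_V(t),A]+S(t)$ whose source $S$ is a linear combination of $t^2[(\pl_{x_k,x_j}^2V)(t),\ga(t)]$, $t[(\pl_{x_j}V)(t),[\J_k(t),\ga(t)]]$ and $t[(\pl_{x_k}V)(t),[\J_j(t),\ga(t)]]$. Solving both equations by Duhamel, $A(t)=\U_V(t)A(0)\U_V(t)^*+\int_0^t\U_V(t,\ta)S(\ta)\U_V(t,\ta)^*\,d\ta$, with initial data $[x_j,\gah_0]$ resp.\ $[x_k,[x_j,\gah_0]]$, and then conjugating back by $\U(-t)(\,\cdot\,)\U(t)$ --- using $\U(-t)\U_V(t,\ta)=\W_V(t)\U_V(\ta)^*$ and $\U_V(t,\ta)^*\U(t)=\U_V(\ta)\W_V(t)^*$ --- recasts everything in the $\W_V(t)\,\U_V(\ta)^*[\cdots]\U_V(\ta)\,\W_V(t)^*$ form, and after dividing by $i\hb$ resp.\ $(i\hb)^2$ and simplifying the scalar prefactors one reads off \eqref{eq:weight single commutator} and \eqref{eq:weight double commutator}. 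Equivalently, one can iterate \eqref{eq:weight single commutator} directly: conjugating the inner commutator through $\U_V(\ta)$ replaces $x_j$ by $\U_V(\ta)x_j\U_V(\ta)^*=\J_j(\ta)-\U_V(\ta)P_j(\ta)\U_V(\ta)^*$, the factor $[x_k,\U_V(\ta)]$ supplies the $\ta^2\pl_{x_k,x_j}^2V$ term, and the resulting remainders are iterated time integrals over $\{0\le s\le\ta\le t\}$ which cancel after Fubini.

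The content of the argument is elementary, so the main obstacle is the bookkeeping: one must carry out the nested commutator and Jacobi expansions without error and, crucially, keep track of every factor of $i\hb$, since this is precisely what produces the $\tfrac1\hb$ and $\tfrac1{\hb^2}$ weights in the statement; in the iteration variant the analogous difficulty is verifying that the iterated-integral remainders genuinely telescope away. The hypothesis $V\in C(\R_{\ge0};W^{2,\I}_x)$, together with mild regularity and decay of $\gah_0$ (which holds for initial data in $\CX_\hb^\si$, controlling $[x_j,\gah_0]$ and $[x_k,[x_j,\gah_0]]$ in the relevant norms), makes all these manipulations legitimate on a suitable dense domain.
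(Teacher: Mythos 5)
Your plan is sound and, for the double commutator, is essentially the paper's own argument: the paper differentiates the commutator in time using $i\hb\pl_t\big(\W_V(t)\gah_0\W_V(t)^*\big)=\big[\U(t)^*V(t)\U(t),\W_V(t)\gah_0\W_V(t)^*\big]$, applies the Jacobi identity, and closes with Duhamel; your version performs the same computation after conjugating by $\U(t)$, so the generator becomes $H_V(t)$ and the weights become $\J_j(t)$, and the cancellation you isolate between $i\hb[\pl_{x_j},\gah(t)]$ and $\tfrac{1}{i\hb}[\hb^2\pl_{x_j},\gah(t)]$ is exactly the mechanism that leaves the $it\hb\,\pl_{x_j}V$ source. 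Your derivation of \eqref{eq:weight single commutator} via \eqref{eq:commutator x W}, self-adjointness of $P_j(t)$ and the intertwining $\gah_0=\U_V(\ta)^*\gah(\ta)\U_V(\ta)$ is a slightly different and arguably cleaner route than the paper's ODE argument, and it is correct.

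One concrete caveat, and it concerns precisely the prefactor bookkeeping you name as the main risk: if you carry your computation through, the two middle terms of the double-commutator identity come out with the prefactor $\tfrac{1}{(i\hb)^2}=-\tfrac{1}{\hb^2}$ (the sources $\ta\big[\pl_{x_j}V(\ta),[\J_k(\ta),\gah(\ta)]\big]$ and its $j\leftrightarrow k$ counterpart enter the inhomogeneous von Neumann equation for $[\J_k(t),[\J_j(t),\gah(t)]]$ with coefficient $+1$, and you then divide by $(i\hb)^2$), i.e.\ you obtain
\begin{equation}
	-\frac{1}{\hb^2}\,\W_V(t)\int_0^t \U_V(\ta)^*\Dk{\ta\,\pl_{x_j}V(\ta),\Dk{\J_k(\ta),\gah(\ta)}}\U_V(\ta)\,d\ta\,\W_V(t)^*,
\end{equation}
and similarly with $j$ and $k$ exchanged, rather than the $+\tfrac{1}{\hb^2}$ printed in \eqref{eq:weight double commutator}. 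The same sign is produced by the paper's own interaction-picture route, and a short consistency check (expand both sides to order $t^2$ for time-independent $V$, using $[x_j,[H_0,V]]=\pl_{x_j}V$ with $H_0=-\tfrac{\hb^2}{2}\De$) confirms that the minus sign is the correct one; so this is a sign slip in the printed statement (harmless for the paper, which only uses Schatten norms of these terms) rather than an error in your scheme. As written, however, your plan asserts that the computation ``reads off'' \eqref{eq:weight double commutator} verbatim, which it will not: carry out the scalar bookkeeping explicitly and either state the identity with $-\tfrac{1}{\hb^2}$ or flag the discrepancy with the printed formula.
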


\begin{lemma}\label{lem:commutator mixed}
	Let $V \in C(\R_{\ge 0}; W^{2,\I}_x)$ and $\ga(t):= \U_V(t)\gah_0\U_V(t)^*$. Then, we have
	\begin{align}\label{eq:mixed double commutator}
		\begin{aligned}
			&\Dk{\pl_{x_k},\Dk{\frac{x_j}{i\hb},\W_V(t)\gah_0 \W_V(t)^*}} 
			= \W_V(t)\Dk{\pl_{x_k}, \Dk{\frac{x_j}{i\hb}, \gah_0 }} \W_V(t)^* \\
			&\qquad + \frac{i}{\hb} \W_V(t)\int_0^t \U_V(\ta)^* \Dk{\ta \pl_{x_j} V(\ta), \Dk{\pl_k, \gah(\ta)}} \U_V(\ta)d\ta\W_V(t)^* \\
			&\qquad - \frac{1}{\hb^2} \W_V(t)\int_0^t \U_V(\ta)^* \Dk{\pl_{x_k} V(\ta), \Dk{\J_j(\ta), \gah(\ta)}} \U_V(\ta)d\ta\W_V(t)^*\\
			&\qquad + \frac{i}{\hb} \W_V(t)\int_0^t \U_V(\ta)^* \Bk{\ta \pl_{x_k,x_j}^2 V(\ta), \gah(\ta)} \U_V(\ta)d\ta\W_V(t)^*.
		\end{aligned}
	\end{align}
\end{lemma}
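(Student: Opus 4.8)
The plan is to run the same scheme as in Lemmas~\ref{lem:commutator nabla W} and~\ref{lem:commutator x W}: conjugate away the free propagator so that the assertion becomes a statement about $\gah(t)=\U_V(t)\gah_0\U_V(t)^*$ itself, derive an inhomogeneous Heisenberg equation for the relevant nested commutator, and integrate it by Duhamel's formula. First I would use $\W_V(t)=\U(-t)\U_V(t)$, so that $\W_V(t)\gah_0\W_V(t)^*=\U(-t)\gah(t)\U(t)$; since $\pl_{x_k}$ commutes with $\U(\pm t)=e^{\pm it\hb\De/2}$ while $\U(t)x_j\U(-t)=\J_j(t)=x_j+it\hb\pl_{x_j}$ by~\eqref{eq:J}, this gives
\begin{equation}\label{eq:plan red}
	\Dk{\pl_{x_k},\Dk{\frac{x_j}{i\hb},\,\W_V(t)\gah_0\W_V(t)^*}}=\frac{1}{i\hb}\,\U(-t)\Dk{\pl_{x_k},\Dk{\J_j(t),\gah(t)}}\U(t).
\end{equation}
Hence it suffices to identify $\Dk{\pl_{x_k},\Dk{\J_j(t),\gah(t)}}$ and then conjugate back, folding $\U(-t)\U_V(t)=\W_V(t)$ and $\U_V(t)^*\U(t)=\W_V(t)^*$.

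Next I would compute the equation of motion. Put $H(t):=-\frac{\hb^2}{2}\De+V(t)$, so that $i\hb\,\pl_t\gah(t)=\Dk{H(t),\gah(t)}$. Using $\pl_t\J_j(t)=i\hb\pl_{x_j}$, the elementary identities $\Dk{\J_j(t),H(t)}=\hb^2\pl_{x_j}+it\hb\,\pl_{x_j}V(t)$ and $\Dk{\pl_{x_k},H(t)}=\pl_{x_k}V(t)$, and the Jacobi identity, one checks (the two terms $\pm i\hb\Dk{\pl_{x_j},\gah(t)}$ cancelling) that
\begin{equation}
	i\hb\,\pl_t\Dk{\J_j(t),\gah(t)}=\Dk{H(t),\Dk{\J_j(t),\gah(t)}}+i\hb t\,\Dk{\pl_{x_j}V(t),\gah(t)},
\end{equation}
and then, differentiating once more in $\pl_{x_k}$ and applying the Jacobi identity twice, that $B(t):=\Dk{\pl_{x_k},\Dk{\J_j(t),\gah(t)}}$ solves
\begin{equation}\label{eq:plan eom}
	\begin{aligned}
		i\hb\,\pl_t B(t)&=\Dk{H(t),B(t)}+\Dk{\pl_{x_k}V(t),\Dk{\J_j(t),\gah(t)}}\\
		&\quad+i\hb t\,\Dk{\pl_{x_k,x_j}^2 V(t),\gah(t)}+i\hb t\,\Dk{\pl_{x_j}V(t),\Dk{\pl_{x_k},\gah(t)}}.
	\end{aligned}
\end{equation}
All the commutators here are bounded operators thanks to the hypothesis $V\in C(\R_{\ge 0};W^{2,\I}_x)$, which legitimizes the differentiations and the operator-valued ODE.

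Finally I would integrate~\eqref{eq:plan eom}. Viewing it as $i\hb\,\pl_t B=\Dk{H(t),B}+F(t)$ with $B(0)=\Dk{\pl_{x_k},\Dk{x_j,\gah_0}}$ and $F(t)$ the sum of its last three terms, Duhamel's formula gives
\begin{equation}
	B(t)=\U_V(t)B(0)\U_V(t)^*+\frac{1}{i\hb}\int_0^t\U_V(t)\U_V(\ta)^*F(\ta)\U_V(\ta)\U_V(t)^*\,d\ta.
\end{equation}
Substituting into~\eqref{eq:plan red}, folding the free propagators into $\W_V(t)$ and $\W_V(t)^*$, and collecting the scalar prefactors $1/(i\hb)$ (with $1/(i\hb)^2=-1/\hb^2$) produces the boundary term $\W_V(t)\Dk{\pl_{x_k},\Dk{\frac{x_j}{i\hb},\gah_0}}\W_V(t)^*$ together with the three integral terms of~\eqref{eq:mixed double commutator}.

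I expect the main obstacle to be the bookkeeping in the second step: expanding several nested commutators with the Jacobi identity, verifying the cancellations that bring the first-order equation into the stated form, and keeping the two source terms carrying the factor $i\hb t$ separate---one arising from $\Dk{\pl_{x_k}V(t),\cdot}$ applied to $\Dk{\J_j(t),\gah(t)}$, the other from differentiating the inhomogeneity of the first-order equation---so that after Duhamel the time weights and the operators $\Dk{\J_j(\ta),\gah(\ta)}$, $\Dk{\pl_{x_k},\gah(\ta)}$, $\gah(\ta)$ land exactly in their stated positions. An equivalent but heavier route is to apply $\Dk{\pl_{x_k},\cdot}$ directly to~\eqref{eq:weight single commutator}, using the wave-operator commutator formulas~\eqref{eq:commutator nabla W} and~\eqref{eq:commutator x W} of Lemma~\ref{lem:commutator wave} together with the identity $\U_V(\ta)^*\J_j(\ta)\U_V(\ta)=x_j+\int_0^\ta\U_V(s)^*\,s\,\pl_{x_j}V(s)\,\U_V(s)\,ds$; the Heisenberg--Duhamel route above keeps the algebra lighter.
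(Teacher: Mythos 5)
Your plan is essentially the paper's own argument. The paper proves Lemmas \ref{lem:commutator nabla W}--\ref{lem:commutator mixed} by writing the interaction-picture equation $i\hb\pl_t\big(\W_V(t)\gah_0\W_V(t)^*\big)=\Dk{\U(t)^*V(t)\U(t),\W_V(t)\gah_0\W_V(t)^*}$, applying the Jacobi identity to the nested commutator, and integrating by Duhamel; your version, obtained after conjugating by $\U(t)$ so as to work with $\gah(t)$, $H(t)$ and $\J_j(t)$, is the same computation in the Schr\"odinger picture, and your intermediate identities ($[\J_j(t),H(t)]=\hb^2\pl_{x_j}+it\hb\,\pl_{x_j}V$, the equation for $[\J_j(t),\gah(t)]$, and the equation for $B(t)=[\pl_{x_k},[\J_j(t),\gah(t)]]$) are correct.

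Two remarks. First, carried to the end, your (correct) equation of motion yields the integral terms with coefficients $-\tfrac{i}{\hb}$, $-\tfrac{1}{\hb^2}$, $-\tfrac{i}{\hb}$: the reduction and Duhamel each contribute a factor $1/(i\hb)$, so the two sources carrying $i\hb\ta$ come out as $-\tfrac{i}{\hb}\Dk{\ta\pl_{x_j}V,\Dk{\pl_{x_k},\gah}}$ and $-\tfrac{i}{\hb}\Dk{\ta\pl^2_{x_k,x_j}V,\gah}$, i.e.\ with sign opposite to the display \eqref{eq:mixed double commutator}. A first-order-in-$V$ check via the double Leibniz rule $[\pl_{x_k},[x_j,[W,\gah_0]]]=[[\pl_{x_k},[x_j,W]],\gah_0]+[[x_j,W],[\pl_{x_k},\gah_0]]+[[\pl_{x_k},W],[x_j,\gah_0]]+[W,[\pl_{x_k},[x_j,\gah_0]]]$ confirms your signs, so the displayed $+\tfrac{i}{\hb}$'s (and likewise the $+\tfrac{1}{\hb^2}$'s in \eqref{eq:weight double commutator}) are most plausibly typos, harmless downstream since the lemma is only used through norms; but your closing claim that the terms land exactly in their stated positions silently passes over this discrepancy, and you should flag it rather than assert agreement. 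Second, two small slips: the cancelling terms in your equation for $[\J_j(t),\gah(t)]$ are $\pm\hb^2[\pl_{x_j},\gah(t)]$, not $\pm i\hb[\pl_{x_j},\gah(t)]$; and the assertion that the hypothesis $V\in C(\R_{\ge0};W^{2,\I}_x)$ makes all commutators bounded is too strong ($H(t)$, $x_j$, $\pl_{x_k}$, $\J_j(t)$ are unbounded), whereas the paper openly labels this computation formal and at least has the bounded generator $\U(t)^*V(t)\U(t)$ in its ODE --- the one small advantage of its formulation over yours.
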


\begin{proof}[Proof of Lemmas \ref{lem:commutator nabla W}, \ref{lem:commutator x W}, and \ref{lem:commutator mixed}]
	We can give a rigorous proof of Lemmas \ref{lem:commutator nabla W}, \ref{lem:commutator x W}, and \ref{lem:commutator mixed} by an elementary but complicated algebraic calculation with Lemma \ref{lem:commutator wave}.
	However, we show formal but more intuitive calculations here. We only prove Lemma \ref{lem:commutator nabla W} because the same proof works for Lemma \ref{lem:commutator x W}.
	
	First, note that
	\begin{align}
		\begin{dcases}
			&i\hb\pl_t \W_V(t) = \U(t)^*V(t)\U(t)\W_V(t),  \\
			&\W_V(0) = \Id_{L^2_x}.
		\end{dcases}
	\end{align}
	Hence we have
	\begin{align}
		\begin{dcases}
		&i\hb\pl_t\W_V(t)\gah_0\W_V(t)^* = \Bk{\U(t)^*V(t)\U(t), \W_V(t)\gah_0\W_V(t)^*}, \\
		&\W_V(0)\gah_0\W_V(0) = \gah_0.
		\end{dcases}
	\end{align}
	Let $A(t) := \Bk{\pl_{x_j}, \W_V(t)\gah_0\W_V(t)^*}$ and $\gah(t):= \U_V(t)\gah_0\U_V(t)^*$.
	Applying the Jacobi identity for the commutator, we have
	\begin{align}
		i\hb\pl_t A(t) 
		&= \Bk{\pl_{x_j}, \Bk{\U(t)^* V(t) \U(t), \W_V(t)\gah_0\W_V(t)^*} } \\
		&=  \Bk{\U(t)^* V(t) \U(t), \Bk{\pl_{x_j}, \W_V(t)\gah_0\W_V(t)^*} } \\
		&\quad +  \Bk{\Bk{\pl_{x_j}, \U(t)^* V(t) \U(t)}, \W_V(t)\gah_0\W_V(t)^*}\\
		&=\Bk{\U(t)^* V(t) \U(t), A(t)}+\U(t)^* \Bk{\pl_{x_j}V(t), \gah(t)} \U(t).
	\end{align}
	Therefore, by the Duhamel formula, we obtain \eqref{eq:nabla single commutator}.
	
	Next, we prove \eqref{eq:nabla double commutator}. 
	Let $B(t):= \Bk{\pl_{x_k}, \Bk{\pl_{x_j}, \W_V(t)\gah_0\W_V(t)^*}}=\Bk{\pl_{x_k}, A(t)}$.
	Then, by the Jacobi identity for the commutator, we have
	\begin{align}
		i\hb\pl_t B(t) &= \Bk{\pl_{x_k}, i\hb\pl_t A(t)]} \\
		&= \Bk{\pl_{x_k}, \Bk{\U(t)^* V(t) \U(t), A(t)}}+ \Bk{\pl_{x_k}, \U(t)^* \Bk{\pl_{x_j}V(t), \gah(t)} \U(t)} \\
		&= \Bk{\U(t)^* V(t) \U(t), \Bk{\pl_{x_k}, A(t)}}+\Bk{\Bk{\pl_{x_k}, \U(t)^* V(t) \U(t)}, A(t)}\\
		&\qquad + \U(t)^* \Bk{\pl_{x_k}, \Bk{\pl_{x_j}V(t), \gah(t)} }\U(t) \\
		&= \Bk{\U(t)^* V(t) \U(t), B(t)}+\U(t)^*\Bk{\pl_{x_k} V(t), \Bk{\pl_{x_j}, \gah(t)}} \U(t)\\
		&\qquad + \U(t)^* \Bk{\pl_{x_j}V(t), \Bk{\pl_{x_k}, \gah(t)} }\U(t)
		+ \U(t)^* \Bk{\pl^2_{x_j,x_k}V(t), \gah(t)}\U(t).
	\end{align}
	Therefore, the Duhamel formula implies \eqref{eq:nabla double commutator}.	
\end{proof}

\subsubsection{A variant of the Duhamel formula}
By the Duhamel principle, we have
\begin{align}
	A(t,s) &:= \U_V(t,s) A \U_V(s,t)\\
	 &= \U(t-s) A \U(s-t) -\frac{i}{\hb} \int_s^t \U(t-\ta)\Bk{V(\ta), A(\ta,s)}U(\ta-t) d\ta.
\end{align}
However, we have another useful expression:
\begin{equation}\label{lem:Duhamel}
	\begin{aligned}
		A(t,s) &= \U(t-s)A\U(s-t) \\
		&\qquad  -\frac{i}{\hb} \int_s^t \U_V(t,\ta) \Bk{V(\ta), \U(\ta-s) A \U(s-\ta)} \U_V(\ta,t) d\ta.
	\end{aligned}
\end{equation}

\begin{proof}[Proof of \eqref{lem:Duhamel}]
	Note that we have the following two representations
	\begin{align*}
		&\U_V(t,s) = \CU^\hb(t-s) - \frac{i}{\hb} \int_s^t \U(t-\ta) V(\ta) \U_V(\ta,s) d\ta, \\
		&\U_V(t,s) = \CU^\hb(t-s) - \frac{i}{\hb} \int_s^t \CU_V^\hb(t,\ta) V(\ta) \CU^\hb(\ta-s) d\ta.
	\end{align*}
	Let $\wt{A}(t) := \U(t)A \U(t)^*$.
	Then, we obtain
	\begin{align*}
		\CU_V^\hb(t,s)A\U_V(s,t)
		&= \wt{A}(t-s)
		-\frac{i}{\hb} \int_s^t \CU_V^\hb(t,\ta)V(\ta)\wt{A}(\ta-s) \CU^\hb(\ta-t) d\ta \\
		&\quad - \frac{i}{\hb}  \int_t^s \CU^\hb(t-\ta')\wt{A}(\ta'-s) V(\ta')\CU_V^\hb(\ta',t)d\ta' \\
		&\quad - \frac{1}{\hb^2} \int_s^t d\ta \int_t^s d\ta' \CU_V^\hb(t,\ta)V(\ta)\wt{A}(\ta-s) \CU^\hb(\ta-\ta') V(\ta') \CU_V^\hb(\ta',t) \\
		&= \wt{A}(t-s)
		-\frac{i}{\hb} \int_s^t \CU_V^\hb(t,\ta)V(\ta)\wt{A}(\ta-s) \CU^\hb(\ta-t) d\ta \\
		&\quad - \frac{i}{\hb}  \int_t^s \CU^\hb(t-\ta')\wt{A}(\ta'-s) V(\ta')\CU_V^\hb(\ta',t)d\ta' \\
		&\quad - \frac{1}{\hb^2} \int_s^t d\ta \int_t^\ta d\ta' \CU_V^\hb(t,\ta)V(\ta)\wt{A}(\ta-s) \CU^\hb(\ta-\ta') V(\ta') \CU_V^\hb(\ta',t) \\
		&\quad - \frac{1}{\hb^2} \int_t^s d\ta' \int_{\ta'}^t d\ta \CU_V^\hb(t,\ta)V(\ta)\CU^\hb(\ta-\ta') \wt{A}(\ta'-s) V(\ta') \CU_V^\hb(\ta',t) \\
		&= \wt{A}(t-s) - \frac{i}{\hb} \int_s^t \CU_V^\hb(t,\ta)\Bk{V(\ta), \wt{A}(\ta-s)}\CU_V^\hb(\ta,t) d\ta.
	\end{align*}
\end{proof}	

\subsection{Basic estimates}\label{subsec:useful estimates}

\subsubsection{Elementary inequalities of Schatten class operators}
The following inequality is well-known in functional analysis:
\begin{align}
	\|AB\|_{\CB} \le \|A\|_{\CB} \|B\|_{\CB},
\end{align}
but this is the special case of the H\"{o}lder inequality for the Schatten norm:
\begin{align}\label{eq:Holder}
	\|AB\|_{\FS_\hb^r} \le \|A\|_{\FS^{r_1}_\hb} \|B\|_{\FS^{r_2}_\hb},
\end{align}
where $r,r_1,r_2\in[1,\I]$ satisfy $1/r=1/r_1+1/r_2$. See, for example, \cite{Simon book} and \cite[Appendix D]{Hytonen et al book}.
Moreover, the following Kato--Seiler--Simons inequality (\cite{Seiler Simon 1975} and \cite[Theorem 4.1]{Simon book}) is useful:
For all $r \in [2,\I]$, it holds that
\begin{align}\label{eq:Kato Seiler Simon}
	\|f(x)g(-i\hb\na)\|_{\FS^r_\hb} \ls \|f\|_{L^r_x} \|g\|_{L^r_x}.
\end{align}
Finally, we mention the triangle inequality just in case:
\begin{align}\label{eq:triangle inequality}
	(2\pi\hb)^3|\Tr(A)| \le \|A\|_{\FS^1_\hb}.
\end{align}

\subsubsection{Commutator estimates}
We often use the commutator estimates to recover the loss of $\hbar$ by the Duhamel formula.
The following lemma is one of the most basic tools in this paper:
\begin{lemma}\label{lem:commutator basic}
	The commutator estimate
	\begin{align}\label{eq:commutator basic}
		\No{\Bk{f(x), A}}_{\FS^r_\hb} \le \No{\na f}_{\CF L^1} \No{\Bk{x, A}}_{\FS^r_\hb}
	\end{align}
	holds for any $r\in[1,\I]$.
	Moreover, we can improve the estimate when $r=2$, that is,
	\begin{align}\label{eq:commutator basic 2}
		\No{\Bk{f(x), A}}_{\FS^2_\hb} \le \big\|\na f\big\|_{L^\I_x} \No{\Bk{x, A}}_{\FS^2_\hb}.
	\end{align}
\end{lemma}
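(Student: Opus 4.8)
The plan is to reduce the estimate to the special case of a plane wave $f(x)=e^{ix\cdot\xi}$ and then recover a general $f$ by Fourier synthesis. The heart of the matter is the operator identity
\[
	\Bk{e^{ix\cdot\xi},A}=i\sum_{j=1}^3\xi_j\int_0^1 e^{isx\cdot\xi}\Bk{x_j,A}\,e^{i(1-s)x\cdot\xi}\,ds ,
\]
which I would derive by writing $e^{ix\cdot\xi}Ae^{-ix\cdot\xi}-A=\int_0^1\frac{d}{ds}\K{e^{isx\cdot\xi}Ae^{-isx\cdot\xi}}\,ds$, observing that the integrand equals $i\,e^{isx\cdot\xi}\Bk{x\cdot\xi,A}e^{-isx\cdot\xi}=i\sum_j\xi_j\,e^{isx\cdot\xi}\Bk{x_j,A}e^{-isx\cdot\xi}$ because the multiplication operator $x\cdot\xi$ commutes with $e^{\pm isx\cdot\xi}$, and finally multiplying on the right by $e^{ix\cdot\xi}$. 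The point of this ``dressed'' representation is that each $e^{isx\cdot\xi}$ is multiplication by a unimodular function, hence unitary on $L^2(\R^3)$, and left/right multiplication by unitaries leaves every Schatten norm invariant; therefore $\No{e^{isx\cdot\xi}\Bk{x_j,A}e^{i(1-s)x\cdot\xi}}_{\FS^r_\hb}=\No{\Bk{x_j,A}}_{\FS^r_\hb}$ for every $s$, and integrating together with $|\xi_j|\le|\xi|$ gives $\No{\Bk{e^{ix\cdot\xi},A}}_{\FS^r_\hb}\le|\xi|\,\No{\Bk{x,A}}_{\FS^r_\hb}$.

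With this in hand, for Schwartz $f$ I would write $f(x)=(2\pi)^{-3/2}\int_{\R^3}\wh f(\xi)e^{ix\cdot\xi}\,d\xi$, so that $\Bk{f(x),A}=(2\pi)^{-3/2}\int\wh f(\xi)\Bk{e^{ix\cdot\xi},A}\,d\xi$, and integrate the plane-wave bound against $|\wh f(\xi)|$; since $|\xi|\,|\wh f(\xi)|=|\wh{\na f}(\xi)|$ and $(2\pi)^{-3/2}\le 1$, this yields $\No{\Bk{f(x),A}}_{\FS^r_\hb}\le\No{\na f}_{\CF L^1}\No{\Bk{x,A}}_{\FS^r_\hb}$, and the general case (whenever the right-hand side is finite) follows by a standard approximation argument. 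This gives \eqref{eq:commutator basic} for all $r\in[1,\I]$.

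For the Hilbert--Schmidt improvement \eqref{eq:commutator basic 2} I would instead argue directly with integral kernels via \eqref{eq:Hilbert Schmidt}: the kernel of $\Bk{f(x),A}$ is $(f(x)-f(x'))A(x,x')$ and that of $\Bk{x_j,A}$ is $(x_j-x_j')A(x,x')$, so the elementary bound $|f(x)-f(x')|\le\No{\na f}_{L^\I_x}|x-x'|$ (fundamental theorem of calculus along the segment from $x'$ to $x$), combined with $|x-x'|^2=\sum_j(x_j-x_j')^2$ and $(\sum_j a_j^2)^{1/2}\le\sum_j a_j$ for $a_j\ge 0$, gives $\No{\Bk{f(x),A}}_{\FS^2_\hb}\le\No{\na f}_{L^\I_x}\No{\Bk{x,A}}_{\FS^2_\hb}$ after integrating in $(x,x')$.

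There is no deep obstacle here --- this is a workhorse lemma --- but two points deserve care. The first is getting the dressed-commutator identity exactly right: it is precisely the flanking unitaries $e^{isx\cdot\xi}$ and $e^{i(1-s)x\cdot\xi}$ that make the Schatten norms collapse, so one must resist estimating $\No{\Bk{e^{ix\cdot\xi},A}}_{\FS^r_\hb}$ by $\No{e^{ix\cdot\xi}A}_{\FS^r_\hb}+\No{Ae^{ix\cdot\xi}}_{\FS^r_\hb}$, which destroys the structure entirely. The second is the functional-analytic bookkeeping --- checking that the operator-valued integrals converge in $\FS^r_\hb$, that the triangle inequality may be pulled under the integral sign, and that the a priori estimate for nice $f$ extends to all $f$ with $\na f\in\CF L^1$ (respectively $\na f\in L^\I_x$) by approximation; I expect this to be the only mildly tedious part.
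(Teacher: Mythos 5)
Your proof is correct and is essentially the paper's own argument in a slightly reshuffled order: the paper applies the fundamental theorem of calculus to the kernel $(f(x)-f(x'))A(x,x')$ and then Fourier-expands $\na f$, which produces exactly your dressed operators $e^{i(1-\th)x\cdot\xi}\Bk{x,A}e^{i\th x\cdot\xi}$ flanked by unimodular multipliers, while you reduce to plane waves first and superpose afterwards --- the same two ingredients (FTC plus unitary invariance of $\FS^r_\hb$ under multiplication by unimodular functions) in either case. Your kernel argument for the $r=2$ improvement coincides with the paper's verbatim.
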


\begin{remark}\label{rmk:weight}
	Note that $\|\na \bx^s\|_{\CF L^1} <\I$ if $s < 1$.
	Hence, we have
	\begin{align}\label{eq:commutator bx s}
		\No{\Bk{\bx^s, A}}_{\FS^r_\hb} \ls_s \No{\Bk{x, A}}_{\FS^r_\hb}.
	\end{align}
	for any $s<1$ and $r\in[1,\I]$.
\end{remark}

\begin{proof}
	By the fundamental theorem of calculus, we have
	\begin{align}
		\Bk{f, A}(x,x') &= \K{f(x)-f(x')} A(x,x') = \int_0^1 (\na f)((1-\th)x + \th x') d\th (x-x') A(x,x') \\
		&= \int_0^1 \int_{\R^3} \wh{\na f}(\xi)e^{i((1-\th)x + \th x')\xi}  \Bk{x,A}(x,x')d\xi d\th.
	\end{align}
	Therefore, we obtain
	\begin{align}
		\No{\Bk{f(x),A}}_{\FS^r_\hb} &\le \int_{\R^3} \abs{\wh{\na f}(\xi)} \No{e^{i(1-\th)x\xi}\Bk{x, A}(x,x') e^{i\th x'\xi}}_{\FS^r_\hb}  d\xi  \\
		&\le \big\|\wh{\na f}\big\|_{L^1_\xi} \No{\Bk{x,A}}_{\FS^r_\hb}.
	\end{align}
	Moreover, when $p=2$, we have
	\begin{align}
		\No{\Bk{f(x),A}}_{\FS^2_\hb}
		&\le (2\pi\hb)^{3/2} \No{\int_0^1 (\na f)((1-\th)x + \th x') d\th (x-x') A(x,x')}_{L^2_{x,x'}}  \\
		&\le \No{\na f}_{L^\I_x} (2\pi\hb)^{3/2} \No{(x-x') A(x,x')}_{L^2_{x,x'}}
		=  \No{\na f}_{L^\I_x}  \No{\Bk{x,A}}_{\FS^2_\hb}.
	\end{align}
\end{proof}

\subsubsection{``Sobolev embedding'' for the density function}
The pointwise-in-$t$ boundedness of density functions comes from the following lemma:
\begin{lemma}\label{lem:Sobolev}
	Let $d=3$ and $\si>3/2$. 
	Then, for any $r\in[1,\I]$, we have
	\begin{align}\label{eq:Sobolev}
	 	&\|\rhh(A)\|_{L^r_x} \ls \|\sd^{\si/r'}A\sd^{\si/r'}\|_{\FS^r_\hb}
	\end{align}
	for all $A \in \FS^r_\hb$.
\end{lemma}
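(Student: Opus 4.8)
The plan is to deduce the estimate for general $r$ from the two extreme cases $r=1$ and $r=\I$ by complex interpolation. Throughout I would introduce $B:=\sd^{\si/r'}A\sd^{\si/r'}$, so that $A=\sd^{-\si/r'}B\sd^{-\si/r'}$ (here $r'$ is the H\"older conjugate of $r$) and the claim becomes
\[
\No{\rhh\K{\sd^{-\si/r'}B\,\sd^{-\si/r'}}}_{L^r_x}\ls_\si \No{B}_{\FS^r_\hb}.
\]

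The endpoint $r=1$ needs no work: then $\si/r'=0$, so $A=B$ and the bound is exactly the trivial estimate \eqref{eq:density L1 bound}. The endpoint $r=\I$ is where the hypothesis $\si>3/2$ enters. Here $B\in\CB$ and $A=\sd^{-\si}B\,\sd^{-\si}$; since $\si>3/2$ the kernel of $A$ is continuous, so $\rhh(A)(x)=(2\pi\hb)^3A(x,x)$ is unambiguous, and if $k_\si:=\CF^{-1}[\bra{\hb\xi}^{-\si}]$ denotes the (real, even) convolution kernel of $\sd^{-\si}$, then $A(x,x)=\bra{k_\si(\cdot-x),\,B\,k_\si(\cdot-x)}$, whence $|A(x,x)|\le\|B\|_\CB\|k_\si\|_{L^2}^2$. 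By Plancherel and the rescaling $\eta=\hb\xi$ one has $\|k_\si\|_{L^2}^2\ls_\si \hb^{-3}\int_{\R^3}\bra{\eta}^{-2\si}\,d\eta$, which is finite precisely because $2\si>3$. Multiplying by the $(2\pi\hb)^3$ in the definition of $\rhh$ cancels the $\hb^{-3}$, giving $\|\rhh(A)\|_{L^\I_x}\ls_\si\|B\|_\CB=\|B\|_{\FS^\I_\hb}$. (If one prefers not to speak of the kernel of a merely bounded operator, run this first for $A\in\FS^1$ via the singular value decomposition and then pass to the limit.)

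For $1<r<\I$ I would apply Stein's interpolation theorem to the analytic family
\[
T_zB:=\rhh\!\K{\sd^{-z\si}B\,\sd^{-z\si}},\qquad 0\le\Re z\le 1,
\]
which is analytic in $z$ because $\sd^{-z\si}$ is the Fourier multiplier $e^{-z\si\log\bra{\hb\xi}}$. On $\Re z=0$ the multiplier $\sd^{-it\si}$ is unitary on $L^2$, so $\|\sd^{-it\si}B\,\sd^{-it\si}\|_{\FS^1_\hb}=\|B\|_{\FS^1_\hb}$ and the $r=1$ case gives $\|T_{it}B\|_{L^1_x}\le\|B\|_{\FS^1_\hb}$, uniformly in $t$. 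On $\Re z=1$, writing $\sd^{-(1+it)\si}=\sd^{-\si}\sd^{-it\si}$ and applying the $r=\I$ case to the unitary conjugate $\sd^{-it\si}B\,\sd^{-it\si}$ gives $\|T_{1+it}B\|_{L^\I_x}\ls_\si\|B\|_{\FS^\I_\hb}$, again uniformly in $t$. Both line-bounds being constants, interpolation yields $\|T_\th B\|_{L^r_x}\ls_\si\|B\|_{\FS^r_\hb}$ with $\th=1-1/r=1/r'$; since $T_\th B=\rhh(\sd^{-\si/r'}B\,\sd^{-\si/r'})=\rhh(A)$, this is the asserted inequality, $(\FS^1_\hb,\FS^\I_\hb)$ and $(L^1_x,L^\I_x)$ being used as complex interpolation couples in the usual way.

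The main obstacle I anticipate is twofold. First, the $\hb$-bookkeeping: everything hinges on the $\hb^{-3}$ produced by $\|k_\si\|_{L^2}^2$ being exactly absorbed by the $(2\pi\hb)^3$ in $\rhh$ and the $(2\pi\hb)^{3/r}$ in $\|\cdot\|_{\FS^r_\hb}$, so that the implicit constants are genuinely uniform in $\hb\in(0,1]$; this must be tracked at every step, including through the interpolation. Second, the rigorous setup of Stein's theorem with endpoint $\FS^\I_\hb=\CB$, which is not separable; the clean fix is to establish the line-bounds and analyticity for finite-rank $B$ — pairing $T_zB$ against $f\in C^\I_c(\R^3)$ via Lemma \ref{lem:duality} and applying the three-lines lemma to the bounded analytic function $z\mapsto\int f\,\rhh(\sd^{-z\si}B\,\sd^{-z\si})\,dx$ — and then extend by density. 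If one wishes to avoid this altogether, the intermediate case $r=2$ can be checked by hand: Plancherel reduces $\|\rhh(A)\|_{L^2_x}$ to an $L^2_\xi$-estimate for (a constant times) $\hb^3\int\bra{\hb\eta}^{-\si/2}\bra{\hb(\eta-\xi)}^{-\si/2}\wh{B}(\eta,\eta-\xi)\,d\eta$, which closes after a Cauchy--Schwarz in $\eta$ using $\bra{\cdot}^{-\si}\in L^2$ (i.e.\ $\si>3/2$), and one then interpolates separately on $[1,2]$ and $[2,\I]$.
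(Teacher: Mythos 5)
Your argument is correct, but it takes a genuinely different route from the paper. The paper first reduces \eqref{eq:Sobolev} to $\hb=1$ by scaling (Remark \ref{rmk:sobolev scaling}) and then proves the bound for \emph{every} $r$ in one stroke by duality: writing $f=f_0f_1$ with $f_0=|f|^{1/2}$, pairing via Lemma \ref{lem:duality}, using H\"older in Schatten classes, and invoking the Kato--Seiler--Simon inequality \eqref{eq:Kato Seiler Simon} with exponent $2r'$ — this is exactly where $\si>3/2$ enters, through $\lg\xi\rg^{-\si/r'}\in L^{2r'}_\xi$. No interpolation is used, so the non-separability of $\CB$ and the analytic-family technicalities you worry about never arise. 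You instead prove only the two endpoints — $r=1$ from the trivial bound \eqref{eq:density L1 bound}, and $r=\I$ by a Bessel-kernel/diagonal computation (which is essentially the $\FS^2$ case of Kato--Seiler--Simon in disguise, with the same $\hb^{-3}$ versus $(2\pi\hb)^3$ cancellation the paper gets automatically from scaling) — and then run Stein interpolation on the analytic family $\sd^{-z\si}B\sd^{-z\si}$, handling the $\FS^\I_\hb$ endpoint by finite-rank reduction, duality pairing and the three-lines lemma. Your unitarity observations on $\Re z=0,1$ and the identification $\th=1/r'$ are correct, and the constants you track are indeed uniform in $\hb$. What each approach buys: the paper's proof is shorter and self-contained given that KSS is already in its toolbox, and it sidesteps all interpolation bookkeeping; yours avoids KSS at general exponents and makes the endpoint mechanism transparent, at the cost of the Stein-interpolation machinery (which, as you note, must be set up carefully because the weights $\sd^{-\si/r'}$ depend on $r$, so plain Riesz--Thorin on a fixed operator would not suffice).
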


\begin{remark}\label{rmk:sobolev scaling}
	Note that \eqref{eq:Sobolev} can be reduced to the case when $\hb=1$ by scaling.
	Indeed, \eqref{eq:Sobolev} is equivalent to 
	\begin{align}
		&\No{\rhh\K{\CF^{-1} \wh{A} \CF}}_{L^r_x} \ls \No{\lg\hb\xi\rg^{\si/r'} \wh{A} \lg\hb\xi \rg^{\si/r'}}_{\FS^r_\hb},
	\end{align}
	where $\wh{A}:= \CF A \CF^{-1}$.
	On the one hand, since
	\begin{align}
		\rhh\K{\CF^{-1} \wh{A} \CF}(x)
		= \frac{(2\pi\hb)^3}{(2\pi)^3} \iint_{\R^3\times \R^3} e^{ix(\xi-\xi')} \wh{A}(\xi,\xi') d\xi d\xi',
	\end{align}
	we obtain
	\begin{align}
		\No{\rhh\K{\CF^{-1} \wh{A} \CF}}_{L^r_x} 
		&=  \frac{1}{\hb^3}\No{\iint_{\R^3\times \R^3} e^{ix(\xi-\xi')/\hb} \wh{A}\K{\frac{\xi}{\hb},\frac{\xi'}{\hb}} d\xi d\xi'}_{L^r_x}\\
		&=  \hb^{3/r-3}\No{\iint_{\R^3\times \R^3} e^{ix(\xi-\xi')} \wh{A}\K{\frac{\xi}{\hb},\frac{\xi'}{\hb}} d\xi d\xi'}_{L^r_x}.
	\end{align}
	On the other hand, we have
	\begin{equation}
		\No{\lg\hb\xi\rg^{\si/r'} \wh{A} \lg\hb\xi \rg^{\si/r'}}_{\FS^r_\hb}
		= (2\pi\hb)^{3/r}\hb^{-3} \No{\lg\xi\rg^{\si/r'} \lg\xi' \rg^{\si/r'}\wh{A}\K{\frac{\xi}{\hb},\frac{\xi'}{\hb}} }_{\FS^r}.
	\end{equation}
	Hence, we obtain
	\begin{align}
		&\text{\eqref{eq:Sobolev} for all } A \in \FS^r_\hb.  \\
		&\Longleftrightarrow \No{\iint_{\R^3\times \R^3} e^{ix(\xi-\xi')} \wh{A}(\xi,\xi') d\xi d\xi'}_{L^r_x}
		\ls \No{\lg\xi\rg^{\si/r'} \lg\xi'\rg^{\si/r'} \wh{A}(\xi,\xi') }_{\FS^r} \text{ for all } A \in \FS^r.\\
		&\Longleftrightarrow \No{\rh^1(A)}_{L^r_x} \ls (2\pi)^{3/r}\No{\lg\na\rg^\si A \lg\na\rg^\si }_{\FS^r} \text{ for all } A \in \FS^r.
	\end{align}
\end{remark}

\begin{proof}
	By Remark \ref{rmk:sobolev scaling}, we can assume $\hb=1$.
	Let $f \in C_c^\I(\R^3)$, and define $f_0,f_1$ by $f_0:= |f|^{1/2}$ and $f=f_0f_1$.
	Then, by Lemma \ref{lem:duality}, \eqref{eq:triangle inequality}, cyclicity of the trace, and \eqref{eq:Holder}, we have
	\begin{align}
	&\abs{\int_{\R^3} f(x) \rh^1(A)(x)dx} 	= (2\pi)^3 \abs{\Tr\Bk{\lg\na\rg^{-\si/r'}f_0(x) f_1(x) \lg\na\rg^{-\si/r'} \lg\na\rg^{\si/r'} A \lg\na\rg^{\si/r'} }}\\
		&\qquad \qquad \le (2\pi)^3\No{\lg\na\rg^{-\si/r'} f_0(x)}_{\FS^{2r'}} \No{f_1(x)\lg\na\rg^{-\si/r'} }_{\FS^{2r'}}
		\No{\lg\na\rg^{\si/r'} A \lg\na\rg^{\si/r'}}_{\FS^r}.
	\end{align}
	By Kato--Seiler--Simon inequality \eqref{eq:Kato Seiler Simon}, we have
	\begin{align}
		&\No{\lg\na\rg^{-\si/r'} f_0(x)}_{\FS^{2r'}}\ls \No{\lg\xi\rg^{-\si/r'}}_{L^{2r'}_\xi} \|f_0\|_{L^{2r'}_x}\ls \|f_0\|_{L^{2r'}_x}, \\
		&\No{f_1(x)\lg\na\rg^{-\si/r'}}_{\FS^{2r'}}\ls \No{\lg\xi\rg^{-\si/r'}}_{L^{2r'}_\xi} \|f_1\|_{L^{2r'}_x}\ls\|f_1\|_{L^{2r'}_x}.
	\end{align}
	Therefore, we obtain
	\begin{align}
	\abs{\int_{\R^3} f(x) \rh^1(A)(x)dx}
	&\ls \|f\|_{L^{r'}_x}(2\pi)^3\No{\lg\na\rg^{\si/r'} A \lg\na\rg^{\si/r'}}_{\FS^r}  \\
	&= \|f\|_{L^{r'}_x}(2\pi)^3\No{\lg\na\rg^{\si/r'} A \lg\na\rg^{\si/r'}}_{\FS^r}.
	\end{align}
	By the duality argument, we obtain \eqref{eq:Sobolev} with $\hb=1$.
\end{proof}

\section{Dispersive estimate for the modified propagators}\label{sec:L1LI}
In this section, we aim to prove an $L^1$--$L^\I$ dispersive estimate for the modified propagator $\U(t)e^{-i\Ps(t,-i\hb\na_x)}$ (Proposition \ref{prop:perturbed dispersive}). As a corollary, we obtain the uniform-in-$\hbar$ dispersive estimates for the density function (Corollary \ref{cor:free}).

\subsection{Free case}
First, we review the uniform estimate for the free propagator:
\begin{lemma}[Proposition 3.1 in \cite{Hadama Hong 2025}]\label{lem:free}
	Let $d=3$ and $\si>3/2$. Then, we have
	\begin{align}
		&\No{\rhh(\U(t)\gah_0\U(t)^*)}_{L^r_x}
		  \ls \No{\sd^{\si/r'} \gah_0 \sd^{\si/r'}}_{\FS^r_\hb}, \label{eq:free bounded}\\
		&\No{\rhh(\U(t)\gah_0\U(t)^*)}_{L^r_x}
		  \ls \frac{1}{|t|^{3/r'}} \No{\bx^{\si/r'} \gah_0 \bx^{\si/r'}}_{\FS^r_\hb} \label{eq:free decay}
	\end{align}
	for any $r\in[1,\I]$, where the implicit constants are independent of $\hb$.
\end{lemma}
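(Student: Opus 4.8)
The plan is to prove the two estimates by rather different mechanisms: \eqref{eq:free bounded} is essentially algebraic, while \eqref{eq:free decay} encodes the $L^1$--$L^\infty$ dispersion of $\U(t)$ and I would obtain it by duality together with a complex-interpolation argument.

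For \eqref{eq:free bounded}, note first that $\sd^{\si/r'}=\langle\hb\nabla\rangle^{\si/r'}$ and $\U(t)=e^{it\hb\De/2}$ are both Fourier multipliers, hence commute; therefore
\[
	\sd^{\si/r'}\,\U(t)\gah_0\U(t)^*\,\sd^{\si/r'} \;=\; \U(t)\big(\sd^{\si/r'}\gah_0\sd^{\si/r'}\big)\U(t)^*,
\]
and unitary invariance of the Schatten norms gives $\big\|\sd^{\si/r'}\U(t)\gah_0\U(t)^*\sd^{\si/r'}\big\|_{\FS^r_\hb}=\big\|\sd^{\si/r'}\gah_0\sd^{\si/r'}\big\|_{\FS^r_\hb}$. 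Applying Lemma \ref{lem:Sobolev} to $A=\U(t)\gah_0\U(t)^*$ then closes this part, and no $\hb$-loss can occur.

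For \eqref{eq:free decay} I would first dualize. For $V\in C_c^\infty(\R^3)$, Lemma \ref{lem:duality}, cyclicity of the trace, the factorization $\gah_0=\bx^{-\si/r'}\big(\bx^{\si/r'}\gah_0\bx^{\si/r'}\big)\bx^{-\si/r'}$, H\"older for Schatten norms \eqref{eq:Holder}, and \eqref{eq:triangle inequality} yield
\[
	(2\pi\hb)^3\big|\Tr\big(V\,\U(t)\gah_0\U(t)^*\big)\big| \;\le\; \big\|\bx^{-\si/r'}\U(t)^*V\U(t)\bx^{-\si/r'}\big\|_{\FS^{r'}_\hb}\,\big\|\bx^{\si/r'}\gah_0\bx^{\si/r'}\big\|_{\FS^r_\hb},
\]
so by duality it suffices to prove the operator estimate $\big\|\bx^{-\si/r'}\U(t)^*V\U(t)\bx^{-\si/r'}\big\|_{\FS^{r'}_\hb}\ls |t|^{-3/r'}\|V\|_{L^{r'}_x}$. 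This I would establish by Stein interpolation of the analytic family $S_z(V):=\bx^{-\si z}\U(t)^*V\U(t)\bx^{-\si z}$ on the strip $\{0\le\Re z\le1\}$. On $\Re z=0$ the multipliers $\bx^{-i\si\Im z}$ and $\U(t)$ are unitary, so $\|S_{i\th}(V)\|_{\FS^\infty_\hb}=\|S_{i\th}(V)\|_{\CB}\le\|V\|_{L^\infty}$. On $\Re z=1$, writing $S_{1+i\th}(V)$ as a product of the two Hilbert--Schmidt operators $\bx^{-\si}\bx^{-i\si\th}\U(t)^*|V|^{1/2}$ and $|V|^{1/2}\sgn(V)\,\U(t)\bx^{-i\si\th}\bx^{-\si}$ and using the pointwise kernel bound $|\U(t)(x,y)|=|2\pi t\hb|^{-3/2}$, one gets $\big\|\bx^{-\si}\bx^{-i\si\th}\U(t)^*|V|^{1/2}\big\|_{\FS^2}^2=|2\pi t\hb|^{-3}\|\bx^{-2\si}\|_{L^1_x}\|V\|_{L^1_x}$, where $\bx^{-2\si}\in L^1(\R^3)$ exactly because $d=3$ and $\si>3/2$; hence $\|S_{1+i\th}(V)\|_{\FS^1_\hb}\ls(2\pi\hb)^3|2\pi t\hb|^{-3}\|V\|_{L^1}=|t|^{-3}\|V\|_{L^1}$. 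Interpolating at $z=1/r'$, where $[\FS^\infty_\hb,\FS^1_\hb]_{1/r'}=\FS^{r'}_\hb$ and $[L^\infty,L^1]_{1/r'}=L^{r'}$, gives the claimed operator bound, and then the duality computation above gives \eqref{eq:free decay}.

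The step I expect to need the most care is the $\hb$-bookkeeping, which is exactly the point of the statement. One has to carry the scaled objects $\rhh(\cdot)=(2\pi\hb)^3(\cdot)(x,x)$ and $\|\cdot\|_{\FS^r_\hb}=(2\pi\hb)^{3/r}\|\cdot\|_{\FS^r}$ throughout and land on \emph{exactly} conjugate Schatten exponents $r,r'$ in the H\"older step, so that the $(2\pi\hb)^3$ normalization of $\rhh$ cancels the $\hb^{-3}$ produced by the dispersive factor $|2\pi t\hb|^{-3/2}$ of $\U(t)$; this is precisely what forces the moving-weight interpolation rather than a coarser splitting. A lesser, purely technical point is checking the analyticity hypothesis of Stein's theorem: although the right-hand conjugation by $\bx^{-\si z}$ looks anti-holomorphic, in the pairing $\Tr(\ga\,S_z(V))$ against a finite-rank $\ga$ the conjugation carried by each rank-one summand recombines the two factors into a genuinely holomorphic function of $z$, so the hypotheses hold.
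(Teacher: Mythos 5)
Your proof is correct. The boundedness part is exactly the paper's own reduction: since $\sd^{\si/r'}$ commutes with $\U(t)$, \eqref{eq:free bounded} is just Lemma \ref{lem:Sobolev} applied after unitary invariance, which is what Remark 3.2 asserts. For the decay part, the paper itself gives no proof (it quotes Proposition 3.1 of \cite{Hadama Hong 2025} and notes that scaling reduces matters to $\hb=1$), but its neighbouring arguments (Lemma \ref{lem:Sobolev}, Lemma \ref{lem:free Fourier}, Corollary \ref{cor:free}) run on the same engine you use: dualize via Lemma \ref{lem:duality} and cyclicity, factor the test function/weights so that each half pairs with a Hilbert--Schmidt piece controlled by the dispersive kernel bound $|\U(t)(x,y)|=(2\pi|t|\hb)^{-3/2}$, and interpolate with the trivial $r=1$ bound $\|\rhh(A)\|_{L^1}\le\|A\|_{\FS^1_\hb}$. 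Where you differ is in how the interpolation is organized: the paper (in Corollary \ref{cor:free}) invokes complex interpolation of the density estimates between $r=1$ and $r=\I$ rather informally, and otherwise hides the $\hb$-bookkeeping in a scaling reduction, whereas you run a Stein analytic-family argument with the moving weight $\bx^{-\si z}$ directly at the level of the dual operator estimate $\|\bx^{-\si/r'}\U(t)^*V\U(t)\bx^{-\si/r'}\|_{\FS^{r'}_\hb}\ls|t|^{-3/r'}\|V\|_{L^{r'}}$, keeping the exact cancellation of $(2\pi\hb)^{3}$ against $|t\hb|^{-3}$ visible at every step. Your version is therefore self-contained and makes the ``interpolation with changing weights'' step rigorous, at the modest cost of having to verify the standard hypotheses of Stein's theorem (holomorphy of $z\mapsto\Tr(\ga\,S_z(V_z))$ for finite-rank $\ga$ and simple $V$, admissible growth on the strip), which you correctly identify and which do hold here since $\bx^{-\si z}=e^{-\si z\log\bx}$ is a contraction-valued holomorphic family for $\Re z\ge 0$.
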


\begin{remark}
	The estimate \eqref{eq:free bounded} is equivalent to Lemma \ref{lem:Sobolev}.
	Hence, \eqref{eq:free bounded} is equivalent to the case when $\hb=1$ (see Remark \ref{rmk:sobolev scaling}).
	More direct scaling implies that \eqref{eq:free decay} is equivalent to the case when $\hb=1$.
\end{remark}

Since Lemma \ref{lem:free} is not enough for our analysis, we will give a generalized version of Lemma \ref{lem:free} as a corollary of Proposition \ref{prop:perturbed dispersive}.
However, before discussing the generalization of Lemma \ref{lem:free}, we give estimates that can bound $\CF L^1$--norm, which is stronger than $L^\I_x$.
To the best of the author's knowledge, this is a new observation even if its proof is simple. 
\begin{lemma}\label{lem:free Fourier}
	Let $d=3$ and $\si \in (3/2,2)$. Then, we have 
	\begin{align}
		&\No{\rhh(\U(t)\gah_0\U(t)^*)}_{\CF L^1}  \ls \frac{1}{\hb^{3/2}}\No{\sd^\si \gah_0\sd^\si}_{\FS^2_\hb}, \label{eq:FL1 bounded} \\
		&\No{\rhh(\U(t)\gah_0\U(t)^*)}_{\CF L^1} \ls \frac{1}{\hb^{3/2}|t|^3} \No{\bx^\si \gah_0 \bx^\si}_{\FS^2_\hb}, \label{eq:FL1 decay}
	\end{align}
	where the implicit constants are independent of $\hb$.
\end{lemma}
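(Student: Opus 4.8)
The plan is to reduce both estimates to a single elementary Hilbert--Schmidt inequality plus scaling. First I would record the core fact: for every $M\in\FS^2$ on $L^2(\R^3)$ and every $\si>3/2$, the function $\rh^1\big(\lg\na\rg^{-\si}M\lg\na\rg^{-\si}\big)$ (the density at $\hb=1$, equivalently $\iint_{\R^3\times\R^3} e^{ix(\xi-\xi')}\lg\xi\rg^{-\si}\wh M(\xi,\xi')\lg\xi'\rg^{-\si}\,d\xi\,d\xi'$ with $\wh M:=\CF M\CF^{-1}$) satisfies
\[
	\No{\rh^1\big(\lg\na\rg^{-\si}M\lg\na\rg^{-\si}\big)}_{\CF L^1}\ls\No{M}_{\FS^2}.
\]
Indeed, taking the Fourier transform in $x$ collapses one frequency integral through a $\de$-function, and then Cauchy--Schwarz in $(\xi,\xi')$ together with $\int_{\R^3}\lg\xi\rg^{-2\si}\,d\xi<\I$ (finite since $2\si>3$) finishes it, using $\No{\wh M}_{L^2_{\xi,\xi'}}=\No{M}_{\FS^2}$. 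All the kernel manipulations below I would first carry out for $\gah_0$ with Schwartz integral kernel and extend by density, since every bound involves only $\FS^2$-type data.

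For \eqref{eq:FL1 bounded}: since $\sd^\si=\lg\hb\na\rg^\si$ is a Fourier multiplier it commutes with $\U(t)=e^{it\hb\De/2}$, so $\U(t)\gah_0\U(t)^*=\sd^{-\si}M\sd^{-\si}$ with $M:=\U(t)\sd^\si\gah_0\sd^\si\U(t)^*$ and $\No{M}_{\FS^2}=\No{\sd^\si\gah_0\sd^\si}_{\FS^2}$. Writing $\rhh(\U(t)\gah_0\U(t)^*)$ through its Fourier kernel and repeating the computation above with the $\lg\hb\xi\rg^{-\si}$-weights — so that the weight integral becomes $\int_{\R^3}\lg\hb\xi\rg^{-2\si}\,d\xi=\hb^{-3}\int_{\R^3}\lg\zeta\rg^{-2\si}\,d\zeta$, while the density normalization $(2\pi\hb)^3/(2\pi)^3$ contributes an $\hb^3$ — the two $\hb$-powers cancel and one obtains $\No{\rhh(\U(t)\gah_0\U(t)^*)}_{\CF L^1}\ls\No{M}_{\FS^2}=\No{\sd^\si\gah_0\sd^\si}_{\FS^2}\ls\hb^{-3/2}\No{\sd^\si\gah_0\sd^\si}_{\FS^2_\hb}$, which is \eqref{eq:FL1 bounded}.

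For \eqref{eq:FL1 decay} the $\bx^\si$-weights do not commute with $\U(t)$, and the gain $|t|^{-3}$ must come precisely from moving them through $\U(t)$. I would write $\gah_0=\bx^{-\si}\wt{\gah_0}\bx^{-\si}$ with $\wt{\gah_0}:=\bx^\si\gah_0\bx^\si$, so by \eqref{eq:f(J)} $\U(t)\gah_0\U(t)^*=\lg\J(t)\rg^{-\si}\,\U(t)\wt{\gah_0}\U(t)^*\,\lg\J(t)\rg^{-\si}$; then insert $\lg\J(t)\rg^{-\si}=\M(t)\lg t\hb\na\rg^{-\si}\M(-t)$, use the MDFM identity \eqref{eq:MDFM} in the form $\M(-t)\U(t)=\D(t)\CF\M(t)$, and use the dilation relation $\lg t\hb\na\rg^{-\si}\D(t)=\D(t)\lg\na\rg^{-\si}$ to reach $\U(t)\gah_0\U(t)^*=\M(t)\D(t)\big(\lg\na\rg^{-\si}\wt B(t)\lg\na\rg^{-\si}\big)\D(t)^*\M(-t)$, with $\wt B(t):=\CF\M(t)\wt{\gah_0}\M(-t)\CF^{-1}$ and hence $\No{\wt B(t)}_{\FS^2}=\No{\wt{\gah_0}}_{\FS^2}=\No{\bx^\si\gah_0\bx^\si}_{\FS^2}$ by unitarity of $\CF$ and $\M(t)$. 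Evaluating on the diagonal, the $\M(\pm t)$-phases cancel and the dilation $\D(t)$ produces $|t\hb|^{-3}$, so together with the $\hb^3$ from the density normalization one gets $\rhh(\U(t)\gah_0\U(t)^*)(x)=|t|^{-3}\,\rh^1\big(\lg\na\rg^{-\si}\wt B(t)\lg\na\rg^{-\si}\big)(x/(t\hb))$. By dilation invariance of the $\CF L^1$-norm and the core estimate with $M=\wt B(t)$, this yields $\No{\rhh(\U(t)\gah_0\U(t)^*)}_{\CF L^1}\ls|t|^{-3}\No{\bx^\si\gah_0\bx^\si}_{\FS^2}\ls|t|^{-3}\hb^{-3/2}\No{\bx^\si\gah_0\bx^\si}_{\FS^2_\hb}$, which is \eqref{eq:FL1 decay}.

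The computations are routine; the points requiring care are keeping track of the $(2\pi\hb)$-powers relating $\FS^2$ and $\FS^2_\hb$ (and the $\hb$- and $t\hb$-rescalings), and making the formal diagonal evaluations rigorous — I would handle the latter by proving the identities for $\gah_0$ with Schwartz kernel and passing to the limit, which is legitimate because the estimates control $\rhh(\U(t)\gah_0\U(t)^*)$ in $\CF L^1$ by $\FS^2$-type norms of $\gah_0$. I do not anticipate a genuine obstacle: the real content is isolating the common Hilbert--Schmidt estimate and, for \eqref{eq:FL1 decay}, using the $\J(t)$/MDFM structure to transfer the $\bx^\si$-weights past $\U(t)$ at the cost of the sharp $|t|^{-3}$ factor.
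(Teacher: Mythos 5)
Your argument is correct, and its core is the same mechanism as the paper's, just organized differently. Your ``core fact'' $\No{\rh^1(\lg\na\rg^{-\si}M\lg\na\rg^{-\si})}_{\CF L^1}\ls \No{M}_{\FS^2}$ (Cauchy--Schwarz on the Fourier-side kernel, using $2\si>3$) is precisely the primal form of what the paper does by duality: the paper tests $\CF\rhh$ against $f\in L^\I$, uses trace cyclicity and Schatten--H\"older, and bounds the Hilbert--Schmidt norm of the weighted test operator ($\lg\na\rg^{-\si}\wh f\,\lg\na\rg^{-\si}$ for the boundedness, $\bx^{-\si}\CU^1(t)^*\wh f\,\CU^1(t)\bx^{-\si}$ via MDFM for the decay). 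The genuine difference is in how the $|t|^{-3}$ is extracted: the paper first reduces to $\hb=1$ by scaling and then moves $\U(t)$ onto the test multiplier, whereas you keep general $\hb$, transfer the $\bx^\si$-weights through $\U(t)$ using $\lg\J(t)\rg^{-\si}=\M(t)\lg t\hb\na\rg^{-\si}\M(-t)$, MDFM, and the dilation commutation $\lg t\hb\na\rg^{-\si}\D(t)=\D(t)\lg\na\rg^{-\si}$, arriving at the exact rescaling identity $\rhh(\U(t)\gah_0\U(t)^*)(x)=|t|^{-3}\rh^1(\lg\na\rg^{-\si}\wt B(t)\lg\na\rg^{-\si})(x/(t\hb))$ with $\No{\wt B(t)}_{\FS^2}=\No{\bx^\si\gah_0\bx^\si}_{\FS^2}$, and then reuse the core fact together with dilation invariance of $\CF L^1$. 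Your bookkeeping of the $\hb$-powers ($(2\pi\hb)^3$ from the density normalization against $\hb^{-3}$ from $\int\lg\hb\xi\rg^{-2\si}d\xi$, and $\FS^2$ versus $\FS^2_\hb$) checks out and reproduces exactly the $\hb^{-3/2}$ loss in the statement. What your route buys is a single reusable lemma and an explicit self-similar formula for the evolved density that makes the sharp $|t|^{-3}$ and $\hb$-uniformity transparent without a separate scaling remark; what the paper's duality route buys is that it avoids manipulating the (a priori only formally defined) diagonal of the evolved operator, so the approximation-by-nice-kernels step you flag is genuinely needed in your version (and is fine, since all bounds are in terms of $\FS^2$-type norms).
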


\begin{remark}\label{rmk:scaling FL1}
	By the same argument as Remark \ref{rmk:sobolev scaling}, we find that \eqref{eq:FL1 bounded} is reduced to the case $\hb=1$.
	More direct scaling implies that \eqref{eq:FL1 decay} is also equivalent to the case when $\hb=1$.
\end{remark}

\begin{proof}
By Remark \ref{rmk:scaling FL1}, we can assume that $\hb=1$.
	
\noindent \underline{\textbf{Step 1: Decay estimate.}}
	By a simple scaling, \eqref{eq:FL1 decay} is equivalent to the case $\hb=1$. 
	Let $f \in C_c^\I(\R^3)$. Then, we have by Lemma \ref{lem:duality}, cyclicity of the trace, \eqref{eq:triangle inequality}, and \eqref{eq:Holder} that 
	\begin{align}
		\abs{\int_{\R^3} f(x) \CF \rh^1(\CU^1(t)\ga_0 \CU^1(t)^*) dx}
		&= (2\pi)^3 \abs{\Tr\Dk{\bx^{-\si} \CU^1(t)^* \wh{f}(\xi)\CU^1(t) \bx^{-\si} \bx^\si \ga_0 \bx^\si}} \\
		&\le (2\pi)^3 \No{\bx^{-\si} \CU^1(t)^* \wh{f}(\xi) \CU^1(t) \bx^{-\si}}_{\FS^2} \|\bx^\si\gah_0 \bx^\si\|_{\FS^2}.
	\end{align}
	By the MDFM decomposition \eqref{eq:MDFM}, we have
	\begin{align}
		\CU^1(t)^* \wh{f}(\xi) \CU^1(t) = \CM^1(-t)\wh{f}(-it\na_x)\CM^1(t).
	\end{align}
	Therefore, we have by \eqref{eq:Hilbert Schmidt}
	\begin{align}
		&(2\pi)^3\No{\bx^{-\si} \CU^1(t)^* \wh{f}(\xi) \CU^1(t) \bx^{-\si}}_{\FS^2}
		= (2\pi)^3\No{\bx^{-\si} \wh{f}(-it\na_x) \bx^{-\si}}_{\FS^2} \\
		&\quad =\frac{(2\pi)^{3/2}}{|t|^3} \No{\bx^{-\si}\lg x'\rg^{-\si} f\K{\frac{x-x'}{t}}}_{L^2_{x,x'}}
		\le \frac{(2\pi)^{3/2}}{|t|^3} \No{\bx^{-\si}}_{L^2_x}^2 \|f\|_{L^\I_x}.
	\end{align}
	From the above, by the duality argument, we obtain
	\begin{align}
		\No{\rh^1(\CU^1(t)\ga_0\CU^1(t)^*)}_{\CF L^1}
		\ls \frac{1}{|t|^3} (2\pi)^{3/2}\No{\bx^\si \ga_0 \bx^\si}_{\FS^2}.
	\end{align}
	
	\noindent \underline{\textbf{Step 2: Boundedness.}}
	Similarly, by Lemma \ref{lem:duality}, cyclicity of the trace, and \eqref{eq:Holder}, we obtain
	\begin{align}
		\abs{\int_{\R^3} f(x) \CF \rh^1(\CU^1(t)\ga_0 \CU^1(t)^*) dx} 
		&= (2\pi)^3 \abs{\Tr\Dk{\lg\na\rg^{-\si} \CU^1(t)^* \wh{f}(\xi)\CU^1(t) \lg\na\rg^{-\si} \lg\na\rg^\si \ga_0 \lg\na\rg^\si}} \\
		&\le (2\pi)^{3/2}\No{\lg\na\rg^{-\si} \wh{f}(\xi) \lg\na\rg^{-\si}}_{\FS^2} (2\pi)^{3/2}\No{\lg\na\rg^\si\ga_0 \lg\na\rg^\si}_{\FS^2}.
	\end{align}
	Note that
	\begin{align}
		&(2\pi)^{3/2}\No{\lg\na\rg^{-\si} \wh{f}(\xi) \lg\na\rg^{-\si}}_{\FS^2}
		= (2\pi)^{3/2} \No{\lg\xi\rg^{-\si} \CF \wh{f}(\xi)\CF^{-1} \lg\xi\rg^{-\si}}_{\FS^2} \\
		&\quad = \No{\lg\xi\rg^{-\si}\lg\xi'\rg^{-\si} f(\xi-\xi') }_{L^2_{\xi,\xi'}}
		\ls \No{\lg\xi\rg^{-\si}}_{L^2_\xi}^2 \|f\|_{L^\I_\xi} \ls \|f\|_{L^\I_\xi}.
	\end{align}
	By the duality argument, we obtain
	\begin{align}
		\No{\rh^1(\CU^1(t)\ga_0\CU^1(t)^*)}_{\CF L^1}
		\ls (2\pi)^{3/2}\No{\lg\na\rg^\si \ga_0\lg\na\rg^\si}_{\FS^2}.
	\end{align}
\end{proof}

\subsection{$L^1$--$L^\I$ estimate}
The most fundamental property of the free propagator $\U(t)$ is
\begin{align}\label{eq:free L1 Linfty}
	\No{\U(t)}_{L^1_x \to L^\I_x} \ls \frac{1}{|t\hb|^{3/2}}.
\end{align}
However, \eqref{eq:free L1 Linfty} is not enough for our analysis. 
Hence, we prove the following generalization.
\begin{proposition}\label{prop:perturbed dispersive}
	Let $d=3$. Assume that $\Ps(t,\xi)\in C^2((0,\I)\times \R^3)$ is real-valued and satisfies
	\begin{equation}\label{eq:assumption for a}
		\|\na_\xi^{2}\Ps(t,\xi)\|_{L^\I_\xi} \le \tw\abs{\frac{t}{\hb}},
		\quad \|\na_\xi^{2}\Ps(t,\xi)\|_{L^{3,1}_\xi} \ls \sqrt{\frac{t}{\hb}}
	\end{equation}
	for all $t \in (0,\I)$ and $\hb \in (0,1]$.
	Then, we have
	\begin{equation}\label{eq:L1Linfty decay}
	\No{\U(t)e^{- i\Ps(t,-i\hb\nabla)}}_{L^1_x \to L^\I_x}
	\ls \frac{1}{|t\hb|^{3/2}}
	\end{equation}
	for any $t\in(0,\I)$ and $\hb \in (0,1]$, where the implicit constant is independent of $\hb$.
\end{proposition}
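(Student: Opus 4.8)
The plan is to reduce \eqref{eq:L1Linfty decay} to a pointwise bound on the convolution kernel of the (translation-invariant) operator $\U(t)e^{-i\Ps(t,-i\hb\na)}$, and then to run a stationary-phase argument built around the uniform convexity of the full phase.

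Since $\U(t)e^{-i\Ps(t,-i\hb\na)}=\CF^{-1}\big(e^{-\frac{i}{2}t\hb|\xi|^2-i\Ps(t,\hb\xi)}\big)\CF$ is a Fourier multiplier, it acts by convolution and $\No{\U(t)e^{-i\Ps(t,-i\hb\na)}}_{L^1_x\to L^\I_x}=\sup_{z}|K(z)|$, where $K(z)=(2\pi)^{-3}\int_{\R^3}e^{iz\cdot\xi-\frac{i}{2}t\hb|\xi|^2-i\Ps(t,\hb\xi)}\,d\xi$ (a well-defined oscillatory integral). Rescaling $\xi=\zeta/\sqrt{t\hb}$ gives $K(z)=(2\pi)^{-3}(t\hb)^{-3/2}\int_{\R^3}e^{iw\cdot\zeta-\frac{i}{2}|\zeta|^2-iG(\zeta)}\,d\zeta$ with $w:=z/\sqrt{t\hb}$ and $G(\zeta):=\Ps(t,\sqrt{\hb/t}\,\zeta)$, so $\na^2 G(\zeta)=(\hb/t)(\na_\xi^2\Ps)(t,\sqrt{\hb/t}\,\zeta)$; since the $L^{3,1}$ quasi-norm on $\R^3$ scales like $L^3$, the hypotheses \eqref{eq:assumption for a} become exactly $\No{\na^2 G}_{L^\I}\le\frac12$ and $\No{\na^2 G}_{L^{3,1}}\ls1$, \emph{uniformly in} $t\in(0,\I)$ and $\hb\in(0,1]$. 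So it is enough to bound $\big|\int_{\R^3}e^{iw\cdot\zeta-\frac{i}{2}|\zeta|^2-iG(\zeta)}\,d\zeta\big|$ by an absolute constant, for all $w$ and all such $G$. Completing the square in $-\frac12|\zeta|^2+w\cdot\zeta$ and translating absorbs $w$ into a unimodular prefactor and replaces $G$ by a function with the same Hessian bounds, reducing us to $\big|\int_{\R^3}e^{-i\Theta}\,d\zeta\big|\ls1$ with $\Theta:=\frac12|\zeta|^2+G$. Here $\na^2\Theta=\Id+\na^2 G$ has spectrum in $[\frac12,\frac32]$, so $\Theta$ is uniformly convex and coercive; translating to its minimizer and subtracting a constant, we may assume $\Theta(0)=0$, $\na\Theta(0)=0$, hence $\Theta(\zeta)\ge\frac14|\zeta|^2$ and $\frac12|\zeta|\le|\na\Theta(\zeta)|\le\frac32|\zeta|$.

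I would then localize at the critical point and integrate by parts twice. Regularize $\int_{\R^3}e^{-i\Theta}\,d\zeta=\lim_{\ep\to0^+}\int_{\R^3}e^{-(i+\ep)\Theta}\,d\zeta$ and split into $\{\Theta<1\}$ and $\{\Theta>1\}$; the first piece is $O(1)$ since $\{\Theta<1\}\subset\{|\zeta|<2\}$. On $\{\Theta>1\}$ one has $|\na\Theta|\gs|\zeta|\gs1$, so $L_\ep:=-\frac{1}{i+\ep}\frac{\na\Theta}{|\na\Theta|^2}\cdot\na$ satisfies $L_\ep e^{-(i+\ep)\Theta}=e^{-(i+\ep)\Theta}$; integrating by parts once (the boundary term at infinity vanishes by the $e^{-\ep\Theta}$ damping, and the one on $\{\Theta=1\}$ is $\ls\int_{\{\Theta=1\}}|\na\Theta|^{-1}dS\ls1$) leaves, up to $O(1)$, the term $\frac{1}{i+\ep}\int_{\{\Theta>1\}}e^{-(i+\ep)\Theta}\na\cdot(|\na\Theta|^{-2}\na\Theta)\,d\zeta$. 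Using $\De\Theta=3+\De G$ one has $\na\cdot(|\na\Theta|^{-2}\na\Theta)=|\na\Theta|^{-2}+E$ with $|E|\ls|\na^2 G|\,|\na\Theta|^{-2}\ls|\na^2 G|\,|\zeta|^{-2}$, and the $E$-contribution is absolutely bounded by $\int_{\R^3}|\na^2 G(\zeta)|\,|\zeta|^{-2}\,d\zeta\ls\No{\na^2 G}_{L^{3,1}}\big\||\zeta|^{-2}\big\|_{L^{3/2,\infty}}\ls1$ by H\"older's inequality for Lorentz spaces — the one place the $L^{3,1}$ hypothesis is used. Finally, for $\frac{1}{i+\ep}\int_{\{\Theta>1\}}|\na\Theta|^{-2}e^{-(i+\ep)\Theta}\,d\zeta$ I integrate by parts with $L_\ep$ a second time: the boundary terms are $O(1)$ (the $\{\Theta=1\}$ one is $\ls\int_{\{\Theta=1\}}|\na\Theta|^{-3}dS\ls1$), and the bulk is $\frac{1}{(i+\ep)^2}\int_{\{\Theta>1\}}e^{-(i+\ep)\Theta}\na\cdot(|\na\Theta|^{-4}\na\Theta)\,d\zeta$; crucially $\na\cdot(|\na\Theta|^{-4}\na\Theta)=-|\na\Theta|^{-4}+O(|\na^2 G|\,|\na\Theta|^{-4})$ still involves only \emph{second} derivatives of $\Theta$, and on $\{\Theta>1\}$ it is of size $\ls(1+|\na^2 G|)|\zeta|^{-4}$, which is \emph{absolutely integrable} because $\int_{|\zeta|\gs1}|\zeta|^{-4}d\zeta<\I$ in $\R^3$ (here $\No{\na^2 G}_{L^\I}$ alone suffices). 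Hence this last term is $O(1)$ uniformly in $\ep$; collecting the pieces and sending $\ep\to0$ yields $\big|\int e^{-i\Theta}d\zeta\big|\ls1$, and therefore $\sup_z|K(z)|\ls(t\hb)^{-3/2}$.

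The main obstacle is precisely the core oscillatory estimate $\big|\int_{\R^3}e^{-i\Theta}d\zeta\big|\ls1$ under only $C^2$-type control of $\Theta$ (equivalently of $\Ps$): $\Theta$ is neither small nor decaying — only its Hessian is controlled — so no perturbative expansion in $G$ is available, and the integral is merely conditionally convergent. The delicate point is that in dimension three a single integration by parts does \emph{not} render the amplitude absolutely integrable (the leftover $\int|\na\Theta|^{-2}e^{-i\Theta}$ is borderline), yet a second, naive integration by parts appears to require the uncontrolled $\na^3\Theta$. Recognizing that integrating by parts against $L_\ep$ once more nevertheless produces only $\na\cdot(|\na\Theta|^{-4}\na\Theta)$, an expression in $\na^2\Theta$ whose size $(1+|\na^2 G|)|\zeta|^{-4}$ is just barely integrable at infinity in three dimensions, together with the Lorentz-space bookkeeping for the $E$-term, is the crux of the argument; it is also exactly where the two quantitative assumptions in \eqref{eq:assumption for a} each get used — the $L^\I$ bound $\le\frac12$ for uniform non-degeneracy of the phase, and the $L^{3,1}$ bound for the borderline integrability of $|\na^2 G|\,|\zeta|^{-2}$.
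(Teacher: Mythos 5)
Your proposal is correct and follows essentially the same approach as the paper: after the same rescaling, you exploit the unique critical point coming from uniform convexity of the full phase, integrate by parts twice with the vector field $\na\Theta/|\na\Theta|^2$ so that only second derivatives of the phase appear, bound the main term by the integrable weight $|\zeta-\zeta_c|^{-4}$ in $\R^3$, and handle the $|\na^2 G|\,|\zeta|^{-2}$ term by Lorentz--H\"older with the $L^{3,1}$ hypothesis, exactly as in the paper's estimates of $C_{R,1,1}$ and $C_{R,2}$. The only differences are bookkeeping ones — you complete the square to eliminate the $x$-dependence and translate to the minimizer of the convex phase (where the paper instead tracks an $x$-dependent critical point via a Hadamard-type global inverse function lemma), and you use Abel regularization with sharp level-set cutoffs and boundary terms where the paper uses smooth cutoffs $\ph_R$, $\chi$ — so no substantive comparison is needed.
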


\begin{remark}
	The author suspects that similar results to Proposition \ref{prop:perturbed dispersive} were already known; however, the author was not able to find such a reference. Thus, we give a self-contained proof here.
\end{remark}

Before proving Proposition \ref{prop:perturbed dispersive}, we give a lemma.
Lemma \ref{lem:bijective} is a corollary of the global inverse function theorem by Hadamard, but we give its proof here for readers' convenience.
\begin{lemma}\label{lem:bijective}
	Let $\ps:\R^3 \to \R$ be a $C^2$--function. If $\na^2\ps \ge 1/2$, then $\na \ps:\R^3 \to \R^3$ is bijective.
\end{lemma}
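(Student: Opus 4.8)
The plan is to deduce bijectivity of $\na\ps$ from its injectivity plus properness via a standard degree-theory or covering-space argument (which is exactly the content of the global inverse function theorem of Hadamard), making the intermediate steps explicit in our convex setting. First I would record that the hypothesis $\na^2\ps\ge 1/2$ (meaning the Hessian, as a symmetric matrix, satisfies $\na^2\ps(x)\ge \tfrac12 I$ for all $x$) implies $\ps$ is strictly convex. In particular, for any $x,y\in\R^3$, the fundamental theorem of calculus along the segment from $y$ to $x$ gives
\begin{equation}
	(\na\ps(x)-\na\ps(y))\cdot(x-y) = \int_0^1 (x-y)^\top \na^2\ps(y+s(x-y))(x-y)\,ds \ge \tfrac12|x-y|^2.
\end{equation}
This monotonicity inequality immediately yields injectivity: if $\na\ps(x)=\na\ps(y)$ then $|x-y|^2\le 0$, so $x=y$.

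Next I would establish surjectivity. Fix an arbitrary $\xi\in\R^3$ and consider the function $g(x):=\ps(x)-\xi\cdot x$. Then $\na g=\na\ps-\xi$ and $\na^2 g=\na^2\ps\ge\tfrac12 I$, so $g$ is strictly convex with the same quadratic lower bound. Applying the displayed monotonicity inequality to $\na g$ with the point $y=0$ gives $(\na g(x)-\na g(0))\cdot x\ge\tfrac12|x|^2$, hence by Cauchy--Schwarz $|\na g(x)|\ge \tfrac12|x|-|\na g(0)|\to\infty$ as $|x|\to\infty$; equivalently, $g$ is coercive: $g(x)\ge g(0)+\na g(0)\cdot x+\tfrac14|x|^2\to+\infty$. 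A coercive continuous function on $\R^3$ attains its minimum at some point $x_\xi$, and there $\na g(x_\xi)=0$, i.e. $\na\ps(x_\xi)=\xi$. Since $\xi$ was arbitrary, $\na\ps$ is onto. Combined with the injectivity from the first step, $\na\ps:\R^3\to\R^3$ is bijective.

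I do not anticipate a serious obstacle here; the only mild subtlety is being careful about what ``$\na^2\ps\ge 1/2$'' means (Hessian bounded below by $\tfrac12 I$ in the order of symmetric matrices, which is how it is used in Proposition \ref{prop:perturbed dispersive} via $\na_\xi^2\Ps$) and making sure the coercivity argument only uses $C^2$ regularity, not more. One could alternatively invoke Hadamard's global inverse function theorem directly—$\na\ps$ is a local diffeomorphism since $\det\na^2\ps\ne 0$, and it is proper because $|\na\ps(x)|\to\infty$—but writing out the convex-analysis argument above keeps the lemma self-contained, which matches the stated intent (``we give its proof here for readers' convenience'').
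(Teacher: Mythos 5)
Your proposal is correct and follows essentially the same route as the paper: the strong monotonicity inequality $(\na\ps(x)-\na\ps(y))\cdot(x-y)\ge\tfrac12|x-y|^2$ for injectivity (you derive it by integrating the Hessian along the segment, the paper by adding two convexity inequalities, which is the same estimate), and surjectivity by minimizing the coercive function $\ps(x)-\xi\cdot x$ and noting the gradient vanishes at the minimizer. No gaps; this matches the paper's self-contained proof.
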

\begin{proof}
By the assumption, we have
    \begin{equation}\label{eq:lower bounds for ps}
	\begin{aligned}
		&\ps(x) \ge \ps(y) + (\na \ps)(y)\cdot (x-y) + \frac{1}{4} |x-y|^2, \\
		&\ps(y) \ge \ps(x) + (\na \ps)(x)\cdot (y-x) + \frac{1}{4} |x-y|^2.
	\end{aligned}
\end{equation}
Hence, adding two inequalities, we obtain
\begin{equation}
	(\na \ps (x) - \na \ps(y)) \cdot (x-y) \ge \tw |x-y|^2.
\end{equation}
Therefore, if $\na \ps (x) = \na \ps(y) $, then we have 
\begin{equation}
	0\ge \tw |x-y|^2,
\end{equation}
which implies $x=y$. Therefore, $\na \ps$ is injective.

Next, we prove the surjectivity. Fix an arbitrary $y\in\R^3$. Let $F(x):= \ps(x)-x\cdot y$.
By \eqref{eq:lower bounds for ps}, we have
\begin{equation}
	F(x) \ge \ps(0) + \Ck{(\na \ps)(0) - y}\cdot x + \frac{1}{4} |x|^2 \ge \frac{1}{4}|x|^4 - C
\end{equation}
for sufficiently large constant $C=C(\ps,y)$.
Hence, there exists $x_* \in \R^3$ such that $F(x_*) = \min_{x\in\R^3} F(x)$.
Since $F \in C^2(\R^3;\R)$, we have $\na \ps(x_*) - y =\na F(x_*) = 0$, which implies $\na \ps(x_*)=y$.
Therefore, $\na \ps $ is surjective.
\end{proof}

\begin{proof}[Proof of Proposition \ref{prop:perturbed dispersive}]
\noindent \underline{\textbf{Step 0: Elimination of $t\hb$ by scaling.}}
	First, we remove $t\hb$ from the estimate \eqref{eq:L1Linfty decay}.
	Since
	\begin{equation}
		\No{\CF^{-1} e^{i\ph(\lm\xi)}\CF }_{L^1_x \to L^\I_x} = \frac{1}{\lm^{3}} \No{\CF^{-1} e^{i\ph(\xi)}\CF }_{L^1_x \to L^\I_x}
	\end{equation}
	holds for any phase function $\ph$, we have 
	\begin{align}
		\No{\U(t)}_{L^1_x \to L^\I_x}
		     &= \No{\CF^{-1}e^{-it\hb|\xi|^2/2 - i\Ps(t,\hb\xi)} \CF}_{L^1_x \to L^\I_x} \\
		     &=  \frac{1}{|t\hb|^{3/2}}\No{\CF^{-1}e^{-i|\xi|^2/2 - i\Ps\K{t,\sqrt{\hb/t}\xi}} \CF}_{L^1_x \to L^\I_x}.
	\end{align}
	For the phase function, we have
	\begin{align}
		&\|\na^2\Ps\|_{L^\I_\xi} \le \tw \abs{\frac{t}{\hb}} \Longleftrightarrow \No{\na^2\Dk{\Ps\K{t,\sqrt{\hb/t}\xi}}}_{L^\I_\xi} \le \frac{1}{2}, \\
		&\|\na^2\Ps\|_{L^{3,1}_\xi} \le C\sqrt{\frac{t}{\hb}} \Longleftrightarrow \No{\na^2\Dk{\Ps(t,\sqrt{\hb/t}\xi)}}_{L^{3,1}_\xi} \le C. 
	\end{align}
	Therefore, it suffices to show that
	\begin{equation}
		\No{\na^2 \Ps}_{L^\I_\xi} \le \tw \text{ and } \No{\na^2 \Ps}_{L^{3,1}_\xi} \ls 1
		\implies \No{\CF^{-1}e^{-i|\xi|^2/2 - i\Ps(\xi)} \CF}_{L^1_x \to L^\I_x} \ls 1.
	\end{equation}

\noindent \underline{\textbf{Step 1: Reduction of the proof to the estimate of $B_R$.}}

\noindent \textbf{$\blacklozenge$ Setup of the proof.}
First, note that
	\begin{align*}
		e^{i\De/2 - i\Ps(-i\na_x)}u_0(x)
		&= \frac{1}{(2\pi)^{3}} \int_{\R^3} \K{\lim_{R \to \I}\int_{\R^3} \ph_R(\xi) e^{i(x-x')\xi-i|\xi|^2/2-i\Ps(\xi)} d\xi} u_0(x') dx' \\
		&=:\frac{1}{(2\pi)^{3}}\K{\lim_{R \to \I} S_R }\ast u_0(x),
	\end{align*}
	where $\ph_R(\xi)=\ph(\xi/R)$ and $\ph \in C_c^\I(\R^3)$ satisfies $\ph \equiv 1$ on $\{|\xi|\le 1\}$.
	The desired bound is
	\begin{equation*}
		\No{\lim_{R\to \I} S_R}_{L^\I_x} \ls 1.
	\end{equation*}

\noindent \textbf{$\blacklozenge$ Unique critical point.}	
	Define the phase function by
	$$\ps (\xi) := x\xi-\frac{|\xi|^2}{2}-\Ps(\xi).$$
	Then, we have
	\begin{equation*}
		\begin{aligned}
			&\na \ps(\xi) = x -\xi - \na \Ps(\xi), \\
			&\na^2 \ps(\xi) = -\Id_{\R^3} -  \na^2 \Ps(\xi).
		\end{aligned}
	\end{equation*}
	By the assumptions for $\Ps$, we have
	\begin{equation}
		-\xi^T (\na^2 \ps)(\xi) \xi
		\le |\xi|^2 - \|\na^2 \Ps(t)\|_{L^\I_\xi} |\xi|^2
		\le \frac{|\xi|^2}{2}.
	\end{equation}
	Hence, Lemma \ref{lem:bijective} implies that $-\na\ps:\R^3_\xi \to \R^3_\xi$ is bijective for all $x \in \R^3$.
	Therefore, there exists a unique critical point $\xi_c = \xi_c(x)$ such that
	$\na \ps(\xi_c)=0$ for all $x \in \R^3$.

\noindent \textbf{$\blacklozenge$ Decomposition of $S_R$.}
Let $\ch\in C_c^\I(\R^3)$ satisfy $\ch\equiv1$ on $\{|\xi|\le 1\}$	and $0\le \ch \le 1$.
	We decompose $S_R = A_R + B_R$, where
	\begin{align}
	&A_R:=\int_{\R^3} e^{i\ps(\xi)}\ph_R(\xi)\chi(\xi-\xi_c)d\xi, \\
	&B_R:=\int_{\R^3} e^{i\ps(\xi)}\ph_R(\xi)\Ck{1-\chi(\xi-\xi_c)}d\xi.
	\end{align}
	Therefore, we have
	\begin{align}
	\No{\lim_{R\to \I} S_R}_{L^\I_x} \le \No{\lim_{R\to\I}A_R}_{L^\I_x} + \No{\lim_{R \to \I} B_R}_{L^\I_x}. 
	\end{align}

\noindent \textbf{$\blacklozenge$ Estimate of $A_R$.}	
	We can estimate $A_R$ easily. Indeed, we have
	\begin{equation}
		A_R \le \int_{\R^3} \abs{\ch(\xi-\xi_c)} d\xi \ls 1.
	\end{equation}
	Hence we have
	\begin{align}
		 \No{\lim_{R\to\I}A_R}_{L^\I_x}\le \limsup_{R\to\I}\|A_R\|_{L^\I_x} \ls 1.
	\end{align}
	In the rest of this proof, we estimate $B_R$.
	
\noindent \underline{\textbf{Step 2: Estimate of $B_R$.}}
	The integration by parts implies
	\begin{align*}
		B_R &= -i\int_{\R^3}  \na \Dk{e^{i\ps(\xi)}} \cdot \frac{ \na \ps(\xi)}{|\na \ps(\xi)|^2} \ph_R(\xi) \Ck{1 - \chi(\xi-\xi_c)} d\xi \\
		&= -i\sum_{j=1}^3 \int_{\R^3} \pl_{\xi_j} \Dk{e^{i\ps(\xi)}} \frac{\pl_{\xi_j} \ps(\xi)}{|\na \ps(\xi)|^2}  \ph_R(\xi) \Ck{1 - \chi(\xi-\xi_c)} d\xi \\
		&= i \sum_{j=1}^3 \int_{\R^3}  e^{i\ps(\xi)} \pl_{\xi_j} \Dk{\frac{\pl_{\xi_j}\ps(\xi)}{|\na \ps(\xi)|^2}  }
		\ph_R(\xi)\Ck{1 - \chi(\xi-\xi_c) } d\xi \\
		&\quad + i \sum_{j=1}^3 \int_{\R^3}  e^{i\ps(\xi)} \frac{\pl_{\xi_j}\ps(\xi)}{|\na \ps(\xi)|^2}
		(\pl_{\xi_j}	\ph_R)(\xi) \Ck{1 - \chi(\xi-\xi_c)} d\xi\\
		&\quad + i \sum_{j=1}^3 \int_{\R^3}  e^{i\ps(\xi)} \frac{\pl_{\xi_j}\ps(\xi)}{|\na \ps(\xi)|^2}
		\ph_R(\xi)\pl_{\xi_j}
		\Dk{1 - \chi(\xi-\xi_c)} d\xi =: C_R+D_R+E_R.
	\end{align*}

\noindent \underline{\textbf{Step 2.1: Estimate of $E_R$.}}
First, we show that $|\na \ps(\xi)| \ge 1/2 $ if $1 \le  |\xi-\xi_c|$.
Indeed, since $\na \ps(t,\xi_c)=0$, we have
\begin{equation}\label{eq:lower bound}
	\begin{aligned}
		|\na \ps(\xi)|
		&= |x-\xi-\na \Ps(\xi)| \\
		&= |-(\xi-\xi_c) + \K{x -\xi_c - \na \Ps(\xi_c) }  + \na \Ps(\xi_c) - \na \Ps(\xi)| \\
		&\ge |\xi-\xi_c| -  \|\na^2 \Ps\|_{L^\I_\xi}|\xi_c- \xi| 
		\ge \tw |\xi_c-\xi| \ge \tw .
	\end{aligned}
\end{equation}
Hence, we have
\begin{align}
	|E_R| &\ls \sum_{j=1}^3 \int_{\R^3} \frac{1}{|\na \ps(\xi)|}
	\abs{\pl_{\xi_j} \chi(\xi-\xi_c)}d\xi \ls \No{\na \ch(\xi)}_{L^1_\xi} \ls 1.
\end{align}

\noindent \underline{\textbf{Step 2.2: Estimate of $C_R$.}}
Note that
\begin{equation}
	\begin{aligned}
		\sum_{j=1}^3 \pl_{\xi_j} \Dk{ \frac{\pl_{\xi_j} \ps}{|\na \ps|^2}}
		&= \frac{\De_\xi \ps}{|\na \ps|^2} - \frac{2 (\na \ps)^T (\na^2 \ps) \na \ps}{|\na \ps|^4} \\
		&= \frac{-1}{|\na \ps|^2}
		+ \frac{2 \De \Ps(\xi)}{|\na \ps|^2}
		+ \frac{2 (\na \ps)^T (\na^2 \Ps)(\xi) \na \ps}{|\na \ps|^4}
		=:\Ph(\xi).
	\end{aligned}
\end{equation}
Therefore, we have
\begin{align}
	C_R &= i \int_{\R^3}  e^{i\ps(\xi)}\Ph(\xi) \ph_R(\xi)\Ck{1 - \chi(\xi-\xi_c) } d\xi \\
	&= - \sum_{j=1}^3 \int_{\R^3}\pl_{\xi_j} \Dk{e^{i\ps(\xi)}}  \frac{\pl_{\xi_j} \ps(\xi)}{|\na \ps(\xi)|^2}  
	\frac{\ph_R(\xi)}{|\na \ps(\xi)|^2} \Ck{1 - \chi(\xi-\xi_c) } d\xi \\
	&\quad + \int_{\R^3}  e^{i\ps(\xi)}\ph_R(\xi) \K{\frac{2 \De \Ps(\xi)}{|\na \ps|^2}
		+ \frac{2 (\na \ps)^T (\na^2 \Ps)(\xi) \na \ps}{|\na \ps|^4}}\Ck{1 - \chi(\xi-\xi_c) } d\xi\\
	&=: C_{R,1}+C_{R,2}.
\end{align}

\noindent \textbf{$\blacklozenge$ Estimate of $C_{R,1}$.}
For $C_{R,1}$, we have
\begin{align}
	C_{R,1} &= - \sum_{j=1}^3 \int_{\R^3} \pl_{\xi_j}	\Dk{e^{i\ps(\xi)}} \frac{\pl_{\xi_j} \ps(\xi)}{|\na \ps(\xi)|^4} 
	\ph_R(\xi)\Ck{1 - \chi(\xi-\xi_c) } d\xi \\
	&=  \sum_{j=1}^3 \int_{\R^3} e^{i\ps(\xi)} \pl_{\xi_j}  \Dk{\frac{\pl_{\xi_j} \ps(\xi)}{|\na \ps(\xi)|^4}} \ph_R(\xi)
	\Ck{1 - \chi(\xi-\xi_c)} d\xi \\
	&\quad +\sum_{j=1}^3 \int_{\R^3} e^{i\ps(\xi)} \frac{\pl_{\xi_j} \ps(\xi)}{|\na \ps(\xi)|^4}
	(\pl_{\xi_j}\ph_R)(\xi)  \Ck{1 - \chi(\xi-\xi_c) } d\xi\\
	&\quad + \sum_{j=1}^3 \int_{\R^3} e^{i\ps(\xi)} \frac{\pl_{\xi_j} \ps(\xi)}{|\na \ps(\xi)|^4}
	\ph_R(\xi)\pl_{\xi_j}  \Dk{1 - \chi(\xi-\xi_c) } d\xi=:C_{R,1,1} + C_{R,1,2} + C_{R,1,3}.
\end{align}
To estimate $C_{R,1,1}$, note that the following: When $|\xi-\xi_c| \ge 1$, by \eqref{eq:lower bound} and $|\na^2 \ps| \ls 1$, we have
\begin{equation}
	\begin{aligned}
		&\abs{\sum_{j=1}^3 \pl_{\xi_j}  \Dk{\frac{\pl_{\xi_j} \ps}{|\na \ps|^4}}}
		=\abs{\frac{\De \ps}{|\na \ps|^4} + \frac{4(\na \ps)^T (\na^2 \ps) (\na \ps)}{|\na \ps|^6}} \\
		&\quad \ls \frac{|\na^2 \ps|}{|\na \ps|^4} \ls \frac{1}{ |\xi-\xi_c|^4}
		\ls \frac{1}{1+|\xi-\xi_c|^4}.
	\end{aligned}
\end{equation}
Therefore, we have
\begin{equation}
	|C_{R,1,1}|\ls \int_{\R^3} \frac{1}{1+|\xi-\xi_c|^4} d\xi \ls 1.
\end{equation}
For $C_{R,1,2}$, we have
\begin{align}
	C_{R,1,2} \ls \frac{1}{R} \int_{\R^3} \frac{|\na \ph(\xi/R)|}{1+|\xi-\xi_c|^3}  d\xi 
	\le \|\ph\|_{L^3_x} \No{\frac{1}{1+|\xi|^3}}_{L^{3/2}_x} \ls 1.
\end{align}
For $C_{R,1,3}$, since $|\na \ps| \ge 1/2$, we have
\begin{equation}
	|C_{R,1,3}| \ls \int_{\R^3} \frac{1}{|\na \ps(\xi)|^3}
	\abs{\na \ch (\xi-\xi_c) } d\xi \ls 1.
\end{equation}

\noindent \textbf{$\blacklozenge$ Estimate of $C_{R,2}$.}
Finally, for $C_{R,2}$, by the O'Neil theorem \cite{O'Neil 1963} (the Young convolution inequality for the Lorentz norm) and $|\na \ps(\xi)|\ge |\xi-\xi_c|/2$, we have
\begin{align}
	C_{R,2} &\le  \int_{\R^3} \frac{|\na^2 \Ps(\xi)|}{|\na\ps|^2} \abs{1 - \chi(\xi-\xi_c) }d\xi 
	\ls  \int_{\R^3} \frac{|\na^2 \Ps(\xi)|}{|\xi-\xi_c|^2} d\xi \\
	&\ls  \No{\na^2 \Ps(\xi)}_{L^{3,1}_x} = \|\na \Ps(t)\|_{L^{3,1}_\xi}
	\ls 1.
\end{align}

\noindent \underline{\textbf{Step 2.3: Estimate of $D_R$.}}
By the integration by parts, we have
\begin{align}
	D_R
	&= \sum_{j,k=1}^3 \int_{\R^3} \pl_{\xi_k} \Dk{e^{i\ps(\xi)}} \frac{\pl_{\xi_k} \ps(\xi)}{|\na \ps(\xi)|^2}   \frac{\pl_{\xi_j}\ps(\xi)}{|\na \ps(\xi)|^2}
	(\pl_{\xi_j}\ph_R)(\xi) \Ck{1 - \chi(\xi-\xi_c)} d\xi \\
	&= -\sum_{j,k=1}^3 \int_{\R^3} e^{i\ps(\xi)} \pl_{\xi_k} \Dk{\frac{\pl_{\xi_j}\ps(\xi) \pl_{\xi_k} \ps(\xi)}{|\na \ps(\xi)|^4}}  
	(\pl_{\xi_j}\ph_R)(\xi) \Ck{1 - \chi(\xi-\xi_c)} d\xi \\
	&\quad - \sum_{j,k=1}^3 \int_{\R^3} e^{i\ps(\xi)} \frac{\pl_{\xi_j} \ps(\xi)\pl_{\xi_k} \ps(\xi)}{|\na \ps(\xi)|^4}  
	(\pl_{\xi_j}\pl_{\xi_k}\ph_R)(\xi) \Ck{1 - \chi(\xi-\xi_c)} d\xi \\
	&\quad - \sum_{j,k=1}^3 \int_{\R^3} e^{i\ps(\xi)} \frac{\pl_{\xi_j} \ps(\xi)\pl_{\xi_k} \ps(\xi)}{|\na \ps(\xi)|^4}  
	(\pl_{\xi_j}\ph_R)(\xi) (\pl_{\xi_k}\chi)(\xi-\xi_c) d\xi 
	=: D_{R,1} + D_{R,2} + D_{R,3}.
\end{align}

\noindent \textbf{$\blacklozenge$ Estimate of $D_{R,1}$.}
For $D_{R,1}$, we have
\begin{align}
	|D_{R,1}| &\ls \frac{1}{R}\int_{\R^3} \frac{|\na^2 \ps(\xi)|}{|\na \ps(\xi)|^3}  
	\abs{\na \ph(\xi/R)} \abs{1 - \chi(\xi-\xi_c)} d\xi \\
	&\ls \frac{1}{R}\int_{\R^3} \frac{1}{1+|\xi-\xi_c|^3}  
	\abs{\na \ph(\xi/R)} d\xi \ls  \No{\na \ph}_{L^3_\xi}.
\end{align}

\noindent \textbf{$\blacklozenge$ Estimate of $D_{R,2}$.}
For $D_{R,2}$, we have
\begin{align}
	D_{R,2} &\ls \frac{1}{R^2} \int_{\R^3} \frac{1}{|\na \ps(\xi)|^2}  
	\abs{\na^2 \ph(\xi/R)} \abs{1 - \chi(\xi-\xi_c)}d\xi \\
	&\le  \frac{1}{R^2} \int_{\R^3} \frac{1}{1 +  |\xi-\xi_c|^2}  
	\abs{\na^2 \ph(\xi/R)} d\xi \ls \|\na^2 \ph\|_{L^{3/2}_\xi}.
\end{align}

\noindent \textbf{$\blacklozenge$ Estimate of $D_{R,3}$.}
For $D_{R,3}$, we have
\begin{align}
	D_{R,3} &\ls \frac{1}{R} \int_{\R^3} \frac{1}{|\na \ps(\xi)|^2} 
	 \abs{\na\chi(\xi-\xi_c)} d\xi
	 \ls \frac{1}{R} \int_{\R^3} \abs{\na \chi(\xi-\xi_c)} d\xi
	 \ls \frac{1}{R} \le 1.
\end{align}

\end{proof}

\subsection{Uniform-in-$\hb$ estimates for modified propagators}
As a corollary of Proposition \ref{prop:perturbed dispersive}, we show a generalization of Lemma \ref{lem:free}.
\begin{corollary} \label{cor:free}
Suppose the same assumptions as in Proposition \ref{prop:perturbed dispersive}.
Moreover, let $\si > 3/2$ and $r\in [1,\I]$. Then, we have
	\begin{align}
		&\No{\rhh\K{\U(t)e^{-i\Ps(t,-i\hb\na_x)} \ga_0^\hb e^{i\Ps(t,-i\hb\na_x)} \U(t)^*}}_{L^r_x}
		\ls \No{\sd^{\si/r'}\ga_0^\hb \sd^{\si/r'}}_{\FS^r_\hb}, \label{eq:bounded} \\
		&\No{\rhh\K{\U(t)e^{-i\Ps(t,-i\hb\na_x)} \ga_0^\hb e^{i\Ps(t,-i\hb\na_x)} \U(t)^*}}_{L^r_x}
		\ls \frac{1}{|t|^{3/r'}}\No{\bx^{\si/r'} \ga_0^\hb\bx^{\si/r'}}_{\FS^r_\hb},\label{eq:decay}
	\end{align}
	where the implicit constants are independent of $\hb$.
\end{corollary}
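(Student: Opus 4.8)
The plan is to reduce both inequalities, by a duality argument in the spirit of the proof of Lemma~\ref{lem:Sobolev}, to a single weighted Schatten-norm bound on the unitary operator $B(t):=\U(t)e^{-i\Ps(t,-i\hb\na_x)}$. Since $B(t)$ is unitary, $B(t)\gah_0 B(t)^*\in\FS^1_\hb$, so $\rhh(B(t)\gah_0 B(t)^*)\in L^1_x$ and is real-valued; hence it is enough to test against real $f\in C_c^\I(\R^3)$. The case $r=1$ of both \eqref{eq:bounded} and \eqref{eq:decay} is the trivial bound $\No{\rhh(\cdot)}_{L^1_x}\le\No{B(t)\gah_0 B(t)^*}_{\FS^1_\hb}=\No{\gah_0}_{\FS^1_\hb}$ (see \eqref{eq:density L1 bound}), so fix $r\in(1,\I]$ and let $P$ denote $\sd^{\si/r'}$ for \eqref{eq:bounded} and $\bx^{\si/r'}$ for \eqref{eq:decay}. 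Writing $f=f_0 f_1$ with $f_0,f_1$ real and $|f_0|=|f_1|=|f|^{1/2}$, Lemma~\ref{lem:duality}, cyclicity of the trace, \eqref{eq:triangle inequality}, the H\"older inequality \eqref{eq:Holder} with $\tfrac1{2r'}+\tfrac1r+\tfrac1{2r'}=1$, and the recombination $(2\pi\hb)^{3/(2r')+3/r+3/(2r')}=(2\pi\hb)^3$ of the scaling factors give
\begin{align}
	\Abs{\int_{\R^3} f\,\rhh(B(t)\gah_0 B(t)^*)}
	&=(2\pi\hb)^3\Abs{\Tr\Dk{\big(f_1 B(t)P^{-1}\big)\big(P\gah_0 P\big)\big(P^{-1}B(t)^* f_0\big)}} \\
	&\le \No{P^{-1}B(t)^* f_1}_{\FS^{2r'}_\hb}\No{P\gah_0 P}_{\FS^r_\hb}\No{P^{-1}B(t)^* f_0}_{\FS^{2r'}_\hb},
\end{align}
where in the last line we used that $P^{-1}$ is self-adjoint and $f_j$ real, so $\No{f_j B(t)P^{-1}}_{\FS^{2r'}_\hb}=\No{P^{-1}B(t)^* f_j}_{\FS^{2r'}_\hb}$. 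Since $\No{f_0}_{L^{2r'}_x}\No{f_1}_{L^{2r'}_x}=\No{f}_{L^{r'}_x}$, everything reduces to proving
\begin{equation}\label{eq:plankey}
	\No{P^{-1}B(t)^* g(x)}_{\FS^{2r'}_\hb}\ls \ka(t)\No{g}_{L^{2r'}_x}\qquad(g\in C_c^\I(\R^3)),
\end{equation}
with $\ka\equiv1$ for \eqref{eq:bounded} and $\ka(t)=|t|^{-3/(2r')}$ for \eqref{eq:decay}: then the chain above gives $\Abs{\int f\,\rhh(\cdots)}\ls\ka(t)^2\No{f}_{L^{r'}_x}\No{P\gah_0 P}_{\FS^r_\hb}$, and duality (with $C_c^\I$ dense in $L^{r'}$) yields the two claimed bounds.

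For \eqref{eq:bounded} the estimate \eqref{eq:plankey} is immediate: $B(t)$ is a Fourier multiplier of modulus one on the Fourier side, hence it is unitary and commutes with $\sd^{-\si/r'}$, so $\No{\sd^{-\si/r'}B(t)^* g(x)}_{\FS^{2r'}_\hb}=\No{g(x)\sd^{-\si/r'}}_{\FS^{2r'}_\hb}\ls\No{g}_{L^{2r'}_x}\No{\lxr^{-\si/r'}}_{L^{2r'}_\xi}\ls\No{g}_{L^{2r'}_x}$ by the Kato--Seiler--Simon inequality \eqref{eq:Kato Seiler Simon}, the last factor being finite because $2r'\cdot(\si/r')=2\si>3$; only the reality of $\Ps$ is used here. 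For \eqref{eq:decay} the position-space weight $\bx^{-\si/r'}$ does not commute with $B(t)^*$, and this is where Proposition~\ref{prop:perturbed dispersive} enters, at the level of kernels. I would establish \eqref{eq:plankey} with $\ka(t)=|t|^{-3/(2r')}$ by complex interpolation of the analytic family
\begin{equation}
	\Theta_z(g):=|t|^{3z/2}\,\bx^{-\si z}\,B(t)^*\,g(x),\qquad 0\le\Re z\le1.
\end{equation}
On $\Re z=0$ the operator $\bx^{-\si z}$ is multiplication by a unimodular function and $B(t)^*$ is unitary, so $\No{\Theta_{is}(g)}_{\CB}\le\No{g}_{L^\I_x}$ uniformly in $s\in\R$. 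On $\Re z=1$, using \eqref{eq:Hilbert Schmidt}, the kernel bound $|B(t)(x,y)|\le\No{B(t)}_{L^1_x\to L^\I_x}\ls|t\hb|^{-3/2}$ from Proposition~\ref{prop:perturbed dispersive}, the identity $|\bx^{-\si(1+is)}|=\bx^{-\si}$, and $\si>3/2$, one computes
\begin{align}
	\No{\Theta_{1+is}(g)}_{\FS^2_\hb}^2
	&=(2\pi\hb)^3|t|^3\iint_{\R^3\times\R^3}\bx^{-2\si}|B(t)(y,x)|^2|g(y)|^2\,dx\,dy \\
	&\ls (2\pi\hb)^3|t|^3|t\hb|^{-3}\No{\bx^{-\si}}_{L^2_x}^2\No{g}_{L^2_x}^2\ls\No{g}_{L^2_x}^2
\end{align}
uniformly in $s$, the powers of $\hb$ cancelling. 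Since the Schatten classes form a complex interpolation scale, $[\FS^\I_\hb,\FS^2_\hb]_\th=\FS^{2/\th}_\hb$ and $[L^\I,L^2]_\th=L^{2/\th}$, so Stein's interpolation theorem applied at $\th=1/r'$ gives $\No{\Theta_{1/r'}(g)}_{\FS^{2r'}_\hb}\ls\No{g}_{L^{2r'}_x}$ with an $\hb$-independent constant, which is exactly \eqref{eq:plankey} with $\ka(t)=|t|^{-3/(2r')}$.

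I expect the substantive point to be precisely this last step: upgrading the pointwise-in-time $L^1$--$L^\I$ dispersive estimate of Proposition~\ref{prop:perturbed dispersive} to the full scale of weighted Schatten bounds \eqref{eq:plankey} at the sharp exponent $2r'$, the sharpness being forced by the required uniformity in $\hb$ (cf.\ Section~\ref{subsec:difficulty}). The two interpolation endpoints are elementary, but the analytic family and all the norms must be arranged so that every power of $\hb$ cancels and the interpolation constant is genuinely $\hb$-independent; by contrast, the reductions to \eqref{eq:plankey}, the easy case \eqref{eq:bounded}, and the case $r=1$ are routine.
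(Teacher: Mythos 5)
Your argument is correct, and its core coincides with the paper's: duality via Lemma \ref{lem:duality}, H\"older in Schatten classes at the sharp exponent, the Kato--Seiler--Simon bound \eqref{eq:Kato Seiler Simon} for \eqref{eq:bounded}, and the $L^1$--$L^\I$ kernel bound of Proposition \ref{prop:perturbed dispersive} (turned into a pointwise kernel bound by Dunford--Pettis) for \eqref{eq:decay}. The difference is organizational. The paper first rescales to $\hb=1$ (Remark \ref{rmk:scaling}), proves \eqref{eq:decay} only at the endpoints $r=1$ (the trivial $\FS^1$ bound) and $r=\I$ (where the duality argument needs just two Hilbert--Schmidt factors --- exactly your $\Re z=1$ computation), and then interpolates the density estimate in $r$; \eqref{eq:bounded} is simply noted to be equivalent to Lemma \ref{lem:Sobolev}, since $e^{-i\Ps(t,-i\hb\na_x)}$ is unitary and commutes with $\sd$, which is also the content of your KSS step. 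You instead keep $\hb$ explicit and run Stein interpolation on the analytic family $|t|^{3z/2}\bx^{-\si z}\,e^{i\Ps(t,-i\hb\na_x)}\U(t)^* g(x)$, obtaining a one-sided weighted Schatten bound on the modified propagator at exponent $2r'$ that yields all $r\in(1,\I]$ at once. That buys a clean, reusable intermediate estimate and dispenses with the scaling reduction, at the price of invoking Stein interpolation with Schatten-valued targets (legitimate, but heavier machinery than the paper's two elementary endpoints plus complex interpolation in $r$). Your endpoint computations, the adjoint/cyclicity manipulations, and the cancellation of the powers of $\hb$ all check out.
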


\begin{remark}\label{rmk:scaling}
	The estimate \eqref{eq:bounded} is equivalent to \eqref{eq:Sobolev}.
	In particular, we can reduce \eqref{eq:Sobolev} to the case when $\hb=1$.
	By direct scaling, the proof of \eqref{eq:decay} can be reduced to the case when $\hb=1$.
\end{remark}

\begin{proof}
	By Remark \ref{rmk:scaling}, we can assume $\hb=1$.
	Since we know the trivial estimate
	\begin{equation}
		\|\rh^1(A)\|_{L^1_x} \le (2\pi)^3\|A\|_{\FS^1}, 
	\end{equation}
	we can assume $r=\I$.
	Indeed, if we get \eqref{eq:decay} $r=1$ and $r=\I$, then the complex interpolation implies that \eqref{eq:decay} holds for general $r \in [1,\I]$.
	
	Let $f \in C_c^\I(\R^d)$ and decompose $f=f_0 f_1$ with $f_0:= |f|^{1/2}$.
	Then, we have
	\begin{align*}
		&\int_{\R^3} f(x) \rh^1\K{\CU^1(t)e^{i\Ps(t,-i\na_x)} \ga_0 e^{-i\Ps(t,-i\na_x)} \CU^1(t)^*} dx \\
		&\quad = (2\pi)^3 \Tr\Bk{ \bx^{-\si} e^{-i\Ps(t,-i\na_x)}\CU^1(t)^* f(x) \CU^1(t) e^{i\Ps(t,-i\na_x)} \bx^{-\si} \bx^\si \ga_0 \bx^\si} \\
		&\quad \le (2\pi)^{3}\|\bx^\si \ga_0\bx^\si\|_{\CB}\prod_{n=0,1}\No{ \bx^{-\si} e^{-i\Ps(t,-i\na_x)}\CU^1(t)^* f_n }_{\FS^2}.
	\end{align*}
	Note that the Dunford--Pettis theorem \cite[Theorem 2.2.5]{Dunford Pettis 1940} says that
	\begin{equation}
		\|A(x,x')\|_{L^\I_{x,x'}} = \|A\|_{L^1_x \to L^\I_x}.
	\end{equation}
	Therefore, Proposition \ref{prop:perturbed dispersive} and \eqref{eq:Hilbert Schmidt} imply
	\begin{align*}
		&\No{ \bx^{-\si} e^{-i\Ps(t,-i\na_x)}\CU^1(t)^* f_n(x) }_{\FS^2} 
		\le \|\bx^{-\si}\|_{L^2_x} \|f_n\|_{L^2_x} \No{e^{-i\Ps(t,-i\na_x)}\CU^1(t)^* (x,x')}_{L^\I_{x,x'}} \\
		&\quad = \|\bx^{-\si}\|_{L^2_x} \|f\|_{L^1_x}^{1/2} \No{e^{-i\Ps(t,-i\na_x)}\CU^1(t)^*}_{L^1_x \to L^\I_x} 
		\ls  \frac{\|f\|_{L^1_x}^{1/2}}{|t|^{3/2}}
	\end{align*}
	for $n=0,1$. 
	Collecting the above argument with the duality argument, we obtain
	\begin{equation}
		\No{\rh^1\K{\U(t)e^{-i\Ps(t,-i\hb\na_x)} \ga_0 e^{i\Ps(t,-i\hb\na_x)} \U(t)^*}}_{L^\I_x}
		\ls \frac{1}{|t|^{3}}\No{\bx^{\si} \ga_0\bx^{\si}}_{\CB}.
	\end{equation}
\end{proof}

\section{Boundedness of wave operators with phase correction}\label{sec:wave}
This is the most important section of this paper.
The goal of this section is to prove the following proposition:
\begin{proposition}\label{prop:wave operator boundedness}
	Let $d=3$, $0\le s \le 2$. Then, there exists a small absolute number $\de>0$ such that the following holds:
	Assume that $V(t,x)\in C([0,T];W^{4,\I}_x)$ satisfies
	\begin{align}
		\|\pl^\al V(t)\|_{L^\I_x}
		\ls \begin{dcases}
			& \bt^{-1} \quad \mbox{if } |\al|=0, \\
			& \bt^{-2} \quad \mbox{if } |\al|=1,\\
			& \bt^{-3+\de} \quad \mbox{if } |\al|=2,\\
			& \bt^{-4+\de} \quad \mbox{if } |\al|=3, \\
			& \bt^{-4+\de} \quad \mbox{if } |\al|=4
		\end{dcases}
	\end{align}
	for all $0\le t \le T$, and
	\begin{align}
		\|\pl^\al V(t)\|_{\CF L^1}\ls
			\begin{dcases}
				&\bt^{-2+\de}\quad \mbox{if } |\al|=1, \\
				&\bt^{-3+\de}\quad \mbox{if } |\al|=2,\\
				&\min\K{\bt^{-7/2+\de}, \hb^{-3/2}\bt^{-4+\de}} \quad \mbox{if } |\al|=3
			\end{dcases}
	\end{align}
	for all $0\le t \le T$.
	Define $\Ps(t,\xi)$ by
	\begin{align}
		\Ps(t,\xi):= \frac{1}{\hb}\int_0^t V(\ta,\ta\xi)d\ta.
	\end{align}
	Then, we have
	\begin{align}
		\sup_{\hb\in(0,1]}\sup_{0\le t \le T}\No{\bx^s e^{i\Ps(t,-i\hb\na_x)} \W_{V}(t) \bx^{-s}}_{\CB} \ls 1
	\end{align}
for all $0\le s\le 2$. Moreover, all implicit constants are independent of $T>0$.
\end{proposition}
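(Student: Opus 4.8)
The plan is to prove the endpoint cases $s=0$ and $s=2$, the general $s\in(0,2)$ following by complex interpolation. Set $\CA(t):=e^{i\Ps(t,-i\hb\na_x)}\W_V(t)$. The case $s=0$ is immediate: $V$ is real, so $\U_V(t)$, hence $\W_V(t)=\U(-t)\U_V(t)$, is unitary, and $\Ps(t,\xi)$ is real, so $e^{i\Ps(t,-i\hb\na_x)}$ is a unitary Fourier multiplier; thus $\|\CA(t)\|_\CB=1$. For $s\in(0,2)$ one applies the three-lines lemma to the operator-valued family $z\mapsto\bx^{z}\CA(t)\bx^{-z}$ on the strip $0\le\Re z\le2$: on $\Re z=0$ it is unitarily conjugate to $\CA(t)$ (norm $1$), on $\Re z=2$ its norm equals $\|\bx^2\CA(t)\bx^{-2}\|_\CB$, and (after a routine cutoff justifying boundedness/analyticity on the strip) one is reduced to the single endpoint bound $\|\bx^2\CA(t)\bx^{-2}\|_\CB\ls1$, uniformly in $t$ and $\hb$. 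Writing $\bx^2=1+\sum_j x_j^2$ and repeatedly commuting $x_j$ leftward ($x_jB=Bx_j+[x_j,B]$), this reduces further to uniform bounds on the single and double position commutators, $\|[x_j,\CA(t)]\|_\CB\ls1$ and $\|[x_j,[x_j,\CA(t)]]\|_\CB\ls1$, the leftover factors $x_j\bx^{-2}$ and $\bx^{-2}$ being bounded multiplication operators.

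The core is $\|[x_j,\CA(t)]\|_\CB\ls1$. By the Leibniz rule $[x_j,\CA(t)]=[x_j,e^{i\Ps(t,-i\hb\na_x)}]\W_V(t)+e^{i\Ps(t,-i\hb\na_x)}[x_j,\W_V(t)]$. Since $[x_j,g(-i\hb\na_x)]=i\hb(\pl_{\xi_j}g)(-i\hb\na_x)$ for Fourier multipliers, the explicit form of $\Ps$ gives
\[
	[x_j,e^{i\Ps(t,-i\hb\na_x)}]=-\Big(\int_0^t\ta\,(\pl_{x_j}V)(\ta,-i\hb\ta\na_x)\,d\ta\Big)\,e^{i\Ps(t,-i\hb\na_x)},
\]
the explicit $\hb^{-1}$ in $\Ps(t,\xi)=\hb^{-1}\int_0^tV(\ta,\ta\xi)\,d\ta$ being cancelled by the $\hb$ from the commutator; this cancellation, together with the fact that the surviving multiplier symbol $\int_0^t\ta(\pl_{x_j}V)(\ta,\ta\hb\xi)d\ta$ is exactly the semiclassically rescaled classical‑flow position integrand generated by conjugation with the free propagator, is precisely why this phase correction is the right one. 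Combining with $[x_j,\W_V(t)]=\W_V(t)\int_0^t\U_V(\ta)^*\,\ta(\pl_{x_j}V)(\ta)\,\U_V(\ta)\,d\ta$ from \eqref{eq:commutator x W}, and moving the Fourier multiplier $\int_0^t\ta(\pl_{x_j}V)(\ta,-i\hb\ta\na_x)d\ta$ through $e^{i\Ps(t,-i\hb\na_x)}$ (with which it commutes) and through $\W_V(t)$ (the resulting commutator handled by \eqref{eq:commutator f W}), one arrives at
\begin{align*}
	[x_j,\CA(t)]&=\CA(t)\int_0^t\ta\Big(\U_V(\ta)^*(\pl_{x_j}V)(\ta)\,\U_V(\ta)-(\pl_{x_j}V)(\ta,-i\hb\ta\na_x)\Big)d\ta\\
	&\quad-e^{i\Ps(t,-i\hb\na_x)}\Big[\int_0^t\ta\,(\pl_{x_j}V)(\ta,-i\hb\ta\na_x)\,d\ta,\ \W_V(t)\Big].
\end{align*}
Each of the two contributions in the original Leibniz splitting grows only like $\log\bt$ (because $\int_0^t\ta\bta^{-2}d\ta\sim\log\bt$), so nothing may be estimated in absolute value before this cancellation is extracted.

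It remains to see that the right‑hand side above is bounded. For the operator difference I would split it into: (a) the discrepancy between conjugation by $\U_V(\ta)$ and by the free propagator $\U(\ta)$, handled via Duhamel's formula, the hypotheses on $\|V(\ta)\|_{L^\I_x}$ and $\|\na V(\ta)\|_{\CF L^1}$, and Lemma \ref{lem:commutator basic}; and (b), using the MDFM decomposition \eqref{eq:MDFM} and the identity $\U(-\ta)f(x)\U(\ta)=\M(-\ta)\,f(-i\hb\ta\na_x)\,\M(\ta)$ implied by \eqref{eq:J}, the stationary‑phase/curvature correction $[\M(-\ta),(\pl_{x_j}V)(\ta,-i\hb\ta\na_x)]\M(\ta)$, which has to be controlled uniformly in $\hb$ by the second (and, for the double commutator, third) derivatives of $V$, distinguishing the regimes $\hb\ta\gs1$ (small effective time, gaining from $\|\pl^2V\|_{L^\I_x}$, $\|\pl^3V\|_{L^\I_x}$) and $\hb\ta\ls1$ (gaining instead from Fourier decay, i.e. the $\CF L^1$ hypotheses on $\pl^2V$, $\pl^3V$, in the spirit of Lemma \ref{lem:free Fourier}); the two‑branch bound $\|\pl^3V\|_{\CF L^1}\ls\min(\bt^{-7/2+\de},\hb^{-3/2}\bt^{-4+\de})$ is calibrated exactly to this dichotomy. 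The commutator term is treated by \eqref{eq:commutator f W}, which trades it for $\hb^{-1}$ times a space‑time integral of $\U_V(\ta)^*[(\pl_{x_j}V)(\si,-i\hb\si\na_x),V(\ta)]\U_V(\ta)$; here too the integrand cannot be taken in absolute value, but one exploits its oscillation (equivalently, a further commutator expansion pairing the sub‑leading terms) together with Lemma \ref{lem:commutator basic} and the decay hypotheses, the $\hb$‑powers again matching. Finally, the double commutator $[x_j,[x_j,\CA(t)]]$ is handled by the same mechanism carried one step further, invoking the second‑order identities \eqref{eq:nabla double commutator}, \eqref{eq:weight double commutator}, \eqref{eq:mixed double commutator} of Lemmas \ref{lem:commutator nabla W}--\ref{lem:commutator mixed}, whose second‑order cancellation between phase correction and wave operator leaves a residual controlled by the $|\al|=3,4$ hypotheses on $V$. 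The step I expect to be the real obstacle is exactly this: extracting the (and, for the double commutator, the second‑order) cancellations so that the only surviving contributions are genuinely integrable in time, while keeping every power of $\hb$ tracked so that the $\hb^{-1}$ factors produced by the Duhamel formula are systematically absorbed — in particular the uniform‑in‑$\hb$ estimate of $[\M(-\ta),(\pl_{x_j}V)(\ta,-i\hb\ta\na_x)]$ across the two time‑regimes.
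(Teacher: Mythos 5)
Your algebraic skeleton is the same as the paper's: split $[x_j,e^{i\Ps}\W_V(t)]$ by Leibniz, compute $[x_j,e^{i\Ps(t,-i\hb\na_x)}]$ explicitly so that the $\hb$ from the commutator cancels the $1/\hb$ in $\Ps$, use \eqref{eq:commutator x W} and the Duhamel variant \eqref{lem:Duhamel} to reduce to the difference between free conjugation $\U(\ta)^*(\pl_j V)(\ta)\U(\ta)=\M(-\ta)(\pl_jV)(\ta,-i\ta\hb\na_x)\M(\ta)$ and the multiplier $(\pl_jV)(\ta,-i\ta\hb\na_x)$, handle the remaining commutators by Lemma \ref{lem:commutator basic} to recover powers of $\hb$, and interpolate in $s$. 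This is exactly Sections \ref{subsec:decomposition}--\ref{subsec:si=2}.

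The genuine gap is your reduction of the endpoint $s=2$ to the \emph{unweighted} bounds $\No{[x_j,\CA(t)]}_\CB\ls1$ and $\No{[x_j,[x_j,\CA(t)]]}_\CB\ls1$, with the weights $x_j\bx^{-2}$, $\bx^{-2}$ discarded as "bounded leftover factors." Those unweighted statements are strictly stronger than what the hypotheses (or the paper) give, and your own sketch cannot produce them: the entire cancellation mechanism lives in the weighted quantity $\CP[f](t)=\No{\{f(-it\hb\na_x)-\M(-t)f(-it\hb\na_x)\M(t)\}\bx^{-1}}_\CB$ of Lemma \ref{lem:key estimate}. The crucial gain $\lg t\hb\rg^{-1/2}$ in \eqref{eq:Af 2}--\eqref{eq:Af 3} comes from the pointwise bound $|\bx^{-1}(1-\M(t))|\ls\lg t\hb\rg^{-1/2}$, and even the bound \eqref{eq:Af 1} needs the weight to make $\na(\M(t)\bx^{-1})$ bounded (note $\na\M(t)=-\tfrac{x}{it\hb}\M(t)$ is unbounded, so neither Lemma \ref{lem:commutator basic} nor its Fourier-side analogue applies to $[\M(\ta),(\pl_jV)(\ta,-i\ta\hb\na_x)]$ without a weight). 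Without the right-hand weight, the leading term in your final identity can only be estimated by $\int_0^t\ta\No{\na V(\ta)}_{L^\I_x}d\ta\ls\log\bt$, i.e.\ the very logarithm (cf.\ Lemma \ref{lem:easy}) you set out to remove, and the alternative $\CF L^1$ route gives $\hb\int_0^t\bta^{-1+\de}d\ta$, which is not uniform in $T$. The fix is to keep a weight adjacent to every commutator, e.g.\ write $[x_j,\CA(t)]x_j\bx^{-2}=\bigl([x_j,\CA(t)]\bx^{-1}\bigr)\bigl(x_j\bx^{-1}\bigr)$ and aim only at $\No{[x,\CA(t)]\bx^{-1}}_\CB\ls1$ and $\No{\bx[x,\CA(t)]\bx^{-2}}_\CB\ls1$, which is precisely the paper's $\CQ(t)$ and $\CR(t)$; your regime dichotomy should then be rephrased accordingly (the paper splits at $\ta=\hb^{-2}$ and the gain at large times comes from the weighted factor, not from $\No{\pl^2V}_{L^\I_x}$ alone). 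A minor further misattribution: the double-commutator identities \eqref{eq:nabla double commutator}, \eqref{eq:weight double commutator}, \eqref{eq:mixed double commutator} concern $\W_V(t)\gah_0\W_V(t)^*$ and are not the tool used for the $s=2$ case of this proposition; there one reuses the decompositions $A_i,B_i$ with an extra weight, Jacobi identities, and Lemma \ref{lem:easy}.
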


Before proving Proposition \ref{prop:wave operator boundedness}, we give a much easier estimate.
\begin{lemma}\label{lem:easy}
Under the same assumptions as Proposition \ref{prop:wave operator boundedness}, we have
	\begin{align}
		&\No{\sd^s \W_V(t) \sd^{-s}}_\CB \ls 1\quad \mbox{for all } 0 \le s \le 2, \label{eq:regularity}\\
		&\No{\bx^s \W_V(t) \bx^{-s}}_\CB \ls \begin{dcases}\label{eq:weight}
			&(\log(t+2))^{s}, \quad \mbox{if } 0 \le s \le 1, \\
			&(\log(t+2))^{s}\bt^{\de(s-1)}\quad \mbox{if } 1\le s \le 2.
		\end{dcases}
	\end{align}
\end{lemma}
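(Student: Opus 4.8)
The two estimates are essentially independent; only the $L^\I_x$ bounds on $\pl^\al V$ with $|\al|\le 2$ (and the corresponding $\CF L^1$ bounds, for the commutator estimates) will actually be used.

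\emph{Proof of \eqref{eq:regularity}.} Since $\sd^s$ is a Fourier multiplier it commutes with the unitary $\U(t)=e^{it\hb\De/2}$, so $\No{\sd^s\W_V(t)\sd^{-s}}_\CB=\No{\sd^s\U_V(t)\sd^{-s}}_\CB$; it therefore suffices to show that $\U_V(t)$ propagates the semiclassically scaled $H^s$--norm uniformly in $t,T,\hb$. For $\psi\in\CS(\R^3)$ set $v(t):=\sd^s\U_V(t)\psi$, which solves $i\hb\pl_t v=-\tfrac{\hb^2}{2}\De v+V(t)v+[\sd^s,V(t)]\sd^{-s}v$. As $-\tfrac{\hb^2}{2}\De$ and $V(t)$ are self-adjoint, the usual energy computation gives $\tfrac{d}{dt}\No{v(t)}_{L^2}^2\le\tfrac{2}{\hb}\No{[\sd^s,V(t)]\sd^{-s}}_\CB\No{v(t)}_{L^2}^2$, hence, by Grönwall,
\begin{align}
\No{\sd^s\U_V(t)\sd^{-s}}_\CB^2\le\exp\K{\frac{2}{\hb}\int_0^t\No{[\sd^s,V(\ta)]\sd^{-s}}_\CB\,d\ta}.
\end{align}
A commutator estimate of the type in Lemma \ref{lem:commutator basic}, now with the roles of $x$ and $-i\hb\na$ interchanged (using $\na_\xi\lg\xi\rg^s=s\lg\xi\rg^{s-2}\xi$ and the boundedness of $(\hb\na)\sd^{-2}$), yields, for $0\le s\le 2$,
\begin{align}
\No{[\sd^s,V(\ta)]\sd^{-s}}_\CB\ls\hb\K{\No{\na V(\ta)}_{\CF L^1}+\hb\No{\na^2 V(\ta)}_{\CF L^1}},
\end{align}
so $\tfrac1\hb\int_0^t\No{[\sd^s,V(\ta)]\sd^{-s}}_\CB\,d\ta\ls\int_0^\I(\bta^{-2+\de}+\bta^{-3+\de})\,d\ta\ls1$ uniformly in $t,T,\hb$, and \eqref{eq:regularity} follows.

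\emph{Proof of \eqref{eq:weight}.} By the commutator identity \eqref{eq:commutator x W} we have, for each $j$, the identity of bounded operators $\W_V(t)^*x_j\W_V(t)=x_j+R_j(t)$, where $R_j(t):=\int_0^t\U_V(\ta)^*\ta(\pl_{x_j}V)(\ta)\U_V(\ta)\,d\ta$ is self-adjoint and
\begin{align}
\No{R_j(t)}_\CB\le\int_0^t\ta\No{\pl_{x_j}V(\ta)}_{L^\I_x}\,d\ta\ls\int_0^t\frac{\ta}{\bta^{2}}\,d\ta\ls\log(t+2).
\end{align}
We prove \eqref{eq:weight} for $s=0$ (trivial, $\W_V(t)$ unitary), $s=1$, and $s=2$, and then obtain the full range by Stein interpolation applied to the analytic family $z\mapsto\bx^z\W_V(t)\bx^{-z}$, noting $\No{\bx^{iy}}_\CB=1$ for $y\in\R$: interpolating $s=0$ with $s=1$ gives $(\log(t+2))^s$ on $[0,1]$, and $s=1$ with $s=2$ gives $(\log(t+2))^s\bt^{\de(s-1)}$ on $[1,2]$. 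For $s=1$, since $\bx^2=1+\sum_jx_j^2$,
\begin{align}
\No{\bx\W_V(t)\bx^{-1}}_\CB^2=\No{\bx^{-1}\W_V(t)^*\bx^2\W_V(t)\bx^{-1}}_\CB=\No{\bx^{-1}\Big(1+\sum_j(x_j+R_j(t))^2\Big)\bx^{-1}}_\CB,
\end{align}
and each term $\bx^{-1}(x_j+R_j(t))^2\bx^{-1}=\big((x_j+R_j(t))\bx^{-1}\big)^*\big((x_j+R_j(t))\bx^{-1}\big)$ has norm $\No{(x_j+R_j(t))\bx^{-1}}_\CB^2\ls(1+\log(t+2))^2$; hence $\No{\bx\W_V(t)\bx^{-1}}_\CB\ls\log(t+2)$. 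For $s=2$ one argues similarly, reducing to the boundedness of $\bx^{-2}$--conjugates of quartic expressions in the $x_j$ and $R_j(t)$: each $x_j$ is absorbed by a $\bx^{-1}$, each $R_j(t)$ contributes a factor $\ls\log(t+2)$, and the commutators $[x_k,R_j(t)]$ produced on reordering are estimated using $\J_k(\ta)\U_V(\ta)=\U_V(\ta)(x_k+R_k(\ta))$ together with $[\J_k(\ta),\pl_{x_j}V(\ta)]=i\ta\hb\,\pl_{x_k}\pl_{x_j}V(\ta)$; the leading contribution there is $\int_0^t\ta^2\hb\No{\pl^2V(\ta)}_{L^\I_x}\,d\ta\ls\int_0^t\hb\,\bta^{-1+\de}\,d\ta\ls\bt^{\de}$, while the remaining terms are $\ls(\log(t+2))^2$. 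Collecting these, $\No{\bx^2\W_V(t)\bx^{-2}}_\CB\ls(\log(t+2))^2\bt^{\de}$, and interpolation finishes the proof.

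\emph{Main obstacle.} The delicate point is the $s=2$ weighted estimate: organizing the quartic expression so that every weight $\bx$ is absorbed while isolating the single worst contribution — a time integral of $\ta^2\hb\No{\pl^2V(\ta)}_{L^\I_x}\sim\hb\,\bta^{-1+\de}$ — which produces exactly the admissible loss $\bt^{\de}$ (this is precisely why the hypothesis on $\pl^2V$ is allowed to lose $\bt^{\de}$). One must also verify that the operator identities above, which involve the unbounded operators $x_j$ and $\J_k(\ta)$, are legitimate; this is handled by first working on $\CS(\R^3)$ and using the a priori control established in the proof of \eqref{eq:regularity}.
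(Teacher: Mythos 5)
Your proposal is correct, and it splits into two halves of different character relative to the paper. For the weighted bound \eqref{eq:weight} you are using essentially the same ingredients as the paper: the identity \eqref{eq:commutator x W} (in the equivalent form $\W_V(t)^*x\,\W_V(t)=x+R(t)$), the logarithmic growth from $\int_0^t\ta\No{\na V(\ta)}_{L^\I_x}d\ta$, the $\bt^{\de}$ loss coming from a term of size $\int_0^t\hb\ta^2\No{\na^2V(\ta)}_{L^\I_x}d\ta$ (in your bookkeeping via $[\J_k(\ta),\pl_{x_j}V(\ta)]=i\ta\hb\,\pl_{x_k}\pl_{x_j}V(\ta)$, in the paper via $\No{\lg\ta\hb\na\rg\,\ta\na V(\ta)\,\lg\ta\hb\na\rg^{-1}}_\CB$), and complex/Stein interpolation; the only difference is organizational — you expand $\W_V(t)^*\bx^{2s}\W_V(t)$ through a $T^*T$/quartic computation with the explicit commutators $[x_k,R_j(t)]$, while the paper estimates the nested commutator $\bx\Bk{x,\W_V(t)}\bx^{-1}$ by conjugating $\ta\na V(\ta)$ through $\lg\J(\ta)\rg$ and reusing the $s\le1$ case. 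For \eqref{eq:regularity} your route is genuinely different: the paper simply reruns the weighted scheme with \eqref{eq:commutator nabla W} and the $L^\I_x$ hypotheses (getting a bound with no loss), whereas you use an energy/Gr\"onwall argument for $\sd^s\U_V(t)\sd^{-s}$ together with the operator bound $\No{[\sd^s,V(\ta)]\sd^{-s}}_\CB\ls\hb\No{\na V(\ta)}_{\CF L^1}+\hb^2\No{\na^2V(\ta)}_{\CF L^1}$, which exploits the $\CF L^1$ hypotheses instead; this buys a self-contained argument that never touches the wave-operator commutator identities. One point you should flesh out: for $1<s\le2$ that commutator bound is not a literal ``role-swapped'' application of Lemma \ref{lem:commutator basic}, since $\na_\xi\lg\xi\rg^s\notin\CF L^1$ there; it does hold, but needs the short extra argument you only hint at — the mean value inequality $|\lg\hb\xi\rg^s-\lg\hb\eta\rg^s|\ls\hb|\xi-\eta|(\lg\hb\xi\rg^{s-1}+\lg\hb\eta\rg^{s-1})$, Peetre's inequality to trade $\lg\hb\xi\rg^{s-1}\lg\hb\eta\rg^{-s}$ for $\lg\hb(\xi-\eta)\rg^{s-1}$, and a Schur test in the Fourier variables — so write that out rather than citing the lemma. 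The domain issues with $x_j$ and $\J_k(\ta)$ you flag are handled at the same (formal-on-$\CS$) level of rigor as the paper's own Lemmas \ref{lem:commutator nabla W}--\ref{lem:commutator mixed}, so they are not an obstruction.
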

\begin{proof}[Proof of Lemma \ref{lem:easy}]
	We only prove \eqref{eq:weight} because the same proof works for \eqref{eq:regularity}.
	First, we have
	\begin{align}
		\No{\bx \W_V(t) \bx^{-1}}_\CB + \No{\bx^{-1} \W_V(t) \bx}_\CB
		\ls 1 + \No{\Bk{x, \W_V(t)}}_\CB.
	\end{align} 
	By \eqref{eq:commutator x W}, we obtain
	\begin{align}
		\No{\Bk{x, \W_V(t)}}_\CB = \No{\int_0^t \U_V(\ta)^*\ta\na V(\ta) \U_V(\ta) d\ta}_\CB 
		\le \int_0^t \frac{\ta}{\bta^2} d\ta \le \log(t+2).
	\end{align}
	Hence, the complex interpolation implies \eqref{eq:weight} for all $-1\le s\le 1$.
		
	Next, we consider the case $s=2$. Note that 
	\begin{align}
		\No{\bx^2 \W_V(t) \bx^{-2}}_\CB
		&\ls \No{\bx \W_V(t) \bx^{-1}}_\CB + \No{\bx \Bk{x,\W_V(t)} \bx^{-1}}_\CB \\
		&\ls \log(t+2) + \No{\bx \Bk{x,\W_V(t)} \bx^{-1}}_\CB.
	\end{align}
	By using \eqref{eq:commutator x W} and boundedness when $-1\le s \le 1$, we have
	\begin{align}
		\No{\bx \Bk{x, \W_V(t)}\bx^{-1}}_\CB
		&= \No{\bx\int_0^t \U_V(\ta)^*\ta\na V(\ta) \U_V(\ta) d\ta \bx^{-1}}_\CB  \\
		&\ls \int_0^t \No{\bx \W_V(\ta)^* \U(\ta)^* \ta\na V(\ta) \U(\ta) \W_V(\ta) \bx^{-1}}_\CB d\ta \\
		&\ls \int_0^t (\log(\ta+2))^2 \No{ \lg\J(\ta)\rg \ta\na V(\ta) \lg \J(\ta)\rg^{-1} }_\CB d\ta.
	\end{align}
	By \eqref{eq:f(J)}, we have
	\begin{align}
		&\No{ \lg\J(\ta)\rg \ta\na V(\ta) \lg \J(\ta)\rg^{-1} }_\CB
		= \No{\lg\ta\hb\na\rg \ta\na V(\ta) \lg \ta\hb\na\rg^{-1} }_\CB \\
		&\quad \le \ta\No{\na V(\ta)}_{L^\I_x} + \hb\ta^2 \No{\na^2 V(\ta)}_{L^\I_x}
		\ls \frac{1}{\bta} + \frac{\hb}{\bta^{1-\de}}.
	\end{align}
	Therefore, we obtain
	\begin{align}
		&\No{\bx \Bk{x, \W_V(t)}\bx^{-1}}_\CB \ls (\log(t+2))^2 \bt^\de.
	\end{align}
	Collecting the above argument, we obtain
	\begin{align}
		\No{\bx^2 \W_V(t) \bx^{-2}}_\CB
		\le (\log(t+2))^2 \bt^\de.
	\end{align}
	Hence, again the complex interpolation implies \eqref{eq:weight} with $1\le s \le 2$.
\end{proof}

\begin{remark}
	As you can see, the boundedness in Lemma \ref{lem:easy} includes the growth with respect to $t\ge 0$; hence, it is not sufficient in our analysis. This is the motivation to prove Proposition \ref{prop:wave operator boundedness}. However, Lemma \ref{lem:easy} is still useful, and in fact, it is used in the proof of Proposition \ref{prop:wave operator boundedness}.
\end{remark}

In the sequel, we prove Proposition \ref{prop:wave operator boundedness}. 
In Section \ref{subsec:key cancellation lemma}, we prove the most important estimate in the proof of Proposition \ref{prop:wave operator boundedness}. In Section \ref{subsec:decomposition}, we give a useful decomposition of the operator to get cancellation. Finally, in Sections \ref{subsec:si=1} and \ref{subsec:si=2}, 
we provide a complete step-by-step proof.

\subsection{Key cancellation lemma and its corollary}\label{subsec:key cancellation lemma}
In the proof of Proposition \ref{prop:wave operator boundedness}, the quantity defined by
\begin{equation}\label{eq:Af}
	\CP[f](t):= \No{\Ck{f(-it\hb\na_x) - \M(-t) f(-it\hb\na_x) \M(t)}\bx^{-1}}_{\CB} \\
\end{equation}
plays an essential role.
The next is the fundamental lemma about $\CP[f](t)$.
\begin{lemma}\label{lem:key estimate}
	We have the following three bounds of $\CP[f](t)$.
	\begin{align}
		&\CP[f](t) \ls \lg t\hb\rg \No{\na f(t)}_{\CF L^1}, \label{eq:Af 1} \\
		&\CP[f](t) \ls \frac{1}{\lg t\hb\rg^{1/2}} \K{ \|f\|_{L^\I_x} + |t\hb|\|\na f(t)\|_{L^\I_x} + |t\hb|^2\|\na^2 f(t)\|_{L^\I_x} } \label{eq:Af 2} \\
		&\CP[f](t) \ls \frac{1}{\lg t\hb\rg^{1/2}} \K{\|f\|_{L^\I_x} + |t\hb|\|\na f(t)\|_{\CF L^1_x}}. \label{eq:Af 3}
	\end{align}
\end{lemma}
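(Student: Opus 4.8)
The plan is to begin by recording two equivalent descriptions of the operator $D:=f(-it\hb\na_x)-\M(-t)f(-it\hb\na_x)\M(t)$, whose weighted operator norm is $\CP[f](t)$. First, since $\M(-t)(x)\M(t)(x')=e^{(|x|^2-|x'|^2)/(2it\hb)}$, the integral kernel of $D$ is $K_f(x-x')\big(1-e^{(|x|^2-|x'|^2)/(2it\hb)}\big)$, where $K_f$ is the convolution kernel of $f(-it\hb\na)$; the substitution $\eta=t\hb\xi$ gives $K_f(z)=(2\pi)^{-3/2}(t\hb)^{-3}\CF^{-1}[f]\!\left(z/(t\hb)\right)$, whence the scale-free bounds $\|K_f\|_{L^1}\ls\|f\|_{\CF L^1}$ and $\int_{\R^3}|z|\,|K_f(z)|\,dz\ls|t\hb|\,\|\na f\|_{\CF L^1}$ (the latter because $z_jK_f(z)$ is $it\hb$ times the kernel of $(\pl_jf)(-it\hb\na)$, and $\|\CF^{-1}g\|_{L^1}=\|g\|_{\CF L^1}$). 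Second, since $\J(-t)=\U(-t)x\,\U(t)=\M(-t)(-it\hb\na)\M(t)$ by \eqref{eq:J} and $\U(t)$ commutes with every Fourier multiplier, functional calculus gives $\M(-t)f(-it\hb\na)\M(t)=\U(-t)f(x)\,\U(t)$, so that
$$D=\U(-t)\big(f(-it\hb\na_x)-f(x)\big)\U(t),\qquad \CP[f](t)=\No{\big(f(-it\hb\na_x)-f(x)\big)\U(t)\bx^{-1}}_\CB,$$
using unitarity of $\U(\pm t)$. The first description drives \eqref{eq:Af 1}, the second \eqref{eq:Af 2}--\eqref{eq:Af 3}.

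For \eqref{eq:Af 1} I would estimate the kernel $K_f(x-x')\big(1-e^{(|x|^2-|x'|^2)/(2it\hb)}\big)\lg x'\rg^{-1}$ of $D\bx^{-1}$ directly. Writing $|x|^2-|x'|^2=|x-x'|^2+2x'\cdot(x-x')$ and $1-e^{a+b}=(1-e^{a})+e^{a}(1-e^{b})$ with $a=|x-x'|^2/(2it\hb)$, $b=x'\cdot(x-x')/(it\hb)$ (both exponents purely imaginary), I bound the first summand by $|1-e^a|\le\min(2,|a|)\le\sqrt2\,|a|^{1/2}=|x-x'|/|t\hb|^{1/2}$ and, using the weight, the second by $|1-e^b|\,\lg x'\rg^{-1}\le(|x'|/\lg x'\rg)\,|x-x'|/|t\hb|\le|x-x'|/|t\hb|$. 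Hence the kernel of $D\bx^{-1}$ is dominated pointwise by $\big(|t\hb|^{-1/2}+|t\hb|^{-1}\big)\,|x-x'|\,|K_f(x-x')|$, a function of $x-x'$ alone, and Schur's test (Young's inequality) together with $\int|z|\,|K_f(z)|\,dz\ls|t\hb|\,\|\na f\|_{\CF L^1}$ gives $\CP[f](t)\ls(|t\hb|^{1/2}+1)\,\|\na f\|_{\CF L^1}\ls\lg t\hb\rg\,\|\na f(t)\|_{\CF L^1}$.

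For \eqref{eq:Af 2} and \eqref{eq:Af 3} I would split on the size of $|t\hb|$. If $|t\hb|\le1$ the factor $\lg t\hb\rg^{-1/2}$ is comparable to $1$ and both bounds follow from the trivial estimate $\|D\bx^{-1}\|_\CB\le\|D\|_\CB\le2\|f\|_{L^\I_x}$ (since $\|f(-it\hb\na)\|_\CB=\|f\|_{L^\I}$ and $\M(\pm t)$ is unitary). If $|t\hb|\ge1$ I would start from $\CP[f](t)=\|(f(-it\hb\na)-f(x))\U(t)\bx^{-1}\|_\CB$, factor $\U(t)\bx^{-1}=\lg\J(t)\rg^{-1}\U(t)=\M(t)\lg t\hb\na\rg^{-1}\M(-t)\U(t)$ via \eqref{eq:f(J)} (equivalently, use the MDFM decomposition \eqref{eq:MDFM}), and exploit that this weight concentrates in the variable along which $f(-it\hb\na)$ and $f(x)$ agree: a second-order Taylor expansion of $f$ peels off the leading contribution of $f$ itself, which I would control by the $L^1$--$L^\I$ dispersive bound $\|\U(t)\|_{L^1_x\to L^\I_x}\ls|t\hb|^{-3/2}$ combined with Hilbert--Schmidt/Kato--Seiler--Simon estimates for the weights (this produces the gain $|t\hb|^{-1/2}$), while the first- and second-order remainders carry the factors $|t\hb|\,\|\na f\|_{L^\I}$ and $|t\hb|^2\,\|\na^2f\|_{L^\I}$, giving \eqref{eq:Af 2}; for \eqref{eq:Af 3} I would instead bound the remainder by the $\CF L^1$-Schur argument of \eqref{eq:Af 1}, replacing $|t\hb|\|\na f\|_{L^\I}+|t\hb|^2\|\na^2f\|_{L^\I}$ by $|t\hb|\|\na f\|_{\CF L^1}$. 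The hard part is precisely the large-$|t\hb|$ case of \eqref{eq:Af 2}--\eqref{eq:Af 3}: since only $\|f\|_{L^\I}$ (not $\|f\|_{\CF L^1}$) is available on the leading term, one cannot estimate absolute values of kernels by Schur's test and must genuinely exploit the oscillation of the Schr\"odinger kernel against the phase-space localization supplied by $\bx^{-1}$ — this is where the dispersive decay $|t\hb|^{-1/2}$ is generated and where keeping all constants independent of $\hb$ requires the most care.
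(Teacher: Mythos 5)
Your kernel/Schur argument for \eqref{eq:Af 1} is correct (it is essentially a kernel-level version of the paper's commutator lemma, Lemma \ref{lem:commutator basic}), the identity $\M(-t)f(-it\hb\na_x)\M(t)=\U(-t)f(x)\U(t)$ is a valid reformulation, and the regime $|t\hb|\le 1$ of \eqref{eq:Af 2}--\eqref{eq:Af 3} is indeed trivial. The genuine gap is the only substantive case: \eqref{eq:Af 2} and \eqref{eq:Af 3} for $|t\hb|\ge 1$, which you leave as a plan (``second-order Taylor expansion of $f$ \dots controlled by the $L^1$--$L^\I$ dispersive bound combined with Hilbert--Schmidt/Kato--Seiler--Simon estimates''). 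This is not an argument, and the mechanism it invokes is misdirected: the gain $\lg t\hb\rg^{-1/2}$ here does not come from dispersion at all, and it is hard to see how it could with the tools you name --- $\bx^{-1}\notin L^2(\R^3)$, so Hilbert--Schmidt/KSS bounds with this single weight are unavailable; only $\|f\|_{L^\I_x}$ is at hand, so kernels of $f(-it\hb\na_x)$ cannot be taken in absolute value; $\No{f(x)\U(t)\bx^{-1}}_\CB$ alone has no decay (take $f\equiv 1$), so everything hinges on the difference structure; and a ``Taylor expansion of $f$'' around an unspecified operator-valued base point ($x$ versus $x-it\hb\na$ do not commute) is not defined.

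What actually produces the gain in the paper is elementary: split $f(-it\hb\na_x)-\M(-t)f(-it\hb\na_x)\M(t)=\K{1-\M(-t)}f(-it\hb\na_x)+\M(-t)f(-it\hb\na_x)\K{1-\M(t)}$, use the pointwise bound $\bx^{-1}\abs{1-e^{\mp |x|^2/(2it\hb)}}\ls \lg t\hb\rg^{-1/2}$ (the same $|1-e^{i\th}|\le\min(2,|\th|)$ inequality you already exploited for \eqref{eq:Af 1}), and move $\bx^{-1}$ next to the factors $1-\M(\pm t)$ at the price of the commutators $\Bk{\bx^{1/2},f(-it\hb\na_x)}$ and $\Bk{\bx^{1/2},\Bk{\bx^{1/2},f(-it\hb\na_x)}}$; these are controlled by Lemma \ref{lem:commutator basic}, since $\|\na\bx^{1/2}\|_{\CF L^1}<\I$ and $\Bk{x,f(-it\hb\na_x)}=it\hb(\na f)(-it\hb\na_x)$, yielding exactly the remainders $|t\hb|\|\na f\|_{L^\I_x}+|t\hb|^2\|\na^2 f\|_{L^\I_x}$ of \eqref{eq:Af 2}; estimating the same correction instead via $\No{\Bk{f(t\hb\xi),\lg\na\rg}}_\CB\ls t\hb\|\na f\|_{\CF L^1}$ gives \eqref{eq:Af 3}. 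Until you supply an argument of this type (or make your Taylor/dispersive plan precise and show it closes with only $\|f\|_{L^\I_x}$ and the weight $\bx^{-1}$), the bounds \eqref{eq:Af 2} and \eqref{eq:Af 3} --- which are the ones the paper actually needs, e.g.\ in Corollary \ref{cor:decay rate} --- remain unproved.
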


\begin{proof}
\underline{\textbf{Step 1: Proof of \eqref{eq:Af 1}.}}
First, we have
\begin{align*}
	\CP[f](t) &\le \No{\Bk{\bx^{-1}, f(t,-it\hb\na_x)}}_\CB
	+ \No{\Bk{\M(t)\bx^{-1}, f(t,-it\hb\na_x) }}_{\CB} =: \SA + \SB.
\end{align*}
On the one hand, for $\SA$, we have by \eqref{eq:commutator basic}
\begin{align*}
	\SA&\le t\hb \|\na f(t)\|_{\CF L^1} \No{\Bk{x, \lg\na\rg^{-1}}}_\CB
	\ls \lg t\hb\rg \|\na f(t)\|_{\CF L^1}.
\end{align*}
On the other hand, for $\SB$, we have
\begin{align*}
	\SB 
	&\le t\hb\|\na f(t)\|_{\CF L^1} \No{\Bk{\na,\M(t)\bx^{-1} }}_\CB  \ls \lg t\hb\rg \|\na f(t)\|_{\CF L^1}.
\end{align*}
Collecting all the above, we obtain
\begin{equation}\label{eq:bound 1'}
	\CP[f](t)\ls  \lg t\hb\rg \|\na f(t)\|_{\CF L^1}.
\end{equation}

\noindent \underline{\textbf{Step 2: Proof of \eqref{eq:Af 2}.}}
	Next, we have
	\begin{align*}
		\CP[f](t)&\le \No{\K{1-\M(-t)}f(-it\hb\na_x)\bx^{-1}}_\CB + \No{\M(-t) f(-it\hb\na_x)\K{1-\M(t)}\bx^{-1}}_{\CB} \\
		&\le \No{\bx^{-1} \K{1-\M(-t)}f(-it\hb\na_x)}_\CB  +\No{f(-it\hb\na_x)\K{1-\M(t)}\bx^{-1}}_{\CB}\\
		&\qquad +\No{\bx^{-1}\K{1-\M(-t)} \Bk{\bx,f(-it\hb\na_x)\bx^{-1}}}_\CB\\
		&\ls \No{\bx^{-1} \K{1-\M(-t)}f(-it\hb\na_x)}_\CB \\
		&\qquad +\No{\bx^{-1} \K{1-\M(-t)}\Bk{\bx ,f(-it\hb\na_x)}\bx^{-1}}_\CB=: \SC + \SD.
	\end{align*}
	On the one hand, since
	\begin{align}
		\abs{\bx^{-1} (1-\M(t))} \ls \frac{1}{\lg t\hb\rg^{1/2}},
	\end{align}
	we have
	\begin{equation*}
		\SC \ls \frac{\|f(t)\|_{L^\I_x}}{\lg t\hb \rg^{1/2}}.
	\end{equation*}
	On the other hand, Lemma \ref{lem:commutator basic} implies
	\begin{align}
		\SD&\le \frac{1}{\lg t\hb \rg^{1/2}} \K{\No{\Bk{\bx^{1/2}, f(-it\hb\na_x)}}_{\CB}
			+ \No{\Bk{\bx^{1/2}, \Bk{\bx^{1/2}, f(-it\hb\na_x)}}}_{\CB}} \\
		&\ls \frac{1}{\lg t\hb \rg^{1/2}} \K{ |t\hb|\No{\na f(t)}_{L^\I_x} + |t\hb|^2\No{\na^2 f(t)}_{L^\I_x} }.
	\end{align}
	Hence, we obtain
	\begin{equation}\label{eq:bound 2'}
		\CP[f](t) \ls \frac{1}{\lg t\hb \rg^{1/2}} 
		\K{\|f(t)\|_{L^\I_x} + |t\hb| \|\na f(t)\|_{L^\I_x}
		 +  |t\hb|^2 \|\na^2 f(t)\|_{L^\I_x}}
	\end{equation}

\noindent \underline{\textbf{Step 3: Proof of \eqref{eq:Af 3}.}}
The proof of \eqref{eq:Af 3} is almost the same as the proof of \eqref{eq:Af 2}.
Only one difference is that we estimate $\SD$ by
\begin{align}
	\SD \le \frac{1}{\lg t\hb\rg^{1/2}} \No{\Bk{f(t\hb\xi),\lg\na\rg}}_\CB \le \frac{t\hb\|\na f\|_{\CF L^1}}{\lg t\hb\rg^{1/2}} \No{\Bk{x, \lg\na\rg}}_\CB \ls \frac{t\hb\|\na f\|_{\CF L^1}}{\lg t\hb\rg^{1/2}}.
\end{align}
\end{proof}

As a corollary of Lemma \ref{lem:key estimate}, we obtain
\begin{corollary}\label{cor:decay rate}
	Under the same assumptions as Proposition \ref{prop:wave operator boundedness}, we have the following estimates:
	\begin{equation}\label{eq:A nabla V}
		\CP[\na V](t)\ls
		\begin{dcases}
			&\bt^{-7/3+\de/3} \quad \mbox{for all }  t \in [0,\hb^{-2}), \\
			& \lg t\hb\rg^{-1/2} \bt^{-2} + \sqrt{\hb}\bt^{-5/2+\de}\quad \mbox{for all } t \in [0,\I).
		\end{dcases}
	\end{equation}
	and
	\begin{equation}\label{eq:A nabla2 V}
		\CP[\na^2 V](t)\ls
		\begin{dcases}
			&\bt^{-19/6+\de} \quad \mbox{for all } t \in [0, \hb^{-2}), \\
				&\lg t\hb\rg^{-1/2} \bt^{-3+\de} + \min(\sqrt{\hb}\bt^{3-\de}, \hb^{-1} \bt^{7/2-\de}) \quad \mbox{for all } t \in [0,\I)
		\end{dcases}
	\end{equation}
\end{corollary}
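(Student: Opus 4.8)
The plan is to feed the decay hypotheses on $V$ recorded in Proposition \ref{prop:wave operator boundedness} into the three bounds of Lemma \ref{lem:key estimate}, applied once with $f=\na V$ (so every multi-index $|\al|$ in those hypotheses is shifted up by one) and once with $f=\na^2V$ (shifted up by two), and then to reconcile the resulting powers of $t\hb$ against powers of $\bt$ using only $t\le\bt$, $\hb\le 1$, and the two elementary inequalities
\begin{equation}
	\lg t\hb\rg^{-1/2}(t\hb)\le (t\hb)^{1/2}\le \sqrt{\hb}\,\bt^{1/2},
	\qquad
	\lg t\hb\rg^{-1/2}(t\hb)^{2}\le (t\hb)^{3/2}\le \sqrt{\hb}\,\bt^{3/2},
\end{equation}
each of which follows from $\lg t\hb\rg\ge\max(1,t\hb)$ by splitting into the cases $t\hb\le 1$ and $t\hb\ge 1$.

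For the global-in-time bounds (the second cases of \eqref{eq:A nabla V} and \eqref{eq:A nabla2 V}) I would invoke \eqref{eq:Af 2} for $\CP[\na V]$ and \eqref{eq:Af 3} for $\CP[\na^2 V]$. Substituting the hypothesized decay of $\|\na^kV\|_{L^\I_x}$ (namely $\bt^{-2}$ for $k=1$ and $\bt^{-k-1+\de}$ for $k=2,3$, saturating at $\bt^{-4+\de}$ for $k=4$) together with $\|\na^3V\|_{\CF L^1}\ls\min(\bt^{-7/2+\de},\hb^{-3/2}\bt^{-4+\de})$, the leading term $\lg t\hb\rg^{-1/2}\|f\|_{L^\I_x}$ in each bound of Lemma \ref{lem:key estimate} contributes exactly the first summand on the right-hand side of the claimed estimate, while every remaining term carries a factor $\lg t\hb\rg^{-1/2}(t\hb)$ or $\lg t\hb\rg^{-1/2}(t\hb)^{2}$, which the two inequalities above turn into $\sqrt{\hb}$ times a power of $\bt$; keeping both entries of the $\min$ defining $\|\na^3V\|_{\CF L^1}$ produces the stated $\min$ term (the displayed exponents in \eqref{eq:A nabla2 V} should of course be read with their natural signs). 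It is precisely here that one must use the $\CF L^1$-type bound \eqref{eq:Af 3} rather than \eqref{eq:Af 2} for $\CP[\na^2V]$, since \eqref{eq:Af 2} would force a $\|\na^4V\|_{L^\I_x}$ term and hence a strictly worse $\bt$-power.

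For the short-time bounds (the first cases, valid on $[0,\hb^{-2})$) the key point is that on this range $t\hb<\sqrt{t}\ls\bt^{1/2}$, so $\lg t\hb\rg\ls\bt^{1/2}$; this makes the subleading terms in \eqref{eq:Af 2} dominated by the leading one, so that \eqref{eq:Af 2} collapses to $\CP[f](t)\ls\lg t\hb\rg^{-1/2}\|f\|_{L^\I_x}$. One then combines this with \eqref{eq:Af 1} through the trivial interpolation $\min(A,B)\le A^{1/3}B^{2/3}$: taking $A=\lg t\hb\rg\|\na f\|_{\CF L^1}$ and $B=\lg t\hb\rg^{-1/2}\|f\|_{L^\I_x}$, the exponent of $\lg t\hb\rg$ becomes $\tfrac13\cdot 1+\tfrac23\cdot(-\tfrac12)=0$, and the surviving $\bt$-exponent is $\tfrac13$ of the $\na f$-rate plus $\tfrac23$ of the $f$-rate, giving $\tfrac13(-3+\de)+\tfrac23(-2)=-\tfrac73+\tfrac\de3$ for $\na V$ and $\tfrac13(-\tfrac72+\de)+\tfrac23(-3+\de)=-\tfrac{19}{6}+\de$ for $\na^2V$. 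There is no deep obstacle — this is a bookkeeping corollary of Lemma \ref{lem:key estimate} — but the step that needs genuine care is the conversion from the operator-norm bounds of Lemma \ref{lem:key estimate} into clean powers of $\hb$ and $\bt$: one must resist the temptation to crudely bound $t\hb\le\bt$, which would destroy the extra $\sqrt{\hb}$, and instead exploit the $\lg t\hb\rg^{-1/2}$ prefactor via the case split above, and one must select the correct member of \eqref{eq:Af 1}–\eqref{eq:Af 3} in each regime.
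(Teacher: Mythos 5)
Your proposal is correct and follows essentially the same route as the paper: it plugs the hypotheses of Proposition \ref{prop:wave operator boundedness} into Lemma \ref{lem:key estimate}, interpolates \eqref{eq:Af 1} against \eqref{eq:Af 2} with weights $1/3,2/3$ on $[0,\hb^{-2})$, and uses a $\lg t\hb\rg^{-1/2}$-type bound globally in time (with the exponents in \eqref{eq:A nabla2 V} indeed to be read with negative signs, as you observe). The only cosmetic difference is that for the global bound on $\CP[\na V]$ the paper invokes \eqref{eq:Af 3} whereas you use \eqref{eq:Af 2}; both work, since the extra term $\lg t\hb\rg^{-1/2}|t\hb|^2\|\na^3 V\|_{L^\I_x}$ is also $\ls \sqrt{\hb}\,\bt^{-5/2+\de}$.
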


\begin{proof}
	\noindent \underline{\textbf{Step 1: Proof of \eqref{eq:A nabla V}.}}
	When $0\le t \le 1/\hb^2$, by the assumption and Lemma \ref{lem:key estimate}, we have
	\begin{equation}
		\begin{aligned}
  \CP[\na V]&\ls \No{\na^2 V(t)}_{\CF L^1}^{1/3}\K{\No{\na V(t)}_{L^\I_x} + |t\hb|\No{\na^2 V(t)}_{L^\I_x} + |t\hb|^2\No{\na^3 V(t)}_{L^\I_x}}^{2/3}  \\
			&\ls \frac{1}{\bt^{1-\de/3}} \K{\frac{1}{\bt^{2}} +\frac{|t\hb|}{\bt^{3-\de}} + \frac{|t\hb|^2}{\bt^{4-\de}} }^{2/3} \\
			&\ls \frac{1}{\bt^{1-\de/3}} \K{\frac{1}{\bt^{2}} +\frac{\sqrt{t}}{\bt^{3-\de}} + \frac{t}{\bt^{4-\de}} }^{2/3} 
			\ls \frac{1}{\bt^{7/3-\de/3}}.
		\end{aligned}
	\end{equation}
	Moreover, by the assumption and Lemma \ref{lem:key estimate}, we have
	\begin{align}
		\CP[\na V](t) &\ls \frac{1}{\lg t\hb\rg^{1/2}} \K{\No{\na V(t)}_{L^\I_x} + |t\hb|\No{\na^2 V(t)}_{\CF L^1}}
		\ls \frac{1}{\lg t\hb\rg^{1/2}\bt^{2}} + \frac{\sqrt{\hb}}{\bt^{5/2-\de}}.
	\end{align}
	
	\noindent \underline{\textbf{Step 2: Proof of \eqref{eq:A nabla2 V}}}
	When $0\le t \le 1/\hb^2$, by the assumption and Lemma \ref{lem:key estimate}, we have
	\begin{equation}
		\begin{aligned}
			\CP[\na^2 V](t)&\ls \No{\na^3 V(t)}_{\CF L^1}^{1/3}\K{\No{\na^2 V(t)}_{L^\I_x} + |t\hb|\No{\na^3 V(t)}_{L^\I_x} + |t\hb|^2\No{\na^4 V(t)}_{L^\I_x}}^{2/3} \\
			&\ls \frac{1}{\bt^{7/6-\de/3}} \K{\frac{1}{\bt^{3-\de}} + \frac{\sqrt{t}}{\bt^{4-\de}} + \frac{t}{\bt^{4-\de}}}^{2/3}
			\ls \frac{1}{\bt^{19/6-\de}}. 
		\end{aligned}
	\end{equation}
	Finally, by the assumption and Lemma \ref{lem:key estimate}, we have
	\begin{equation}
		\begin{aligned}
			\CP[\na^2 V](t)&\ls \frac{1}{\lg t\hb\rg^{1/2}}
			\K{\No{\na^2 V(t)}_{L^\I_x} + |t\hb|\No{\na^3 V(t)}_{\CF L^1}} \\
			&\ls \frac{1}{\lg t\hb\rg^{1/2}}
			\K{\frac{1}{\bt^{3-\de}} + \frac{|t\hb|}{\bt^{7/2-\de}} }
			\le \frac{1}{\lg t\hb\rg^{1/2}\bt^{3-\de}} + \frac{\sqrt{\hb}}{\bt^{3-\de}}.
		\end{aligned}
	\end{equation}
	Another estimate is
	\begin{align}
		\CP[\na^2 V](t)	&\ls \frac{1}{\lg t\hb\rg^{1/2}}
		\K{\frac{1}{\bt^{3-\de}} + \frac{|t\hb|}{\hb^{3/2}\bt^{4-\de}} }
		\le \frac{1}{\lg t\hb\rg^{1/2}\bt^{3-\de}}
		+ \frac{1}{\hb\bt^{7/2-\de}}.
\end{align}
\end{proof}

\subsection{Decomposition of the commutator}\label{subsec:decomposition}
In the proof of Proposition \ref{prop:wave operator boundedness}, it is very important to consider the following commutator:
\begin{align}
	\Bk{x, e^{i\Ps(t,-i\hb\na_x)}\W_{V}(t)}
	= \Bk{x, e^{i\Ps(t,-i\hb\na_x)}}\W_{V}(t) +  e^{i\Ps(t,-i\hb\na_x)}\Bk{x,\W_{V}(t)} =: A(t)+ B(t).
\end{align}
A direct calculation shows the decomposition of $A(t)$ and $B(t)$.
\begin{lemma}\label{lem:decomposition of A}
	For $A(t)$, we have the $A(t)=A_1(t)+A_2(t)+A_3(t)$, where
	\begin{equation}\label{eq:A}
		\begin{aligned}
			&A_1(t):= - e^{i\Ps(t,-i\hb\na_x)} \W_{V}(t) \int_0^t \ta (\na V)(\ta,-i\ta\hb\na_x) d\ta, \\
			&A_2(t):= -\frac{i}{\hb} e^{i\Ps(t,-i\hb\na_x)}  \W_{V}(t)\int_0^t\int_0^{\ta}d\ta' d\ta \U_{V}(\ta')^* \Bk{V(\ta'), \ta(\na V)(\ta,-i\ta\hb\na_x) } \U_{V}(\ta'),  \\
			&A_3(t):=- \frac{i}{\hb} e^{i\Ps(t,-i\hb\na_x)}  \W_{V}(t)\int_0^t \int_{\ta}^td\ta' d\ta
			\U_{V}(\ta')^* \Bk{V(\ta'), \ta(\na V)(\ta,-i\ta\hb\na_x) } \U_{V}(\ta').  
		\end{aligned}
	\end{equation}
\end{lemma}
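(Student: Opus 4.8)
The plan is to treat $e^{i\Ps(t,-i\hb\na_x)}$ as a Fourier multiplier, compute the commutator $\Bk{x,e^{i\Ps(t,-i\hb\na_x)}}$ explicitly, move $e^{i\Ps(t,-i\hb\na_x)}$ to the left, and then commute the remaining multiplier past $\W_V(t)$ using Lemma~\ref{lem:commutator wave}.

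First I would use the elementary identity $\Bk{x_j, m(-i\hb\na_x)} = i\hb\,(\pl_{\xi_j}m)(-i\hb\na_x)$, valid for any sufficiently nice symbol $m$ and proved on the Fourier side via $x_j\leftrightarrow i\pl_{\xi_j}$. Applying it with $m=e^{i\Ps(t,\cdot)}$, using the chain rule, and differentiating the defining integral $\Ps(t,\xi)=\hb^{-1}\int_0^t V(\ta,\ta\xi)\,d\ta$ under the integral sign (so that $\pl_{\xi_j}\Ps(t,\xi)=\hb^{-1}\int_0^t\ta(\pl_{x_j}V)(\ta,\ta\xi)\,d\ta$), the factors of $\hb$ cancel exactly and one gets
\[
	\Bk{x, e^{i\Ps(t,-i\hb\na_x)}} = -\,N(t)\,e^{i\Ps(t,-i\hb\na_x)}, \qquad N(t) := \int_0^t \ta\,(\na V)(\ta,-i\ta\hb\na_x)\,d\ta.
\]
Here $e^{i\Ps(t,-i\hb\na_x)}$ is a unitary Fourier multiplier since $\Ps$ is real, and $N(t)$ is a vector of bounded Fourier multipliers because $\No{(\na V)(\ta,-i\ta\hb\na_x)}_\CB \le \No{\na V(\ta)}_{L^\I_x}\ls \bt^{-2}$ under the hypotheses of Proposition~\ref{prop:wave operator boundedness}, so the $\ta$-integral converges in operator norm. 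Since $N(t)$ and $e^{i\Ps(t,-i\hb\na_x)}$ are both Fourier multipliers they commute, hence
\[
	A(t) = \Bk{x,e^{i\Ps(t,-i\hb\na_x)}}\W_V(t) = -N(t)\,e^{i\Ps(t,-i\hb\na_x)}\W_V(t) = -e^{i\Ps(t,-i\hb\na_x)}N(t)\W_V(t).
\]

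Next I would write $N(t)\W_V(t) = \W_V(t)N(t) + \Bk{N(t),\W_V(t)}$; the first term contributes exactly $A_1(t)$ as in \eqref{eq:A}. For the commutator, fix $\ta\in[0,t]$: then $(\na V)(\ta,-i\ta\hb\na_x)$ is a bounded Fourier multiplier and $(\na V)(\ta,-i\ta\hb\na_x)V(\ta')\in\CB$ for each $\ta'$, so the commutator formula \eqref{eq:commutator f W} of Lemma~\ref{lem:commutator wave} applies. Multiplying by $\ta$, integrating in $\ta$ over $[0,t]$ (Fubini is legitimate since all integrands are bounded on $L^2$ with $(\ta,\ta')$-integrable norms), and using $\Bk{V(\ta'),\ta(\na V)(\ta,-i\ta\hb\na_x)} = -\ta\Bk{(\na V)(\ta,-i\ta\hb\na_x),V(\ta')}$, one obtains
\[
	-e^{i\Ps(t,-i\hb\na_x)}\Bk{N(t),\W_V(t)} = -\frac{i}{\hb}e^{i\Ps(t,-i\hb\na_x)}\W_V(t)\int_0^t\!\!\int_0^t \U_V(\ta')^*\Bk{V(\ta'),\ta(\na V)(\ta,-i\ta\hb\na_x)}\U_V(\ta')\,d\ta'\,d\ta.
\]
Splitting the inner integral as $\int_0^t d\ta' = \int_0^\ta d\ta' + \int_\ta^t d\ta'$ breaks the right-hand side into $A_2(t) + A_3(t)$, which gives $A(t) = A_1(t) + A_2(t) + A_3(t)$.

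The only step requiring genuine care is the first one: keeping track of the signs, of the factor of $\hb$ (which must cancel exactly), and of whether $e^{i\Ps(t,-i\hb\na_x)}$ ends up on the left or the right when one differentiates $e^{i\Ps(t,\xi)}$ and interchanges $\pl_{\xi_j}$ with the $\ta$-integral defining $\Ps$. Everything after that is bounded-operator bookkeeping; one only needs to check at each stage that the operators in play are genuinely bounded on $L^2(\R^3)$ — which is exactly where the $L^\I_x$-bounds on $\na V$ (and, for companion terms, $\na^2 V$) from Proposition~\ref{prop:wave operator boundedness} enter — so that the formal commutations and the appeal to Lemma~\ref{lem:commutator wave} are justified.
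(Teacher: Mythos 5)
Your proposal is correct and follows essentially the same route as the paper: compute $\Bk{x,e^{i\Ps(t,-i\hb\na_x)}}$ via the Fourier-multiplier identity $\Bk{x,\CF^{-1}m(\xi)\CF}=\CF^{-1}i\na_\xi m(\xi)\CF$ (with the $\hb$ cancelling against the $1/\hb$ in $\Ps$), move the resulting multiplier past $\W_V(t)$ at the cost of a commutator handled by \eqref{eq:commutator f W} of Lemma \ref{lem:commutator wave}, and split the $\ta'$-integral over $[0,t]$ into $[0,\ta]$ and $[\ta,t]$ to obtain $A_1,A_2,A_3$. The signs and the placement of $e^{i\Ps(t,-i\hb\na_x)}$ check out, so no gaps.
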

\begin{proof}
	Since $\Dk{x,\CF^{-1}m(\xi)\CF}=\CF^{-1} i\na_\xi m(\xi) \CF$, we have
	\begin{equation}
		\begin{aligned}
			A(t) &= - e^{i\Ps(t,-i\hb\na_x)} \int_0^t \ta (\na V)(\ta,-i\ta\hb\na_x) d\ta \W_{V}(t)\\
			&= - e^{i\Ps(t,-i\hb\na_x)} \W_{V}(t) \int_0^t \ta (\na V)(\ta,-i\ta\hb\na_x) d\ta \\
			&\quad - e^{i\Ps(t,-i\hb\na_x)} \int_0^t \ta \Bk{(\na V)(\ta,-i\ta\hb\na_x), \W_{V}(t)} d\ta.
	\end{aligned}
	\end{equation}
	By \eqref{eq:commutator f W} in Lemma \ref{lem:commutator wave},  we obtain
	\begin{equation}
		\begin{aligned}
			A(t)&= - e^{i\Ps(t,-i\hb\na_x)} \W_{V}(t) \int_0^t \ta (\na V)(\ta,-i\ta\hb\na_x) d\ta \\
			&\quad - \frac{i}{\hb} e^{i\Ps(t,-i\hb\na_x)}  \W_{V}(t)\int_0^t
			\int_0^{\ta} \U_{V}(\ta')^* \Bk{V(\ta'), \ta(\na V)(\ta,-i\ta\hb\na_x) } \U_{V}(\ta') d\ta' d\ta\\
			&\quad - \frac{i}{\hb} e^{i\Ps(t,-i\hb\na_x)}  \W_{V}(t)\int_0^t
			\int_{\ta}^t \U_{V}(\ta')^* \Bk{V(\ta'), \ta(\na V)(\ta,-i\ta\hb\na_x) } \U_{V}(\ta') d\ta' d\ta.
		\end{aligned}
	\end{equation}
\end{proof}

Moreover, we have the following decomposition of $B(t)$.
\begin{lemma}\label{lem:decomposition of B}
	For $B(t)$, we have the decomposition $B(t)=B_1(t) + B_2(t)$, where
	\begin{equation}\label{eq:B}
		\begin{aligned}
			&B_1(t)= e^{i\Ps(t,-i\hb\na_x)} \W_{V}(t)\int_0^t \U(\ta)^* \ta (\na V)(\ta) \U(\ta) d\ta, \\
			&B_2(t) := \frac{i}{\hb} e^{i\Ps(t,-i\hb\na_x)} \W_{V}(t) \int_0^t d\ta\int_0^{\ta} d\ta'  \\
			&\qquad \qquad \cdot \U_{V}(\ta')^*
			\Bk{V(\ta'), \U(\ta'-\ta) \ta (\na V)(\ta) \U(\ta-\ta')} \U_{V}(\ta').
		\end{aligned}	
	\end{equation}
\end{lemma}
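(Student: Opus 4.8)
The plan is to combine the commutator formula \eqref{eq:commutator x W} with the variant of the Duhamel formula \eqref{lem:Duhamel}, paralleling the treatment of $A(t)$ in Lemma \ref{lem:decomposition of A} (which instead used \eqref{eq:commutator f W}). Under the hypotheses of Proposition \ref{prop:wave operator boundedness} we have $V(t,x)\in C([0,T];W^{4,\I}_x)$, so in particular $V\in C([0,T];W^{1,\I}_x)$ and Lemma \ref{lem:commutator wave} applies. First I would write
\begin{align}
B(t)=e^{i\Ps(t,-i\hb\na_x)}\Bk{x,\W_{V}(t)}
= e^{i\Ps(t,-i\hb\na_x)}\W_{V}(t)\int_0^t \U_{V}(\ta)^*\,\ta\,(\na V)(\ta)\,\U_{V}(\ta)\,d\ta.
\end{align}
The only thing standing between this expression and the claimed decomposition is that the integrand is conjugated by the perturbed flow $\U_{V}(\ta)$ rather than by the free flow; the role of \eqref{lem:Duhamel} is precisely to swap the two at the price of a single commutator with $V$.

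Next, for each fixed $\ta\in[0,t]$ I would apply \eqref{lem:Duhamel} to the fixed multiplication operator $\ta(\na V)(\ta,x)$ with time parameters $t\mapsto 0$ and $s\mapsto\ta$, renaming the internal integration variable $\ta'$. Using unitarity of the propagators in the forms $\U_{V}(0,\ta')=\U_{V}(\ta')^*$ and $\U(-\ta)=\U(\ta)^*$, and reversing the order of the integration limits (which flips the sign of the $\ta'$-integral), this gives
\begin{align}
\U_{V}(\ta)^*\,\ta(\na V)(\ta)\,\U_{V}(\ta)
&= \U(\ta)^*\,\ta(\na V)(\ta)\,\U(\ta) \\
&\quad +\frac{i}{\hb}\int_0^{\ta}\U_{V}(\ta')^*\Bk{V(\ta'),\,\U(\ta'-\ta)\,\ta(\na V)(\ta)\,\U(\ta-\ta')}\U_{V}(\ta')\,d\ta'.
\end{align}
Substituting this into the expression for $B(t)$ and separating the two resulting summands produces exactly $B(t)=B_1(t)+B_2(t)$ with $B_1,B_2$ as in \eqref{eq:B}; in the double-time term one uses Fubini's theorem, which is justified since all the operators appearing are bounded with norms integrable in $(\ta,\ta')$ over the relevant simplex.

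Since the argument is pure bookkeeping, I do not expect a genuine analytic obstacle. The points that need care are: (i) confirming that the hypotheses of Lemma \ref{lem:commutator wave} and of \eqref{lem:Duhamel} are met, which follows from $V(t,x)\in C([0,T];W^{4,\I}_x)$; (ii) specializing \eqref{lem:Duhamel} to the case $t=0\le s=\ta$ while correctly tracking the sign and the endpoints of the $\ta'$-integral; and (iii) identifying $\U_{V}(0,\ta')$ with $\U_{V}(\ta')^*$ via unitarity of the perturbed propagator. Alternatively, one could bypass \eqref{lem:Duhamel} and prove the identity directly by writing down the Duhamel equation for $\ta\mapsto \U_{V}(\ta)^*\,\ta(\na V)(\ta)\,\U_{V}(\ta)$ and iterating once, which is the computation already carried out in the proof of \eqref{lem:Duhamel}; I would phrase it the present way in order to reuse that lemma verbatim.
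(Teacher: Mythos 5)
Your proposal is correct and follows essentially the same route as the paper: apply \eqref{eq:commutator x W} to write $B(t)=e^{i\Ps(t,-i\hb\na_x)}\W_V(t)\int_0^t \U_V(\ta)^*\ta(\na V)(\ta)\U_V(\ta)\,d\ta$, then use the variant Duhamel identity \eqref{lem:Duhamel} (with the specialization and sign bookkeeping you describe) to replace the perturbed conjugation by the free one plus the commutator double integral, yielding $B_1(t)+B_2(t)$ as in \eqref{eq:B}.
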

\begin{proof}[Proof of Lemma \ref{lem:decomposition of B}]
By \eqref{lem:Duhamel}, we have
	\begin{equation*}
		\begin{aligned}
			&\U_{V}(\ta)^* \ta (\na V)(\ta) \U_{V}(\ta)
			= \U(\ta)^* \ta (\na V) \U(\ta) \\
			&\quad  + \frac{i}{\hb} \int_0^{\ta} \U_{V}(\ta')^* \Bk{V(\ta'), \U(\ta'-\ta)^* \ta (\na V)(\ta) \U(\ta-\ta')} \U_{V}(\ta') d\ta'.
		\end{aligned}
	\end{equation*}
	Hence, by \eqref{eq:commutator x W}, we have
	\begin{equation*}
		\begin{aligned}
			B(t) &= e^{i\Ps(t,-i\hb\na_x)} \W_{V}(t)\int_0^t \U_{V}(\ta)^* \ta (\na V)(\ta) \U_{V}(\ta) d\ta \\
			&= e^{i\Ps(t,-i\hb\na_x)} \W_{V}(t)\int_0^t \U(\ta)^* \ta (\na V)(\ta) \U(\ta) d\ta \\
			&\quad + \frac{i}{\hb} e^{i\Ps(t,-i\hb\na_x)} \W_{V}(t) \int_0^t d\ta  \int_0^{\ta} d\ta'\\
			&\qquad \qquad \qquad \qquad \U_{V}(\ta')^*\Bk{V(\ta'), \U(\ta'-\ta) \ta (\na V)(\ta) \U(\ta-\ta')} \U_{V}(\ta').
		\end{aligned}
	\end{equation*}
\end{proof}

\subsection{Proof of Proposition \ref{prop:wave operator boundedness} when $0\le s\le 1$}\label{subsec:si=1}
In this section, we prove Proposition \ref{prop:wave operator boundedness} when $0 \le s \le 1$.
\subsubsection{Setup of the proof.}
When $s=0$, it is clear that 
\begin{align}
	\No{\bx^s e^{i\Ps(t,-i\hb\na_x)} \W_{V}(t) \bx^{-s}}_{\CB}=1.
\end{align}
Hence, by the complex interpolation, it suffices to consider the case $s=1$. 
Since
\begin{align}
	\No{\bx e^{i\Ps(t,-i\hb\na_x)} \W_{V}(t) \bx^{-1}}_{\CB}
	&\ls 1
	+ \No{\Bk{x,  e^{i\Ps(t,-i\hb\na_x)} \W_{V}(t)}\bx^{-1}}_\CB 
	=:1 + \CQ(t).
\end{align}
Our goal is to prove 
\begin{equation}\label{eq:first goal}
\sup_{0\le t\le T} \CQ(t)\ls 1.
\end{equation}
By Lemmas \ref{lem:decomposition of A} and \ref{lem:decomposition of B}, we obtain
\begin{align}
	\CQ(t) \le \|(A_1(t)+B_1(t))\bx^{-1}\|_\CB + \|(A_2(t)+B_2(t))\bx^{-1}\|_\CB + \|A_3(t)\bx^{-1}\|_\CB.
\end{align}
In the sequel, we estimate these three terms separately.

\subsubsection{Cancellation of $A_1(t)$ and $B_1(t)$}
In this section, we prove the cancellation of $A_1(t)$ and $B_1$, that is, we show
\begin{align}\label{eq:A1+B1}
		\boxed{\|(A_1(t) + B_1(t))\bx^{-1}\|_\CB \ls 1.}
\end{align}

\begin{proof}[Proof of \eqref{eq:A1+B1}]
By Lemmas \ref{lem:decomposition of A}, \ref{lem:decomposition of B}, and MDFM decomposition \eqref{eq:MDFM}, we obtain
\begin{equation*}
	\begin{aligned}
	&\No{(A_1(t) + B_1(t))\bx^{-1}}_{\CB}\\
	&\quad \le \int_0^t \ta \No{\Ck{(\na V)(\ta,-i\ta\hb\na_x) - \U(\ta)^* (\na V)(\ta) \U(\ta)}\bx^{-1}}_{\CB} d\ta \\
	&\quad = \int_0^{1/\hb^2} \ta \CP[\na V](\ta) d\ta
	 + \int_{1/\hb^2}^\I \ta \CP[\na V](\ta) d\ta =: \SA+ \SB,
	\end{aligned}
\end{equation*}
where $\CP[f](t)$ is defined by \eqref{eq:Af}.
By Corollary \ref{cor:decay rate}, we have
\begin{equation}
\int_0^{1/\hb^2} \ta \CP[\na V](\ta) d\ta \ls \int_0^{1/\hb^2} \frac{1}{\bta^{4/3-\de/3}} d\ta \ls 1.
\end{equation}
Again, Corollary \ref{cor:decay rate} implies
\begin{equation}
\int_{1/\hb^2}^\I \ta \CP[\na V](\ta) d\ta
\ls \int_{1/\hb^2}^\I \frac{1}{\sqrt{\hb}|\ta|^{3/2}}d\ta+ \int_{1/\hb^2}^\I \frac{\sqrt{\hb}}{|\ta|^{3/2-\de}}d\ta \ls 1.
\end{equation} 
\end{proof}

\subsubsection{Cancellation of $A_2(t)$ and $B_2(t)$}
Next, we prove the cancellation of $A_2(t)$ and $B_2(t)$. Namely, we show
	\begin{align}\label{eq:A2+B2}
		\boxed{\|(A_2(t) + B_2(t))\bx^{-1}\|_\CB \ls 1.}
	\end{align}
	
\begin{proof}[Proof of \eqref{eq:A2+B2}]
\noindent \underline{\textbf{Step 0: Setup of the proof.}}
By Lemmas \ref{lem:easy}, \ref{lem:decomposition of A}, \ref{lem:decomposition of B}, and MDFM decomposition \eqref{eq:MDFM},  we have
\begin{align*}
	&\No{(A_2(t) + B_2(t))\bx^{-1}}_{\CB} \le \frac{1}{\hb} \int_0^t \int_{0}^{\ta}\log(\ta'+2)
	\Big\|\Big[V(\ta'), \ta(\na V)(\ta,-i\ta\hb\na_x)\\
	&\qquad \qquad \qquad \qquad  - \U(\ta'-\ta) \ta (\na V)(\ta) \U(\ta-\ta') \Big] \lg\J(\ta')\rg^{-1} \Big\|_\CB d\ta' d\ta \\
	&\quad = \frac{1}{\hb} \int_0^\I d\ta \int_0^{\ta} d\ta' \log(\ta'+2) \No{\Bk{V(\ta'), \U(\ta') \CV(\ta)\U(\ta')^* } \lg\J(\ta')\rg^{-1} }_\CB d\ta' d\ta,
\end{align*}
where
\begin{equation}
	\CV(\ta) := \ta(\na V)(\ta,-i\ta\hb\na_x) - \M(-\ta) \ta (\na V)(\ta,-i\ta\hb\na_x) \M(\ta).
\end{equation}
Note that 
\begin{align}
	&\Bk{V(\ta'), \U(\ta') \CV(\ta)\U(\ta')^* }\lg\J(\ta')\rg^{-1} \\ 
	&\quad = \Bk{V(\ta'), \U(\ta') \CV(\ta)\U(\ta')^* \lg\J(\ta')\rg^{-1}}
	- \U(\ta')\CV(\ta)\U(\ta')^* \Bk{V(\ta'), \lg\J(\ta')\rg^{-1}} \\
	&\quad =: C(\ta,\ta') + D(\ta,\ta').
\end{align}
Hence, we have
\begin{align}
	\No{(A_2(t) + B_2(t))\bx^{-1}}_{\CB}
	&\le \frac{1}{\hb} \int_0^\I \int_0^{\ta} \log(\ta'+2) \No{C(\ta,\ta')}_\CB d\ta' d\ta  \\
	&\qquad +\frac{1}{\hb} \int_0^\I \int_0^{\ta} \log(\ta'+2) \No{D(\ta,\ta')}_\CB d\ta' d\ta =: \SC + \SD.
\end{align}

\noindent \underline{\textbf{Step 2: Estimate of $\SD$.}}
We decompose $\SD$ into
\begin{align}
	\SD &= \frac{1}{\hb}\int_0^{1/\hb^2} \int_0^{\ta} \log(\ta'+2) \No{D(\ta,\ta')}_\CB d\ta' d\ta
	         +\frac{1}{\hb}\int_{1/\hb^2}^\I \int_0^{\ta} \log(\ta'+2)\No{D(\ta,\ta')}_\CB d\ta' d\ta\\
	    &=:\SD_1 + \SD_2.
\end{align}
\noindent \textbf{$\blacklozenge$ Estimate of $\SD_1$.}
Note that
\begin{align}
	\|D(\ta,\ta')\|_\CB
	&\le \No{\CV(\ta)\U(\ta')^*\lg\J(\ta')\rg^{-1}}_\CB \No{\lg\J(\ta')\rg \Bk{V(\ta'), \lg\J(\ta')\rg^{-1}}}_{\CB} \\
	&\le \No{\CV(\ta)\bx^{-1}}_\CB \No{\bx \Bk{\U(\ta')^*V(\ta')\U(\ta'), \bx^{-1}}}_{\CB}
\end{align}
On the one hand, by Corollary \ref{cor:decay rate}, we have
\begin{align}
	\No{\CV(\ta)\bx^{-1}}_\CB = \ta \CP[\na V](\ta) \ls \frac{1}{\bta^{4/3-\de}}
\end{align}
for $0\le \ta\le \hb^{-2}$.
On the other hand, it follows from the assumption that
\begin{equation}\label{eq:UVUx-1}
\begin{aligned}
&\No{\bx \Bk{\U(\ta')^*V(\ta')\U(\ta'), \bx^{-1}}}_{\CB} \\
&\quad \le \No{\Bk{\U(\ta')^*V(\ta')\U(\ta'), \bx^{-1}}}_{\CB} + \No{x\Bk{\U(\ta')^*V(\ta')\U(\ta'), \bx^{-1}}}_{\CB} \\
&\quad \le \No{\Bk{\U(\ta')^*V(\ta')\U(\ta'), \bx^{-1}}}_{\CB} + \No{\Bk{x, \U(\ta')^*V(\ta')\U(\ta')}\bx^{-1}}_{\CB} \\
&\qquad \qquad +  \No{\Bk{x\bx^{-1}, \U(\ta')^*V(\ta')\U(\ta')}}_{\CB} \\
&\quad \ls \No{\Bk{x, \U(\ta')^*V(\ta')\U(\ta')}}_{\CB} = \ta'\hb\No{\na V(\ta')}_{L^\I_x} \ls \frac{\hb}{\lg\ta'\rg}.
\end{aligned}
\end{equation}
Therefore, we obtain
\begin{equation}
	\SD_1 \ls \int_0^{1/\hb^2} \frac{1}{\bta^{4/3-\de}} \K{\int_0^{\ta} \frac{\log(\ta'+2)}{\lg\ta'\rg}d\ta' }d\ta \ls 1.
\end{equation}

\noindent \textbf{$\blacklozenge$ Estimate of $\SD_2$.}
By Corollary \ref{cor:decay rate} and \eqref{eq:UVUx-1}, we have
\begin{align}
	\SD_2 \ls \frac{1}{\sqrt{\hb}}\int_{1/\hb^2}^\I \frac{\log(\ta+2)}{|\ta|^{3/2} } d\ta
	       + \int_{1/\hb^2}^\I \frac{\sqrt{\hb}\log(\ta+2)}{\bta^{3/2-\de}} d\ta \ls 1.
\end{align}

\noindent \underline{\textbf{Step 3: Estimate of $\SC$.}}
First, we decompose $\SC$ into
\begin{align}
	\SC&=\frac{1}{\hb}\int_0^{1/\hb^2}\int_0^{\ta} \log(\ta'+1)\No{C(\ta,\ta')}_\CB d\ta' d\ta
	 + \frac{1}{\hb}\int_{1/\hb^2}^\I \int_0^{\ta} \log(\ta'+1)\No{C(\ta,\ta')}_\CB d\ta' d\ta\\
	 &=:\SC_1 + \SC_2.
\end{align}
By Lemma \ref{lem:commutator basic}, we have
\begin{align}
	\|C(\ta,\ta')\|_\CB&\le \No{\na V(\ta')}_{\CF L^1} \No{\Bk{x,\U(\ta')\CV(\ta)\U(\ta')^*\lg\J(\ta')\rg^{-1}}}_{\CB} \\
	&= \No{\na V(\ta')}_{\CF L^1} \No{\Bk{\J(\ta'),\CV(\ta)\bx^{-1}}}_{\CB} \\
	&\le \No{\na V(\ta')}_{\CF L^1} \No{\Bk{x,\CV(\ta)\bx^{-1}}}_{\CB} + \ta'\hb\No{\na V(\ta')}_{\CF L^1} \No{\Bk{\na,\CV(\ta)\bx^{-1}}}_{\CB} \\
	&=: \SE(\ta,\ta') + \SF(\ta,\ta').
\end{align}

\noindent \textbf{$\blacklozenge$ Estimate of $\SC_1$.}
For $\SE(\ta,\ta')$, it follows from the assumption and Corollary \ref{cor:decay rate} that
\begin{align}
	\SE(\ta,\ta')
	&\ls \frac{\ta^2\hb}{\lg\ta'\rg^{2-\de}}\CP[\na^2 V](\ta)
	\ls \frac{\ta^2\hb}{\lg\ta'\rg^{2-\de}} \frac{1}{\bta^{19/6-\de}} \ls \frac{\hb}{\lg\ta'\rg^{2-\de}} \frac{1}{\bta^{7/6-\de}}
\end{align}
for all $0\le\ta\le\hb^{-2}$.
For $\SF(\ta,\ta')$, we obtain
\begin{equation}\label{eq:F bound}
\begin{aligned}
\SF(\ta,\ta') 	&\ls \ta'\hb\No{\na V(\ta')}_{\CF L^1} \K{\No{\Bk{\na,\CV(\ta)}\bx^{-1}}_{\CB} + \No{\CV(\ta)\bx^{-2}}_\CB } \\
	&\le \ta\ta'\hb\No{\na V(\ta')}_{\CF L^1} \K{\No{\na^2 V(\ta)}_{L^\I_x} + \CP[\na V](\ta) } \\
	&\ls \frac{\hb}{\lg\ta'\rg^{1-\de}} \K{\frac{1}{\bta^{2-\de}} + \frac{1}{\bta^{4/3-\de/3}}}.
\end{aligned}
\end{equation}
Hence, we obtain $\SC_1 \ls 1$.

\noindent \textbf{$\blacklozenge$ Estimate of $\SC_2$.}
For $\SE(\ta,\ta')$, it follows from the assumption and Corollary \ref{cor:decay rate} that
\begin{align}
	\SE(\ta,\ta')
	&\le \frac{\ta^2\hb}{\lg\ta'\rg^{2-\de}}\CP[\na^2 V](\ta) 
	\ls \frac{\hb}{\lg\ta'\rg^{2-\de}} \K{\frac{1}{\sqrt{\hb}|\ta|^{3/2-\de}} + \frac{1}{|\ta|^{7/6-\de}}}.
\end{align}
Again, by using \eqref{eq:F bound}, we obtain $\SC_2\ls 1$.
\end{proof}

\subsubsection{Estimates of $A_3(t)$ and the conclusion}
Finally, we estimate $A_3$.
	By Lemmas \ref{lem:commutator basic} and \ref{lem:decomposition of A}, we obtain
	\begin{align}
		&\|A_3(t)\|_\CB
		\le \frac{1}{\hb} \int_0^t \int_{\ta}^t \No{\Bk{V(\ta'), \ta(\na V)(\ta,-i\ta\hb\na_x) }}_\CB d\ta'd\ta \\
		&\quad \le \frac{1}{\hb} \int_0^t \int_{\ta}^t
		     \No{\na V(\ta')}_{\CF L^1} \No{\Bk{x, \ta(\na V)(\ta,-i\ta\hb\na_x) }}_\CB d\ta'd\ta \\
		&\quad \le \int_0^t \int_0^{\ta'}
		\No{\na V(\ta')}_{\CF L^1} \ta^2 \No{\na^2 V(\ta) }_{L^\I_x}  d\ta d\ta'
		\ls \int_0^t  \frac{1}{\lg \ta'\rg^{2-\de}} \int_0^{\ta'} \frac{1}{\bta^{1-\de}} d\ta d\ta' \ls 1.
\end{align}

From the above argument, we obtain
\begin{align}
	\CQ(t) \le \|(A_1(t)+B_1(t))\bx^{-1}\|_\CB + \|(A_2(t)+B_2(t))\bx^{-1}\|_\CB + \|A_3(t)\bx^{-1}\|_\CB \ls 1,
\end{align}
which completes the proof of Proposition \ref{prop:wave operator boundedness} when $s=1$ (therefore, all $s$ such that $0\le s\le 1$).

\subsection{Proof of Proposition \ref{prop:wave operator boundedness} when $1\le s \le 2$}\label{subsec:si=2}
\subsubsection{Setup of the proof}
Since we have already proved the case $s=1$, we can complete the proof if we prove Lemma \ref{prop:wave operator boundedness} when $s=2$.
We have
\begin{align}
	&\No{\bx^{2} e^{i\Ps(t,-i\hb\na_x)} \W_{V}(t) \bx^{-2}}_{\CB} \\
	&\quad \ls \No{\bx e^{i\Ps(t,-i\hb\na_x)} \W_{V}(t) \bx^{-1}}_{\CB}
	+ \No{\bx\Dk{x,e^{i\Ps(t,-i\hb\na_x)} \W_{V}(t)} \bx^{-2}}_{\CB} \\
	&\quad \ls 1 + \No{\bx\Dk{x,e^{i\Ps(t,-i\hb\na_x)} \W_{V}(t)} \bx^{-2}}_{\CB}=:1+\CR(t),
\end{align}
where we used the boundedness when $s=1$.
For the second term $\CR(t)$, we have
\begin{align}
	\CR(t) &\le \No{\bx (A_1(t) + B_1(t)) \bx^{-2}}_\CB + \No{\bx (A_2(t) + B_2(t))\bx^{-2}}_\CB + \No{\bx A_3(t) \bx^{-2}}_\CB\\
	& =: \CR_1(t) + \CR_2(t) + \CR_3(t),
\end{align}
where $A_1(t), A_2(t), A_3(t)$ and $B_1(t), B_2(t)$ are the same notations as in Lemmas \ref{lem:decomposition of A} and \ref{lem:decomposition of B}.

\subsubsection{Estimate of $\CR_1(t)$}
By Lemmas \ref{lem:decomposition of A}, \ref{lem:decomposition of B}, and MDFM decomposition \eqref{eq:MDFM}, we obtain
\begin{equation*}
	\begin{aligned}
		\CR_1(t)&\le \int_0^t \ta \No{\bx \Ck{(\na V)(\ta,-i\ta\hb\na_x) - \U(\ta)^* (\na V)(\ta) \U(\ta)}\bx^{-2}}_{\CB} d\ta \\
		&\le \int_0^{\I} \ta \No{\Ck{(\na V)(\ta,-i\ta\hb\na_x) - \M(-\ta)(\na V)(\ta,-i\ta\hb\na_x)\M(\ta)}\bx^{-1}}_{\CB} d\ta \\
		&\quad + \int_0^\I \ta^2 \hb \No{\Ck{(\na^2 V)(\ta,-i\ta\hb\na_x) - \M(-\ta)(\na^2 V)(\ta,-i\ta\hb\na_x)\M(\ta)}\bx^{-2}}_{\CB} d\ta \\
		&=: \SA+\SB.
	\end{aligned}
\end{equation*}
By \eqref{eq:A1+B1}, we have $\SA \ls 1$.
For $\SB$, by Corollary \ref{cor:decay rate}, we obtain
\begin{align}
	&\ta^2 \hb \No{\Ck{(\na^2 V)(\ta,-i\ta\hb\na_x) - \M(-\ta)(\na^2 V)(\ta,-i\ta\hb\na_x)\M(\ta)}\bx^{-2}}_{\CB}\\
	&\quad \ls \frac{\sqrt{\hb}}{|\ta|^{1/2}\bta^{1-\de}} + \frac{1}{\bta^{3/2-\de}}
\end{align}
for all $\ta\in[0,\I)$, which implies $\SB \ls 1$.

\subsubsection{Estimate of $\CR_2(t)$.}
\noindent \underline{\textbf{Step 0: Setup.}}
By Lemmas \ref{lem:easy}, \ref{lem:decomposition of A}, and \ref{lem:decomposition of B}, we have
\begin{align*}
	&\No{\bx\K{A_2(t) + B_2(t)}\bx^{-2}}_{\CB} \ls \frac{1}{\hb} \int_0^\I d\ta \int_0^{\ta} d\ta' \lg\ta'\rg^{3\de} \\
	&\qquad \qquad \qquad \qquad \cdot \No{\lg\J(\ta')\rg \Bk{V(\ta'), \U(\ta') \CV(\ta)\U(\ta')^* } \lg\J(\ta')\rg^{-2} }_\CB d\ta' d\ta,
\end{align*}
where
\begin{equation}
	\CV(\ta) := \ta(\na V)(\ta,-i\ta\hb\na_x) - \M(-\ta) \ta (\na V)(\ta,-i\ta\hb\na_x) \M(\ta).
\end{equation}
It suffices to consider
\begin{align}
	\SD &:= \frac{1}{\hb} \int_0^\I d\ta \int_0^{\ta} d\ta' \lg\ta'\rg^{3\de}\No{\Bk{\J(\ta'), \Bk{V(\ta'), \U(\ta') \CV(\ta)\U(\ta')^* }} \lg\J(\ta')\rg^{-2} }_\CB d\ta' d\ta.
\end{align}
By the Jacobi identity for the commutator, we obtain
\begin{align}
	&\No{\Bk{\J(\ta'), \Bk{V(\ta'), \U(\ta') \CV(\ta)\U(\ta')^* }} \lg\J(\ta')\rg^{-2} }_\CB \\
	&\quad = \No{\Bk{x, \Bk{\U(\ta')^* V(\ta') \U(\ta'), \CV(\ta)} } \bx^{-2} }_\CB \\
	&\quad \le \No{\Bk{\Bk{x, \U(\ta')^* V(\ta') \U(\ta')}, \CV(\ta) }\bx^{-2}}_\CB
	+ \No{\Bk{\Bk{x, \CV(\ta)},\U(\ta')^* V(\ta') \U(\ta') }\bx^{-2}}_\CB \\
	&=: \SD_1 + \SD_2.
\end{align}

\noindent \underline{\textbf{Step 1: Estimate of $\SD_1$.}}
For $\SD_1$, by Lemma \ref{lem:commutator basic} and the assumptions, we have
\begin{align}
	\SD_1 &\le \ta'\hb \No{\Bk{\U(\ta')^* (\na V)(\ta') \U(\ta), \CV(\ta) }}_\CB
        	= \ta'\hb \No{\Bk{\na V(\ta') , \U(\ta) (\na V)(\ta)\U(\ta)^* }}_\CB \\
	&\le \ta'\hb \No{\na^2 V(\ta')}_{\CF L^1} \No{\Bk{x, \U(\ta)^*(\na V)(\ta)\U(\ta) }}_\CB \\
	&\le \ta'\ta\hb^2 \No{\na^2 V(\ta')}_{\CF L^1} \No{\na^2 V(\ta)}_{L^\I_x}
	\ls \frac{\hb^2}{\lg\ta'\rg^{2-\de} \bta^{2-\de}}.
\end{align}
Therefore, we obtain
\begin{align}
	\frac{1}{\hb} \int_0^\I d\ta \int_0^{\ta} d\ta' \lg\ta'\rg^{3\de} \SD_1\ls 1.
\end{align}

\noindent \underline{\textbf{Step 2: Estimate of $\SD_2$.}}
Next, we consider $\SD_2$. we have
\begin{align}
	\SD_2&\le \Ck{\No{V(\ta')}_{L^\I_x} 
	+ \No{\lg\J(\ta')\rg V(\ta')\lg\J(\ta')\rg^{-1}}_{\CB} } \No{\Bk{x, \CV(\ta)}\bx^{-1}}_\CB \\
	&\ls \K{\No{V(\ta')}_{L^\I_x} + \ta'\hb \No{\na V(\ta')}_{L^\I_x} }
	     \ta^2 \hb\CP[\na^2 V](\ta).
\end{align}
When $0\le \ta \le \hb^{-2}$, then we have by Corollary \ref{cor:decay rate}
\begin{align}
	\SD_2 \ls \frac{\hb}{\lg\ta'\rg \bta^{7/6-\de}}.
\end{align}
When $\hb^{-2}\le \ta <\I$, then we have by Corollary \ref{cor:decay rate}
\begin{align}
	\SD_2 \ls \frac{\hb}{\lg\ta'\rg} \K{ \frac{1}{\sqrt{\hb}|\ta|^{3/2-\de}} + \frac{1}{|\ta|^{7/6-\de}} }.
\end{align}
From the above, we obtain
\begin{align}
	\frac{1}{\hb} \int_0^\I d\ta \int_0^{\ta} d\ta' \lg\ta'\rg^{3\de} \SD_2\ls 1.
\end{align}

\subsubsection{Estimate of $\CR_3(t)$}
Finally, we estimate $\CR_3(t)$.
By Lemmas \ref{lem:commutator basic}, \ref{lem:easy}, and \ref{lem:decomposition of A}, we have
\begin{align}
	&\CR_3(t)
	\ls \frac{1}{\hb} \int_0^t \int_{\ta}^t \lg\ta'\rg^{3\de} \No{\lg\J(\ta')\rg \Bk{V(\ta'), \ta(\na V)(\ta,-i\ta\hb\na_x) } \lg\J(\ta')\rg^{-2}}_\CB d\ta'd\ta.
\end{align}
Hence, it suffices to estimate
\begin{align}
	\SE(t) := \frac{1}{\hb} \int_0^t \int_{\ta}^t \lg\ta'\rg^{3\de}\No{\Bk{\J(\ta'), \Bk{V(\ta'), \ta(\na V)(\ta,-i\ta\hb\na_x) } }\lg\J(\ta')\rg^{-2}}_\CB d\ta'd\ta.
\end{align}
By the Jacobi identity and the assumption of Proposition \ref{prop:wave operator boundedness}, we have
\begin{align}
	&\No{\Bk{\J(\ta'), \Bk{V(\ta'), \ta(\na V)(\ta,-i\ta\hb\na_x) } }\lg\J(\ta')\rg^{-2}}_\CB \\
	&\quad \le \No{\Bk{\Bk{\J(\ta'), V(\ta')}, \ta(\na V)(\ta,-i\ta\hb\na_x) }}_\CB \\
	&\qquad \qquad + \No{\Bk{\Bk{\J(\ta'), \ta(\na V)(\ta,-i\ta\hb\na_x) }, V(\ta')} }_\CB \\
	&\quad = \ta'\hb \No{\Bk{(\na V)(\ta'), \ta(\na V)(\ta,-i\ta\hb\na_x) }}_\CB
	+ \ta\hb \No{\Bk{\ta(\na^2 V)(\ta,-i\ta\hb\na_x), V(\ta')} }_\CB \\
	&\quad \le \ta'\hb \No{\na^2 V(\ta')}_{\CF L^1} \No{\Bk{x, \ta(\na V)(\ta,-i\ta\hb\na_x) }}_\CB \\
	&\qquad \qquad + \ta\hb \No{\na V(\ta')}_{\CF L^1} \No{\Bk{x, \ta(\na^2 V)(\ta,-i\ta\hb\na_x)} }_\CB \\
	&\quad = \ta'\ta^2 \hb^2 \No{\na^2 V(\ta')}_{\CF L^1} \No{\na^2 V(\ta)}_{L^\I_x} 
	 + \ta^3\hb^2 \No{\na V(\ta')}_{\CF L^1} \No{\na^3 V(\ta)}_{L^\I_x} \\
	&\quad \ls \frac{\hb^2}{\lg\ta'\rg^{2-\de} \bta^{1-\de}}.
\end{align}
Therefore, we obtain $\SE(t)\ls 1$.
From the above, we complete the proof of Proposition \ref{prop:wave operator boundedness} with $s=2$. 

\section{Single commutator estimates with general Schatten--$r$ norm}\label{sec:single}
To prove the key estimate Proposition \ref{prop:a priori}, we need to obtain some commutator estimates. This is because we need to estimate derivatives of the density function (see \eqref{eq:density deriv formula 0} and \eqref{eq:density deriv formula}). 
We will find a solution to \eqref{eq:NLH} such that the density function $\rhh(\gah(t))$ belongs to the function space $\CY^{a,b}_T$ with two small parameters $a,b>0$, where $\CY^{a,b}_T$ is defined by
\begin{equation}\label{eq:density condition}
	\begin{aligned}
		\|\rhh(\gah)\|_{\CY^{a,b}_T}&:=\sup_{0\le t \le T} \bigg\{\No{\rhh(\gah(t))}_{L^1_x} + \bt^3\No{\rhh(\gah(t))}_{L^\I_x} \\
		&\qquad \qquad  + \bt^{1-a}\No{\na \rhh(\gah(t))}_{L^1_x} + \bt^{4-a}\No{\na \rhh(\gah(t))}_{L^{\I}_x} \\
		&\qquad \qquad + \bt^{4-a} \hb^{3/2} \No{\na \rhh(\gah(t))}_{\CF L^1} + \bt^{7/2-b} \No{\na^2 \rhh(\gah(t))}_{L^2_x} 
		 \bigg\}.
	\end{aligned}
\end{equation}
Throughout this section, we assume the following.
\begin{assumption}\label{ass:1}
	In this section, we always assume that $d=3$, $\si\in(3/2,2)$ and $r \in [1,\I]$.
	Let $w(x)$ satisfy \eqref{eq:ass}.
	Assume that $\de>0$ is a small constant given in Proposition \ref{prop:wave operator boundedness}.
	Suppose that small numbers $a, b \in (0,\de/100)$ satisfy $7b/8<a<b$.
	Let $T>0$, $\ze\in \CY^{a,b}_T$ and $V := w \ast \ze(t)$. 
	Define $\gah(t):= \U_V(t)\gah_0 \U_V(t)^*$ for $\gah_0 \in \CX^{\si}_\hb$ (see \eqref{eq:initial data class} for the definition of $\CX^\si_\hb$).
	We often choose a small number $\ep>0$, but it is always much smaller than $a,b$.
\end{assumption}

\subsection{Bounds of densities and potentials}
Since we often need to estimate the potential $w\ast \ze(t)$,
we collect necessary estimates here.

\begin{lemma}\label{lem:Lp bound}
	Under Assumption \ref{ass:1}, we have
	\begin{align}\label{eq:Lp bound}
		&\No{\ze(t)}_{L^{r,1}_x} \le \frac{\|\ze\|_{\CY^{a,b}_T}}{\bt^{3/r'}}, 
		\quad \No{\na \ze(t)}_{L^{r,1}_x} \le \frac{\|\ze\|_{\CY^{a,b}_T}}{\bt^{1+3/r'-a}}.
	\end{align}
	for all $0\le t \le T$ and $r \in (1,\I)$.
\end{lemma}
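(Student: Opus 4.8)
The statement is an interpolation-type estimate: both bounds follow by combining the $L^1$ and $L^\infty$ control encoded in the definition \eqref{eq:density condition} of $\|\cdot\|_{\CY^{a,b}_T}$ with the well-known embedding $L^1 \cap L^\infty \hookrightarrow L^{r,1}$ for $1 < r < \infty$. Concretely, for a function $g$ one has $\|g\|_{L^{r,1}_x} \lesssim \|g\|_{L^1_x}^{1/r}\|g\|_{L^\infty_x}^{1/r'}$ (a standard real-interpolation fact for Lorentz spaces; see \cite{Grafakos book}), and this splits the powers of $\bt$ exactly as needed. Thus the first step is simply to invoke this Lorentz interpolation bound with $g = \ze(t)$.

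For the first estimate in \eqref{eq:Lp bound}, I would apply the interpolation bound and then use, directly from \eqref{eq:density condition} with $\ze = \rhh(\gah)$ (or, more generally, from the hypothesis $\ze \in \CY^{a,b}_T$, which is what Assumption \ref{ass:1} provides), the two bounds $\|\ze(t)\|_{L^1_x} \le \|\ze\|_{\CY^{a,b}_T}$ and $\|\ze(t)\|_{L^\infty_x} \le \bt^{-3}\|\ze\|_{\CY^{a,b}_T}$. Multiplying, $\|\ze(t)\|_{L^{r,1}_x} \lesssim \|\ze\|_{\CY^{a,b}_T} \cdot \bt^{-3/r'} = \bt^{-3/r'}\|\ze\|_{\CY^{a,b}_T}$, which is the claim (with the implicit constant absorbed, or, if one wants the clean inequality with constant $1$ as written, noting that the Lorentz constant can be taken to be $1$ for this particular normalization, or simply re-reading the statement as ``$\lesssim$'', consistent with the paper's convention). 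For the second estimate, I would repeat the argument with $g = \na\ze(t)$, now using $\|\na\ze(t)\|_{L^1_x} \le \bt^{-(1-a)}\|\ze\|_{\CY^{a,b}_T}$ and $\|\na\ze(t)\|_{L^\infty_x} \le \bt^{-(4-a)}\|\ze\|_{\CY^{a,b}_T}$ from \eqref{eq:density condition}; interpolating gives the exponent $\tfrac{1}{r}(1-a) + \tfrac{1}{r'}(4-a) = (1-a) + \tfrac{3}{r'} = 1 + \tfrac{3}{r'} - a$, matching the stated power of $\bt$, while the two factors of $\|\ze\|_{\CY^{a,b}_T}^{1/r}$ and $\|\ze\|_{\CY^{a,b}_T}^{1/r'}$ combine to the single linear factor.

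There is no real obstacle here; the only point requiring a moment's care is bookkeeping the exponent arithmetic $\tfrac{1}{r} + \tfrac{1}{r'} = 1$ so that the powers of $\bt$ add correctly, and checking that the endpoint case is genuinely excluded so that the Lorentz interpolation inequality is available (which is why the statement restricts to $r \in (1,\infty)$). One should also note that $\ze$ here plays the role of $\rhh(\gah)$ only through membership in $\CY^{a,b}_T$, so all six summands in \eqref{eq:density condition} are at our disposal; only the first four are needed. I would present this in a few lines, citing \cite{Grafakos book} for the Lorentz interpolation inequality and \eqref{eq:density condition} for the componentwise bounds.
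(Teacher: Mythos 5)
Your argument is correct and is essentially the paper's own proof: the paper likewise reads off the $L^1_x$ and $L^\infty_x$ bounds for $\ze(t)$ and $\na\ze(t)$ from the definition of $\|\cdot\|_{\CY^{a,b}_T}$ and concludes ``by interpolation,'' which is exactly the Lorentz bound $\|g\|_{L^{r,1}_x}\ls \|g\|_{L^1_x}^{1/r}\|g\|_{L^\infty_x}^{1/r'}$ you invoke, with the same exponent arithmetic. Your remark about the implicit constant (the inequality should really be read with $\ls$, consistent with the paper's conventions) is a fair observation but does not affect anything.
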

\begin{proof}
	By the assumptions, we have
	\begin{align}
		&\No{\ze(t)}_{L^1_x}\le \|\ze\|_{\CY^{a,b}_T}, \quad \No{\ze(t)}_{L^\I_x} \le \frac{\|\ze\|_{\CY^{a,b}_T}}{\bt^3} \\
		&\No{\na \ze(t)}_{L^1_x}\le \frac{\|\ze\|_{\CY^{a,b}_T}}{\bt^{1-a}}, \quad \No{\na \ze(t)}_{L^\I_x} \le \frac{\|\ze\|_{\CY^{a,b}_T}}{\bt^{4-a}}.
	\end{align}
	Hence, by the interpolation, we obtain \eqref{eq:Lp bound} for all $1<r < \I$.
\end{proof}

\begin{lemma}\label{lem:potential deriv}
	Under Assumption \ref{ass:1}, we have
	\begin{align}
		\No{\pl^\al w\ast \zeta(t) }_{L^\infty} \ls 
		\begin{dcases}
			&\bt^{-1}\|\ze\|_{\CY^{a,b}_T} \quad \mbox{if } |\al|=0, \\
			&\bt^{-2}\|\ze\|_{\CY^{a,b}_T}\quad \mbox{if } |\al|=1, \\
			&\bt^{-3+\ep}\|\ze\|_{\CY^{a,b}_T} \quad \mbox{if } |\al|=2, \\
			&\bt^{-4+a+\ep}\|\ze\|_{\CY^{a,b}_T} \quad \mbox{if } |\al|=3, \\
			&\bt^{-4+a+\ep}\|\ze\|_{\CY^{a,b}_T} \quad \mbox{if } |\al|=4
		\end{dcases}
	\end{align}
	for all $0\le t \le T$, where we can choose $\ep\in(0,\de/100)$ arbitrarily small, independently of $a,b$.
	Moreover, we sometimes need the estimate
	\begin{align}
		\No{\na^3 V(t)}_{L^{3}_x} \ls \bt^{-3+b/2+\ep}\|\ze\|_{\CY^{a,b}_T}
	\end{align}
	for all $0\le t \le T$ and $|\al|=3$.
	All implicit constants are independent of $T>0$.
\end{lemma}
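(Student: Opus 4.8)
The plan is to estimate $\pl^\al(w\ast\ze)=(\pl^\al w)\ast\ze$ or $w\ast(\pl^\al\ze)$, distributing the derivatives between the kernel and the density in whichever way gives the best decay, and then split the convolution integral into a near region $\{|y|\lesssim\bt\}$ and a far region $\{|y|\gtrsim\bt\}$ so that we can play off the $L^1_x$ bound of $\ze$ (good for the far region, where $w$ is small) against the $L^\infty_x$ bound (good for the near region). Concretely, for $|\al|\le 1$ I would put all derivatives on $w$, write
\begin{align}
\No{(\pl^\al w)\ast\ze(t)}_{L^\infty_x}\le \int_{|y|\le\bt}|\pl^\al w(y)|\,|\ze(t,x-y)|\,dy + \int_{|y|\ge\bt}|\pl^\al w(y)|\,|\ze(t,x-y)|\,dy,
\end{align}
bound the first integral by $\No{\ze(t)}_{L^\infty_x}\No{\pl^\al w}_{L^1(|y|\le\bt)}$ and the second by $\No{\ze(t)}_{L^1_x}\No{\pl^\al w}_{L^\infty(|y|\ge\bt)}$. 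Using \eqref{eq:ass}, $|\pl^\al w(y)|\ls \bra{y}^{-1-|\al|}$, so for $|\al|=0$ the near piece is $\ls \bt^{-3}\cdot\bt^2=\bt^{-1}$ and the far piece is $\ls 1\cdot\bt^{-1}=\bt^{-1}$; for $|\al|=1$ the near piece is $\ls\bt^{-3}\cdot\log\bt$ (which is $\ls\bt^{-2+\ep}$, but in fact one can do slightly better, or simply accept the $\ep$) and the far piece is $\ls\bt^{-2}$. Since the statement only claims $\bt^{-1}$ and $\bt^{-2}$ for these two cases and $\ep$ is free, a cleaner route for $|\al|=1$ is to note $\No{(\na w)\ast\ze}_{L^\infty_x}\le\No{\na w}_{L^{3/2,\infty}_x}\No{\ze(t)}_{L^{3,1}_x}$ by O'Neil's inequality, and then apply Lemma \ref{lem:Lp bound} with $r=3$ to get $\bt^{-1}\cdot\No{\na w}_{L^{3/2,\infty}}\ls\bt^{-2}\|\ze\|_{\CY^{a,b}_T}$; this avoids logarithms entirely, which is why I would prefer it. Here $\na w\in L^{3/2,\infty}$ because $|\na w(y)|\ls\bra{y}^{-2}$.

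For $|\al|=2$ I would move one derivative onto $\ze$: $\pl^\al(w\ast\ze)=(\pl^\beta w)\ast(\pl_j\ze)$ with $|\beta|=1$, and again split into near/far, using $\No{\na\ze(t)}_{L^\infty_x}\ls\bt^{-4+a}$ and $\No{\na\ze(t)}_{L^1_x}\ls\bt^{-1+a}$. The near piece is $\ls\bt^{-4+a}\No{\na w}_{L^1(|y|\le\bt)}\ls\bt^{-4+a}\bt=\bt^{-3+a}$ and the far piece is $\ls\bt^{-1+a}\No{\na w}_{L^\infty(|y|\ge\bt)}\ls\bt^{-1+a}\bt^{-2}=\bt^{-3+a}$; since $a<\ep$ can be arranged (or since $a\in(0,\de/100)$ and $\ep$ is only required to satisfy $\ep\in(0,\de/100)$, one simply takes $\ep>a$) this gives $\ls\bt^{-3+\ep}\|\ze\|_{\CY^{a,b}_T}$. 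Alternatively, keeping both derivatives on $w$ and using O'Neil with $\na^2 w\in L^{1,\infty}$... no — $\na^2w\in L^{1,\infty}$ fails to pair with $L^\infty$, so the derivative-transfer route is the right one. For $|\al|=3$ and $|\al|=4$ I would transfer derivatives down to $\ze$ as far as the $\CY^{a,b}_T$-norm allows: for $|\al|=3$ write $(\pl^\beta w)\ast(\pl^2\ze)$ with $|\beta|=1$, but $\CY^{a,b}_T$ only controls $\na^2\ze$ in $L^2_x$, so instead write $(\pl^\gamma w)\ast(\pl\ze)$ with $|\gamma|=2$, split near/far: near piece $\ls\No{\na\ze(t)}_{L^\infty_x}\No{\na^2 w}_{L^1(|y|\le\bt)}\ls\bt^{-4+a}\cdot\log\bt\ls\bt^{-4+a+\ep}$, far piece $\ls\No{\na\ze(t)}_{L^1_x}\No{\na^2 w}_{L^\infty(|y|\ge\bt)}\ls\bt^{-1+a}\bt^{-3}=\bt^{-4+a}$; for $|\al|=4$, $(\pl^\gamma w)\ast(\pl^2\ze)$ with $|\gamma|=2$ pairs $L^2$ with $L^2$: near piece $\ls\No{\na^2\ze(t)}_{L^2_x}\No{\na^2 w}_{L^2(|y|\le\bt)}$; but $\na^2 w\in L^2_{\loc}$ since $|\na^2 w|\ls\bra{y}^{-3}$ and $\int_{|y|\le\bt}\bra{y}^{-6}dy<\infty$ uniformly, so this is $\ls\bt^{-7/2+b}\cdot C\ls\bt^{-7/2+b}$, which is better than $\bt^{-4+a+\ep}$ only if... wait, $\bt^{-7/2+b}$ is worse than $\bt^{-4+a+\ep}$ for large $t$, so I need a different split for $|\al|=4$: transfer $|\gamma|=3$ onto $w$ and one derivative onto $\ze$: $(\pl^\gamma w)\ast(\pl\ze)$, $|\gamma|=3$, with $|\pl^\gamma w|\ls\bra{y}^{-4}\in L^1$, giving near piece $\ls\No{\na\ze(t)}_{L^\infty_x}\cdot C\ls\bt^{-4+a}$ and far piece $\ls\No{\na\ze(t)}_{L^1_x}\bt^{-4}\ls\bt^{-5+a}$, so $\ls\bt^{-4+a+\ep}$ as claimed.

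For the last estimate, $\No{\na^3 V(t)}_{L^3_x}\ls\bt^{-3+b/2+\ep}\|\ze\|_{\CY^{a,b}_T}$, I would write $\na^3 V=(\na w)\ast(\na^2\ze)$ and apply Young's convolution inequality in Lorentz spaces (O'Neil): $\No{(\na w)\ast(\na^2\ze)}_{L^3_x}\le\No{\na w}_{L^{q,\infty}_x}\No{\na^2\ze(t)}_{L^2_x}$ where $\tfrac13=\tfrac1q+\tfrac12-1$, i.e. $q=6/5$; and $\na w\in L^{6/5,\infty}_x$ since $|\na w(y)|\ls\bra{y}^{-2}$ and $2\cdot\tfrac65=\tfrac{12}{5}>3$. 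Hmm, that gives the wrong power: $\No{\na^2\ze(t)}_{L^2_x}\ls\bt^{-7/2+b}$ would yield $\bt^{-7/2+b}$, not $\bt^{-3+b/2+\ep}$, and $-7/2+b<-3+b/2+\ep$ for $t$ large, so that bound is actually \emph{stronger} than claimed — but it only holds if $q=6/5$ works, which it does. Actually $\bt^{-7/2+b}$ implies $\bt^{-3+b/2+\ep}$ trivially (smaller is fine for an upper bound stated with $\ls$... no wait, we need $\|\na^3 V\|\ls\bt^{-3+b/2+\ep}$, and $\bt^{-7/2+b}\le\bt^{-3+b/2+\ep}$ when $-7/2+b\le -3+b/2+\ep$, i.e. $b/2\le 1/2+\ep$, true). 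So the O'Neil route with $q=6/5$ directly gives something at least as strong; alternatively, interpolating $\na^3 V$ between $\na^2 V\in L^\infty$ (decay $\bt^{-3+\ep}$) and $\na^4 V\in$ (a suitable space) recovers the stated $\bt^{-3+b/2+\ep}$ — but I would just present the O'Neil estimate.

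\emph{The main obstacle} is purely bookkeeping: choosing, for each multi-index $\al$, the optimal distribution of derivatives between $w$ and $\ze$ given that $\CY^{a,b}_T$ controls $\ze,\na\ze$ in $L^1_x\cap L^\infty_x$ (with sharp rates) but $\na^2\ze$ only in $L^2_x$, and then verifying that the near/far splitting (or the O'Neil/Lorentz convolution inequality) actually closes with the claimed powers of $\bt$ and only a harmless $\ep$-loss from logarithms. There is no deep point — the decay assumption \eqref{eq:ass} on $w$ and the definition \eqref{eq:density condition} of $\CY^{a,b}_T$ are exactly calibrated so that everything fits — but one must be careful that the logarithmic factors arising when $|\pl^\al w|\sim\bra{y}^{-3}$ is integrated over $|y|\le\bt$ are absorbed into $\bt^{\ep}$, and that $\ep$ is chosen after $a,b$ but still inside $(0,\de/100)$, consistent with Assumption \ref{ass:1}.
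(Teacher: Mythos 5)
Two of your six estimates do not reach the claimed strength. For $|\al|=2$, your derivative-transfer plus near/far splitting only yields $\bt^{-3+a}$, and the proposed repair (``simply take $\ep>a$'') contradicts the statement, which asserts the bound $\bt^{-3+\ep}$ with $\ep$ arbitrarily small \emph{independently of} $a,b$; the paper's convention (Assumption \ref{ass:1}) is precisely $\ep\ll\min(a,b)$, and this is not cosmetic: the later exponent bookkeeping (e.g.\ in Lemma \ref{lem:commutator 1.6}, where one needs totals like $3b/4+5\ep<7b/8$ while $a>7b/8$) breaks if $\No{\na^2 V(t)}_{L^\I_x}$ is only known to decay like $\bt^{-3+a}$. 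The fix stays inside your own framework: keep \emph{both} derivatives on $w$ and split near/far, using $\No{\na^2 w}_{L^1(|y|\le\bt)}\ls\log\bt$ and $\No{\na^2 w}_{L^\I(|y|\ge\bt)}\ls\bt^{-3}$, which gives $\bt^{-3}\log\bt\ls\bt^{-3+\ep}$ for every $\ep>0$; the O'Neil objection you raised (that $\na^2w$ only lies in weak $L^1$) is irrelevant to the cutoff splitting. (The paper instead uses fractional derivatives, $|\na|^{2+\ep}w\in L^{6/(6-\ep)}_x$ paired with $\ze(t)\in L^{3/\ep}_x$, to the same effect.)

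The $L^3$ estimate for $\na^3 V$ is genuinely wrong as written: you claim $\na w\in L^{6/5,\infty}_x$ because ``$2\cdot\frac65=\frac{12}{5}>3$'', but $12/5<3$, and indeed a function with tail $\bra{y}^{-2}$ belongs to $L^{p,\infty}(\R^3)$ only for $p\ge 3/2$. Hence the O'Neil bound $\No{(\na w)\ast\na^2\ze}_{L^3}\le\No{\na w}_{L^{6/5,\infty}}\No{\na^2\ze(t)}_{L^2}$, and with it your intermediate bound $\bt^{-7/2+b}$, is not available. Writing $(\na^2 w)\ast\na\ze$ and splitting near/far does not rescue it either, since that route only produces $\bt^{-3+a}$, and $a>7b/8>b/2$, so the claimed $\bt^{-3+b/2+\ep}$ is strictly stronger. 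To get the stated rate you must actually use the $L^2$ control of $\na^2\ze$ through interpolation, as the paper does: $\No{\na^3 V(t)}_{L^3_x}\ls\No{|\na|\ze(t)}_{L^{3/(1+\ep)}_x}\ls\No{\ze(t)}_{L^{6/(1+4\ep)}_x}^{1/2}\No{|\na|^2\ze(t)}_{L^2_x}^{1/2}\ls\bt^{-3+b/2+\ep}\|\ze\|_{\CY^{a,b}_T}$. The remaining cases are fine: $|\al|=0,1$ via O'Neil with $w\in L^{3,\infty}$, $\na w\in L^{3/2,\infty}$ (this is exactly the paper's argument), and $|\al|=3,4$ via your splitting with one derivative moved onto $\ze$, which matches the claimed $\bt^{-4+a+\ep}$.
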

\begin{proof}
	When $|\al|=0$, the O'Neil theorem \cite{O'Neil 1963} (the Young convolution inequality for Lorentz norm) and Lemma \ref{lem:Lp bound} imply
	\begin{align}
		\No{w\ast \ze(t)}_{L^\I_x} 
		\ls \No{\ze(t)}_{L^{3/2,1}_x} \le \frac{\|\ze\|_{\CY_T^{a,b}}}{\bt}.
	\end{align}
	Similarly, when $|\al|=1$, we have 
	\begin{align}
		&\No{\pl^\al w\ast \ze(t)}_{L^\I_x} 
		\ls \No{\ze(t)}_{L^{3,1}_x}
		\le\frac{\|\ze\|_{\CY^{a,b}_T}}{\bt^{2}}.
	\end{align}
	
	Note that $|\na|^2 w, |\na|^3 w \in L^p_x$ for all $p\in (1,\I)$.
	Therefore, we obtain $|\na|^{s} w \in L^p_x$ for all $p\in(1,\I)$ and $s \in [2,3]$.
	When $|\al|=2$, the O'Neil theorem and Lemma \ref{lem:Lp bound} imply
	\begin{align}
		&\No{\pl^\al w\ast \ze(t)}_{L^{\I}_x} 
		\ls \No{|\na|^{2+\ep/2} w\ast \ze(t)}_{L^{6/\ep,1}_x} 
		\ls \No{|\na|^{2+\ep}w}_{L^{\frac{6}{6-\ep}}_x} \No{\ze(t)}_{L^{3/\ep}_x}\ls \frac{\|\ze\|_{\CY^{a,b}_T}}{\bt^{3-\ep}}.
	\end{align}
	Similarly, when $|\al|=3$, we have
	\begin{align}\label{eq:b}
	   &\No{\pl^\al w\ast \ze(t)}_{L^\I_x}
	        \ls \No{|\na|^{3+\ep/2} w \ast \ze(t)}_{L^{6/\ep,1}_x}
	        \ls \No{\na \ze(t)}_{L^{3/\ep}_x} \le \frac{\|\ze\|_{\CY^{a,b}_T}}{\bt^{4-a-\ep}}.
	\end{align}
	The estimate \eqref{eq:b} works when $|\al|=4$.
	
	Finally, we have
	\begin{align}
		\No{\na^3 V(t)}_{L^{3}_x} \ls \No{|\na| \ze(t)}_{L^{\frac{3}{1+\ep}}_x}
		\ls \No{\ze(t)}_{L^{\frac{6}{1+4\ep}}_x}^{1/2} \No{|\na|^2 \ze(t)}_{L^2_x}^{1/2} \ls \frac{\|\ze\|_{\CY^{a,b}_T}}{\bt^{3-b/2-\ep}}
	\end{align}
\end{proof}

\begin{lemma}\label{lem:potential deriv 2}
	Under Assumption \ref{ass:1}, we have
	\begin{align}
		\No{\pl^\al w\ast \zeta(t) }_{\CF L^1} \ls
		\begin{dcases}
			&\bt^{-2+a/2+\ep}\|\ze\|_{\CY^{a,b}_T} \quad \mbox{if } |\al|=1, \\
			&\bt^{-3+3b/4+\ep}\|\ze\|_{\CY^{a,b}_T} \quad \mbox{if } |\al|=2, \\
			&\min(\bt^{-7/2+b}, \hb^{-3/2}\bt^{-4+a} )\No{\ze}_{\CY^{a,b}_T}\quad \mbox{if } |\al|=3
		\end{dcases}
	\end{align}
	for all $0\le t\le T$, where we can choose $\ep\in(0,\de/100)$ arbitrarily small, independently of $a,b$.
	Moreover, the implicit constant is independent of $T>0$.
\end{lemma}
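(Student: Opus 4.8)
The plan is to pass to the Fourier side and run a frequency-space splitting. With our conventions $\widehat{\pl^\al(w\ast\ze(t))}(\xi)=c\,(i\xi)^\al\wh w(\xi)\wh{\ze(t)}(\xi)$, so that $\No{\pl^\al w\ast\ze(t)}_{\CF L^1}=c\No{\,|\xi^\al|\,|\wh w|\,|\wh{\ze(t)}|\,}_{L^1_\xi}$, the proof rests on two inputs. First, bounds on the multiplier $m_\al(\xi):=\xi^\al\wh w(\xi)$: from \eqref{eq:ass} one has $|\wh w(\xi)|\ls|\xi|^{-2-\ep_0}$ for $|\xi|\le 1$ and $|\wh w(\xi)|\ls|\xi|^{-3}$ for $|\xi|\ge 1$, for any fixed small $\ep_0>0$ (the high-frequency decay because $\pl^{\al'}w\in L^1$ for $|\al'|=3$; the low-frequency bound by a standard integration by parts on the tail $(1-\chi)w$, which produces at worst a logarithm absorbed into $|\xi|^{-\ep_0}$). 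Hence $|m_\al(\xi)|\ls|\xi|^{|\al|-2-\ep_0}$ for $|\xi|\le 1$ and $|m_\al(\xi)|\ls|\xi|^{|\al|-3}$ for $|\xi|\ge 1$. Second, the $\CY^{a,b}_T$-controlled norms: Assumption \ref{ass:1} gives $\No{\ze(t)}_{L^1_x}\le\|\ze\|_\CY$, $\No{\ze(t)}_{L^2_x}\le\No{\ze(t)}_{L^1_x}^{1/2}\No{\ze(t)}_{L^\I_x}^{1/2}\ls\bt^{-3/2}\|\ze\|_\CY$, $\No{\na^2\ze(t)}_{L^2_x}\le\bt^{-7/2+b}\|\ze\|_\CY$ and $\No{\na\ze(t)}_{\CF L^1}\le\hb^{-3/2}\bt^{-4+a}\|\ze\|_\CY$; together with $\No{\wh{\ze(t)}}_{L^\I}\ls\No{\ze(t)}_{L^1_x}$, $\No{\wh{\ze(t)}}_{L^2}=\No{\ze(t)}_{L^2_x}$ and $\No{|\xi|^2\wh{\ze(t)}}_{L^2}=\No{\na^2\ze(t)}_{L^2_x}$.

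For $|\al|\in\{1,2\}$, and for the first of the two $|\al|=3$ bounds, I would split the $\xi$-integral at frequency $\lambda:=\bt^{-1+b/2}$, which is $\le 1$. On the low part $|\xi|\le\lambda$ I estimate $|m_\al\wh{\ze(t)}|$ by Cauchy--Schwarz against $\No{\wh{\ze(t)}}_{L^2}$ when $|\al|\le 2$ (using that $|\xi|^{2(|\al|-2-\ep_0)}$ is locally integrable near the origin) and against $\No{\wh{\ze(t)}}_{L^\I}$ when $|\al|=3$; this gives low-part bounds $\ls\lambda^{1/2-\ep_0}\bt^{-3/2}\|\ze\|_\CY$, $\ls\lambda^{3/2-\ep_0}\bt^{-3/2}\|\ze\|_\CY$ and $\ls\lambda^{4-\ep_0}\|\ze\|_\CY$ for $|\al|=1,2,3$. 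On the high part $|\xi|>\lambda$ I write $|m_\al\wh{\ze(t)}|=(|m_\al|\,|\xi|^{-2})(|\xi|^2|\wh{\ze(t)}|)$ and apply Cauchy--Schwarz against $\No{\na^2\ze(t)}_{L^2_x}$; the remaining integral $\int_{|\xi|>\lambda}|m_\al|^2|\xi|^{-4}d\xi$ converges at infinity and its behaviour near $\lambda$ yields $\ls\lambda^{-3-\ep_0}$, $\ls\lambda^{-1-\ep_0}$, $\ls 1$ for $|\al|=1,2,3$, so the high part is $\ls\lambda^{-3/2-\ep_0}\bt^{-7/2+b}\|\ze\|_\CY$, $\ls\lambda^{-1/2-\ep_0}\bt^{-7/2+b}\|\ze\|_\CY$, $\ls\bt^{-7/2+b}\|\ze\|_\CY$. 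The choice $\lambda=\bt^{-1+b/2}$ balances the two pieces for $|\al|=1,2$, giving $\ls\bt^{-2+b/4+\ep_0}\|\ze\|_\CY$ and $\ls\bt^{-3+3b/4+\ep_0}\|\ze\|_\CY$; since $a>7b/8$ forces $b/4<a/2$, choosing $\ep_0$ small enough these are dominated by $\bt^{-2+a/2+\ep}\|\ze\|_\CY$ and $\bt^{-3+3b/4+\ep}\|\ze\|_\CY$. For $|\al|=3$ the high part already gives $\bt^{-7/2+b}\|\ze\|_\CY$, while the low part $\ls\lambda^{4-\ep_0}\|\ze\|_\CY\ls\bt^{-7/2+b}\|\ze\|_\CY$, so $\No{\pl^\al w\ast\ze(t)}_{\CF L^1}\ls\bt^{-7/2+b}\|\ze\|_\CY$.

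The second $|\al|=3$ bound is immediate from the multiplier estimate alone: $|m_\al(\xi)|\ls\min(|\xi|,1)\le|\xi|$ for all $\xi$, so $\No{\pl^\al w\ast\ze(t)}_{\CF L^1}\ls\No{\,|\xi|\,|\wh{\ze(t)}|\,}_{L^1_\xi}\ls\No{\na\ze(t)}_{\CF L^1}\le\hb^{-3/2}\bt^{-4+a}\|\ze\|_\CY$. Taking the smaller of the two $|\al|=3$ estimates gives the stated $\min$, and all implicit constants are manifestly independent of $T$ (and of $\hb$, apart from the explicit $\hb^{-3/2}$).

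The only genuinely delicate point is the low-frequency behaviour of $\wh w$ under the bare $C^3$, long-range hypothesis \eqref{eq:ass}: one cannot expect a clean $|\wh w(\xi)|\ls|\xi|^{-2}$ near $\xi=0$, only $|\wh w(\xi)|\ls|\xi|^{-2}\log(2/|\xi|)$, and this forces the $|\xi|^{-\ep_0}$ in the multiplier bound. This is harmless because every target exponent carries $\ep$-room and the hypothesis $7b/8<a$ provides strict slack precisely in the borderline $|\al|=1$ case; everything else is the routine $\lambda$-optimisation and elementary radial integrals.
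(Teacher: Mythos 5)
Your proposal is correct in substance but follows a genuinely different route from the paper. The paper never uses pointwise decay of $\wh{w}$: for $|\al|=1,2$ it places $|\xi|^{1/2+\ep}\wh{w}$ in $L^2_\xi$ via Hausdorff--Young and complex interpolation (from $w\in L^p$, $p>3$, and $\na w\in L^p$, $p>3/2$), applies Cauchy--Schwarz once globally in $\xi$, and then interpolates $\||\na|^{s}\ze(t)\|_{L^2}$ between $\|\ze(t)\|_{L^2}$, $\|\na\ze(t)\|_{L^2}$, $\|\na^2\ze(t)\|_{L^2}$; for $|\al|=3$ it pairs $|\xi|\wh w\in L^2$ with $\|\na^2\ze\|_{L^2}$, and separately $\||\xi|^2\wh w\|_{L^\I}$ with $\|\na\ze\|_{\CF L^1}$. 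You instead derive pointwise multiplier bounds on $\xi^\al\wh w$ from \eqref{eq:ass} and run a $t$-dependent frequency splitting at $\lambda=\bt^{-1+b/2}$, using $\|\wh\ze\|_{L^2}$ (or $\|\wh\ze\|_{L^\I}$) at low frequency and $\|\na^2\ze\|_{L^2}$ at high frequency. Your exponent bookkeeping checks out, and in the case $|\al|=1$ your method even yields the stronger rate $\bt^{-2+b/4+\ep}$, which dominates the stated $\bt^{-2+a/2+\ep}$ since $a>7b/8$ gives $b/4<a/2$; the $|\al|=2$ and first $|\al|=3$ rates match the lemma exactly, and constants are visibly $T$-independent in both arguments. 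What the paper's route buys is that it sidesteps any pointwise analysis of $\wh w$ near the origin; what yours buys is a more transparent picture of where each region of frequency space contributes.

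One wrinkle: your second $|\al|=3$ estimate rests on $|\xi^\al\wh w(\xi)|\ls\min(|\xi|,1)$, i.e.\ on the \emph{clean} bound $|\wh w(\xi)|\ls|\xi|^{-2}$ for $|\xi|\le1$, which contradicts your own earlier hedge that only $|\xi|^{-2}\log(2/|\xi|)$ (hence $|\xi|^{-2-\ep_0}$) is available; with the hedged bound alone, the reduction to $\|\na\ze(t)\|_{\CF L^1}$ and hence to $\hb^{-3/2}\bt^{-4+a}\|\ze\|_{\CY^{a,b}_T}$ does not follow pointwise, and this term has no $\ep$-room. The fix is that the clean bound is in fact true under \eqref{eq:ass}: decomposing $w$ dyadically in $|x|\sim 2^j$ and using three integrations by parts (available since $|\pl^\al w|\ls\bx^{-1-|\al|}$ for $|\al|\le3$) gives $|\wh{(\varphi_j w)}(\xi)|\ls 2^{2j}\min\big(1,(2^j|\xi|)^{-3}\big)$, and since $3>2$ both geometric sums converge without a logarithm, yielding $|\wh w(\xi)|\ls|\xi|^{-2}$ for $|\xi|\le1$ (and $\ls|\xi|^{-3}$ for $|\xi|\ge1$). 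With this the step is legitimate — and indeed the paper's own one-line proof of that case uses the same fact, $\sup_\xi|\xi|^2|\wh w(\xi)|<\I$, implicitly — so you should simply replace the log-hedged multiplier bound by the clean one (or prove it as above) to make the write-up internally consistent.
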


\begin{proof}
	Since $w\in L^p_x$ for $p>3$ and $\na w \in L^p_x$ for $p>3/2$, we have $|\na|^{1/2+\ep}w \in L^2_\xi$ by the complex interpolation.
When $|\al|=1$, we have
	\begin{align}
		\|\pl^\al w\ast \ze(t)\|_{\CF L^1}
		&\le \No{|\xi|^{1/2+\ep}\wh{w}(\xi)|\xi|^{1/2-\ep}\wh{\ze}(\xi)}_{L^1_\xi}
		\ls \||\na|^{1/2-\ep}\ze(t)\|_{L^2_x}\\
		&\ls \No{\ze(t)}_{L^2_x}^{1/2+\ep} \No{\na\ze(t)}_{L^2_x}^{1/2-\ep} \le \frac{\|\ze\|_{\CY^{a,b}_T}}{\bt^{2-a/2-\ep}}.
	\end{align}
Similarly, when $|\al|=2$, we have by Lemma \ref{lem:Lp bound}
	\begin{align}
		\|\pl^\al w\ast \ze(t)\|_{\CF L^1}
		&\ls \No{\ze(t)}_{L^2_x}^{1/4+\ep/2} \No{|\na|^2 \ze(t)}_{L^2_x}^{3/4-\ep/2}
		\le \frac{\|\ze\|_{\CY^{a,b}_T}}{\bt^{3-3b/4-\ep}}.
	\end{align}
	Finally, when $|\al|=3$, we have
	\begin{align}
		\|\pl^\al w\ast \ze(t)\|_{\CF L^1}
		\ls \No{|\na|^2\ze(t)}_{L^{2}_\xi}
		\le \frac{\|\ze\|_{\CY^{a,b}_T}}{\bt^{7/2-b}}.
	\end{align}	
	Moreover, we have another estimate:
	\begin{align}
		\|\pl^\al w\ast \ze(t)\|_{\CF L^1} \ls \No{\na \ze(t)}_{\CF L^1} \le \frac{\|\ze\|_{\CY^{a,b}_T}}{\hb^{3/2}\bt^{4-a}}.
	\end{align}
\end{proof}

\subsection{Single commutator estimates}
In this section, we collect various kinds of single commutator estimates with the general Schatten--$r$ norm.
In Section \ref{sec:double}, we focus on the double commutator estimates for the Hilbert--Schmidt norm. The biggest difference between these two cases is clear in Lemma \ref{lem:commutator basic}. Namely, we can improve \eqref{eq:commutator basic} into \eqref{eq:commutator basic 2}.

\subsubsection{An easy case without weights or derivatives}
The simplest commutator estimate is the following.
\begin{lemma}\label{lem:commutator 1}
	Under Assumption \ref{ass:1}, there exists a sufficiently small $R>0$ such that $\|\ze\|_{\CY^{a,b}_T} \le R$ implies
	\begin{align}\label{eq:commutator first}
		&\sup_{0\le t\le T}\frac{1}{\bt} \No{\Dk{\frac{x}{\hb},\ga^\hb(t)}}_{\FS^r_\hb}
		+ \sup_{0\le t\le T}\No{\Dk{\na,\ga^\hb(t)}}_{\FS^r_\hb}\ls \No{\gah_0}_{\CX^\si_\hb}.
	\end{align}
\end{lemma}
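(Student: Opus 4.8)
The plan is to use the Duhamel-type identity for commutators from Lemma~\ref{lem:commutator nabla W} and Lemma~\ref{lem:commutator x W} (applied with $\gah_0 \in \CX^\si_\hb$ and $V = w\ast\ze$), which express $[\na,\gah(t)]$ and $[x/i\hb,\gah(t)]$ as the conjugated initial commutator plus a time integral of a commutator of $\na V$ (resp.\ $\ta\,\na V$) against $\gah(\ta)$. First I would peel off the conjugation: since $\gah(t) = \U_V(t)\gah_0\U_V(t)^*$ and $\W_V(t) = \U(-t)\U_V(t)$, writing everything in terms of $\W_V(t)$ and $\U(t)$ and using the boundedness estimates of Lemma~\ref{lem:easy} (note we only need the cheap bounds here, since the target is $\gah(t)$ itself, not a weighted version) together with the MDFM/${\langle\J(t)\rangle}$ identities \eqref{eq:J}--\eqref{eq:f(J)}, the initial term contributes $\|[\na,\gah_0]\|_{\FS^r_\hb} + \|[x/\hb,\gah_0]\|_{\FS^r_\hb} \ls \|\gah_0\|_{\CX^\si_\hb}$ directly from the definition of the norm \eqref{eq:initial data}. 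For the $[\na,\cdot]$ estimate this is already enough modulo the Duhamel term; for the $[x/\hb,\cdot]$ estimate the initial term gives the $\bt^0$ part, well within the allowed $\bt^1$ growth.

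The core of the argument is the Duhamel integral term. For $[\na,\gah(t)]$ it has the form $-\tfrac{i}{\hb}\int_0^t \U_V(\ta)^*[\pl_{x_j}V(\ta),\gah(\ta)]\U_V(\ta)\,d\ta$; I would bound its $\FS^r_\hb$ norm by $\tfrac{1}{\hb}\int_0^t \|[\pl_{x_j}V(\ta),\gah(\ta)]\|_{\FS^r_\hb}\,d\ta$, then apply the fundamental commutator inequality \eqref{eq:commutator basic} of Lemma~\ref{lem:commutator basic}:
\begin{align}
	\No{[\pl_{x_j}V(\ta),\gah(\ta)]}_{\FS^r_\hb} \le \No{\na(\pl_{x_j}V(\ta))}_{\CF L^1}\No{[x,\gah(\ta)]}_{\FS^r_\hb} = \hb\No{\na^2 V(\ta)}_{\CF L^1}\No{[x/\hb,\gah(\ta)]}_{\FS^r_\hb}.
\end{align}
The crucial point is that the $1/\hb$ loss from Duhamel is exactly cancelled by the $\hb$ gained when converting $[x,\cdot]$ to $[x/\hb,\cdot]$. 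Using Lemma~\ref{lem:potential deriv 2}, $\|\na^2 V(\ta)\|_{\CF L^1}\ls \bt^{-3+3b/4+\ep}\|\ze\|_{\CY^{a,b}_T}$, this closes a Grönwall-type inequality for $\Phi(t) := \sup_{0\le\ta\le t}\bt^{-1}\|[x/\hb,\gah(\ta)]\|_{\FS^r_\hb}$ once $\|\ze\|_{\CY^{a,b}_T}\le R$ is small, since $\int_0^t \bta\,\bta^{-3+3b/4+\ep}\,d\ta$ converges. Similarly for $[x/\hb,\gah(t)]$ the integrand in the Duhamel term carries an extra factor $\ta$, giving $\int_0^t \ta\cdot\bta^{-3+3b/4+\ep}\bta\,d\ta$, still convergent. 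Both self-improving inequalities then yield $\Phi(t)\ls\|\gah_0\|_{\CX^\si_\hb}$ uniformly in $t\le T$ and $\hb$, after absorbing the small multiple of $\Phi$ into the left side.

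The main obstacle I anticipate is not any single estimate but the bookkeeping of the $\hb$-powers and the $\bt$-exponents so that everything closes \emph{uniformly in $\hb$}: one must verify that the $1/\hb$ from each Duhamel iteration is always matched, that applying Lemma~\ref{lem:commutator basic} rather than the naive triangle inequality is essential (as stressed in the remark after Section~\ref{subsec:idea}), and that the $\CF L^1$ bound on $\na^2 V$ — rather than merely the $L^\infty$ bound — is what gives a time-integrable weight with room to spare. A secondary subtlety is that $[\na,\gah(t)]$ and $[x/\hb,\gah(t)]$ are coupled (the Duhamel term for the first involves $[x,\gah]$, hence $[x/\hb,\gah]$), so one really proves the two bounds simultaneously via a single Grönwall argument on the pair, or first closes the $[x/\hb,\cdot]$ bound (which is self-contained once written via \eqref{eq:commutator basic}) and then feeds it into the $[\na,\cdot]$ bound. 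Finally, one should check the endpoint cases $r=1$ and $r=\I$ directly so that interpolation is not needed — all the ingredients (Hölder \eqref{eq:Holder}, Lemma~\ref{lem:commutator basic}, Lemma~\ref{lem:easy}) hold for every $r\in[1,\I]$, so this is routine.
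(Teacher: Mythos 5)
Your main route is essentially the paper's own proof: both arguments run the commutator Duhamel identities of Lemmas \ref{lem:commutator nabla W} and \ref{lem:commutator x W}, cancel the $1/\hb$ from Duhamel by writing $\No{\Bk{\na V(\ta),\gah(\ta)}}_{\FS^r_\hb}\le \hb\No{\na^2V(\ta)}_{\CF L^1}\No{\Dk{\xh,\gah(\ta)}}_{\FS^r_\hb}$ via Lemma \ref{lem:commutator basic}, invoke the $\CF L^1$ decay of $\na^2 V$ from Lemma \ref{lem:potential deriv 2}, and close by absorbing a small multiple of $\sup_{0\le\ta\le t}\bta^{-1}\No{\Dk{\xh,\gah(\ta)}}_{\FS^r_\hb}$ for $\|\ze\|_{\CY^{a,b}_T}\le R$ small. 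One caveat: your proposed alternative of first closing the $\Dk{x/\hb,\cdot}$ bound as a ``self-contained'' estimate does not work. Since $x/\hb$ fails to commute with the kinetic term (equivalently, conjugation by $\U(t)$ turns $x$ into $\J(t)=x+it\hb\na$), the estimate for $\bt^{-1}\No{\Dk{\xh,\gah(t)}}_{\FS^r_\hb}$ unavoidably produces $\No{\Bk{\na,\gah(t)}}_{\FS^r_\hb}$ with an $O(1)$, not small, constant; the system closes only because the smallness $R$ sits on the opposite direction, where $\No{\Bk{\na,\gah(t)}}_{\FS^r_\hb}\ls\No{\gah_0}_{\CX^\si_\hb}+R\sup_{0\le\ta\le t}\bta^{-1}\No{\Dk{\xh,\gah(\ta)}}_{\FS^r_\hb}$ — i.e.\ exactly your first option, the joint bootstrap on the pair, which is what the paper does. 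A minor imprecision: the integral $\int_0^t\ta\,\bta^{-3+3b/4+\ep}\bta\,d\ta$ is not convergent but grows like $\bt^{3b/4+\ep}$; this is harmless only because your quantity $\Phi$ carries the $\bt^{-1}$ weight, so the Duhamel contribution is $\ls R\,\bt^{3b/4+\ep-1}\Phi(t)\ls R\,\Phi(t)$, as in the paper.
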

\begin{proof}
	\noindent \underline{\textbf{Step 1: Estimate of the first term.}}
	We have
	\begin{align}
		\frac{1}{\bt}\No{\Dk{\xh, \ga(t)}}_{\FS^r_\hb}
		&\le \frac{1}{\bt}\bigg\|\Dk{\xh,\U(t)}\W_V(t) \ga_0^\hb \W_V(t)^* \U(t)^* \\
		&\qquad \qquad  + \U(t)\W_V(t) \ga_0^\hb \W_V(t)^* \Dk{\xh,\U(t)^*}\bigg\|_{\FS^r_\hb} \\
		&\quad +\frac{1}{\bt}\bigg\|\U(t) \Dk{\xh,\W_V(t)\ga_0^\hb \W_V(t)^*}  \U(t)^* \bigg\|_{\FS^r_\hb} =: \SA(t) + \SB(t).
	\end{align}
Clearly, we have
\begin{align}
		\SA(t)&\le \No{\Bk{\na,\gah(t) }}_{\FS^r_\hb}.
\end{align}
By Lemmas \ref{lem:commutator x W}, \ref{lem:commutator basic}, and \ref{lem:potential deriv 2}, we obtain
	\begin{align}
		\SB(t) &= \frac{1}{\bt}\No{\Dk{\xh,\W_V(t)\ga_0^\hb \W_V(t)^*}}_{\FS^r_\hb} \\
		&\le \frac{1}{\bt} \No{\Dk{\xh,\ga_0^\hb}}_{\FS^r_\hb}
		 + \frac{1}{\bt\hb} \No{\W_V(t)\int_0^t  \U_V(\ta)^*\Bk{\ta\na V(\ta), \gah(\ta)}\U_V(\ta)  d\ta\W_V(t)^*}_{\FS^r_\hb} \\
		&\le \No{\gah_0}_{\CX^\si_\hb} + \frac{1}{\bt} \int_0^t \bta^2 \|\na^2 V(\ta)\|_{\CF L^1} \frac{1}{\bta}\No{\Dk{\xh,\gah(\ta)}}_{\FS^r_\hb}d\ta \\
		&\ls \No{\gah_0}_{\CX^\si_\hb}
		       + \frac{1}{\bt} \int_0^t \frac{R}{\bta^{1-\ep-3b/4}} d\ta \sup_{0\le\ta\le t} \frac{1}{\bta}\No{\Dk{\xh,\gah(\ta)}}_{\FS^r_\hb} \\
		&\ls \No{\gah_0}_{\CX^\si_\hb}
		+ R \sup_{0\le \ta\le t} \frac{1}{\bta}\No{\Dk{\xh,\gah(\ta)}}_{\FS^r_\hb},
	\end{align}
	where we used sufficiently small $\ep>0$.
	From the above, we obtain
	\begin{align}
		\sup_{0\le \ta \le t} \frac{1}{\bta}\No{\Bk{\xh, \gah(\ta)}}_{\FS^r_\hb}
		\ls \No{\gah_0}_{\CX^\si_\hb}+  \No{\Bk{\na,\gah(t) }}_{\FS^r_\hb}
	\end{align}
	if we choose sufficiently small $R>0$.
	
	\noindent \underline{\textbf{Step 2: Estimate of the second term.}}
By Lemmas \ref{lem:commutator nabla W}, \ref{lem:commutator basic}, and \ref{lem:potential deriv 2}, we have
\begin{align}
	&\No{\Dk{\na,\gah(t) }}_{\FS^r_\hb}
	= \No{\Bk{\na,\W_V(t)\gah_0\W_V(t)^* }}_{\FS^r_\hb}
 \le \No{\gah_0}_{\CX_\hb^\si} + \frac{1}{\hb}\int_0^t \No{\Bk{\na V(\ta), \gah(\ta)}}_{\FS^r_\hb} d\ta \\
 &\quad \le \No{\gah_0}_{\CX_\hb^\si} + \int_0^t \No{\na^2 V(\ta)}_{\CF L^1} \No{\Bk{\xh, \gah(\ta)}}_{\FS^r_\hb} d\ta \\
 &\quad \ls \No{\gah_0}_{\CX_\hb^\si} + \int_0^t \frac{R}{\bt^{2-\ep-3b/4}} d\ta 
 \sup_{0\le\ta\le t} \frac{1}{\bta}\No{\Bk{\xh, \gah(\ta)}}_{\FS^r_\hb} \\
 &\quad \ls \No{\gah_0}_{\CX_\hb^\si} + R \sup_{0\le\ta\le t} \frac{1}{\bta}\No{\Bk{\xh, \gah(\ta)}}_{\FS^r_\hb}.
\end{align}

\noindent \underline{\textbf{Step 3: Conclusion.}}
By the above argument, we obtain
\begin{align}
	\sup_{0\le\ta\le t} \frac{1}{\bta}\No{\Bk{\xh, \ga(\ta)}}_{\FS^r_\hb} \ls \No{\gah_0}_{\CX_\hb^\si} + R \sup_{0\le \ta\le t} \frac{1}{\bta}\No{\Bk{\xh, \gah(\ta)}}_{\FS^r_\hb}.
\end{align}
Therefore, choosing sufficiently small $R>0$, we obtain
\begin{align}
	\sup_{0\le \ta \le t} \frac{1}{\bta}\No{\Bk{\xh, \ga(\ta)}}_{\FS^r_\hb} \ls \No{\gah_0}_{\CX_\hb^\si}.
\end{align}
Since all implicit constants are independent of $t$, we obtain 
\begin{align}
	\sup_{0\le t\le T} \frac{1}{\bt}\No{\Bk{\xh, \ga(t)}}_{\FS^r_\hb}
	\ls \No{\gah_0}_{\CX_\hb^\si}.
\end{align}
Finally, by the calculation in \textbf{Step 2}, we obtain
\begin{align}
\No{\Dk{\na,\gah(t) }}_{\FS^r_\hb} \ls \No{\gah_0}_{\CX^\si_\hb} + \sup_{t\ge 0} \frac{1}{\bt}\No{\Bk{\xh, \ga(t)}}_{\FS^r_\hb}
\ls \No{\gah_0}_{\CX^\si_\hb}.
\end{align}
\end{proof}

\begin{remark}\label{rmk:byproduct 1}
	As a byproduct of the above proof, we obtain
	\begin{align}
			&\No{\Dk{\xh,\W_V(t)\ga_0^\hb \W_V(t)^*}}_{\FS^r_\hb} \ls 
			\bt^{\ep+3b/4} \No{\gah_0}_{\CX^\si_\hb} \le \bt^{a} \No{\gah_0}_{\CX^\si_\hb}
	\end{align}
	for sufficiently small $\ep>0$, where we used $3b/4<a$. Moreover, we have
	\begin{align}
		\No{\Bk{\na,\W_V(t)\ga_0^\hb \W_V(t)^*}}_{\FS^r_\hb} \ls \No{\gah_0}_{\CX^\si_\hb}.
	\end{align}
\end{remark}

\subsubsection{A useful lemma}
The following lemma may seem tricky at first, but we will soon find it to be quite useful.
\begin{lemma}\label{lem:commutator 1.6}
	Under Assumption \ref{ass:1} with $r\ge 2$, there exists a sufficiently small $R>0$ such that $\|\ze\|_{\CY^{a,b}_T} \le R$ implies
	\begin{equation}\label{eq:hojo}
		\sup_{0\le t\le T}\frac{\bt^{1-7b/8}}{\hb} \No{\Jt^\si \Bk{t\na V(t), \gah(t)} \Jt^\si}_{\FS^r_\hb} \ls \No{\gah_0}_{\CX^\si_\hb}
	\end{equation}
	for all $\ep \in (0,\de/100)$.
\end{lemma}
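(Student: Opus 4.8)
The plan is to peel off the two weights $\Jt^\si$ using the identity $\Jt^\si = \U(t)\bx^\si\U(-t)$ from \eqref{eq:f(J)}, then exploit that $\gah(t) = \U_V(t)\gah_0\U_V(t)^* = \U(t)\W_V(t)\gah_0\W_V(t)^*\U(t)^*$, so that after conjugating by $\U(-t)$ the quantity becomes
\begin{equation*}
	\No{\bx^\si \Bk{\U(-t)(t\na V(t))\U(t),\ \W_V(t)\gah_0\W_V(t)^*} \bx^\si}_{\FS^r_\hb}.
\end{equation*}
The operator $\U(-t)(t\na V(t))\U(t)$ is $t(\na V)(t,-it\hb\na_x)$ after MDFM-type manipulations, but the cleaner route is to first distribute the commutator as $[\bx^\si (\cdots) \bx^{-\si}]\cdot(\bx^\si \W_V\gah_0\W_V^*\bx^{\si})$ plus the symmetric term, i.e. write $\bx^\si[A,B]\bx^\si = (\bx^\si A \bx^{-\si})(\bx^\si B\bx^\si) - (\bx^\si B \bx^\si)(\bx^{-\si} A \bx^\si)$ and apply the Hölder inequality \eqref{eq:Holder} with the $\FS^\infty$ factor being the conjugated multiplier and the $\FS^r_\hb$ factor being $\bx^\si \W_V(t)\gah_0\W_V(t)^*\bx^\si$. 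For the latter, insert $\bx^{-\si}\bx^\si$ between $\W_V$ and $\gah_0$ and use Proposition \ref{prop:wave operator boundedness} (with $s=\si<2$) together with the hypotheses of Assumption \ref{ass:1} — whose decay rates for $w\ast\zeta$ are exactly those verified in Lemmas \ref{lem:potential deriv} and \ref{lem:potential deriv 2} — to get $\No{\bx^\si \W_V(t)\gah_0\W_V(t)^*\bx^\si}_{\FS^r_\hb} \ls \No{\gah_0}_{\CX^\si_\hb}$. One must also bound $\No{\bx^\si\W_V(t)[\na,\gah_0]\text{-type terms}}$; but since only the $\bx^\si \cdots \bx^\si$ weight with $\si < 2$ appears, Remark \ref{rmk:byproduct 1} and the definition of $\|\cdot\|_{\CX^\si_\hb}$ cover this.

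The remaining — and genuinely delicate — task is to estimate the conjugated multiplier $\bx^\si\, [t\,(\na V)(t,-it\hb\na_x)]\, \bx^{-\si}$ in $\FS^\infty = \CB$. Here one uses Lemma \ref{lem:commutator basic} / Remark \ref{rmk:weight} to move $\bx^\si$ past the Fourier multiplier at the cost of $\No{\na(\bx^\si)}_{\CF L^1} \cdot \No{[x, \cdot]}$, which produces factors like $t^2\hb\No{\na^2 V(t)}$ and higher; iterating gives $\bx^{\si}(\cdots)\bx^{-\si} = t(\na V)(t,-it\hb\na_x) + (\text{lower order with }t^2\hb\,\na^2 V,\ t^3\hb^2\,\na^3 V)$. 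Using the $\CB$-bounds $\No{f(-it\hb\na)}_\CB \le \No{f}_{L^\infty_x}$ and the decay rates from Lemma \ref{lem:potential deriv}, the leading term contributes $t\No{\na V(t)}_{L^\infty_x}\ls \bt^{-1}$, which against the prefactor $\bt^{1-7b/8}/\hb$ is $\ls \bt^{-7b/8}/\hb$ — and this is where the $1/\hb$ is dangerous. The resolution is that we do \emph{not} pull all weights onto the multiplier; instead we split one weight onto $\W_V$ via Proposition \ref{prop:wave operator boundedness} and the commutator identities of Lemma \ref{lem:commutator wave} to extract an extra $1/\hb \cdot \int_0^t [\na V,\cdot]$ structure with its own $\hb$-gain, exactly as in Section \ref{subsec:si=1}; alternatively, and more simply, use $\CP[\na V](t)$-type bounds (Corollary \ref{cor:decay rate}) which already encode the cancellation between $\M(t)$ and $\M(-t)$ and give decay like $\bt^{-7/3+\de/3}$ for $t\le\hb^{-2}$ and $\sqrt\hb\,\bt^{-5/2+\de} + \bt^{-2}\lg t\hb\rg^{-1/2}$ for all $t$, so that after multiplying by $\bt^{1-7b/8}$ and the $1/\hb$ the $t\le\hb^{-2}$ range gives $\bt^{-4/3+\de/3-7b/8}\cdot\hb^{-1}\ls 1$ (using $\hb^{-1}\le\bt^{1/2}$ there) and the large-$t$ range gives $\sqrt\hb\,\hb^{-1}\bt^{-3/2+\de-7b/8} = \hb^{-1/2}\bt^{-3/2+\cdots}\ls 1$.

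The main obstacle is bookkeeping the $1/\hb$: I must ensure that every appearance of the bad factor is matched either by an explicit $\hb$ coming from a $[\cdot,\cdot]$ turned into $\hb\,[x/\hb,\cdot]$ (Lemma \ref{lem:commutator basic}, in the form $\No{[\na,A]}\le\hb^{-1}\cdot\hb\No{\na f}\cdots$ — no, rather by choosing $\CF L^1$ versus $L^\infty$ norms of $\na^k V$ judiciously) or by the trick $\hb^{-1}\le\bt^{1/2}$ valid only on $t\le\hb^{-2}$, together with the small-$\hb$-friendly alternative bound $\hb^{-3/2}\bt^{-4+a}$ for $\No{\na^3 V}_{\CF L^1}$ in Lemma \ref{lem:potential deriv 2} for the large-$t$ regime. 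The exponent arithmetic — verifying $1-7b/8$ minus the decay exponent stays negative, using $7b/8<a<b<\de/100$ and $\ep$ arbitrarily small — is routine but must be done carefully in the two regimes $t\lessgtr\hb^{-2}$ separately; I expect this case split, already foreshadowed throughout Section \ref{sec:wave}, to be the crux. The byproduct of this lemma (a bound on $\No{\Jt^\si[t\na V,\gah]\Jt^\si}$ with polynomial-in-$\bt$ growth) will then feed the double-commutator estimates of Section \ref{sec:double}.
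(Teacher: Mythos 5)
There is a genuine gap: your argument never actually cancels the $1/\hb$ in front of the commutator, which is the whole point of the lemma. Your main step splits $\bx^\si[A,B]\bx^\si$ into two products and estimates each factor by H\"older, with the multiplier $t(\na V)(t,-it\hb\na_x)$ measured in $\CB$; this discards the commutator structure, and, as the paper itself stresses (the remark in Section \ref{subsec:idea}: one must estimate $\tfrac1\hb[A,B]$, never $\|AB\|+\|BA\|$), no factor of $\hb$ can then be recovered. Your proposed rescue via the $\CP[\na V]$ bounds of Corollary \ref{cor:decay rate} fails for the same reason: those are pure operator-norm bounds carrying no $\hb$-gain, and your inequality ``$\hb^{-1}\le\bt^{1/2}$ on $t\le\hb^{-2}$'' is reversed (on that range $\bt^{1/2}\ls\hb^{-1}$, not the other way around); at $t\sim 1$ your bound is of size $\hb^{-1}$, which blows up as $\hb\to0$. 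Your first rescue (``split one weight onto $\W_V$ ... exactly as in Section \ref{subsec:si=1}'') is too vague to supply the missing $\hb$, and the claimed uniform bound $\No{\bx^\si\W_V\gah_0\W_V^*\bx^\si}_{\FS^r_\hb}\ls\No{\gah_0}_{\CX^\si_\hb}$ also overreaches: Proposition \ref{prop:wave operator boundedness} concerns the phase-corrected operator $\bx^s e^{i\Ps}\W_V\bx^{-s}$, while without the phase correction Lemma \ref{lem:easy} only gives logarithmic/$\bt^{\de(s-1)}$ growth.

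The paper's proof keeps the commutator intact and gains the $\hb$ explicitly: after moving the weights to write the quantity as $[t\na V(t),\Jt^\si\gah(t)\Jt^\si]$ plus a weight-commutator error, Lemma \ref{lem:commutator basic} converts the main term into $t\hb\No{\na^2 V(t)}_{\CF L^1}\No{[\tfrac{x}{\hb},\Jt^\si\gah(t)\Jt^\si]}_{\FS^r_\hb}$, so the $\hb$ appears from $[x,\cdot]=\hb[\tfrac{x}{\hb},\cdot]$ together with the $\CF L^1$ decay of $\na^2 V$ from Lemma \ref{lem:potential deriv 2}. The commutators $[\tfrac{x}{\hb},\cdot]$ and $[\na,\cdot]$ of $\bx^\si\W_V\gah_0\W_V^*\bx^\si$ are then expanded via Lemmas \ref{lem:commutator x W} and \ref{lem:commutator nabla W}, which reintroduce $\tfrac1\hb\int_0^t\Jta^\si[\ta\na V(\ta),\gah(\ta)]\Jta^\si\,d\ta$, i.e.\ the very quantity being estimated; the proof closes by a self-consistent absorption argument, using the smallness $\|\ze\|_{\CY^{a,b}_T}\le R$ and the exponent bookkeeping $3b/4+5\ep<7b/8$, $b/2<7b/8$ to absorb the right-hand side. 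This absorption mechanism (and the explicit $\hb$-gain from the $\CF L^1$ commutator lemma) is absent from your proposal, so as written the proof does not go through.
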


\begin{proof}
	\noindent \underline{\textbf{Step 0: Setup of the proof.}}
	Note that
	\begin{align}
		&\No{\Jt^{\si} \Bk{t\na V(t), \gah(t)}\Jt^{\si} }_{\FS^r_\hb}
		\ls \No{\Bk{t\na V(t), \Jt^\si \gah(t) \Jt^\si}}_{\FS^r_\hb} \\
		&\qquad \qquad + \No{\Dk{\Jt^\si, t\na V(t)}\Jt^{-\si}}_{\FS^r_\hb}
		\No{\Jt^\si \gah(t) \Jt^\si}_\CB
		=: \SA+ \SB.
	\end{align}
	In the sequel, we fix sufficiently small $\ep\in (0,\de/100)$
	such that
	\begin{align}
		\No{\na^2 V(t)}_{L^\I_x} \ls \frac{1}{\bt^{3-\ep}}
	\end{align}
	holds by Lemma \ref{lem:potential deriv}.
	
	\noindent \underline{\textbf{Step 1: Estimate of $\SA$.}}
	First, by Lemmas \ref{lem:commutator basic} and \ref{lem:potential deriv 2}, we have
	\begin{align}
		\SA &\le t\hb \No{\na^2 V(t)}_{\CF L^1} \No{\Dk{\xh, \Jt^\si \gah(t)\Jt^\si}}_{\FS^r_\hb}\\
		&\ls \frac{R\hb}{\bt^{2-\ep-3b/4}} \No{\Dk{\xh, \Jt^\si \gah(t)\Jt^\si}}_{\FS^r_\hb}.
	\end{align}
	Note that by \eqref{eq:J}
	\begin{align}
		&\No{\Dk{\xh, \Jt^\si \gah(t) \Jt^\si} }_{\FS^r_\hb}
		= \frac{1}{\hb}\No{\Bk{\J(t), \bx^\si \W_V(t)\gah_0\W_V(t)^* \bx^\si} }_{\FS^r_\hb} \\
		&\quad \le \No{\Bk{\xh, \bx^\si \W_V(t)\gah_0\W_V(t)^* \bx^\si} }_{\FS^r_\hb} \\ 
		&\qquad \qquad + t \No{\Bk{\na, \bx^\si \W_V(t)\gah_0\W_V(t)^* \bx^\si} }_{\FS^r_\hb}=\SA_1+\SA_2.
	\end{align}
	
	\noindent \textbf{$\blacklozenge$ Estimate of $\SA_1$.}
	For $\SA_1$, by Lemmas \ref{lem:commutator x W} and \ref{lem:easy}, we have
	\begin{align}
		\SA_1 &\ls \No{\Bk{\xh, \bx^\si \W_V(t)\gah_0\W_V(t)^* \bx^\si} }_{\FS^r_\hb} \\
		&\ls \bt^{4\ep}\No{\gah_0}_{\CX^\si_\hb}
		+ \frac{\bt^{4\ep}}{\hb}\int_0^t \bta^{4\ep}
		 \No{\Jta^\si \Bk{\ta \na V(\ta), \gah(\ta)} \Jta^\si}_{\FS^r_\hb} d\ta \\
		&\le \bt^{4\ep}\No{\gah_0}_{\CX^\si_\hb}
		+ \frac{\bt^{4\ep}}{\hb}\int_0^t \frac{\hb}{\bta^{1-7b/8-4\ep}} d\ta \\
		&\qquad \qquad \cdot \sup_{0\le\ta\le t} \frac{\bta^{1-7b/8}}{\hb} \No{\Jta^\si \Bk{\ta \na V(\ta), \gah(\ta)} \Jta^\si}_{\FS^r_\hb} \\
		&\ls \bt^{4\ep}\No{\gah_0}_{\CX^\si_\hb} + \bt^{7b/8+8\ep} \sup_{0\le\ta\le t} \frac{\bta^{1-7b/8}}{\hb}
		\No{\Jta^\si \Bk{\ta \na V(\ta), \gah(\ta)} \Jta^\si}_{\FS^r_\hb}.
	\end{align}
	
	\noindent \textbf{$\blacklozenge$ Estimate of $\SA_2$.}
	For $\SA_2$, by Lemmas \ref{lem:easy}, \ref{lem:commutator nabla W}, and \ref{lem:potential deriv}, we have
	\begin{align}
	\SA_2 &\ls t \No{\bx^{\si-1}\W_V(t)\gah_0\W_V(t)^* \bx^\si}_{\FS^r_\hb} \\
	&\qquad + t \No{\bx^\si \Bk{\na, \W_V(t)\gah_0\W_V(t)^*} \bx^\si}_{\FS^r_\hb} \\
	&\ls \bt^{1+4\ep}\No{\gah_0}_{\CX^\si_\hb} + \frac{\bt^{1+4\ep}}{\hb} \int_0^t \bta^{4\ep} \No{\Jta^{\si} \Bk{\na V(\ta), \gah(\ta)} \Jta^\si}_{\FS^r_\hb} d\ta \\
	&\ls \bt^{1+4\ep}\No{\gah_0}_{\CX^\si_\hb} + \bt^{1+4\ep} \int_0^t \frac{1}{\bta^{2-7b/8-4\ep}} d\ta \\
	&\qquad \qquad \cdot \sup_{0\le\ta\le t}\frac{\bta^{1-7b/8}}{\hb} \No{\Jta^{\si} \Bk{\ta\na V(\ta), \gah(\ta)} \Jta^\si}_{\FS^r_\hb} \\
	&\ls \bt^{1+4\ep}\No{\gah_0}_{\CX^\si_\hb} + \bt^{1+4\ep} \sup_{0\le\ta\le t}\frac{\bta^{1-7b/8}}{\hb} \No{\Jta^{\si} \Bk{\ta\na V(\ta), \gah(\ta)} \Jta^\si}_{\FS^r_\hb}.
	\end{align}
	
	\noindent \textbf{$\blacklozenge$ Conclusion.}
	Lemmas \ref{lem:potential deriv 2} with the above argument, we have
	\begin{align}
		\SA &\ls R\hb \bt^{-2+\ep +3b/4} \Big\{\bt^{1+4\ep}\No{\gah_0}_{\CX_\hb^\si} \\
		&\quad+\bt^{1+4\ep}
		\sup_{0\le\ta\le t}\frac{\bta^{1-7b/8}}{\hb} \No{\Jta^{\si} \Bk{\ta\na V(\ta), \gah(\ta)} \Jta^\si}_{\FS^r_\hb} \Big\} \\
		&= R\hb \bt^{-1+3b/4+5\ep} \Big\{\No{\gah_0}_{\CX_\hb^\si} +\sup_{0\le\ta\le t}\frac{\bta^{1-7b/8}}{\hb} \No{\Jta^{\si} \Bk{\ta\na V(\ta), \gah(\ta)} \Jta^\si}_{\FS^r_\hb} \Big\}.
	\end{align}

	\noindent \underline{\textbf{Step 2: Estimate of $\SB$.}}
	By Lemmas \ref{lem:easy}, \ref{lem:commutator basic}, \ref{lem:potential deriv}, and \eqref{eq:f(J)}, we have
	\begin{align}
		\SB &\ls \bt^{4\ep} \No{\gah_0}_{\CX_\hb^\si}\Big\{\No{\Bk{\Jt^{\si/2}, t\na V(t)}}_{\FS^r_\hb} \\
		&\qquad	\qquad + \No{\Bk{\Jt^{\si/2}, \Bk{\Jt^{\si/2}, t\na V(t)\Jt^{-\si}}}}_{\FS^r_\hb}\Big\}\\
		&=\bt^{4\ep} \No{\gah_0}_{\CX_\hb^\si}\Big\{\No{\Bk{\bx^{\si/2}, \U(t)^*t\na V(t)\U(t)}}_{\FS^r_\hb} \\
		&\qquad	\qquad + \No{\Bk{\bx^{\si/2}, \Bk{\bx^{\si/2}, \U(t)^*t\na V(t)\U(t)\bx^{-\si} }}}_{\FS^r_\hb}\Big\} \\
		&\ls \bt^{4\ep+2}\hb \No{\gah_0}_{\CX_\hb^\si}\Ck{\No{\na^2 V(t)}_{L^\I_x}
			+ t\hb\No{\na^3 V(t)\U(t)\bx^{-\si}}_{L^\I_x}} \\
		&\ls \bt^{4\ep+2}\hb \No{\gah_0}_{\CX_\hb^\si}\K{R\bt^{-3+\ep}
			+ \No{\na^3 V(t)}_{L^3_x}} 
			\ls R\hb\bt^{-1+b/2+5\ep}\No{\gah_0}_{\CX_\hb^\si}.
	\end{align}

	\noindent\underline{\textbf{Step 3: Conclusion.}}
	Clearly, we have $3b/4+5\ep<7b/8$ and $b/2 < 7b/8$ for sufficiently small $\ep>0$.
Therefore, we have
	\begin{align}
		&\sup_{0\le \ta\le t}\frac{\bta^{1-7b/8}}{\hb} \No{\Jta^{\si} \Bk{\ta\na V(\ta), \gah(\ta)}\Jta^{\si} }_{\FS^r_\hb} \\
		&\quad \ls \No{\gah_0}_{\CX_\hb^\si} + R\sup_{0\le\ta\le t}\frac{\bta^{1-7b/8}}{\hb} \No{\Jta^{\si} \Bk{\ta\na V(\ta), \gah(\ta)} \Jta^\si}_{\FS^r_\hb}.
	\end{align}
	Hence, choosing sufficiently small $R>0$, we get
	\begin{align}
		\sup_{0\le \ta\le t}\frac{\bta^{1-7b/8}}{\hb} \No{\Jta^{\si} \Bk{\ta\na V(\ta), \gah(\ta)}\Jta^{\si} }_{\FS^r_\hb} \ls \No{\gah_0}_{\CX_\hb^\si}.
	\end{align}
	Because all constants are independent of $t$, we obtain the desired estimate \eqref{eq:hojo}.
\end{proof}

\subsubsection{An application of Lemma \ref{lem:commutator 1.6}}
By using Lemma \ref{lem:commutator 1.6}, we obtain the following:
\begin{lemma}\label{lem:commutator 1.7}
	Under Assumption \ref{ass:1} with $r \ge 2$, there exists a sufficiently small $R>0$ such that $\|\ze\|_{\CY^{a,b}_T} \le R$ implies
	\begin{align}
		&\No{\bx^\si e^{i\Ps(t,-i\hb\na_x)} \Bk{\xh, \W_V(t)\gah_0 \W_V(t)^*} e^{-i\Ps(t,-i\hb\na_x)}\bx^\si}_{\FS^r_\hb}
		\ls \bt^a \No{\gah_0}_{\CX^\si_\hb}, \label{eq:commutator weight}\\
		&\No{\sd^\si \Bk{\na, \W_V(t)\gah_0 \W_V(t)^*}\sd^\si}_{\FS^r_\hb}
		\ls \bt^{a} \No{\gah_0}_{\CX^\si_\hb}. \label{eq:commutator nabla}
	\end{align}
\end{lemma}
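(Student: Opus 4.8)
The plan is to prove both estimates by the same three moves: expand the commutator against the density operator $\W_V(t)\gah_0\W_V(t)^*$ by one of the wave-operator commutator identities of Section~\ref{subsec:useful identities}, peel the outer wave operators off with the appropriate boundedness statement, and hand the resulting time integral to Lemma~\ref{lem:commutator 1.6}. In both cases the factor $1/\hb$ produced by the Duhamel part of those identities is exactly the power of $\hb$ that Lemma~\ref{lem:commutator 1.6} supplies, so the two cancel and the remaining time integral grows only like $\bt^{7b/8}\le\bt^a$ because $7b/8<a$ under Assumption~\ref{ass:1}.

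For \eqref{eq:commutator weight} I would start from \eqref{eq:weight single commutator}, which splits $[\xh,\W_V(t)\gah_0\W_V(t)^*]$ into the boundary piece $\W_V(t)[\xh,\gah_0]\W_V(t)^*$ and the Duhamel piece $-\tfrac{i}{\hb}\W_V(t)\int_0^t\U_V(\ta)^*[\ta\na V(\ta),\gah(\ta)]\U_V(\ta)\,d\ta\,\W_V(t)^*$, then conjugate by $\bx^\si e^{i\Ps(t,-i\hb\na_x)}$ and its adjoint. One first checks that $V=w\ast\ze$ meets the hypotheses of Proposition~\ref{prop:wave operator boundedness}: Lemmas~\ref{lem:potential deriv} and \ref{lem:potential deriv 2} give precisely the pointwise and $\CF L^1$ decay of $\pl^\al V$ required there, once $\ep$ is taken below $\de/100$. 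Then $\|\bx^\si e^{i\Ps(t,-i\hb\na_x)}\W_V(t)\bx^{-\si}\|_\CB\ls1$, so the boundary piece is dominated by $\|\bx^\si[\xh,\gah_0]\bx^\si\|_{\FS^r_\hb}$, which is one of the summands of $\|\gah_0\|_{\CX^\si_\hb}$ in \eqref{eq:initial data} at $r=2$ and $r=\I$ and hence, by real interpolation of the Schatten classes, at all $r\in[2,\I]$. For the Duhamel piece, after Proposition~\ref{prop:wave operator boundedness} removes the outer $\bx^\si e^{i\Ps(t,-i\hb\na_x)}\W_V(t)$, one is left with $\tfrac1\hb\int_0^t\|\,\text{(weighted)}\ \U_V(\ta)^*[\ta\na V(\ta),\gah(\ta)]\U_V(\ta)\,\|_{\FS^r_\hb}\,d\ta$; using $\U_V(\ta)=\U(\ta)\W_V(\ta)$, that $e^{i\Ps(\ta,-i\hb\na_x)}$ commutes with $\U(\ta)$, the identity $\Jta^\si=\U(\ta)\bx^\si\U(\ta)^*$ from \eqref{eq:f(J)}, and unitarity of $\W_V(\ta)$, I would rewrite this integrand — again routing the weighted conjugation through $e^{i\Ps(\ta,-i\hb\na_x)}\W_V(\ta)$ and Proposition~\ref{prop:wave operator boundedness} — as a bounded multiple of $\|\Jta^\si[\ta\na V(\ta),\gah(\ta)]\Jta^\si\|_{\FS^r_\hb}$. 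By Lemma~\ref{lem:commutator 1.6} the latter is $\ls\hb\,\bt^{-1+7b/8}\|\gah_0\|_{\CX^\si_\hb}$, and the bound follows.

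For \eqref{eq:commutator nabla} the same scheme applies but no phase correction is needed: I would use \eqref{eq:nabla single commutator} and conjugate by $\sd^\si$. Because $\sd$ commutes with $\U(t)$, and the regularity bound \eqref{eq:regularity} of Lemma~\ref{lem:easy} carries \emph{no} growth in $t$, the outer wave operators come off at the cost of $O(1)$ factors; the boundary piece is then controlled by $\|\sd^\si[\na,\gah_0]\sd^\si\|_{\FS^r_\hb}\le\|\gah_0\|_{\CX^\si_\hb}$, and the Duhamel piece by a time integral of $\|\sd^\si[\na V(\ta),\gah(\ta)]\sd^\si\|_{\FS^r_\hb}$, which decays fast enough — by the argument of Lemma~\ref{lem:commutator 1.6}, now with the momentum weight $\sd$ in place of $\Jt$ and exploiting that $\sd$ commutes with $\na$ and $\U$ — to make the $1/\hb$-weighted integral $O(1)$.

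The step I expect to be the crux is the Duhamel piece of \eqref{eq:commutator weight}: one cannot afford to pass the weight $\bx^\si$ through the bare wave operator $\W_V(\ta)$, since the only estimate available there, \eqref{eq:weight} of Lemma~\ref{lem:easy}, grows like $(\log\bt)^\si\bt^{\de(\si-1)}$, and with $\si\in(3/2,2)$ this polynomial factor vastly exceeds the $\bt^a$ budget (recall $a<\de/100$). The whole reason the phase correction sits inside the statement is to route that conjugation instead through $e^{i\Ps(\ta,-i\hb\na_x)}\W_V(\ta)$, for which Proposition~\ref{prop:wave operator boundedness} — which already packages the delicate cancellation between the phase and the wave operator — gives a clean uniform bound; the fact that $e^{i\Ps(\ta,-i\hb\na_x)}$ commutes with $\U(\ta)$ (both are Fourier multipliers) is what lets the phase slide freely into place so that the $\Jta^\si$-conjugated commutator of Lemma~\ref{lem:commutator 1.6} appears. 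A subsidiary point to track throughout is that moving $\sd^\si$ or $\bx^\si$ across multiplication operators such as $\na V(\ta)$ produces commutators of the type $[\sd^\si,\na V(\ta)]$ that are $O(\hb)$ relative to the leading term, hence do not jeopardize the recovery of the lost $1/\hb$, but this must be verified term by term.
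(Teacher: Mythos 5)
Your overall architecture for \eqref{eq:commutator weight} is the paper's: decompose with \eqref{eq:weight single commutator}, verify the hypotheses of Proposition \ref{prop:wave operator boundedness} via Lemmas \ref{lem:potential deriv} and \ref{lem:potential deriv 2}, peel the outer phase-corrected conjugation, reduce the Duhamel integrand to $\No{\Jta^\si\Bk{\ta\na V(\ta),\gah(\ta)}\Jta^\si}_{\FS^r_\hb}$, apply Lemma \ref{lem:commutator 1.6}, and integrate using $7b/8<a$; your boundary term, the Schatten interpolation for $r\in[2,\infty]$, and the scheme for \eqref{eq:commutator nabla} are likewise consistent with the paper. The gap is exactly at the step you call the crux. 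After the outer peeling, the integrand is $\bx^\si\W_V(\ta)^*\U(\ta)^*\Bk{\ta\na V(\ta),\gah(\ta)}\U(\ta)\W_V(\ta)\bx^\si$, so transferring the weights to the position of $\Jta^\si$ requires a bound on $\No{\bx^{\si}\W_V(\ta)^*e^{-i\Ps(\ta,-i\hb\na_x)}\bx^{-\si}}_\CB$, equivalently $\No{\bx^{-\si}e^{i\Ps(\ta,-i\hb\na_x)}\W_V(\ta)\bx^{\si}}_\CB$ — the weights in the \emph{reversed} order. Proposition \ref{prop:wave operator boundedness} only gives $\No{\bx^{s}e^{i\Ps}\W_V\bx^{-s}}_\CB\ls1$; its proof hinges on $\No{\Bk{x,e^{i\Ps}\W_V}\bx^{-1}}_\CB$, with the compensating weight on the right, and the reversed-order statement (equivalently the same bound for the adjoint $\W_V(t)^*e^{-i\Ps(t,-i\hb\na_x)}$) is neither stated nor an immediate corollary; you would have to redo the analysis of Section \ref{sec:wave} to get it. So ``routing the weighted conjugation through $e^{i\Ps(\ta,-i\hb\na_x)}\W_V(\ta)$ and Proposition \ref{prop:wave operator boundedness}'' is not justified as written.

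Moreover, your reason for rejecting the phase-free alternative is the point where you diverge from what actually closes the proof. You cite \eqref{eq:weight} of Lemma \ref{lem:easy} with its $(\log(t+2))^{\si}\bt^{\de(\si-1)}$ growth, but under Assumption \ref{ass:1} the potential satisfies the sharper bounds of Lemma \ref{lem:potential deriv} (e.g.\ $\No{\na^2V(t)}_{L^\I_x}\ls\bt^{-3+\ep}$ with $\ep$ arbitrarily small, in place of $\bt^{-3+\de}$), so rerunning the proof of Lemma \ref{lem:easy} with these inputs yields $\No{\bx^{\si}\W_V(\ta)\bx^{-\si}}_\CB+\No{\bx^{-\si}\W_V(\ta)\bx^{\si}}_\CB\ls(\log(\ta+2))^{\si}\bta^{\ep(\si-1)}\ls\bta^{2\ep}$, and — unlike Proposition \ref{prop:wave operator boundedness} — that argument is symmetric in the order of the weights (it reduces to $\Bk{x,\W_V}$-type quantities via \eqref{eq:commutator x W}, hence transfers to $\W_V(\ta)^*$ by adjoints). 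This is the source of the $\bta^{4\ep}$ factor in the paper's proof, and since $7b/8+O(\ep)<a$ it is harmless: no phase correction is needed at the inner times at all. With that replacement your argument goes through; as proposed, the inner peeling rests on an estimate the cited proposition does not provide.
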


\begin{proof}
The proof of \eqref{eq:commutator nabla} is easy; it follows from a standard argument that we did in this paper,
applying Lemmas \ref{lem:commutator nabla W}, \ref{lem:easy}, \ref{lem:commutator basic}, et al.	
	Now we consider \eqref{eq:commutator weight}. By Lemmas \ref{lem:commutator x W}, \ref{lem:commutator 1.6}, \ref{lem:easy}, and Proposition \ref{prop:wave operator boundedness}, we have
	\begin{align}
		&\No{\bx^\si e^{i\Ps(t,-i\hb\na_x)}\Bk{\xh,  \W_V(t)\gah_0\W_V(t)^* } e^{-i\Ps(t,-i\hb\na_x)}\bx^\si}_{\FS^r_\hb} \\
		&\quad \ls \No{\gah_0}_{\CX^\si_\hb}
		+ \frac{1}{\hb}\int_0^t \bta^{4\ep} \No{\Jta^\si \Bk{\ta \na V(\ta), \gah(\ta)} \Jta^\si}_{\FS^r_\hb} d\ta \\
		&\quad \ls \No{\gah_0}_{\CX^\si_\hb}
		+ \int_0^t \frac{\No{\gah_0}_{\CX_\hb^\si}}{\bta^{1-7b/8-4\ep}} d\ta
		\ls \bt^{7b/8+4\ep} \No{\gah_0}_{\CX^\si_\hb}.
	\end{align}
	Finally, by Assumption \ref{ass:1}, we have $7b/8+4\ep<a$ for sufficiently small $\ep>0$.
\end{proof}

\section{Double commutator estimates with the Hilbert--Schmidt norm}\label{sec:double}
In this section, we give very important double commutator bounds for the Hilbert--Schmidt norm. When we work in $\FS^2_\hb$--space, we can improve \eqref{eq:commutator basic} to \eqref{eq:commutator basic 2}.
Throughout this section, we assume the following.
\begin{assumption}\label{ass:2}
	In this section, we always assume that $d=3$ and $\si\in(3/2,2)$.
	Assume that $\de>0$ is a constant given in Proposition \ref{prop:wave operator boundedness}.
	Suppose that small numbers $a, b \in (0,\de/100)$ satisfy $7b/8<a<b$.
	Assume that $w(x)$ satisfies \eqref{eq:ass}.
	Let $\ze\in \CY^{a,b}_T$ and $V := w \ast \ze(t)$. 
	Define $\gah(t):= \U_V(t)\gah_0 \U_V(t)^*$ for $\gah_0 \in \CX^{\si}_\hb$.
	We often choose a small number $\ep>0$, but it is always much smaller than $a,b$.
\end{assumption}

The goal of this section is to prove the following estimates:
\begin{lemma}\label{lem:commutator 6}
Under Assumption \ref{ass:2}, there exists a small $R>0$ such that $\|\ze\|_{\CY^{a,b}_T} \le R$ implies
	\begin{align}
		&\sup_{0\le t\le T} \bt^{-a-\ep} \No{\bx^{{\si/2}} \Dk{\frac{x_k}{\hb},\Dk{\frac{x_j}{\hb},\W_V(t)\gah_0 \W_V(t)^*}}\bx^{{\si/2}}}_{\Sp} \ls \No{\gah_0}_{\CX^\si_\hb},  \\
		&\sup_{0\le t\le T} \bt^{-\ep} \No{\bx^{{\si/2}} \Dk{\pl_{x_k},\Dk{\frac{x_j}{\hb},\W_V(t)\gah_0 \W_V(t)^*}}\bx^{{\si/2}}}_{\Sp} \ls \No{\gah_0}_{\CX^\si_\hb} , \\
		&\sup_{0\le t\le T} \bt^{-\ep} \No{\bx^{{\si/2}} \Bk{\pl_{x_k},\Bk{\pl_{x_j},\W_V(t)\gah_0 \W_V(t)^*}}\bx^{{\si/2}}}_{\Sp} \ls \No{\gah_0}_{\CX^\si_\hb}
	\end{align}
	for any $0<\ep\ll \min(a,b)$ and $j,k=1,\dots,3$.
\end{lemma}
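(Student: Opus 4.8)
The plan is to expand each of the three double commutators by its natural identity: Lemma \ref{lem:commutator x W} for $[\tfrac{x_k}{\hb},[\tfrac{x_j}{\hb},\cdot]]$, Lemma \ref{lem:commutator mixed} for $[\pl_{x_k},[\tfrac{x_j}{\hb},\cdot]]$, and Lemma \ref{lem:commutator nabla W} for $[\pl_{x_k},[\pl_{x_j},\cdot]]$. Each expansion produces a ``data term'' $\W_V(t)[\,\cdot\,,[\,\cdot\,,\gah_0]]\W_V(t)^*$ together with finitely many Duhamel terms of the schematic shape $\tfrac{1}{\hb}\,\W_V(t)\int_0^t\U_V(\ta)^*\big[(\pl^{\le 2}V)(\ta),\,\big(\text{a commutator of order }\le 1\text{ of }\gah(\ta)\big)\big]\U_V(\ta)\,d\ta\,\W_V(t)^*$, except that the terms carrying a first–order $\J$--commutator inside come with prefactor $\tfrac{1}{\hb^2}$ (and a factor $\ta$ or $\ta^2$ on the derivative of $V$). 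Because these Duhamel terms feed back into the very quantities being estimated — see the last paragraph — I would run a single \emph{simultaneous} bootstrap. Concretely, let $N(t)$ be $\|\gah_0\|_{\CX^\si_\hb}^{-1}$ times the supremum over $0\le\ta\le t$ of the three left–hand sides of Lemma \ref{lem:commutator 6} divided by their target weights $\bta^{a+\ep}$, $\bta^{\ep}$, $\bta^{\ep}$; the goal is to show $N(t)\ls 1+R\,N(t)$ with $R:=\|\ze\|_{\CY^{a,b}_T}$, which closes by continuity once $R$ is small (exactly as in the proofs of Lemmas \ref{lem:commutator 1} and \ref{lem:commutator 1.6}).

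\textbf{Data terms.} Since $\si/2<1$, Lemma \ref{lem:easy} gives $\|\bx^{\si/2}\W_V(t)\bx^{-\si/2}\|_\CB\ls(\log(t+2))^{\si/2}\ls\bt^{\ep}$, so after pulling $\bx^{\si/2}$ past $\W_V(t)$ each data term is $\ls\bt^{2\ep}$ times a double commutator of $\gah_0$ sandwiched by $\bx^{\si/2}$. For $[\tfrac{x_k}{\hb},[\tfrac{x_j}{\hb},\gah_0]]$ and $[\pl_{x_k},[\tfrac{x_j}{\hb},\gah_0]]$ these are literally two of the terms defining $\|\gah_0\|_{\CX^\si_\hb}$ (using $\|\bx^{-\si/2}\|_\CB=1$ to pass from the full weights $\bx^\si$ to $\bx^{\si/2}$ wherever needed); for $[\pl_{x_k},[\pl_{x_j},\gah_0]]$ with the position weights $\bx^{\si/2}$ one needs one additional small manipulation relating it to the double–commutator terms (in particular $\|\sd^{\si/2}[\na,[\na,\gah_0]]\sd^{\si/2}\|_{\Sp}$) present in $\|\cdot\|_{\CX^\si_\hb}$ — this is where the precise choice of norm enters.

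\textbf{Duhamel terms (the heart).} For a generic Duhamel term I first pull $\bx^{\si/2}$ past $\W_V(t)$ (cost $\bt^{\ep}$), then move the weights and the free propagators through the integrand using $\|\bx^{-\si/2}\W_V(\ta)\bx^{\si/2}\|_\CB\ls\bta^{\ep}$ and $\U(\ta)x\U(\ta)^*=\J(\ta)$, reducing to a bound by $\tfrac{1}{\hb}$ (resp.\ $\tfrac{1}{\hb^2}$) times $\bt^{\ep}\int_0^t\bta^{\ep}\,\big\|\Jta^{\si/2}\big[(\pl^{|\al|}V)(\ta),\,Z(\ta)\big]\Jta^{\si/2}\big\|_{\Sp}\,d\ta$, where $Z(\ta)\in\{\gah(\ta),\,[\J_k(\ta),\gah(\ta)],\,[\pl_k,\gah(\ta)]\}$. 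Next I use the $\Sp$–improvement \eqref{eq:commutator basic 2} of Lemma \ref{lem:commutator basic}: it replaces $[(\pl^{|\al|}V)(\ta),\cdot]$ by $\|\na\pl^{|\al|}V(\ta)\|_{L^\infty}\,[x,\cdot]$, i.e.\ one extra $x$–commutator and one extra derivative of $V$; the new $x$–commutator, via the identity $[x,\U(\ta)(\cdot)\U(\ta)^*]=\U(\ta)[\J(-\ta),\cdot]\U(\ta)^*$ with $\J(-\ta)=x-i\ta\hb\na$ and the fact that $x$ commutes with $\bx^{\si/2}$, turns $[x,Z(\ta)]$ back into a linear combination (with $\hb$– and $\ta\hb$–coefficients) of the three double commutators of $\W_V(\ta)\gah_0\W_V(\ta)^*$, plus cheaper single commutators handled by Remark \ref{rmk:byproduct 1} and Lemmas \ref{lem:commutator 1.6}, \ref{lem:commutator 1.7} (and their $\bx^{\si/2}$–weighted analogues, which follow from the same arguments since $\si/2<1$, using Remark \ref{rmk:weight} to pass weights through commutators). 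Crucially the powers of $\hb$ now \emph{exactly} cancel the $\tfrac1\hb$ or $\tfrac1{\hb^2}$ prefactor. Finally I insert the decay rates $\|\pl^{|\al|}V(\ta)\|_{L^\infty}\ls R\,\bta^{-(|\al|+1)+\ep}$ for $|\al|=1,2$ and $\ls R\,\bta^{-4+a+\ep}$ for $|\al|=3$ (Lemma \ref{lem:potential deriv}) together with the bootstrap hypotheses for the double commutators; every surviving time integral converges, the worst being of the form $\int_0^t\bta^{-1+C\ep}\,d\ta\ls\bt^{C\ep}$, so each Duhamel term is $\ls R\,N(t)\,\|\gah_0\|_{\CX^\si_\hb}\,\bt^{a+\ep}$ (resp.\ $\bt^{\ep}$). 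Summing data and Duhamel contributions gives $N(t)\ls 1+R\,N(t)$, and taking $R$ small finishes the proof.

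\textbf{Main obstacle.} The genuine difficulty is the $\hb$–accounting. The double–commutator identities lose one or two powers of $\hb^{-1}$, and the only mechanism available to recover them — converting $V$–commutators into $x$–commutators via \eqref{eq:commutator basic 2} — reintroduces, through the $\J(-\ta)=x-i\ta\hb\na$ expansion, the very double commutators one is trying to bound. Hence the three estimates cannot be decoupled: they must be closed together, with the feedback absorbed into the smallness of $\|\ze\|_{\CY^{a,b}_T}$. A secondary point of care is the pure–derivative data term $\|\bx^{\si/2}[\pl_k,[\pl_j,\gah_0]]\bx^{\si/2}\|_{\Sp}$ with position (rather than momentum) weights, which has to be traced back to the specific double–commutator norms built into $\CX^\si_\hb$.
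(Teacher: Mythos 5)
Your proposal follows essentially the same route as the paper: expand each double commutator via Lemmas \ref{lem:commutator nabla W}, \ref{lem:commutator x W}, \ref{lem:commutator mixed}, bound the data terms through Lemma \ref{lem:easy} against $\|\gah_0\|_{\CX^\si_\hb}$, convert the potential commutators in the Duhamel terms into $x$-commutators via the Hilbert--Schmidt improvement \eqref{eq:commutator basic 2} so that the recovered powers of $\hb$ cancel the $1/\hb$, $1/\hb^2$ prefactors (this is exactly what the paper packages into Lemmas \ref{lem:commutator 1.6.2}, \ref{lem:commutator 5.9}, \ref{lem:commutator 5.9.1}), and close the three estimates simultaneously by absorbing the feedback into the smallness of $R$. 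The subtlety you flag about the position-weighted pure-derivative data term $\|\bx^{\si/2}[\pl_k,[\pl_j,\gah_0]]\bx^{\si/2}\|_{\Sp}$ is real and is in fact glossed over in the paper's own conclusion, so your awareness of it is a point in your favor rather than a gap.
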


\begin{lemma}\label{lem:commutator 6.1}
	Under Assumption \ref{ass:2}, there exists $R>0$ such that $\|\ze\|_{\CY^{a,b}_T} \le R$ implies
	\begin{align}
		&\sup_{0\le t\le T} \bt^{-a-\ep} \No{\sd^{{\si/2}} \Dk{\frac{x_k}{\hb},\Dk{\frac{x_j}{\hb},\W_V(t)\gah_0 \W_V(t)^*}}\sd^{{\si/2}}}_{\Sp} \ls \No{\gah_0}_{\CX^\si_\hb},  \\
		&\sup_{0\le t\le T} \bt^{-\ep} \No{\sd^{{\si/2}} \Dk{\pl_{x_k},\Dk{\frac{x_j}{\hb},\W_V(t)\gah_0 \W_V(t)^*}}\sd^{{\si/2}}}_{\Sp} \ls \No{\gah_0}_{\CX^\si_\hb} , \\
		&\sup_{0\le t\le T} \bt^{-\ep} \No{\sd^{{\si/2}} \Bk{\pl_{x_k},\Bk{\pl_{x_j},\W_V(t)\gah_0 \W_V(t)^*}}\sd^{{\si/2}}}_{\Sp} \ls \No{\gah_0}_{\CX^\si_\hb}
	\end{align}
	for any $0<\ep\ll \min(a,b)$ and $j,k=1,\dots,3$.
\end{lemma}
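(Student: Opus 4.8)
The plan is to mirror, essentially verbatim, the proof of Lemma~\ref{lem:commutator 6}, replacing the spatial weight $\bx^{\si/2}$ by the Fourier weight $\sd^{\si/2}$ everywhere. The structural ingredients that made the $\bx^{\si/2}$ version work are all symmetric under the Fourier-conjugation $x\leftrightarrow -i\hb\na$, $\J(t)\leftrightarrow \ldots$, so I expect the same bookkeeping to go through once the right analogues are identified. Concretely, I would start from the double-commutator identities \eqref{eq:weight double commutator}, \eqref{eq:mixed double commutator}, \eqref{eq:nabla double commutator} of Lemmas~\ref{lem:commutator x W}, \ref{lem:commutator mixed}, \ref{lem:commutator nabla W}, conjugate everything by $\W_V(t)$ on the two sides, and sandwich between $\sd^{\si/2}$ and $\sd^{\si/2}$. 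The boundary term $\W_V(t)\,\sd^{\si/2}[\cdots,[\cdots,\gah_0]]\sd^{\si/2}\,\W_V(t)^*$ is handled by the regularity bound \eqref{eq:regularity} of Lemma~\ref{lem:easy} (together with $\|\sd^{\si/2}\W_V(t)\sd^{-\si/2}\|_\CB\ls1$) and the definition of $\|\gah_0\|_{\CX^\si_\hb}$, which by \eqref{eq:initial data} contains precisely the four double-commutator norms $\|\sd^{\si/2}[\na,[\na,A]]\sd^{\si/2}\|_{\FS^2_\hb}$, $\|\sd^{\si/2}[\na,[\xh,A]]\sd^{\si/2}\|_{\FS^2_\hb}$, etc.

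The Duhamel integral terms are where the work lies, but they are the same integrals as in Lemma~\ref{lem:commutator 6} up to the weight swap. For each such term of the shape $\frac{1}{\hb}\int_0^t \U_V(\ta)^*[P(\ta),Q(\ta)]\U_V(\ta)\,d\ta$ (with $P$ a derivative of the potential $V$ and $Q$ a single commutator $[\J_j(\ta),\gah(\ta)]$, $[\pl_k,\gah(\ta)]$, or $\gah(\ta)$ itself), I would: (i) conjugate the weights through $\W_V(\ta)$ and $\U(\ta)$, turning $\sd^{\si/2}\cdots\sd^{\si/2}$ into $\sd^{\si/2}\W_V(\ta)\sd^{-\si/2}$ pieces (bounded by Lemma~\ref{lem:easy}, with at worst $\log(\ta+2)^{\si}$ growth) times $\sd^{\si/2}[\cdots]\sd^{\si/2}$; (ii) pull $P(\ta)=\pl^\al V(\ta)$ outside the commutator using the Fourier-side analogue of Lemma~\ref{lem:commutator basic}, namely the $\FS^2$ improvement \eqref{eq:commutator basic 2}: since $\pl^\al V(\ta)$ is a function of $x$, commuting it past $\sd^{\si/2}$ costs $\|\na \cdot\|_{L^\I_x}$-type factors, which in turn one controls via Lemma~\ref{lem:potential deriv} (and Lemma~\ref{lem:potential deriv 2} for the $\CF L^1$-norms when the single-commutator sandwich must be kept intact); (iii) invoke the already-proven single-commutator bounds — Lemma~\ref{lem:commutator 1} and especially Lemmas~\ref{lem:commutator 1.6}, \ref{lem:commutator 1.7} with $r=2$ (the $\Sp$ case) — to estimate $\frac{\bta^{1-7b/8}}{\hb}\|\Jta^\si[\ta\na V(\ta),\gah(\ta)]\Jta^\si\|_{\FS^2_\hb}\ls\|\gah_0\|_{\CX^\si_\hb}$ and its $[\pl_k,\cdot]$-sandwiched variant. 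The time integrals then close exactly as in Section~\ref{sec:double}: each integrand decays like $\bta^{-1-c}$ or better with $c>0$ coming from the gap $7b/8<a<b$ and the freedom to take $\ep$ arbitrarily small, giving the claimed $\bt^{a+\ep}$ or $\bt^{\ep}$ growth after a Grönwall/bootstrap absorption of the small constant $R$.

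The main obstacle — really the only genuinely new point — will be checking that no weight-swap introduces an operator that is bounded on the spatial side but not on the Fourier side, or vice versa. In the $\bx^{\si/2}$ proof one freely uses $\lg\J(t)\rg^\si=\U(t)\bx^\si\U(-t)$ together with $\sd^\si$-boundedness of $\W_V(t)$ to reorganize terms; on the Fourier side I must instead use that $\sd^\si$ commutes with $\U(t)$ exactly, which is actually \emph{simpler} (no need for the $\M(t)$-conjugation or the MDFM decomposition of Lemma~\ref{lem:key estimate} / Corollary~\ref{cor:decay rate}), but it means the terms that previously produced the ``bad'' $1/\hb$-factors controlled by $\CP[f](t)$ must now be re-examined: when a weight $\sd^{\si/2}$ hits a multiplication operator $\pl^\al V(\ta,x)$ the relevant quantity is the operator norm of $[\sd^{\si/2},\pl^\al V(\ta)]\sd^{-\si/2}$, and I need the analogue of Lemma~\ref{lem:key estimate} for this. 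Fortunately the crude bound $\|[\sd^{\si/2},g(x)]\sd^{-\si/2}\|_\CB\ls\|\na g\|_{\CF L^1}+\|\na^2 g\|_{\CF L^1}+\dots$ suffices here (no $\hb$-gain needed, because on the Fourier side there is no $1/\hb$ loss to recover — the $1/\hb$ in the Duhamel term is absorbed entirely by the single-commutator lemmas as before), and these $\CF L^1$-norms of derivatives of $V$ are exactly what Lemma~\ref{lem:potential deriv 2} provides. So the proof is a careful transcription: I would write out the $[\xh,[\xh,\cdot]]$ case in full detail, then remark that the $[\pl_k,[\xh,\cdot]]$ and $[\pl_k,[\pl_j,\cdot]]$ cases follow by the identical scheme with better time decay, exactly as Section~\ref{sec:double} does for Lemma~\ref{lem:commutator 6}.

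\begin{proof}[Proof sketch of Lemma~\ref{lem:commutator 6.1}]
The proof is identical to that of Lemma~\ref{lem:commutator 6} after replacing the weight $\bx^{\si/2}$ by $\sd^{\si/2}$ throughout. One uses the double-commutator identities of Lemmas~\ref{lem:commutator nabla W}, \ref{lem:commutator x W}, and \ref{lem:commutator mixed}, conjugates by $\W_V(t)$, and estimates each resulting term: the boundary terms by $\|\gah_0\|_{\CX^\si_\hb}$ and the regularity bound \eqref{eq:regularity}; the Duhamel terms by pulling the potential factor out of the commutator via \eqref{eq:commutator basic} and \eqref{eq:commutator basic 2}, controlling the potential norms by Lemmas~\ref{lem:potential deriv} and \ref{lem:potential deriv 2}, and invoking the single-commutator bounds of Lemmas~\ref{lem:commutator 1}, \ref{lem:commutator 1.6}, and \ref{lem:commutator 1.7} with $r=2$. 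The time integrals converge with the stated growth rates by the same computations as in Section~\ref{sec:double}, using $7b/8<a<b$ and taking $\ep>0$ sufficiently small, with the smallness of $R$ used to absorb the self-referential terms.
\end{proof}
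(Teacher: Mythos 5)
Your proposal coincides with the paper's own treatment: the paper gives no separate proof of Lemma \ref{lem:commutator 6.1}, stating only that the argument of Lemma \ref{lem:commutator 6} carries over with $\sd^{\si/2}$ in place of $\bx^{\si/2}$, which is precisely your plan (same double-commutator identities, same auxiliary single-commutator lemmas, same absorption of $R$). One caveat on your justification: the assertion that ``no $\hb$-gain is needed'' for $\Dk{\sd^{\si/2},\pl^\al V}\sd^{-\si/2}$ is imprecise, since the weight-commutation terms still sit under the $1/\hb$ Duhamel prefactor exactly as in the spatial-weight case; fortunately the gain is automatic because $\na_\xi\lg\hb\xi\rg^{\si/2}=O(\hb)$, so the transcription closes as claimed.
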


In the sequel, we only prove Lemma \ref{lem:commutator 6} because the same argument works well for Lemma \ref{lem:commutator 6.1}.

\subsection{Preliminary estimates}
In this section, we collect various kinds of necessary estimates to prove Lemma \ref{lem:commutator 6}.

\subsubsection{A variant of Lemma \ref{lem:commutator 1.6}}
\begin{lemma}\label{lem:commutator 1.6.1}
	Under Assumption \ref{ass:2}, there exists $R>0$ such that $\|\ze\|_{\CY^{a,b}_T} \le R$ implies
	\begin{equation}\label{eq:hojo 2}
		\sup_{0\le t\le T} \frac{\bt^{1-\ep}}{\hb}\No{\Jt^{\si/2} \Bk{t\na V(t), \gah(t)} \Jt^{\si/2}}_{\FS^2_\hb} \ls \No{\gah_0}_{\CX^\si_\hb}
	\end{equation}
	for all $\ep \in (0,\de/100)$.
\end{lemma}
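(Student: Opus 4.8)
The plan is to re-run the proof of Lemma~\ref{lem:commutator 1.6} essentially verbatim, but carried out in the Hilbert--Schmidt class $\FS^2_\hb$ rather than in a general $\FS^r_\hb$. Note that one cannot simply quote Lemma~\ref{lem:commutator 1.6} with $r=2$: since $\Jt^{-\si/2}$ has operator norm $1$ we do have $\No{\Jt^{\si/2}B\Jt^{\si/2}}_{\FS^2_\hb}\le\No{\Jt^\si B\Jt^\si}_{\FS^2_\hb}$, but that only yields the weaker rate $\bt^{1-7b/8}$, not $\bt^{1-\ep}$. The structural change that produces the improvement is that every appeal to the commutator estimate \eqref{eq:commutator basic} (which forces a $\CF L^1$--norm, hence Lemma~\ref{lem:potential deriv 2}) is replaced by the Hilbert--Schmidt estimate \eqref{eq:commutator basic 2}, which involves only $\|\na f\|_{L^\infty}$ and therefore lets us use Lemma~\ref{lem:potential deriv}. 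Since $\|\na^2V(t)\|_{L^\infty}\ls R\bt^{-3+\ep}$ beats $\|\na^2V(t)\|_{\CF L^1}\ls R\bt^{-3+3b/4+\ep}$ by a factor $\bt^{-3b/4}$, the loss in the time weight that the scheme of Lemma~\ref{lem:commutator 1.6} suffers ($7b/8$) shrinks to an arbitrarily small $\ep$; the price is that only the weight $\Jt^{\si/2}$ survives on each side, which is exactly what \eqref{eq:hojo 2} asks for.

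Concretely, I would set
\[
  N(t):=\sup_{0\le\ta\le t}\frac{\bta^{1-\ep}}{\hb}\No{\Jta^{\si/2}\Bk{\ta\na V(\ta),\gah(\ta)}\Jta^{\si/2}}_{\FS^2_\hb}
\]
and prove $N(t)\ls\No{\gah_0}_{\CX^\si_\hb}$ by a bootstrap in $R$. First one moves the weights past the multiplication operator $t\na V(t)$:
\[
  \Jt^{\si/2}\Bk{t\na V(t),\gah(t)}\Jt^{\si/2}
  =\Bk{t\na V(t),\,\Jt^{\si/2}\gah(t)\Jt^{\si/2}}
  +\Bk{\Jt^{\si/2},t\na V(t)}\gah(t)\Jt^{\si/2}
  +\Jt^{\si/2}\gah(t)\Bk{\Jt^{\si/2},t\na V(t)}.
\]
For the first (main) term I would apply \eqref{eq:commutator basic 2} with $f=t\na V(t)$, rewrite $[x,\cdot]=\hb[\xh,\cdot]$ and use \eqref{eq:J}--\eqref{eq:f(J)} to turn it into a $\J(t)$--commutator of $\bx^{\si/2}\W_V(t)\gah_0\W_V(t)^*\bx^{\si/2}$, split $\J(t)=x+it\hb\na$, conjugate the weights through $\W_V(t)$ with Lemma~\ref{lem:easy} (which only costs logarithms, i.e.\ a factor $\bt^{O(\ep)}$), and finally apply Lemmas~\ref{lem:commutator x W} and \ref{lem:commutator nabla W}. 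This produces time integrals $\hb^{-1}\int_0^t\bta^{O(\ep)}\No{\Jta^{\si/2}[\ta\na V(\ta),\gah(\ta)]\Jta^{\si/2}}_{\FS^2_\hb}\,d\ta$ and $\hb^{-1}\int_0^t\bta^{O(\ep)}\No{\Jta^{\si/2}[\na V(\ta),\gah(\ta)]\Jta^{\si/2}}_{\FS^2_\hb}\,d\ta$, which by definition of $N$ are $\ls\bt^{O(\ep)}N(t)$ (the second one converges absolutely, since $[\na V(\ta),\cdot]=\ta^{-1}[\ta\na V(\ta),\cdot]$ gains an extra $\bta^{-1}$). For the two boundary terms I would estimate, in operator norm only, $\No{[\Jt^{\si/2},t\na V(t)]\Jt^{-\si/2}}_{\CB}\ls t^2\hb\|\na^2V(t)\|_{L^\infty}$ using \eqref{eq:commutator basic} with $r=\infty$ together with $\|h(x)m(-i\hb\na)\|_{\CB}\le\|h\|_{L^\infty}\|m\|_{L^\infty}$ — so that no Kato--Seiler--Simon bound and no restriction on $\si$ beyond $\si>3/2$ is needed — and bound the remaining factor $\No{\Jt^{\si/2}\gah(t)\Jt^{\si/2}}_{\FS^2_\hb}\ls\bt^{O(\ep)}\No{\gah_0}_{\CX^\si_\hb}$ via Lemma~\ref{lem:easy}. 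Multiplying the main term by $t\|\na^2V(t)\|_{L^\infty}\ls R\bt^{-2+\ep}$ and collecting, every contribution is $\ls R\hb\bt^{-1+O(\ep)}\bigl(\No{\gah_0}_{\CX^\si_\hb}+N(t)\bigr)$, so $N(t)\ls\No{\gah_0}_{\CX^\si_\hb}+R\,N(t)$; taking $R$ small and noting all constants are independent of $t$ and $T$ gives the claim, with $\ep$ as small as one wishes once the auxiliary loss exponents are chosen smaller still.

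The step I expect to be the genuine obstacle is the $\hb$-- and $\bt$--power bookkeeping that makes everything close at the sharp rate $\bt^{1-\ep}$: one must check that the $\W_V(t)$--conjugations and the nested time integrals never consume more than $O(\ep)$, so that the $\bt^{-3b/4}$ gain of the $L^\infty$--norm over the $\CF L^1$--norm of $\na^2V$ really survives, and that the prefactor $\hb^{-1}$ in \eqref{eq:hojo 2} is matched exactly — it enters through $[x,\cdot]=\hb[\xh,\cdot]$, which is also how $\hb$ sits inside $\|\cdot\|_{\CX^\si_\hb}$. A subsidiary nuisance is that expanding $[\na,\W_V(t)\gah_0\W_V(t)^*]$ by Lemma~\ref{lem:commutator nabla W} produces a term $[\na,\gah_0]$ carrying $\bx^{\si/2}$-- rather than $\sd^{\si/2}$--weights; as in the proof of Lemma~\ref{lem:commutator 1.6} this is handled by keeping the combination $\J(-t)=x-it\hb\na$ intact (so that the derivative stays paired with the position operator through the Galilean covariance) and expanding only at the very last moment.
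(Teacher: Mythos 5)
Your proposal follows essentially the same route as the paper's proof: the same splitting into the main term $\Bk{t\na V,\Jt^{\si/2}\gah\Jt^{\si/2}}$ plus weight-commutator boundary terms, the same key replacement of \eqref{eq:commutator basic} by the Hilbert--Schmidt bound \eqref{eq:commutator basic 2} so that only $\|\na^2 V\|_{L^\I_x}\ls R\bt^{-3+\ep}$ (Lemma \ref{lem:potential deriv}) enters, the same reduction via \eqref{eq:J}, Lemmas \ref{lem:commutator x W}, \ref{lem:commutator nabla W} and \ref{lem:easy} to time integrals of the same quantity, and the same absorption in $R$. The only deviations (your symmetric three-term split and placing the operator norm on $[\Jt^{\si/2},t\na V]\Jt^{-\si/2}$ rather than on the weighted $\gah$ factor) are cosmetic and yield the same bounds.
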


\begin{proof}
	The proof proceeds in the same manner as the proof of Lemma \ref{lem:commutator 1.6}; however, to clarify what has changed, we present the complete proof again. 
	
	\noindent \underline{\textbf{Step 0: Setup of the proof.}}
	Note that
	\begin{align}
		&\No{\Jt^{{\si/2}} \Bk{t\na V(t), \gah(t)}\Jt^{{\si/2}} }_{\FS^2_\hb}
		\ls \No{\Bk{t\na V(t), \Jt^{\si/2} \gah(t) \Jt^{\si/2}}}_{\FS^2_\hb} \\
		&\qquad + \No{\Bk{\Jt^{\si/2}, t\na V(t)}\Jt^{-{\si/2}}}_{\FS^2_\hb}
		\No{\Jt^{\si/2} \gah(t) \Jt^{\si/2}}_\CB
		=: \SA+ \SB.
	\end{align}
	In the sequel, we fix sufficiently small $\ep\in (0,\de/100)$
	such that
	\begin{align}
		\No{\na^2 V(t)}_{L^\I_x} \ls \frac{1}{\bt^{3-\ep}}
	\end{align}
	holds by Lemma \ref{lem:potential deriv}.
	
	\noindent \underline{\textbf{Step 1: Estimate of $\SA$.}}
	First, by Lemmas \ref{lem:commutator basic} and \ref{lem:potential deriv}, we have
	\begin{align}
		\SA &\le t\hb \No{\na^2 V(t)}_{L^\I} \No{\Dk{\xh, \Jt^{\si/2} \gah(t)\Jt^{\si/2}}}_{\FS^2_\hb}\\
		&\ls \frac{R\hb}{\bt^{2-\ep}} \No{\Dk{\xh, \Jt^{\si/2} \gah(t)\Jt^{\si/2}}}_{\FS^2_\hb}.
	\end{align}
	Note that
	\begin{align}
		&\No{\Dk{\xh, \Jt^{\si/2} \gah(t) \Jt^{\si/2}} }_{\FS^2_\hb}
		= \frac{1}{\hb}\No{\Bk{\J(t), \bx^{\si/2} \W_V(t)\gah_0\W_V(t)^* \bx^{\si/2}} }_{\FS^2_\hb} \\
		&\quad \le \No{\Bk{\xh, \bx^{\si/2} \W_V(t)\gah_0\W_V(t)^* \bx^{\si/2}} }_{\FS^2_\hb} \\ 
		&\qquad \qquad + t \No{\Bk{\na, \bx^{\si/2} \W_V(t)\gah_0\W_V(t)^* \bx^{\si/2}} }_{\FS^2_\hb}=\SA_1+\SA_2.
	\end{align}
	
	\noindent \textbf{$\blacklozenge$ Estimate of $\SA_1$.}
	Define $L(t):= \log(t+2)$ to avoid wasting space.
	For $\SA_1$, by Lemmas \ref{lem:commutator x W} and \ref{lem:easy}, we have
	\begin{align}
		\SA_1 &=\No{ \bx^{\si/2} \Bk{\xh,\W_V(t)\gah_0\W_V(t)^* } \bx^{\si/2}}_{\FS^2_\hb} \\
		&\ls (L(t))^2\No{\gah_0}_{\CX^{\si}_\hb}
		+ \frac{(L(t))^2}{\hb}\int_0^t (L(\ta))^2
		\No{\Jta^{\si/2} \Bk{\ta \na V(\ta), \gah(\ta)} \Jta^{\si/2}}_{\FS^2_\hb} d\ta \\
		&\le (L(t))^2\No{\gah_0}_{\CX^{\si}_\hb}
		+ \frac{(L(t))^2}{\hb}\int_0^t \frac{\hb(L(\ta))^2}{\bta^{1-10\ep}} d\ta \\
		&\qquad \qquad \cdot \sup_{0\le\ta\le t} \frac{\bta^{1-10\ep}}{\hb} \No{\Jta^{\si/2} \Bk{\ta \na V(\ta), \gah(\ta)} \Jta^{\si/2}}_{\FS^2_\hb} \\
		&\ls \bt^{100\ep}\No{\gah_0}_{\CX^{\si}_\hb}  + \bt^{100\ep} \sup_{0\le\ta\le t} \frac{\bta^{1-10\ep}}{\hb}
		\No{\Jta^{\si/2} \Bk{\ta \na V(\ta), \gah(\ta)} \Jta^{\si/2}}_{\FS^2_\hb}.
	\end{align}
	
	\noindent \textbf{$\blacklozenge$ Estimate of $\SA_2$.}
	For $\SA_2$, by Lemmas \ref{lem:easy}, \ref{lem:commutator nabla W}, and \ref{lem:potential deriv}, we have
	\begin{align}
		\SA_2 &\ls t \No{\bx^{{\si/2}-1}\W_V(t)\gah_0\W_V(t)^* \bx^{\si/2}}_{\FS^2_\hb} \\
		&\qquad + t \No{\bx^{\si/2} \Bk{\na, \W_V(t)\gah_0\W_V(t)^*} \bx^{\si/2}}_{\FS^2_\hb} \\
		&\ls \bt L(t)\No{\gah_0}_{\CX^{\si}_\hb} \\
		&\qquad + \frac{\bt (L(t))^2}{\hb} \int_0^t (L(\ta))^{2} \No{\Jta^{{\si/2}} \Bk{\na V(\ta), \gah(\ta)} \Jta^{\si/2}}_{\FS^2_\hb} d\ta \\
		&\ls \bt L(t)\No{\gah_0}_{\CX^{\si}_\hb} + \bt(L(t))^2 \int_0^t \frac{1}{\bta^{2-11\ep}} d\ta \\
		&\qquad \qquad \cdot \sup_{0\le\ta\le t}\frac{\bta^{1-10\ep}}{\hb} \No{\Jta^{{\si/2}} \Bk{\ta\na V(\ta), \gah(\ta)} \Jta^{\si/2}}_{\FS^2_\hb} \\
		&\ls \bt^{1+\ep}\No{\gah_0}_{\CX^{\si}_\hb} + \bt^{1+\ep} \sup_{0\le\ta\le t}\frac{\bta^{1-10\ep}}{\hb} \No{\Jta^{{\si/2}} \Bk{\ta\na V(\ta), \gah(\ta)} \Jta^{\si/2}}_{\FS^2_\hb}.
	\end{align}
	
	\noindent \textbf{$\blacklozenge$ Conclusion.}
	Lemmas \ref{lem:potential deriv} with the above argument, we have
	\begin{align}
		\SA &\ls R \hb \bt^{-2+\ep} \Big\{\bt^{1+\ep}\No{\gah_0}_{\CX_\hb^{\si/2}} \\
		&\quad+\bt^{1+\ep}
		\sup_{0\le\ta\le t}\frac{\bta^{1-10\ep}}{\hb} \No{\Jta^{{\si/2}} \Bk{\ta\na V(\ta), \gah(\ta)} \Jta^{\si/2}}_{\FS^2_\hb} \Big\} \\
		&= R\hb \bt^{-1+2\ep} \Big\{\No{\gah_0}_{\CX_\hb^{\si/2}} \\
		&\qquad +\sup_{0\le\ta\le t}\frac{\bta^{1-10\ep}}{\hb} \No{\Jta^{{\si/2}} \Bk{\ta\na V(\ta), \gah(\ta)} \Jta^{\si/2}}_{\FS^2_\hb} \Big\}.
	\end{align}
	
	\noindent \underline{\textbf{Step 2: Estimate of $\SB$ and conclusion.}}
	By Lemmas \ref{lem:easy}, \ref{lem:commutator basic}, and \ref{lem:potential deriv}, we have
	\begin{align}
		\SB &\ls (L(\ta))^2 \No{\gah_0}_{\CX_\hb^\si} t^2\hb \No{\na^2 V(t)}_{L^\I_x} \ls R\hb \bt^{-1+2\ep} \No{\gah_0}_{\CX_\hb^\si}. 
	\end{align}
	Therefore, we have
	\begin{align}
		&\sup_{0\le \ta\le t}\frac{\bt^{1-10\ep}}{\hb} \No{\Jt^{{\si/2}} \Bk{t\na V(t), \gah(t)}\Jt^{{\si/2}} }_{\FS^2_\hb} \\
		&\quad \ls \No{\gah_0}_{\CX_\hb^{\si/2}} + R\sup_{0\le\ta\le t}\frac{\bta^{1-10\ep}}{\hb} \No{\Jta^{{\si/2}} \Bk{\ta\na V(\ta), \gah(\ta)} \Jta^{\si/2}}_{\FS^2_\hb}.
	\end{align}
	Hence, choosing sufficiently small $R>0$, we get
	\begin{align}
		\sup_{0\le \ta\le t}\frac{\bt^{1-10\ep}}{\hb} \No{\Jt^{{\si/2}} \Bk{t\na V(t), \gah(t)}\Jt^{{\si/2}} }_{\FS^2_\hb} \ls \No{\gah_0}_{\CX_\hb^{\si}}.
	\end{align}
	Because all constants are independent of $t$, we obtain the desired estimate \eqref{eq:hojo 2}.
\end{proof}

\subsubsection{A variant of Remark \ref{rmk:byproduct 1}}
\begin{lemma}\label{lem:commutator 5}
	Under Assumption \ref{ass:2}, there exists $R>0$ such that $\|\ze\|_{\CY^{a,b}_T} \le R$ implies
	\begin{align}
		&\No{\Bk{\frac{x}{\hb},\bx^{\si/2}\W_V(t)\gah_0 \W_V(t)^* \bx^{\si/2}}}_{\FS^2_\hb} \ls \bt^\ep \No{\gah_0}_{\CX^\si_\hb}\\
		&\No{\Bk{\na,\bx^{\si/2}\W_V(t)\gah_0 \W_V(t)^*\bx^{\si/2} }}_{\FS^2_\hb} \ls \bt^\ep \No{\gah_0}_{\CX^\si_\hb},
	\end{align}
	for arbitrarily small $0<\ep\ll \min(a,b)$.
\end{lemma}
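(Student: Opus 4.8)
The two left‑hand sides are, up to harmless scalar factors, exactly the quantities $\SA_1$ and $t^{-1}\SA_2$ that appear in the proof of Lemma~\ref{lem:commutator 1.6.1}: indeed $[x/\hb,\bx^{\si/2}\W_V(t)\gah_0\W_V(t)^*\bx^{\si/2}]=\bx^{\si/2}[x/\hb,\W_V(t)\gah_0\W_V(t)^*]\bx^{\si/2}$ because $x$ and $\bx^{\si/2}$ are both multiplication operators, and by \eqref{eq:f(J)} this equals $\U(t)\,[x/\hb,\langle\J(t)\rangle^{\si/2}\gah(t)\langle\J(t)\rangle^{\si/2}]\,\U(t)^*$ up to a commutator with $\na$. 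So the plan is to reproduce, now as a self‑contained statement, the computation carried out there, using the key decay \eqref{eq:hojo 2} as a black box. For the first estimate I would write $N:=\W_V(t)\gah_0\W_V(t)^*$, reduce to $\bx^{\si/2}[x/i\hb,N]\bx^{\si/2}$, and apply the single‑commutator identity \eqref{eq:weight single commutator}, which splits it into an "initial–data'' term $\bx^{\si/2}\W_V(t)[x/i\hb,\gah_0]\W_V(t)^*\bx^{\si/2}$ and a ``$\hb^{-1}\int_0^t$'' term.

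\textbf{The two resulting pieces.}
For the initial–data term I insert $\bx^{\pm\si/2}$ around each of the two copies of $\W_V(t)$ and use the weighted wave–operator bound \eqref{eq:weight} with exponent $\si/2\in[0,1]$ — crucially this costs only a power of $\log(t+2)$, not any power $\bt^{\de}$ — together with the trace–cyclicity/Hölder estimate
$\No{\bx^{\si/2}C\bx^{\si/2}}_{\FS^2_\hb}^{2}\le\No{\bx^{\si}C\bx^{\si}}_{\CB}\No{C}_{\FS^1_\hb}$,
applied with $C=[x/i\hb,\gah_0]$ (both factors being pieces of the norm \eqref{eq:initial data}); this yields $\ls(\log(t+2))^{\si}\No{\gah_0}_{\CX^\si_\hb}$. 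For the integral term I rewrite $\W_V(t)\U_V(\ta)^*[\,\cdot\,]\U_V(\ta)\W_V(t)^*$, insert $\bx^{\pm\si/2}$ around each of the four factors $\W_V(t)^{\pm1},\W_V(\ta)^{\pm1}$ (all bounded by $(\log(\cdot{+}2))^{\si/2}$ via \eqref{eq:weight}), and use $\bx^{\si/2}\U(\ta)^*=\U(\ta)^*\langle\J(\ta)\rangle^{\si/2}$, $\U(\ta)\bx^{\si/2}=\langle\J(\ta)\rangle^{\si/2}\U(\ta)$ (from \eqref{eq:f(J)}) to turn the core into $\langle\J(\ta)\rangle^{\si/2}[\ta\na V(\ta),\gah(\ta)]\langle\J(\ta)\rangle^{\si/2}$ conjugated by unitaries. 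Then \eqref{eq:hojo 2} gives $\hb^{-1}\No{\langle\J(\ta)\rangle^{\si/2}[\ta\na V(\ta),\gah(\ta)]\langle\J(\ta)\rangle^{\si/2}}_{\FS^2_\hb}\ls\bta^{-1+\ep}\No{\gah_0}_{\CX^\si_\hb}$, and $\int_0^t\bta^{-1+\ep}(\log(\ta+2))^{\si}\,d\ta\ls\bt^{2\ep}$; summing and relabelling $\ep$ finishes the first estimate.

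\textbf{The derivative commutator, and the main obstacle.}
For the second estimate I Leibniz–expand $[\na,\bx^{\si/2}N\bx^{\si/2}]$: the two outer terms carry the bounded multiplier $\na\bx^{\si/2}$ (bounded since $\si/2<1$) and, after extracting a $\bx^{-\si/2}$, reduce to $\No{\bx^{\si/2}N\bx^{\si/2}}_{\FS^2_\hb}\ls(\log(t+2))^{\si}\No{\gah_0}_{\CX^\si_\hb}$ by \eqref{eq:weight} (exponent $\si/2$) and $\No{\bx^{\si/2}\gah_0\bx^{\si/2}}_{\FS^2_\hb}^2\le\No{\bx^\si\gah_0\bx^\si}_\CB\No{\gah_0}_{\FS^1_\hb}$; the inner term $\bx^{\si/2}[\na,N]\bx^{\si/2}$ is treated exactly as in the first estimate but via \eqref{eq:nabla single commutator} with $\na V(\ta)$ in place of $\ta\na V(\ta)$, so that the relevant core is $\langle\J(\ta)\rangle^{\si/2}[\na V(\ta),\gah(\ta)]\langle\J(\ta)\rangle^{\si/2}=\ta^{-1}\langle\J(\ta)\rangle^{\si/2}[\ta\na V(\ta),\gah(\ta)]\langle\J(\ta)\rangle^{\si/2}$, which by \eqref{eq:hojo 2} decays like $\hb\,\bta^{-2+\ep}$ and the time integral converges with room to spare, while the corresponding initial–data term $\bx^{\si/2}\W_V(t)[\na,\gah_0]\W_V(t)^*\bx^{\si/2}$ is again $\ls(\log(t+2))^{\si}\No{\gah_0}_{\CX^\si_\hb}$ using the weighted pieces of \eqref{eq:initial data}. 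The expected obstacle is purely bookkeeping rather than conceptual: to land on $\bt^{\ep}$ (as opposed to the $\bt^{a}$ of Remark~\ref{rmk:byproduct 1}) one must use the $\bx^{\si/2}$–weight throughout — it is essential that \eqref{eq:weight} grows only logarithmically for exponents in $[0,1]$, whereas $\bx^{\si}$–weights would inject $\bt^{\de(\si-1)}$ — and one must keep the parameter $\ep$ fed into \eqref{eq:hojo 2} small enough that all accumulated logarithms together with $\int_0^t\bta^{-1+\ep}\,d\ta$ are absorbed into a single $\bt^{\ep}$; the one power of $\hb$ that cancels the $\hb^{-1}$ standing in front of the Duhamel integral is supplied precisely, and only, by Lemma~\ref{lem:commutator 1.6.1}.
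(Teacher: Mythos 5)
Your proof is correct and follows essentially the same route as the paper's: the commutator identities of Lemmas \ref{lem:commutator x W} and \ref{lem:commutator nabla W}, the logarithmically growing weighted wave-operator bounds of Lemma \ref{lem:easy} with exponent $\si/2\le 1$, and the key decay \eqref{eq:hojo 2} from Lemma \ref{lem:commutator 1.6.1}, followed by the integration $\int_0^t \bta^{-1+\ep}(\log(\ta+2))^{C}\,d\ta \ls \bt^{2\ep}$. The only difference is that you spell out explicitly the Leibniz expansion of $\Bk{\na,\bx^{\si/2}\W_V(t)\gah_0\W_V(t)^*\bx^{\si/2}}$ and the H\"older/weight bookkeeping for the initial-data pieces, which the paper leaves implicit (``we can prove the second one in the same way'').
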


\begin{proof}
Define $L(t):= \log(t+2)$ to avoid wasting space.
By Lemmas \ref{lem:commutator x W}, \ref{lem:easy}, and \ref{lem:commutator 1.6.1}, we have
	\begin{align}
		&\No{\bx^{\si/2}\Bk{\frac{x}{\hb},\W_V(t)\gah_0 \W_V(t)^*} \bx^{\si/2}}_{\FS^2_\hb} \\
		&\quad \ls (L(t))^2 \No{\gah_0}_{\CX^\si_\hb} + \frac{(L(t))^2}{\hb} \int_0^t (L(\ta))^2 \No{\Jta^{\si/2} \Bk{\ta\na V(\ta), \gah(\ta)} \Jta^{\si/2} }_{\FS^2_\hb} d\ta \\
		&\quad \ls  (L(t))^2 \No{\gah_0}_{\CX^\si_\hb} + (L(t))^2 \int_0^t \frac{(L(\ta))^2 }{\bta^{1-\ep}} d\ta \No{\gah_0}_{\CX^\si_\hb}
		\ls \bt^{2\ep} \No{\gah_0}_{\CX^\si_\hb}.
	\end{align}
	Hence, we obtained the first estimate. We can prove the second one in the same way.
\end{proof}

\subsubsection{Miscellaneous technical inequalities}
\begin{lemma}\label{lem:commutator 1.6.2}
		Under Assumption \ref{ass:2}, there exists sufficiently small $R>0$ such that $\|\ze\|_{\CY^{a,b}_T} \le R$ implies
	\begin{align}\label{eq:hojo 3}
		&\No{\Jt^{\si/2} \Bk{f(t), \gah(t)} \Jt^{\si/2}}_{\FS^2_\hb} 
		\ls \bt^{1+\ep} \hb\No{\na f(t)}_{L^\I_x} \No{\gah_0}_{\CX^\si_\hb} 
	\end{align}
	for all $f \in W^{1,\I}(\R^3)$ and arbitrarily small $0<\ep\ll \min(a,b)$.
\end{lemma}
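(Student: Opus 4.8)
The plan is to reduce the two-sided weighted commutator to quantities already controlled by Lemma~\ref{lem:commutator 5}, via the Leibniz identity
\[
\Jt^{\si/2}\Bk{f(t),\gah(t)}\Jt^{\si/2}=\Bk{f(t),\Jt^{\si/2}\gah(t)\Jt^{\si/2}}-\Bk{f(t),\Jt^{\si/2}}\gah(t)\Jt^{\si/2}+\Jt^{\si/2}\gah(t)\Bk{\Jt^{\si/2},f(t)}.
\]
Throughout, fix $t>0$ (the value at $t=0$ is trivial, see the last paragraph) and set $Z(t):=\bx^{\si/2}\W_V(t)\gah_0\W_V(t)^*\bx^{\si/2}$, so that by \eqref{eq:f(J)} one has $\Jt^{\si/2}\gah(t)\Jt^{\si/2}=\U(t)Z(t)\U(t)^*=:G(t)$. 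I would first record two bounds that are used repeatedly. On the one hand, $\No{G(t)}_{\FS^2_\hb}=\No{Z(t)}_{\FS^2_\hb}\le\No{\bx^{\si/2}\W_V(t)\bx^{-\si/2}}_\CB^2\No{\bx^{\si/2}\gah_0\bx^{\si/2}}_{\FS^2_\hb}\ls\bt^{\ep}\No{\gah_0}_{\CX^\si_\hb}$, using Lemma~\ref{lem:easy} (recall $\si/2\in(3/4,1)$) and the elementary interpolation $\No{\bx^{\si/2}\gah_0\bx^{\si/2}}_{\FS^2_\hb}^2=(2\pi\hb)^3\Tr\big[\bx^{\si}\gah_0\bx^{\si}\gah_0\big]\le\No{\bx^\si\gah_0\bx^\si}_\CB\No{\gah_0}_{\FS^1_\hb}\le\No{\gah_0}_{\CX^\si_\hb}^2$. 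On the other hand, Lemma~\ref{lem:commutator 5} gives $\No{[\xh,Z(t)]}_{\FS^2_\hb}+\No{[\na,Z(t)]}_{\FS^2_\hb}\ls\bt^{\ep}\No{\gah_0}_{\CX^\si_\hb}$.

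For the first term on the right I would apply the Hilbert--Schmidt commutator estimate \eqref{eq:commutator basic 2} to the multiplication operator $f(t)$, getting $\No{[f(t),G(t)]}_{\FS^2_\hb}\le\No{\na f(t)}_{L^\infty_x}\No{[x,G(t)]}_{\FS^2_\hb}$, and then use $\U(t)^*x\U(t)=x-it\hb\na$ (cf.\ \eqref{eq:J}) to write $[x,G(t)]=\U(t)\big([x,Z(t)]-it\hb[\na,Z(t)]\big)\U(t)^*$. Unitarity of $\U(t)$ in $\FS^2_\hb$ together with $[x,Z(t)]=\hb[\xh,Z(t)]$ then yields $\No{[x,G(t)]}_{\FS^2_\hb}\le\hb\No{[\xh,Z(t)]}_{\FS^2_\hb}+t\hb\No{[\na,Z(t)]}_{\FS^2_\hb}\ls\hb\bt^{1+\ep}\No{\gah_0}_{\CX^\si_\hb}$ by the bounds just recorded (since $t\hb\le\hb\bt$), so this term contributes exactly the right-hand side of \eqref{eq:hojo 3}.

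The two correction terms are adjoints of one another up to complex-conjugating $f(t)$, so it suffices to treat $\Bk{f(t),\Jt^{\si/2}}\gah(t)\Jt^{\si/2}$. Using $\Jt^{\si/2}=\M(t)\lg t\hb\na\rg^{\si/2}\M(-t)$ from \eqref{eq:f(J)}, the fact that $f(t)$ commutes with $\M(\pm t)$, and that $\M(\pm t)$ are unitary, one finds, after inserting $\lg t\hb\na\rg^{-\si/2}\lg t\hb\na\rg^{\si/2}$ in the middle and unwinding the $\M$'s,
\[
\No{\Bk{f(t),\Jt^{\si/2}}\gah(t)\Jt^{\si/2}}_{\FS^2_\hb}\le\No{\Bk{f(t),\lg t\hb\na\rg^{\si/2}}\lg t\hb\na\rg^{-\si/2}}_\CB\,\No{G(t)}_{\FS^2_\hb}.
\]
The last factor is $\ls\bt^{\ep}\No{\gah_0}_{\CX^\si_\hb}$ as above, and for the operator-norm factor I would use the standard commutator bound $\No{[b(x),\lg\na\rg^{s}]\lg\na\rg^{-s}}_\CB\ls\No{\na b}_{L^\infty_x}$, valid for $0\le s<1$; this is the Fourier-side analogue of \eqref{eq:commutator basic 2} and lies outside the reach of Lemma~\ref{lem:commutator basic} (which would cost a $\CF L^1$-norm of $\na b$), but it is classical and can also be proved directly by mimicking the proof of \eqref{eq:commutator basic 2}. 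A unitary rescaling $x\mapsto t\hb x$ turns $\lg t\hb\na\rg^{s}$ into $\lg\na\rg^{s}$ and $f(t,\cdot)$ into $f(t,t\hb\,\cdot)$, whose gradient has $L^\infty$-norm $t\hb\No{\na f(t)}_{L^\infty_x}$; hence the operator-norm factor is $\ls t\hb\No{\na f(t)}_{L^\infty_x}$, and the correction terms are $\ls t\hb\No{\na f(t)}_{L^\infty_x}\bt^{\ep}\No{\gah_0}_{\CX^\si_\hb}\ls\hb\bt^{1+\ep}\No{\na f(t)}_{L^\infty_x}\No{\gah_0}_{\CX^\si_\hb}$.

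Combining the three pieces gives \eqref{eq:hojo 3} for every fixed $t>0$ with constants independent of $t$ and of $T$; at $t=0$ we have $\Jt^{\si/2}=\bx^{\si/2}$ and $[f(0),\bx^{\si/2}]=0$, so the left-hand side equals $\No{[f(0),\bx^{\si/2}\gah_0\bx^{\si/2}]}_{\FS^2_\hb}\le\No{\na f(0)}_{L^\infty_x}\No{[x,\bx^{\si/2}\gah_0\bx^{\si/2}]}_{\FS^2_\hb}\ls\hb\No{\na f(0)}_{L^\infty_x}\No{\gah_0}_{\CX^\si_\hb}$, using $[x,\bx^{\si/2}\gah_0\bx^{\si/2}]=\hb\bx^{-\si/2}\big(\bx^\si[\xh,\gah_0]\bx^\si\big)\bx^{-\si/2}$ and the term $\No{\bx^\si[\xh,\gah_0]\bx^\si}_{\FS^2_\hb}$ of $\No{\gah_0}_{\CX^\si_\hb}$. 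Taking the supremum over $0\le t\le T$ finishes the proof. I expect the main obstacle to be exactly the correction terms: the commutator of $f(t)$ with the fractional momentum weight $\lg t\hb\na\rg^{\si/2}$ is not reachable through the paper's elementary commutator lemma, so one must either cite the standard pseudodifferential commutator estimate above or add a short self-contained proof of it; once that is in place, the rest is bookkeeping of the harmless $\bt^{\ep}$ losses coming from Lemmas~\ref{lem:easy} and~\ref{lem:commutator 5}.
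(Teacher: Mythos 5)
Your decomposition and your treatment of the main term $\Bk{f(t),\Jt^{\si/2}\gah(t)\Jt^{\si/2}}$ coincide with the paper's: both apply \eqref{eq:commutator basic 2}, rewrite $[x,\U(t)Z(t)\U(t)^*]$ in terms of $[\xh,Z(t)]$ and $t\hb[\na,Z(t)]$ with $Z(t)=\bx^{\si/2}\W_V(t)\gah_0\W_V(t)^*\bx^{\si/2}$, and conclude with Lemma \ref{lem:commutator 5}; your explicit interpolation bound $\No{\bx^{\si/2}\gah_0\bx^{\si/2}}_{\FS^2_\hb}^2\le\No{\bx^{\si}\gah_0\bx^{\si}}_\CB\No{\gah_0}_{\FS^1_\hb}$ is a detail the paper leaves implicit. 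Where you genuinely diverge is the correction terms $[\Jt^{\si/2},f(t)]\gah(t)\Jt^{\si/2}$ and its adjoint. The paper handles these inside its own toolbox: writing $[\Jt^{\si/2},f(t)]=\U(t)\,[\bx^{\si/2},\U(t)^*f(t)\U(t)]\,\U(t)^*$ and using Lemma \ref{lem:commutator basic} via Remark \ref{rmk:weight}, the $\CF L^1$ cost falls on $\na\bx^{\si/2}$ (admissible since $\si/2<1$), while $[x,\U(t)^*f(t)\U(t)]=\U(t)^*[\J(t),f(t)]\U(t)=it\hb\,\U(t)^*(\na f)(t)\U(t)$ gives the factor $t\hb\No{\na f(t)}_{L^\I_x}$ directly. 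You instead conjugate by $\M(t)$ and reduce to $\No{[f(t,t\hb\,\cdot),\lg\na\rg^{\si/2}]\lg\na\rg^{-\si/2}}_\CB\ls\No{\na f(t,t\hb\,\cdot)}_{L^\I_x}$, a Calder\'on-type commutator estimate. That estimate is indeed classical and true for exponent $\si/2<1$, so your argument closes once it is supplied; but your remark that it can be obtained by ``mimicking the proof of \eqref{eq:commutator basic 2}'' does not hold up, since that kernel argument is specific to the Hilbert--Schmidt norm. A correct self-contained proof would use, e.g., the subordination formula for $\lg\na\rg^{s}$, $0<s<1$, expressing $[\lg\na\rg^{s},b]$ as an integral in $\lm$ of $(1-\De+\lm)^{-1}[\De,b](1-\De+\lm)^{-1}$ with $[\De,b]=\na\cdot(\na b)+(\na b)\cdot\na$, the integral converging precisely because $s<1$. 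So: correct modulo this external (or separately proved) ingredient; the paper's $\U(t)$-conjugation trick shows the ingredient can be avoided altogether, which is the cleaner route given the lemmas already available.
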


\begin{proof}
	Note that
	\begin{align}
		&\No{\Jt^{\si/2} \Bk{f(t), \gah(t)} \Jt^{\si/2}}_{\FS^2_\hb} \\
		&\quad \le \No{\Bk{\Jt^{\si/2},f(t)}\Jt^{-\si/2}}_{\CB} \No{\Jt^{\si/2} \gah(t)\Jt^{\si/2}}_{\FS^2_\hb} \\
		&\qquad \qquad + \No{ \Bk{f(t), \Jt^{\si/2}\gah(t)\Jt^{\si/2}} }_{\FS^2_\hb}=: \SA + \SB.
	\end{align}
	For $\SA$, by Lemma \ref{lem:commutator basic}, we have
	\begin{align}
		\SA\le \bt^{1+\ep} \hb\No{\na f(t)}_{L^\I_x} \No{\gah_0}_{\CX^\si_\hb}.
	\end{align}
	For $\SB$, by Lemmas \ref{lem:commutator basic}, we have
	\begin{align}
		\SB &\le \No{\na f(t)}_{L^\I_x} \No{\Bk{x,\Jt^{\si/2}\gah(t)\Jt^{\si/2}}}_{\FS^2_\hb} \\
		&\le \No{\na f(t)}_{L^\I_x} \No{\Bk{\J(t),\bx^{\si/2}\W_V(t)\gah_0 \W_V(t)^* \bx^{\si/2}}}_{\FS^2_\hb} \\
		&\le \hb \No{\na f(t)}_{L^\I_x} \No{\Bk{\xh,\bx^{\si/2}\W_V(t)\gah_0 \W_V(t)^* \bx^{\si/2}}}_{\FS^2_\hb} \\
		&\qquad \qquad + t\hb \No{\na f(t)}_{L^\I_x} \No{\Bk{\na,\bx^{\si/2}\W_V(t)\gah_0 \W_V(t)^* \bx^{\si/2}}}_{\FS^2_\hb}=:\SB_1 + \SB_2.
	\end{align}
	For $\SB_1$, by Lemma \ref{lem:commutator 5}, we obtain
	\begin{align}
		\SB_1 \ls \hb\bt^{\ep} \No{\na f(t)}_{L^\I_x} \No{\gah_0}_{\CX^\si_\hb}.
	\end{align}
	To estimate $\SB_2$, note that
	\begin{align}
		&\No{\Bk{\na,\bx^{\si/2}\W_V(t)\gah_0 \W_V(t)^* \bx^{\si/2}}}_{\FS^2_\hb} \\
		&\quad \ls \No{\bx^{\si/2}\Bk{\na,\W_V(t)\gah_0 \W_V(t)^* }\bx^{\si/2}}_{\FS^2_\hb} + \bt^\ep \No{\gah_0}_{\CX^\si_\hb}.
	\end{align}
	Applying Lemma \ref{lem:commutator 5} to the above estimates, we get the desired result.
\end{proof}

\begin{lemma}\label{lem:commutator 5.9}
		Under Assumption \ref{ass:2}, there exists sufficiently small $R>0$ such that $\|\ze\|_{\CY^{a,b}_T} \le R$ implies
	\begin{align}
		&\No{\Jt^{{\si/2}}\Bk{f(t) , \Bk{ \J(t), \gah(t)}}\Jt^{{\si/2}}}_\Sp \ls \hb^2 \bt^{1+\ep} \No{\na f(t)}_{L^\I} \No{\gah_0}_{\CX^\si_\hb}\\
	    &\qquad \qquad + \hb^2 \|\na f(t)\|_{L^\I_x}  \No{\bx^{{\si/2}} \Dk{\xh, \Bk{ \xh, \W_V(t)\gah_0\W_V(t)^*} }\bx^{{\si/2}} }_{\FS^2_\hb} \\
	    &\qquad \qquad + \hb^2\bt \|\na f(t)\|_{L^\I_x} \No{\bx^{{\si/2}} \Dk{\na, \Bk{ \xh, \W_V(t)\gah_0\W_V(t)^*} }\bx^{{\si/2}} }_{\FS^2_\hb}.
	\end{align}
	for all $f \in W^{1,\I}(\R^3)$ and arbitrarily small $0<\ep\ll \min(a,b)$.
\end{lemma}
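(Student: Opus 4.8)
The plan is to mimic the proof of Lemma \ref{lem:commutator 1.6.2}, but now carrying one extra inner commutator with $\J(t)$ instead of a bare density operator. First I would expand $f(t)$ against the outer $\Jt^{\si/2}$ factors: writing
\begin{align}
&\No{\Jt^{{\si/2}}\Bk{f(t) , \Bk{ \J(t), \gah(t)}}\Jt^{{\si/2}}}_\Sp \\
&\quad \le \No{\Bk{\Jt^{\si/2},f(t)}\Jt^{-\si/2}}_\CB \No{\Jt^{\si/2}\Bk{\J(t),\gah(t)}\Jt^{\si/2}}_{\FS^2_\hb} \\
&\qquad + \No{\Bk{f(t),\Jt^{\si/2}\Bk{\J(t),\gah(t)}\Jt^{\si/2}}}_{\FS^2_\hb} =: \SA + \SB.
\end{align}
For $\SA$, Lemma \ref{lem:commutator basic} gives $\No{\Bk{\Jt^{\si/2},f(t)}\Jt^{-\si/2}}_\CB \ls \hb\No{\na f(t)}_{\CF L^1}\ls \hb\No{\na f(t)}_{L^\I_x}$ after using $\No{[x,\lg\na\rg^{\si/2}]\lg\na\rg^{-\si/2}}_\CB\ls1$ and \eqref{eq:f(J)}; the remaining Hilbert--Schmidt factor $\No{\Jt^{\si/2}[\J(t),\gah(t)]\Jt^{\si/2}}_{\FS^2_\hb}$ is rewritten via \eqref{eq:f(J)} and \eqref{eq:J} as $\hb\No{\Bk{\xh,\bx^{\si/2}\W_V(t)\gah_0\W_V(t)^*\bx^{\si/2}}}_{\FS^2_\hb}+t\hb\No{\Bk{\na,\cdots}}_{\FS^2_\hb}$, both of which are $\ls\hb\bt^{1+\ep}\No{\gah_0}_{\CX^\si_\hb}$ by Lemma \ref{lem:commutator 5}. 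This produces the first term on the right-hand side.

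For $\SB$, I would again use Lemma \ref{lem:commutator basic} to pull the multiplier $f(t)$ through, converting the commutator with $f(t)$ into a commutator with $x$ at the cost of $\No{\na f(t)}_{L^\I_x}$ (since we are in $\FS^2_\hb$, the Hilbert--Schmidt-improved bound \eqref{eq:commutator basic 2} applies). Thus
\begin{align}
\SB \le \No{\na f(t)}_{L^\I_x}\No{\Bk{x,\Jt^{\si/2}\Bk{\J(t),\gah(t)}\Jt^{\si/2}}}_{\FS^2_\hb}.
\end{align}
Now I rewrite $\Jt^{\si/2}\Bk{\J(t),\gah(t)}\Jt^{\si/2}=\hb\,\U(t)\,\bx^{\si/2}\Bk{\xh,\W_V(t)\gah_0\W_V(t)^*}\bx^{\si/2}\,\U(t)^*$ using \eqref{eq:J} and \eqref{eq:f(J)}, so that conjugating the outer $x$ by $\U(t)^*$ turns it into $\J(t)=x+it\hb\na$. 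Splitting $\J(t)=x+it\hb\na$ then gives exactly
\begin{align}
\SB &\ls \hb\No{\na f(t)}_{L^\I_x}\No{\bx^{\si/2}\Bk{\xh,\Bk{\xh,\W_V(t)\gah_0\W_V(t)^*}}\bx^{\si/2}}_{\FS^2_\hb} \\
&\quad + \hb^2 t\No{\na f(t)}_{L^\I_x}\No{\bx^{\si/2}\Bk{\na,\Bk{\xh,\W_V(t)\gah_0\W_V(t)^*}}\bx^{\si/2}}_{\FS^2_\hb},
\end{align}
modulo commutators of $\bx^{\si/2}$ with $x$ and with $it\hb\na$, which are lower order and absorbable by Remark \ref{rmk:weight} together with the interpolation bounds of Lemma \ref{lem:easy}; these contributions feed back into the already-controlled $\SA$-type term. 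Collecting $\SA$ and $\SB$ and matching the $\hb$ powers — the overall $\hb^2$ coming from the two inner $\xh$/$\J(t)$ factors — yields the three claimed terms.

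The main obstacle I anticipate is bookkeeping rather than a genuine analytic difficulty: one must be careful that the auxiliary commutators $[\bx^{\si/2},x]$, $[\bx^{\si/2},\na]$ generated when moving weights past $\J(t)=x+it\hb\na$ do not secretly generate a worse-than-$\hb^2$ factor or an uncontrolled $\bt$-power. Here Remark \ref{rmk:weight} (so that $[\bx^{\si/2},\cdot]$ costs the same as $[x,\cdot]$) and the crude logarithmic bounds of Lemma \ref{lem:easy} suffice, since the statement allows an arbitrarily small loss $\bt^\ep$ and since the genuinely dangerous terms (those with the full double $\xh$-commutator and the $t\cdot\na\xh$-commutator) are exactly the two terms kept explicitly on the right-hand side. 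A secondary point of care is that Lemma \ref{lem:commutator 1.6.1} and Lemma \ref{lem:commutator 5} must be invoked with the correct weight $\bx^{\si/2}$ (not $\bx^\si$), which is why this lemma is stated and proved under Assumption \ref{ass:2} in the Hilbert--Schmidt setting; this is precisely what makes the smallness $\|\ze\|_{\CY^{a,b}_T}\le R$ enter, through the a priori bound on $\W_V(t)\gah_0\W_V(t)^*$ used inside those two lemmas.
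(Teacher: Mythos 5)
Your decomposition into $\SA+\SB$, the use of the Hilbert--Schmidt commutator bound \eqref{eq:commutator basic 2} for $\SB$, the conjugation of $\J(t)$ back to $x$ through $\U(t)$, the splitting into $\xh$ and $t\na$ parts, and the absorption of the weight-commutation corrections via Lemma \ref{lem:commutator 5} are exactly the paper's proof; in particular your treatment of $\SB$ is correct up to a typo (the first term of your displayed bound for $\SB$ must carry $\hb^2$, not $\hb$, as you implicitly acknowledge when collecting). The genuine problem is in $\SA$. The claim $\No{\Bk{\Jt^{\si/2},f(t)}\Jt^{-\si/2}}_\CB\ls\hb\No{\na f(t)}_{\CF L^1}$ is false uniformly in $t$: by \eqref{eq:f(J)} this operator equals $\M(t)\Bk{\lg t\hb\na\rg^{\si/2},f}\lg t\hb\na\rg^{-\si/2}\M(-t)$, and taking $f$ concentrated at a fixed frequency $\xi_0$ and testing on low-frequency inputs produces a norm of size $(t\hb|\xi_0|)^{\si/2}$, so the factor $t$ cannot be dropped; the correct bound (the one the paper uses) is $\No{\Bk{\Jt^{\si/2},f}\Jt^{-\si/2}}_\CB\ls\No{\Bk{\bx^{\si/2},\U(-t)f\U(t)}}_\CB\ls\No{\Bk{x,\U(-t)f\U(t)}}_\CB=\No{\Bk{\J(t),f}}_\CB=t\hb\No{\na f}_{L^\I_x}$, obtained from \eqref{eq:commutator basic} and Remark \ref{rmk:weight}. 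Moreover your subsequent step $\No{\na f}_{\CF L^1}\ls\No{\na f}_{L^\I_x}$ is the wrong direction of the embedding ($\CF L^1\hookrightarrow L^\I$, not conversely), and since the lemma is asserted for every $f\in W^{1,\I}(\R^3)$, the quantity $\No{\na f}_{\CF L^1}$ may be infinite and cannot enter the proof at all.

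That your final bound for $\SA$ nonetheless matches the statement is the result of two compensating inaccuracies: you drop the factor $t$ from the operator-norm factor and insert a spurious factor $\bt$ into the Hilbert--Schmidt factor. In fact the conjugation is exact, $\Jt^{\si/2}\Bk{\J(t),\gah(t)}\Jt^{\si/2}=\hb\,\U(t)\bx^{\si/2}\Bk{\xh,\W_V(t)\gah_0\W_V(t)^*}\bx^{\si/2}\U(t)^*$, so $\No{\Jt^{\si/2}\Bk{\J(t),\gah(t)}\Jt^{\si/2}}_{\FS^2_\hb}\ls\hb\bt^{\ep}\No{\gah_0}_{\CX^\si_\hb}$ by Lemma \ref{lem:commutator 5}, with no $t\hb\Bk{\na,\cdot}$ contribution; combined with the corrected operator-norm factor $t\hb\No{\na f(t)}_{L^\I_x}$ this gives $\SA\ls\hb^2\bt^{1+\ep}\No{\na f(t)}_{L^\I_x}\No{\gah_0}_{\CX^\si_\hb}$, which is the first term of the statement and is precisely the paper's estimate. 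With this repair (and the $\hb^2$ typo fixed) your argument coincides with the paper's proof.
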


\begin{proof}
	First, note that
	\begin{align}
		&\No{\Jt^{{\si/2}}\Bk{f(t), \Bk{ \J(t), \gah(t)}}\Jt^{{\si/2}}}_\Sp\\
		&\quad \ls \No{\Bk{\Jt^{{\si/2}}, f(t)} \Jt^{-\si/2}}_\CB
		\No{\Jt^{{\si/2}} \Bk{ \J(t), \gah(t)} \Jt^{{\si/2}}}_{\FS^2_\hb}\\
		&\qquad + \No{\Dk{f(t), \Jt^{{\si/2}} \Bk{ \J(t), \gah(t)} \Jt^{{\si/2}}} }_{\FS^2_\hb}=: \SA + \SB.
	\end{align}
For $\SA$, by Lemma \ref{lem:commutator 5}, we have
\begin{align}
	\SA \ls t\hb^2 \No{\na f(t)}_{L^\I} \No{\bx^{{\si/2}} \Bk{ \xh, \W_V(t)\gah_0 \W_V(t)^*} \bx^{{\si/2}}}_{\FS^2_\hb}
	\ls \hb^2 \bt^{1+\ep}  \No{\na f(t)}_{L^\I} \No{\gah_0}_{\CX^\si_\hb}.
\end{align}
For $\SB$, by Lemma \ref{lem:commutator basic}, we have
\begin{align}
	\SB&\le \|\na f(t)\|_{L^\I}  \No{\Dk{x, \Jt^{{\si/2}} \Bk{ \J(t), \gah(t)} \Jt^{{\si/2}}} }_{\FS^2_\hb} \\
	&=  \|\na f(t)\|_{L^\I}  \No{\Dk{\J(t), \bx^{{\si/2}} \Bk{ x, \W_V(t)\gah_0\W_V(t)^*} \bx^{{\si/2}}} }_{\FS^2_\hb} \\
	&\le \hb^2 \|\na f(t)\|_{L^\I}  \No{\Dk{\xh, \bx^{{\si/2}} \Bk{ \xh, \W_V(t)\gah_0\W_V(t)^*} \bx^{{\si/2}}} }_{\FS^2_\hb} \\
	&\quad + t\hb^2 \|\na f(t)\|_{L^\I}  \No{\Dk{\na, \bx^{{\si/2}} \Bk{ \xh, \W_V(t)\gah_0\W_V(t)^*} \bx^{{\si/2}}} }_{\FS^2_\hb}.
\end{align}
Finally, note that Lemma \ref{lem:commutator 5} implies
\begin{align}
	&\No{\Dk{\na, \bx^{{\si/2}} \Bk{ \xh, \W_V(t)\gah_0\W_V(t)^*} \bx^{{\si/2}}} }_{\FS^2_\hb} \\
	&\quad \le \No{\bx^{{\si/2}} \Dk{\na, \Bk{ \xh, \W_V(t)\gah_0\W_V(t)^*} }\bx^{{\si/2}} }_{\FS^2_\hb} \\
	&\qquad + \No{\bx^{{\si/2}} \Bk{ \xh, \W_V(t)\gah_0\W_V(t)^*} \bx^{{\si/2}}}_{\FS^2_\hb} \\
	&\quad \ls \No{\bx^{{\si/2}} \Dk{\na, \Bk{ \xh, \W_V(t)\gah_0\W_V(t)^*} }\bx^{{\si/2}} }_{\FS^2_\hb} 
	+ \bt^\ep \No{\gah_0}_{\CX^\si_\hb}.
\end{align}
Collecting the above estimates, we obtain the desired result.
\end{proof}

\begin{lemma}\label{lem:commutator 5.9.1}
		Under Assumption \ref{ass:2}, there exists sufficiently small $R>0$ such that $\|\ze\|_{\CY^{a,b}_T} \le R$ implies
	\begin{align}
		&\No{\Jt^{{\si/2}}\Bk{f(t) , \Bk{ \na, \gah(t)}}\Jt^{{\si/2}}}_\Sp \ls \hb \bt^{1+\ep} \No{\na f(t)}_{L^\I} \No{\gah_0}_{\CX^\si_\hb}\\
		&\qquad \qquad + \hb \|\na f(t)\|_{L^\I_x}  \No{\bx^{{\si/2}} \Dk{\na, \Bk{ \xh, \W_V(t)\gah_0\W_V(t)^*} }\bx^{{\si/2}} }_{\FS^2_\hb} \\
		&\qquad \qquad + \hb\bt \|\na f(t)\|_{L^\I_x} \No{\bx^{{\si/2}} \Dk{\na, \Bk{ \na, \W_V(t)\gah_0\W_V(t)^*} }\bx^{{\si/2}} }_{\FS^2_\hb}
	\end{align}
	for all $f \in W^{1,\I}(\R^3)$ and arbitrarily small $0<\ep\ll \min(a,b)$.
\end{lemma}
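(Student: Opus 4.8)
The strategy is to repeat the proof of Lemma~\ref{lem:commutator 5.9} almost verbatim, with the inner commutator $\Bk{\J(t),\gah(t)}$ replaced by $\Bk{\na,\gah(t)}$. Since $\J(t)=\hb\xh+it\hb\na$ carries a factor of $\hb$ that $\na$ does not, every term of the output loses exactly one power of $\hb$ relative to Lemma~\ref{lem:commutator 5.9}, which is precisely the discrepancy between the two statements. First I would decompose, as there,
\begin{align}
	&\No{\Jt^{{\si/2}}\Bk{f(t), \Bk{\na, \gah(t)}}\Jt^{{\si/2}}}_\Sp
	\ls \No{\Bk{\Jt^{{\si/2}}, f(t)}\Jt^{-\si/2}}_\CB\,\No{\Jt^{{\si/2}}\Bk{\na, \gah(t)}\Jt^{{\si/2}}}_{\FS^2_\hb} \\
	&\qquad\qquad + \No{\Dk{f(t),\,\Jt^{{\si/2}}\Bk{\na, \gah(t)}\Jt^{{\si/2}}}}_{\FS^2_\hb}
	=: \SA + \SB,
\end{align}
keeping $f(t)$ as the outermost factor so that it is differentiated only once; differentiating it twice would force $\No{\na^2 f}_{L^\infty}$ into the estimate.

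For $\SA$: by $\eqref{eq:f(J)}$ and $\eqref{eq:J}$, conjugation by $\U(t)$ (an isometry on $\FS^2_\hb$ under which $\na$ is invariant) identifies $\No{\Jt^{{\si/2}}\Bk{\na, \gah(t)}\Jt^{{\si/2}}}_{\FS^2_\hb}$ with $\No{\bx^{\si/2}\Bk{\na,\W_V(t)\gah_0\W_V(t)^*}\bx^{\si/2}}_{\FS^2_\hb}$; note that no extra $\hb$ is produced here, in contrast to Lemma~\ref{lem:commutator 5.9} where $\Bk{x,\cdot}=\hb\Bk{\xh,\cdot}$. Lemma~\ref{lem:commutator 5} (up to Leibniz remainders of the same kind treated there) bounds this by $\bt^\ep\No{\gah_0}_{\CX^\si_\hb}$. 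For the operator-norm factor I would argue exactly as in the $\SA$-estimate of the proof of Lemma~\ref{lem:commutator 1.6.2}: by $\eqref{eq:f(J)}$ the multiplier $\M(t)$ commutes with $f(t,x)$, so $\No{\Bk{\Jt^{\si/2},f(t)}\Jt^{-\si/2}}_\CB=\No{\Bk{f(t),\lg t\hb\na\rg^{\si/2}}\lg t\hb\na\rg^{-\si/2}}_\CB\ls t\hb\No{\na f(t)}_{L^\infty}$ (using $\si/2\in(3/4,1)$). Multiplying, $\SA\ls \hb\bt^{1+\ep}\No{\na f(t)}_{L^\infty}\No{\gah_0}_{\CX^\si_\hb}$, which is the first term on the right.

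For $\SB$: the Hilbert--Schmidt improvement $\eqref{eq:commutator basic 2}$ gives $\SB\le\No{\na f(t)}_{L^\infty}\No{\Bk{x,\Jt^{\si/2}\Bk{\na,\gah(t)}\Jt^{\si/2}}}_{\FS^2_\hb}$. Conjugating by $\U(t)$ and using $\U(t)^*x\U(t)=x-it\hb\na=\hb\xh-it\hb\na$, the norm becomes $\No{\Bk{\hb\xh-it\hb\na,\;\bx^{\si/2}\Bk{\na,\W_V(t)\gah_0\W_V(t)^*}\bx^{\si/2}}}_{\FS^2_\hb}$, which the triangle inequality splits into an $\hb$-part and an $\hb\bt$-part. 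In the $\hb$-part, $\bx^{\si/2}$ commutes with $\xh$, so only $\bx^{\si/2}\Bk{\xh,\Bk{\na,\W_V\gah_0\W_V^*}}\bx^{\si/2}$ survives, and by the Jacobi identity---the cross commutator $\Bk{\cdot,\Bk{\xh,\na}}$ vanishes since $\Bk{\xh,\na}$ is a scalar---this equals $\bx^{\si/2}\Bk{\na,\Bk{\xh,\W_V\gah_0\W_V^*}}\bx^{\si/2}$, the second term on the right. In the $\hb\bt$-part, the Leibniz rule applied to $\Bk{\na,\bx^{\si/2}\,\cdot\,\bx^{\si/2}}$ produces $\bx^{\si/2}\Bk{\na,\Bk{\na,\W_V\gah_0\W_V^*}}\bx^{\si/2}$---the third term---plus remainders in which one weight has dropped to $\bx^{\si/2-1}$; iterating (and using $\No{\W_V\gah_0\W_V^*}_{\FS^2_\hb}=\No{\gah_0}_{\FS^2_\hb}\le\No{\gah_0}_{\FS^1_\hb}$ once both weights become negative), these remainders are $\ls\bt^\ep\No{\gah_0}_{\CX^\si_\hb}$ by Lemmas~\ref{lem:commutator 5} and~\ref{lem:easy}, hence contribute $\ls t\hb\,\bt^\ep\No{\na f}_{L^\infty}\No{\gah_0}_{\CX^\si_\hb}\ls\hb\bt^{1+\ep}\No{\na f}_{L^\infty}\No{\gah_0}_{\CX^\si_\hb}$ and are absorbed into the first term. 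Summing the three contributions yields the claim.

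The only delicate point is bookkeeping: one must keep the $\hb$-powers aligned when rewriting $\Bk{x,\cdot}$ on a $\U(t)$-conjugate as $\hb\xh-it\hb\na$, and check that the Leibniz expansion of $\Bk{\na,\bx^{\si/2}\,\cdot\,\bx^{\si/2}}$ collapses to exactly the double commutators $\Bk{\na,\Bk{\xh,\cdot}}$ and $\Bk{\na,\Bk{\na,\cdot}}$ appearing in the statement, all other terms being genuinely lower order and controlled by the already-established estimates. No new constraint on $R$ is imposed; the smallness hypothesis is the one inherited from Lemmas~\ref{lem:commutator 5} and~\ref{lem:commutator 1.6.1}, which enter as black boxes.
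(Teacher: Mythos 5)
Your proposal is correct and follows the paper's own proof essentially step by step: the same split into $\SA+\SB$, the bound $\No{\Bk{\Jt^{\si/2},f(t)}\Jt^{-\si/2}}_\CB\ls t\hb\No{\na f(t)}_{L^\I_x}$ together with Lemma \ref{lem:commutator 5} for $\SA$, and for $\SB$ the Hilbert--Schmidt bound \eqref{eq:commutator basic 2}, conjugation by $\U(t)$, the splitting of $x\mp it\hb\na$ into an $\hb$-part and a $t\hb$-part, the Jacobi identity turning $\Dk{\xh,\Bk{\na,\cdot}}$ into $\Dk{\na,\Bk{\xh,\cdot}}$, and absorption of the Leibniz remainders via Lemma \ref{lem:commutator 5}. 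The only flaw is the parenthetical claim $\No{\gah_0}_{\FS^2_\hb}\le\No{\gah_0}_{\FS^1_\hb}$, which is false for the semiclassically scaled norms (it costs a factor $(2\pi\hb)^{-3/2}$; cf.\ Appendix \ref{subsec:natural}, where it is stressed that no uniform inclusion holds); fortunately you never need it, since the remainders with a dropped weight $\bx^{\si/2-1}$ are bounded directly by $\No{\bx^{\si/2}\Bk{\na,\W_V(t)\gah_0\W_V(t)^*}\bx^{\si/2}}_{\FS^2_\hb}\ls\bt^\ep\No{\gah_0}_{\CX^\si_\hb}$, exactly as in the paper.
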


\begin{proof}
	First, note that
	\begin{align}
		&\No{\Jt^{{\si/2}}\Bk{f(t), \Bk{ \na, \gah(t)}}\Jt^{{\si/2}}}_\Sp\\
		&\quad \ls \No{\Bk{\Jt^{{\si/2}}, f(t)}\Jt^{-\si/2}}_\CB
		\No{\Jt^{{\si/2}} \Bk{ \na, \gah(t)} \Jt^{{\si/2}}}_{\FS^2_\hb}\\
		&\qquad + \No{\Dk{f(t), \Jt^{{\si/2}} \Bk{ \na, \gah(t)} \Jt^{{\si/2}}} }_{\FS^2_\hb}=: \SA + \SB.
	\end{align}
	For $\SA$, by Lemma \ref{lem:commutator 5}, we have
	\begin{align}
		\SA \ls t\hb \No{\na f(t)}_{L^\I} \No{\bx^{{\si/2}} \Bk{ \na, \W_V(t)\gah_0 \W_V(t)^*} \bx^{{\si/2}}}_{\FS^2_\hb}
		\ls \hb \bt^{1+\ep}  \No{\na f(t)}_{L^\I} \No{\gah_0}_{\CX^\si_\hb}.
	\end{align}
	For $\SB$, by Lemma \ref{lem:commutator basic}, we have
	\begin{align}
		\SB&\le \|\na f(t)\|_{L^\I}  \No{\Dk{x, \Jt^{{\si/2}} \Bk{ \na, \gah(t)} \Jt^{{\si/2}}} }_{\FS^2_\hb} \\
		&=  \|\na f(t)\|_{L^\I}  \No{\Dk{\J(t), \bx^{{\si/2}} \Bk{ \na, \W_V(t)\gah_0\W_V(t)^*} \bx^{{\si/2}}} }_{\FS^2_\hb} \\
		&\le \hb \|\na f(t)\|_{L^\I}  \No{\Dk{\xh, \bx^{{\si/2}} \Bk{ \na, \W_V(t)\gah_0\W_V(t)^*} \bx^{{\si/2}}} }_{\FS^2_\hb} \\
		&\quad + t\hb \|\na f(t)\|_{L^\I}  \No{\Dk{\na, \bx^{{\si/2}} \Bk{ \na, \W_V(t)\gah_0\W_V(t)^*} \bx^{{\si/2}}} }_{\FS^2_\hb}.
	\end{align}
	By the Jacobi identity, we have
	\begin{equation}\label{eq:commute}
		\begin{aligned}
		&\No{\bx^{{\si/2}} \Dk{\xh, \Bk{ \na, \W_V(t)\gah_0\W_V(t)^*} } \bx^{{\si/2}}}_{\FS^2_\hb} \\
		&\qquad = \No{\bx^{{\si/2}} \Dk{ \na, \Bk{\xh, \W_V(t)\gah_0\W_V(t)^*} } \bx^{{\si/2}}}_{\FS^2_\hb}.
		\end{aligned}
	\end{equation}
	Finally, note that Lemma \ref{lem:commutator 5} implies
	\begin{align}
		&\No{\Dk{\na, \bx^{{\si/2}} \Bk{ \na, \W_V(t)\gah_0\W_V(t)^*} \bx^{{\si/2}}} }_{\FS^2_\hb} \\
		&\quad \le \No{\bx^{{\si/2}} \Dk{\na, \Bk{ \na, \W_V(t)\gah_0\W_V(t)^*} }\bx^{{\si/2}} }_{\FS^2_\hb} 
		 + \No{\bx^{{\si/2}} \Bk{ \na, \W_V(t)\gah_0\W_V(t)^*} \bx^{{\si/2}}}_{\FS^2_\hb} \\
		&\quad \ls \No{\bx^{{\si/2}} \Dk{\na, \Bk{ \na, \W_V(t)\gah_0\W_V(t)^*} }\bx^{{\si/2}} }_{\FS^2_\hb} 
		+ \bt^\ep \No{\gah_0}_{\CX^\si_\hb}.
	\end{align}
	Collecting the above estimates, we obtain the desired result.
\end{proof}

\subsection{Proof of Lemma \ref{lem:commutator 6}}
Now we give a complete proof of Lemma \ref{lem:commutator 6}.
Let $L(t):= \log(t+2)$ to avoid wasting space.

\subsubsection{Estimate of the first term}\label{subsubsec:first}
By Lemmas \ref{lem:commutator x W} and \ref{lem:easy}, we have
\begin{align}
	&\No{\bx^{{\si/2}} \Dk{\frac{x_k}{\hb},\Dk{\frac{x_j}{\hb},\W_V(t)\gah_0 \W_V(t)^*}}\bx^{{\si/2}}}_{\Sp} \\ 
	&\quad \ls (L(t))^2 \No{\bx^{{\si/2}} \Dk{\frac{x_k}{\hb}, \Dk{\frac{x_j}{\hb}, \gah_0 }} \bx^{{\si/2}}}_\Sp \\
	&\qquad + \frac{(L(t))^2 }{\hb^2} \int_0^t (L(\ta))^2
	\No{\Jta^{{\si/2}}\Bk{\ta \pl_{x_j} V(\ta) , \Bk{ \J_k(\ta), \gah(\ta)}}\Jta^{{\si/2}}}_\Sp d\ta \\
	&\qquad + \frac{(L(t))^2 }{\hb^2} \int_0^t (L(\ta))^2
	\No{\Jta^{{\si/2}}\Bk{\ta \pl_{x_k} V(\ta), \Bk{\J_j(\ta), \gah(\ta)}}\Jta^{{\si/2}}}_\Sp d\ta\\
	&\qquad + \frac{(L(t))^2 }{\hb} \int_0^t (L(\ta))^2
	\No{\Jta^{{\si/2}} \Bk{\ta^2 \pl_{x_k,x_j}^2 V(\ta), \gah(\ta)}\Jta^{{\si/2}}}_{\FS^2_\hb} d\ta\\
	&\quad =: (L(t))^2 \No{\bx^{{\si/2}} \Dk{\frac{x_k}{\hb}, \Dk{\frac{x_j}{\hb}, \gah_0 }} \bx^{{\si/2}}}_\Sp + \SA + \SB + \SC.
\end{align}

\noindent {\textbf{$\blacklozenge$ Estimate of $\SA$ and $\SB$.}}
By Lemmas \ref{lem:commutator 5.9} and \ref{lem:potential deriv}, we have
\begin{align}
	&\No{\Jta^{{\si/2}}\Bk{\ta \pl_{x_j} V(\ta), \Bk{ \J_k(\ta), \gah(\ta)}}\Jta^{{\si/2}}}_\Sp\\
	&\quad \ls \hb^2 \bta^{1+\ep} \No{\ta \na^2 V(\ta)}_{L^\I}\No{\gah_0}_{\CX^\si_\hb}\\
	&\qquad \qquad + \hb^2 \|\ta \na^2 V(\ta)\|_{L^\I_x}  \No{\bx^{{\si/2}} \Dk{\xh, \Bk{ \xh, \W_V(\ta)\gah_0\W_V(\ta)^*} }\bx^{{\si/2}} }_{\FS^2_\hb} \\
	&\qquad \qquad + \hb^2\bta \|\ta \na^2 V(\ta)\|_{L^\I_x} \No{\bx^{{\si/2}} \Dk{\na, \Bk{ \xh, \W_V(\ta)\gah_0\W_V(\ta)^*} }\bx^{{\si/2}} }_{\FS^2_\hb} \\
	&\quad \ls \frac{\hb^2 R \No{\gah_0}_{\CX^\si_\hb}}{\bta^{1-2\ep}} 
	 + \frac{R \hb^2}{\bta^{2-\ep}} \No{\bx^{{\si/2}} \Dk{\xh, \Bk{ \xh, \W_V(\ta)\gah_0\W_V(\ta)^*} }\bx^{{\si/2}} }_{\FS^2_\hb} \\
	&\qquad \qquad + \frac{R\hb^2}{\bta^{1-\ep}} \No{\bx^{{\si/2}} \Dk{\na, \Bk{ \xh, \W_V(\ta)\gah_0\W_V(\ta)^*} }\bx^{{\si/2}} }_{\FS^2_\hb}.
\end{align}
Therefore, we have
\begin{align}
	\SA &\ls \frac{(L(t))^2}{\hb^2} \int_0^t d\ta (L(\ta))^2 \bigg\{\frac{R\hb^2\No{\gah_0}_{\CX^\si_\hb}}{\bta^{1-2\ep}} \\
	&\qquad \qquad + \frac{R \hb^2}{\bta^{2-\ep}} \No{\bx^{{\si/2}} \Dk{\xh, \Bk{ \xh, \W_V(\ta)\gah_0\W_V(\ta)^*} }\bx^{{\si/2}} }_{\FS^2_\hb} \\
	&\qquad \qquad + \frac{R\hb^2}{\bta^{1-\ep}} \No{\bx^{{\si/2}} \Dk{\na, \Bk{ \xh, \W_V(\ta)\gah_0\W_V(\ta)^*} }\bx^{{\si/2}} }_{\FS^2_\hb}\bigg\}  \\
	&\ls R\bt^{4\ep}\No{\gah_0}_{\CX^\si_\hb} 
	    + R\bt^\ep \sup_{0\le\ta\le t} \bta^{-a-3\ep} \No{\bx^{{\si/2}} \Dk{\xh, \Bk{ \xh, \W_V(\ta)\gah_0\W_V(\ta)^*} }\bx^{{\si/2}} }_{\FS^2_\hb} \\
	&\qquad \qquad + R\bt^{100\ep}\sup_{0\le\ta\le t} \bta^{-10\ep} \No{\bx^{{\si/2}} \Dk{\na, \Bk{ \xh, \W_V(\ta)\gah_0\W_V(\ta)^*} }\bx^{{\si/2}} }_{\FS^2_\hb}.
\end{align}
By symmetry, it is clear that $\SB$ satisfies the same estimate.

\noindent {\bf $\blacklozenge$ Estimate of $\SC$.}
By Lemmas \ref{lem:commutator 1.6.2} and \ref{lem:potential deriv} we have
\begin{align}
	\SC &\ls \frac{(L(t))^2}{\hb} \int_0^t 
	(L(\ta))^2\bta^{1+\ep} \hb \bta^2 \No{\na^3 V(\ta)}_{L^\I_x} \No{\gah_0}_{\CX^\si_\hb} d\ta \\
	&\ls R(L(t))^2 \No{\gah_0}_{\CX^\si_\hb}\int_0^t \frac{(L(\ta))^2}{\bta^{1-a-2\ep}}d\ta \ls R\bt^{a+3\ep}\No{\gah_0}_{\CX^\si_\hb}.
\end{align}

\noindent {\bf $\blacklozenge$ Conclusion.}
Collecting the above argument, we obtain
\begin{align}
	&\sup_{0\le\ta\le t}\bt^{-a-3\ep}\No{\bx^{{\si/2}} \Dk{\frac{x_k}{\hb},\Dk{\frac{x_j}{\hb},\W_V(t)\gah_0 \W_V(t)^*}}\bx^{{\si/2}}}_{\Sp} \\
	&\qquad \ls R\No{\gah_0}_{\CX^\si_\hb} 
	+ R \sup_{0\le\ta\le t} \bta^{-a-3\ep} \No{\bx^{{\si/2}} \Dk{\xh, \Bk{ \xh, \W_V(\ta)\gah_0\W_V(\ta)^*} }\bx^{{\si/2}} }_{\FS^2_\hb} \\
	&\qquad \qquad \qquad \qquad + R\sup_{0\le\ta\le t} \bta^{-10\ep} \No{\bx^{{\si/2}} \Dk{\na, \Bk{ \xh, \W_V(\ta)\gah_0\W_V(\ta)^*} }\bx^{{\si/2}} }_{\FS^2_\hb}.
\end{align}

\subsubsection{Estimate of the second term}\label{subsub:second}
By Lemma \ref{lem:commutator mixed}, we have
\begin{align}
	&\No{\bx^{{\si/2}} \Dk{\pl_{x_k},\Dk{\frac{x_j}{\hb},\W_V(t)\gah_0 \W_V(t)^*}}\bx^{{\si/2}}}_{\Sp}  \\
	&\quad \le (L(t))^2 \No{\bx^{\si/2} \Dk{\na, \Dk{\frac{x}{\hb}, \gah_0 }} \bx^{\si/2}}_\Sp \\
	&\qquad + \frac{(L(t))^2}{\hb} \int_0^t (L(\ta))^2\No{\Jta^{\si/2} \Bk{\ta \na V(\ta), \Bk{\na, \gah(\ta)}} \Jta^{\si/2}}_\Sp d\ta \\
			&\qquad + \frac{(L(t))^2}{\hb^2} \int_0^t (L(\ta))^2\No{\Jta^{\si/2} \Bk{\na V(\ta), \Bk{\J(\ta), \gah(\ta)}} \Jta^{\si/2}}_\Sp d\ta\\
			&\qquad + \frac{(L(t))^2}{\hb} \int_0^t (L(\ta))^2\No{\Jta^{\si/2} \Bk{\ta \na^2 V(\ta), \gah(\ta)} \Jta^{\si/2}}_\Sp d\ta \\
	&\quad =: (L(t))^2 \No{\bx^{\si/2} \Dk{\na, \Dk{\frac{x}{\hb}, \gah_0 }} \bx^{\si/2}}_\Sp + \SD+ \SE + \SF.
\end{align}

\noindent {\bf $\blacklozenge$ Estimate of $\SD$.}
By Lemmas \ref{lem:commutator 5.9.1} and \ref{lem:potential deriv}, we have
\begin{align}
	&\No{\Jta^{\si/2} \Bk{\ta \na V(\ta), \Bk{\na, \gah(\ta)}} \Jta^{\si/2}}_\Sp\\
	&\quad  \ls \hb \bta^{1+\ep} \No{\ta \na^2 V(\ta)}_{L^\I} \No{\gah_0}_{\CX^\si_\hb}\\
	&\qquad \qquad + \hb \|\ta \na^2 V(\ta)\|_{L^\I_x}  \No{\bx^{{\si/2}} \Dk{\na, \Bk{ \xh, \W_V(\ta)\gah_0\W_V(\ta)^*} }\bx^{{\si/2}} }_{\FS^2_\hb} \\
	&\qquad \qquad + \hb\bta \|\ta \na^2 V(t)\|_{L^\I_x} \No{\bx^{{\si/2}} \Dk{\na, \Bk{ \na, \W_V(\ta)\gah_0\W_V(\ta)^*} }\bx^{{\si/2}} }_{\FS^2_\hb} \\
	&\quad  \ls \frac{R \hb \No{\gah_0}_{\CX^\si_\hb} }{\bta^{1-2\ep}} 
	+ \frac{R\hb}{\bta^{2-\ep}}   \No{\bx^{{\si/2}} \Dk{\na, \Bk{ \xh, \W_V(\ta)\gah_0\W_V(\ta)^*} }\bx^{{\si/2}} }_{\FS^2_\hb} \\
	&\qquad \qquad + \frac{R\hb}{\bta^{1-\ep}}  \No{\bx^{{\si/2}} \Dk{\na, \Bk{ \na, \W_V(\ta)\gah_0\W_V(\ta)^*} }\bx^{{\si/2}} }_{\FS^2_\hb}.
\end{align}
Therefore, we have
\begin{align}
	\SD &\ls \frac{(L(t))^2}{\hb} \int_0^t d\ta (L(\ta))^2 \bigg\{\frac{R \hb \No{\gah_0}_{\CX^\si_\hb} }{\bta^{1-2\ep}} \\
	&\qquad \qquad + \frac{R\hb}{\bta^{2-\ep}} \No{\bx^{{\si/2}} \Dk{\xh, \Bk{ \na, \W_V(\ta)\gah_0\W_V(\ta)^*} }\bx^{\si/2} }_{\FS^2_\hb} \\
	&\qquad \qquad + \frac{R\hb}{\bta^{1-\ep}}  \No{\bx^{{\si/2}} \Dk{\na, \Bk{ \na, \W_V(\ta)\gah_0\W_V(\ta)^*} }\bx^{\si/2} }_{\FS^2_\hb} \bigg\}\\
	&\ls R\bt^{3\ep}\No{\gah_0}_{\CX^\si_\hb} + R\bt^{3\ep} \sup_{0\le\ta\le t} \bta^{-3\ep} \No{\bx^{{\si/2}} \Dk{\na, \Bk{ \xh, \W_V(\ta)\gah_0\W_V(\ta)^*} }\bx^{{\si/2}} }_{\FS^2_\hb} \\
	&\qquad \qquad + R\bt^{3\ep} \sup_{0\le\ta\le t} \bta^{-\ep} \No{\bx^{{\si/2}} \Dk{\na, \Bk{ \na, \W_V(\ta)\gah_0\W_V(\ta)^*} }\bx^{{\si/2}} }_{\FS^2_\hb},
\end{align}
where we used \eqref{eq:commute} in the last inequality.

\noindent {\bf $\blacklozenge$ Estimate of $\SE$.}
By Lemmas \ref{lem:commutator 5.9}, \ref{lem:potential deriv} and \ref{lem:commutator 5}, we have
\begin{align}
	&\No{\Jta^{\si/2} \Dk{\na V(\ta), \Dk{\J(\ta), \gah(\ta)}} \Jta^{\si/2}}_\Sp\\
	&\quad  \ls \hb^2 \bta^{1+\ep} \No{\na^2 V(\ta)}_{L^\I} \No{\gah_0}_{\CX^\si_\hb}\\
	&\qquad \qquad + \hb^2 \|\na^2 V(\ta)\|_{L^\I_x}  \No{\bx^{{\si/2}} \Dk{\xh, \Bk{ \xh, \W_V(\ta)\gah_0\W_V(\ta)^*} }\bx^{{\si/2}} }_{\FS^2_\hb} \\
	&\qquad \qquad + \hb^2\bta \|\na^2 V(\ta)\|_{L^\I_x} \No{\bx^{{\si/2}} \Dk{\na, \Bk{ \xh, \W_V(\ta)\gah_0\W_V(\ta)^*} }\bx^{{\si/2}} }_{\FS^2_\hb} \\
	&\quad  \ls \frac{R \hb^2 \No{\gah_0}_{\CX^\si_\hb} }{\bta^{2-2\ep}} 
	 + \frac{R\hb^2}{\bta^{3-\ep}}   \No{\bx^{{\si/2}} \Dk{\xh, \Bk{ \xh, \W_V(\ta)\gah_0\W_V(\ta)^*} }\bx^{{\si/2}} }_{\FS^2_\hb} \\
	&\qquad \qquad + \frac{R\hb^2}{\bta^{2-\ep}}  \No{\bx^{{\si/2}} \Dk{\na, \Bk{ \xh, \W_V(\ta)\gah_0\W_V(\ta)^*} }\bx^{{\si/2}} }_{\FS^2_\hb}.
\end{align}
Therefore, we have
\begin{align}
	\SE &\ls R\bt^{\ep}\No{\gah_0}_{\CX^\si_\hb} 
	 + R\bt^{\ep} \sup_{0\le\ta\le t} \bta^{-a-3\ep} \No{\bx^{{\si/2}} \Dk{\xh, \Bk{ \xh, \W_V(\ta)\gah_0\W_V(\ta)^*} }\bx^{{\si/2}} }_{\FS^2_\hb} \\
	&\qquad \qquad + R\bt^{\ep} \sup_{0\le\ta\le t} \bta^{-3\ep} \No{\bx^{{\si/2}} \Dk{\na, \Bk{ \xh, \W_V(\ta)\gah_0\W_V(\ta)^*} }\bx^{{\si/2}} }_{\FS^2_\hb}
\end{align}

\noindent {\bf$\blacklozenge$ Estimate of $\SF$.}
By Lemmas \ref{lem:commutator 1.6.2} and \ref{lem:potential deriv} we have
\begin{align}
	\SC &\ls \frac{(L(t))^2}{\hb} \int_0^t 
	(L(\ta))^2\bta^{1+\ep} \hb \bta \No{\na^3 V(\ta)}_{L^\I_x} \No{\gah_0}_{\CX^\si_\hb} d\ta \\
	&\ls R(L(t))^2 \No{\gah_0}_{\CX^\si_\hb}\int_0^t \frac{(L(\ta))^2}{\bta^{2-a-2\ep}}d\ta \ls R\bt^{\ep}\No{\gah_0}_{\CX^\si_\hb}.
\end{align}

\noindent \textbf{$\blacklozenge$ Conclusion.}
Collecting all the above, we obtain
\begin{align}
	&\No{\bx^{{\si/2}} \Dk{\na,\Dk{\frac{x}{\hb},\W_V(t)\gah_0 \W_V(t)^*}}\bx^{{\si/2}}}_{\Sp} \\
	&\quad \ls \bt^{3\ep}\No{\gah_0}_{\CX^\si_\hb}
            +R\bt^{3\ep} \sup_{0\le\ta\le t} \bta^{-a-3\ep} \No{\bx^{{\si/2}} \Dk{\xh, \Bk{ \xh, \W_V(\ta)\gah_0\W_V(\ta)^*} }\bx^{{\si/2}} }_{\FS^2_\hb} \\	
	&\qquad \qquad + R\bt^{3\ep} \sup_{0\le\ta\le t} \bta^{-3\ep} \No{\bx^{{\si/2}} \Dk{\na, \Bk{ \xh, \W_V(\ta)\gah_0\W_V(\ta)^*} }\bx^{{\si/2}} }_{\FS^2_\hb} \\
	&\qquad \qquad + R\bt^{3\ep} \sup_{0\le\ta\le t} \bta^{-\ep} \No{\bx^{{\si/2}} \Dk{\na, \Bk{ \na, \W_V(\ta)\gah_0\W_V(\ta)^*} }\bx^{{\si/2}} }_{\FS^2_\hb}
\end{align}

\subsubsection{Estimate of the third term.}\label{subsub:third}
By Lemma \ref{lem:commutator nabla W}, we have
\begin{align}
	&\No{\bx^{{\si/2}} \Bk{\pl_{x_k},\Bk{\pl_{x_j},\W_V(t)\gah_0 \W_V(t)^*}}\bx^{{\si/2}}}_{\Sp}  \\
	&\quad \le (L(t))^2 \No{\bx^{\si/2} \Dk{\na, \Dk{\na, \gah_0 }} \bx^{\si/2}}_\Sp \\
	&\qquad + \frac{(L(t))^2}{\hb} \int_0^t (L(\ta))^2\No{\Jta^{\si/2} \Bk{\na V(\ta), \Dk{\na, \gah(\ta)}} \Jta^{\si/2}}_\Sp d\ta \\
	&\qquad + \frac{(L(t))^2}{\hb} \int_0^t (L(\ta))^2\No{\Jta^{\si/2} \Bk{\na V(\ta), \Dk{\na, \gah(\ta)}} \Jta^{\si/2}}_\Sp d\ta\\
	&\qquad + \frac{(L(t))^2}{\hb} \int_0^t (L(\ta))^2\No{\Jta^{\si/2} \Bk{\na^2 V(\ta), \gah(\ta)} \Jta^{\si/2}}_\Sp d\ta \\
	&\quad =: (L(t))^2 \No{\bx^{\si/2} \Bk{\na, \Bk{\na, \gah_0 }} \bx^{\si/2}}_\Sp + \SG+ \SH + \SI.
\end{align}

\noindent {\bf $\blacklozenge$ Estimate of $\SG$ and $\SH$.}
By the symmetry, it suffices to estimate $\SG$.
By Lemmas \ref{lem:commutator 5.9.1} and \ref{lem:potential deriv}, we have
\begin{align}
	&\No{\Jta^{\si/2} \Bk{\na V(\ta), \Bk{\na, \gah(\ta)}} \Jta^{\si/2}}_\Sp\\
	&\quad  \ls \hb \bta^{1+\ep} \No{\na^2 V(\ta)}_{L^\I} \No{\gah_0}_{\CX^\si_\hb}\\
	&\qquad \qquad + \hb \|\na^2 V(\ta)\|_{L^\I_x}  \No{\bx^{{\si/2}} \Bk{\na, \Bk{ \xh, \W_V(\ta)\gah_0\W_V(\ta)^*} }\bx^{{\si/2}} }_{\FS^2_\hb} \\
	&\qquad \qquad + \hb\bta \|\na^2 V(t)\|_{L^\I_x} \No{\bx^{{\si/2}} \Bk{\na, \Bk{ \na, \W_V(\ta)\gah_0\W_V(\ta)^*} }\bx^{{\si/2}} }_{\FS^2_\hb} \\
	&\quad  \ls \frac{R \hb \No{\gah_0}_{\CX^\si_\hb} }{\bta^{2-2\ep}} 
	+ \frac{R\hb}{\bta^{3-\ep}}   \No{\bx^{{\si/2}} \Dk{\na, \Bk{ \xh, \W_V(\ta)\gah_0\W_V(\ta)^*} }\bx^{{\si/2}} }_{\FS^2_\hb} \\
	&\qquad \qquad + \frac{R\hb}{\bta^{2-\ep}}  \No{\bx^{{\si/2}} \Dk{\na, \Bk{ \na, \W_V(\ta)\gah_0\W_V(\ta)^*} }\bx^{{\si/2}} }_{\FS^2_\hb}.
\end{align}
Therefore, we have
\begin{align}
	\SG &\ls R\bt^{\ep}\No{\gah_0}_{\CX^\si_\hb} + R\bt^{\ep} \sup_{0\le\ta\le t} \bta^{-3\ep} \No{\bx^{{\si/2}} \Dk{\na, \Bk{ \xh, \W_V(\ta)\gah_0\W_V(\ta)^*} }\bx^{{\si/2}} }_{\FS^2_\hb} \\
	&\qquad \qquad + R\bt^{\ep} \sup_{0\le\ta\le t} \bta^{-\ep} \No{\bx^{{\si/2}} \Dk{\na, \Bk{ \na, \W_V(\ta)\gah_0\W_V(\ta)^*} }\bx^{{\si/2}} }_{\FS^2_\hb}
\end{align}

\noindent {\bf$\blacklozenge$ Estimate of $\SI$.}
By Lemmas \ref{lem:commutator 1.6.2} and \ref{lem:potential deriv} we have
\begin{align}
	\SI &\ls \frac{(L(t))^2}{\hb} \int_0^t 
	(L(\ta))^2\bta^{1+\ep} \hb \No{\na^3 V(\ta)}_{L^\I_x} \No{\gah_0}_{\CX^\si_\hb} d\ta \\
	&\ls R(L(t))^2 \No{\gah_0}_{\CX^\si_\hb}\int_0^t \frac{(L(\ta))^2}{\bta^{3-a-2\ep}}d\ta \ls R\bt^{\ep}\No{\gah_0}_{\CX^\si_\hb}.
\end{align}

\noindent \textbf{$\blacklozenge$ Conclusion.}
Collecting the above all, we obtain
\begin{align}
	&\No{\bx^{{\si/2}} \Bk{\na,\Bk{\na,\W_V(t)\gah_0 \W_V(t)^*}}\bx^{{\si/2}}}_{\Sp} \\
	&\quad \ls \bt^{\ep}\No{\gah_0}_{\CX^\si_\hb}
	+R\bt^{\ep} \sup_{0\le\ta\le t} \bta^{-a-3\ep} \No{\bx^{{\si/2}} \Dk{\xh, \Bk{ \xh, \W_V(\ta)\gah_0\W_V(\ta)^*} }\bx^{{\si/2}} }_{\FS^2_\hb} \\	
	&\qquad \qquad + R\bt^{\ep} \sup_{0\le\ta\le t} \bta^{-3\ep} \No{\bx^{{\si/2}} \Dk{\na, \Bk{ \xh, \W_V(\ta)\gah_0\W_V(\ta)^*} }\bx^{{\si/2}} }_{\FS^2_\hb} \\
	&\qquad \qquad + R\bt^{\ep} \sup_{0\le\ta\le t} \bta^{-\ep} \No{\bx^{{\si/2}} \Dk{\na, \Bk{ \na, \W_V(\ta)\gah_0\W_V(\ta)^*} }\bx^{{\si/2}} }_{\FS^2_\hb}
\end{align}

\subsubsection{Conclusion}
Collecting the estimates obtained in Sections \ref{subsubsec:first}, \ref{subsub:second}, and \ref{subsub:third}, we have
\begin{align}
	&\sup_{0\le\ta\le t}\bt^{-a-3\ep}\No{\bx^{{\si/2}} \Dk{\xh,\Dk{\frac{x}{\hb},\W_V(t)\gah_0 \W_V(t)^*}}\bx^{\si/2}}_{\Sp} \\
	&\qquad \qquad + \sup_{0\le\ta\le t}\bt^{-3\ep}\No{\bx^{\si/2} \Dk{\na,\Dk{\frac{x}{\hb},\W_V(t)\gah_0 \W_V(t)^*}}\bx^{\si/2}}_{\Sp} \\
	&\qquad \qquad + \sup_{0\le\ta\le t}\bt^{-\ep}\No{\bx^{\si/2} \Bk{\na,\Bk{\na,\W_V(t)\gah_0 \W_V(t)^*}}\bx^{{\si/2}}}_{\Sp} \\
	&\quad \ls \No{\gah_0}_{\CX^\si_\hb} + R \sup_{0\le\ta\le t}\bt^{-a-3\ep}\No{\bx^{{\si/2}} \Dk{\xh,\Dk{\frac{x}{\hb},\W_V(t)\gah_0 \W_V(t)^*}}\bx^{\si/2}}_{\Sp} \\
	&\qquad \qquad + R\sup_{0\le\ta\le t}\bt^{-3\ep}\No{\bx^{\si/2} \Dk{\na,\Dk{\frac{x}{\hb},\W_V(t)\gah_0 \W_V(t)^*}}\bx^{\si/2}}_{\Sp} \\
	&\qquad \qquad + R\sup_{0\le\ta\le t}\bt^{-\ep}\No{\bx^{\si/2} \Bk{\na,\Bk{\na,\W_V(t)\gah_0 \W_V(t)^*}}\bx^{{\si/2}}}_{\Sp}.
\end{align}
Therefore, choosing sufficiently small $R>0$, we have
\begin{align}
	&\sup_{0\le\ta\le t}\bt^{-a-3\ep}\No{\bx^{{\si/2}} \Dk{\xh,\Dk{\frac{x}{\hb},\W_V(t)\gah_0 \W_V(t)^*}}\bx^{\si/2}}_{\Sp} \\
	&\qquad \qquad + \sup_{0\le\ta\le t}\bt^{-3\ep}\No{\bx^{\si/2} \Dk{\na,\Dk{\frac{x}{\hb},\W_V(t)\gah_0 \W_V(t)^*}}\bx^{\si/2}}_{\Sp} \\
	&\qquad \qquad + \sup_{0\le\ta\le t}\bt^{-\ep}\No{\bx^{\si/2} \Bk{\na,\Bk{\na,\W_V(t)\gah_0 \W_V(t)^*}}\bx^{{\si/2}}}_{\Sp} \ls \No{\gah_0}_{\CX^\si_\hb}
\end{align}
Since all the implicit constants are independent of $t$, we obtain the desired estimates.

\section{Proof of the main result}\label{sec:proof}
In this section, we give a proof of Theorem \ref{th:main} and Remark \ref{rmk:modified scattering}.
\subsection{Key a priori estimate}
Now, we give a key a priori estimate.
\begin{proposition}\label{prop:a priori}
	Let $d=3$ and $3/2<\si<2$.
	Let $a,b>0$ be small numbers such that $7b/8<a<b$.
	Assume that $w(x)$ satisfies \eqref{eq:ass}.
	Then, there exist $C\ge 1$ and a small $R>0$ such that the following holds: If $\|\ze\|_{\CY^{a,b}_T}\le R$, then we have
	\begin{align}
		\No{\rhh(\U_{w\ast\ze}(t)\gah_0\U_{w\ast\ze}(t)^*)}_{\CY^{a,b}_T} \le C\|\gah_0\|_{\CX^\si_\hb}
	\end{align}
	for any self-adjoint $\gah_0 \in \CX^\si_\hb$, where $C, R$ are independent of $\hb\in(0,1]$ and $T>0$.
\end{proposition}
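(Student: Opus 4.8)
The plan is to estimate, one at a time, each of the six quantities appearing on the right-hand side of \eqref{eq:density condition} evaluated at $\rhh(\gah(t))$, where $\gah(t):=\U_V(t)\gah_0\U_V(t)^*$ and $V:=w\ast\ze$. First I would fix $R>0$ smaller than every threshold appearing in Lemmas \ref{lem:commutator 1}, \ref{lem:commutator 1.6}, \ref{lem:commutator 1.7}, \ref{lem:commutator 6} and \ref{lem:commutator 6.1}, and small enough that the bounds of Lemmas \ref{lem:Lp bound}, \ref{lem:potential deriv}, \ref{lem:potential deriv 2} (all of which carry the prefactor $\|\ze\|_{\CY^{a,b}_T}\le R$) imply both the hypotheses on $V$ demanded by Proposition \ref{prop:wave operator boundedness} and, since $\na_\xi^2\Ps(t,\xi)=\hb^{-1}\int_0^t\ta^2(\na^2V)(\ta,\ta\xi)\,d\ta$, the hypotheses on the phase $\Ps(t,\xi)=\hb^{-1}\int_0^tV(\ta,\ta\xi)\,d\ta$ demanded by Proposition \ref{prop:perturbed dispersive} and Corollary \ref{cor:free}. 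Throughout I would keep two presentations of $\gah(t)$: the bare one $\gah(t)=\U(t)\,\W_V(t)\gah_0\W_V(t)^*\,\U(t)^*$, used whenever a non-phase-corrected $\FS^2_\hb$-type dispersive estimate suffices, and the phase-corrected one $\gah(t)=\U(t)e^{-i\Ps(t,-i\hb\na)}\big(e^{i\Ps(t,-i\hb\na)}\W_V(t)\gah_0\W_V(t)^*e^{-i\Ps(t,-i\hb\na)}\big)e^{i\Ps(t,-i\hb\na)}\U(t)^*$, used for the $L^\I_x$- and $\CF L^1$-type bounds; I also split $t\le 1$, where only boundedness is needed, from $t\ge 1$, where decay is needed.

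For the zeroth-order terms, $\|\rhh(\gah(t))\|_{L^1_x}\le\|\gah(t)\|_{\FS^1_\hb}=\|\gah_0\|_{\FS^1_\hb}$ by unitarity of $\U_V(t)$ and \eqref{eq:density L1 bound}; and $\bt^3\|\rhh(\gah(t))\|_{L^\I_x}$ is controlled for $t\le 1$ by \eqref{eq:free bounded} together with the regularity bound \eqref{eq:regularity} for $\W_V$ (noting that $\sd$ commutes with $\U(t)$ and with $e^{i\Ps(t,-i\hb\na)}$), and for $t\ge 1$ by the decay estimate \eqref{eq:decay} of Corollary \ref{cor:free} with $r=\I$, after bounding $\|\bx^\si e^{i\Ps(t,-i\hb\na)}\W_V(t)\bx^{-\si}\|_\CB$ via Proposition \ref{prop:wave operator boundedness}. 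For the first-derivative terms I would use the two identities $\pl_{x_j}\rhh(\U(t)A\U(t)^*)=\rhh(\U(t)[\pl_{x_j},A]\U(t)^*)=\tfrac1t\rhh(\U(t)[\tfrac{x_j}{i\hb},A]\U(t)^*)$ with $A=\W_V(t)\gah_0\W_V(t)^*$: the $L^1_x$ bound follows from the trivial density inequality together with Remark \ref{rmk:byproduct 1} (the $[\na,\cdot]$ form for $t\le 1$, the $[\tfrac{x}{i\hb},\cdot]$ form plus the gained $t^{-1}$ for $t\ge 1$); the $L^\I_x$ bound from \eqref{eq:free bounded}/\eqref{eq:decay} fed by \eqref{eq:commutator nabla} and by \eqref{eq:commutator weight} of Lemma \ref{lem:commutator 1.7}; and the $\CF L^1$ bound from Lemma \ref{lem:free Fourier} (and its phase-corrected analogue, obtained by the same MDFM computation as in Section \ref{sec:L1LI}) fed by the same two commutator estimates. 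The commutator inputs cost at most $\bt^a$, which is exactly absorbed by the weights $\bt^{1-a}$, $\bt^{4-a}$, $\bt^{4-a}\hb^{3/2}$ in \eqref{eq:density condition}.

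For $\bt^{7/2-b}\|\na^2\rhh(\gah(t))\|_{L^2_x}$ I would apply the derivative identities twice, writing $\na^2\rhh(\gah(t))$ as $\rhh(\U(t)B(t)\U(t)^*)$ with $B(t)=[\na,[\na,\W_V(t)\gah_0\W_V(t)^*]]$ for $t\le 1$ and $B(t)=t^{-2}[\tfrac{x}{i\hb},[\tfrac{x}{i\hb},\W_V(t)\gah_0\W_V(t)^*]]$ for $t\ge 1$, and then invoke the non-phase-corrected dispersive bounds \eqref{eq:free bounded}/\eqref{eq:free decay} with $r=2$. The weighted Hilbert--Schmidt norms of these double commutators are exactly the content of Lemmas \ref{lem:commutator 6} and \ref{lem:commutator 6.1}, which give growth only $\bt^{a+\ep}$; since $a+\ep<b$ the term closes. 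Here no phase correction is needed because the relevant weight exponent is $\si/2<1$, so Lemma \ref{lem:easy} costs only a logarithm, in contrast with the first-derivative $L^\I_x$ term where the weight $\bx^\si$ with $\si>1$ forces us to use the cancellation provided by Proposition \ref{prop:wave operator boundedness}.

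The main difficulty is essentially one of bookkeeping: one must check that each of the six estimates closes with the precise power of $\bt$ demanded in \eqref{eq:density condition} and with a constant independent of $\hb\in(0,1]$ and of $T>0$, using the inequalities $7b/8<a<b$ and the freedom to take $\ep\ll\min(a,b)$ at each step, and one must keep track that every $1/\hb$ produced by a Duhamel term is paired with a commutator estimate (via Lemma \ref{lem:commutator basic} and the $\CF L^1$-bounds of Lemma \ref{lem:potential deriv 2}) rather than ever standing alone. The one genuinely delicate ingredient feeding into all of this is the $\hb$-uniform verification, via the decay rates of Lemma \ref{lem:potential deriv}, that $\Ps$ satisfies the hypotheses of Proposition \ref{prop:perturbed dispersive}; granting that together with the wave-operator boundedness of Proposition \ref{prop:wave operator boundedness}, the a priori estimate follows by assembling the pieces above and choosing $R$ small.
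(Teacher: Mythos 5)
Your proposal is correct and follows essentially the same route as the paper's own proof: the same six-term decomposition of the $\CY^{a,b}_T$-norm, Corollary \ref{cor:free} together with Proposition \ref{prop:wave operator boundedness} for the $L^\I_x$ pieces, the derivative identities \eqref{eq:density deriv formula 0}--\eqref{eq:density deriv formula} fed by Remark \ref{rmk:byproduct 1}, Lemma \ref{lem:commutator 1.7}, Lemma \ref{lem:free Fourier}, and Lemmas \ref{lem:commutator 6}--\ref{lem:commutator 6.1}, with no phase correction needed for the second-derivative $L^2_x$ term. Your explicit verification that $\Ps$ satisfies the hypotheses of Proposition \ref{prop:perturbed dispersive} (via Lemmas \ref{lem:potential deriv}--\ref{lem:potential deriv 2} and smallness of $R$) is a point the paper leaves implicit, and is a welcome addition rather than a deviation.
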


\begin{proof}
	Let $V:= w\ast \ze$.
	
\noindent \underline{\textbf{Step 1: Without derivative terms.}}
By \eqref{eq:Sobolev}, we have
\begin{align}
	\No{\rhh(\U_{V}(t)\gah_0\U_{V}(t)^*)}_{L^1_x} \le \No{\gah_0}_{\FS^1_\hb} \le \No{\gah_0}_{\CX^\si_\hb}.
\end{align}
By Corollary \ref{cor:free} and Proposition \ref{prop:wave operator boundedness}, we have
\begin{align}
	\No{\rhh(\gah(t))}_{L^\I_x} 
	&\ls \frac{1}{\bt^3} \Big\{ \No{\sd^\si \W_V(t) \gah_0 \W_V(t)^* \sd^\si}_{\CB}  \\
	&\qquad \qquad + \No{\bx^\si e^{i\Ps(t,-i\hb \na_x)} \W_V(t)\gah_0 \W_V(t)^* e^{-i\Ps(t,-i\hb\na_x)} \bx^\si }_\CB \Big\}  \\
	&\ls \frac{1}{\bt^3} \Big\{ \No{\sd^\si \gah_0 \sd^\si}_{\CB}
	 + \No{\bx^\si \gah_0 \bx^\si }_\CB \Big\}
	\le \frac{1}{\bt^{3}} \|\gah_0\|_{\CX_\hb}.
\end{align}
In the sequel, we fix a sufficiently small number $\ep\in(0,\de/100)$.

\noindent \underline{\textbf{Step 2: With one derivative (i).}}
By \eqref{eq:density deriv formula 0} and \eqref{eq:density deriv formula}, we have
\begin{align}
	\No{\na \rhh(\gah(t))}_{L^1_x}
	&\le \frac{1}{\bt} \bigg\{\No{\Dk{\na,\W_V(\ta)\gah_0\W_V(t)^*}}_{\FS^1} + \No{\Dk{\xh,\W_V(\ta)\gah_0\W_V(t)^*}}_{\FS^1} \bigg\}.
\end{align}
By Remark \ref{rmk:byproduct 1}, we have
\begin{align}
	&\No{\Dk{\xh,\W_V(\ta)\gah_0\W_V(t)^*}}_{\FS^1}
	+ \No{\Dk{\na,\W_V(\ta)\gah_0\W_V(t)^*}}_{\FS^1} \ls \bt^a \No{\gah_0}_{\CX^\si_\hb}.
\end{align}
Therefore, we obtained
\begin{align}
\sup_{0\le t \le T} \bt^{1-a} \No{\na \rhh(\gah(t))}_{L^1_x}
\ls \No{\gah_0}_{\CX^\si_\hb}.
\end{align}

\noindent \underline{\textbf{Step 3: With one derivative (ii).}}
By Corollary \ref{cor:free}, \eqref{eq:density deriv formula 0}, and \eqref{eq:density deriv formula}, we have
\begin{align}
	\No{\na \rhh(\gah(t))}_{L^{\I}_x}
	&\le \frac{1}{\bt^4} \bigg\{ \No{\sd^{\si}\Bk{\na,\W_V(t)\gah_0\W_V(t)^*}\sd^{\si} }_{\CB} \\
	&\qquad \qquad + \No{\bx^{\si} e^{i\Ps(t,-i\hb\na_x)}\Dk{\xh,\W_V(t)\gah_0\W_V(t)^*} e^{-i\Ps(t,-i\hb\na_x)}\bx^{\si} }_{\CB} \bigg\}.
\end{align}
By Lemma \ref{lem:commutator 1.7}, we have
\begin{align}
	&\No{\sd^{\si}\Bk{\na,\W_V(t)\gah_0\W_V(t)^*}\sd^{\si} }_{\CB} 
	 + \No{\bx^{\si} \Dk{\xh,\W_V(t)\gah_0\W_V(t)^*} \bx^{\si}}_\CB \ls \bt^{a}\|\gah_0\|_{\CX^\si_\hb}.
\end{align}
Therefore, we obtain
\begin{align}
	\sup_{0\le t \le T} \bt^{4-a}\No{\na \rhh(\gah(t))}_{L^{\I}_x}
	\ls \No{\gah_0}_{\CX^\si_\hb}.
\end{align}

\noindent \underline{\textbf{Step 4: With one derivative (iii).}}
By Lemmas \ref{lem:free Fourier} and \ref{lem:commutator 1.7}, we have
\begin{align}
&\No{\na \rhh(\gah(t))}_{\CF L^1}
\ls \frac{1}{\hb^{3/2}\bt^{4}} \Big\{\No{\sd^{\si} \Bk{\na,\W_V(t)\gah_0 \W_V(t)^*} \sd^{\si} }_{\FS^2_\hb} \\
&\quad+ \No{\bx^{\si} e^{i\Ps(t,-i\hb\na_x)} \Bk{\xh, \W_V(t)\gah_0 \W_V(t)^*} e^{-i\Ps(t,-i\hb\na_x)}\bx^{\si} }_{\FS^2_\hb} \Big\} \ls \frac{1}{\hb^{3/2}\bt^{4-a}}  \No{\gah_0}_{\CX^\si_\hb}.
\end{align}
Therefore, we obtain
\begin{align}
	\sup_{0\le t\le T} \hb^{3/2}\bt^{4-a} \No{\na \rhh(\gah(t))}_{\CF L^1} \ls \No{\gah_0}_{\CX_\hb^\si}.
\end{align}

\noindent \underline{\bf Step 5: With two derivatives.}
By \eqref{eq:density deriv formula 0}, \eqref{eq:density deriv formula}, and Lemmas \ref{lem:commutator 6}, \ref{lem:commutator 6.1}, we have
\begin{align}
	\No{|\na|^2 \rhh(\gah(t))}_{L^2}
	&\ls \frac{1}{\bt^{7/2}} \Big\{\No{\sd^{\si/2} \Bk{\na,\Bk{\na,\W_V(t)\gah_0 \W_V(t)^*} }\sd^{\si/2} }_{\FS^2_\hb} \\
	&\qquad \qquad \qquad + \No{\bx^{\si/2}\Bk{\xh, \Bk{\xh, \W_V(t)\gah_0 \W_V(t)^*}} \bx^{\si/2} }_{\FS^2_\hb} \Big\} \\
	&\ls \frac{1}{\bt^{7/2}} \Big\{\bt^\ep \No{\sd^{\si/2} \Bk{\na,\Bk{\na,\W_V(t)\gah_0 \W_V(t)^*} }\sd^{\si/2} }_{\FS^2_\hb} \\
	&\qquad \qquad \qquad + \bt^{a+\ep}\No{\bx^{\si/2}\Bk{\xh, \Bk{\xh, \W_V(t)\gah_0 \W_V(t)^*}} \bx^{\si/2} }_{\FS^2_\hb} \Big\}.
\end{align}
Since $a<b$ and $\ep\ll \min(a,b)$, we obtain $a+\ep < b$. Therefore, we conclude that
\begin{align}
	\sup_{0\le t\le T} \bt^{7/2-b} \No{|\na|^2 \rhh(\gah(t))}_{L^2}
	&\ls \No{\gah_0}_{\CX^\si_\hb},
\end{align}
which completes the proof.
\end{proof}

\subsection{Local well-posedness}
In this section, we prove the local well-posedness of \eqref{eq:NLH} as a preparation for the bootstrap argument in the next section.
Define the space for initial data $\CX_0$ by
\begin{equation}
	\begin{aligned}
	\CX_0 &:= \Big\{ A \in \CB(L^2(\R^3)) \mid A \mbox{ is self-adjoint, } \sd^\si A \sd^\si \mbox{ is compact, and } \|A\|_{\CX_0} < \I\Big\},
	\end{aligned}
\end{equation}
where the norm $\|\cdot\|_{\CX_0}$ is defined by
\begin{align}
	\No{A}_{\CX_0} &:= \|A\|_{\FS^1_\hb} + \No{\sd^{\si} A \sd^{\si}}_\CB + \No{\Bk{\na,A}}_{\FS^{1}_\hb} \\
	 &\quad + \No{\sd^{\si} \Bk{\na,A}\sd^{\si}}_{\FS^2_\hb}
	+ \No{\sd^{\si/2}\Bk{\na,\Bk{\na,A}}\sd^{\si/2}}_{\FS^{2}_\hb}.
\end{align}
We define the space for density functions $\CY_{0,T}$ by
\begin{equation}
	\No{\ze}_{\CY_{0,T}} := \|\ze\|_{C([0,T];L^1_x\cap L^\I_x)} + \|\na \ze\|_{C([0,T];L^1_x \cap L^\I_x)}
	+ \hb^{3/2}\No{\na \ze}_{C([0,T];\CF L^1)} +  \No{\na^2\ze(t)}_{C([0,T];L^2_x)}.
\end{equation}

\begin{remark}\label{rmk:continuity}
	We have $\rhh(\U(t) \gah_0 \U(t)^*) \in \CY_{0,T}$ for any $T>0$, and this fact will be implicitly used in the proof of Lemma \ref{lem:LWP}.
	Note that the continuity is required, but we can prove it in the same way as \cite[Proof of Lemma 4.4]{Hadama Hong 2025}.
	In the definition of $\CX_0$ and $\CX_\hb^\si$ (see \eqref{eq:initial data class}), we imposed the compactness of $\sd^\si \gah_0 \sd^\si$, but this condition was added to ensure that
	$$\R\ni t\mapsto \rhh(\U(t)\gah_0\U(t)^*)  \in L^\I_x \mbox{ is continuous}.$$
\end{remark}

\begin{lemma}\label{lem:LWP}
	Let $d=3$ and $3/2<\si<2$.
	Assume that $w(x)$ satisfies \eqref{eq:ass}.
	Then, we have the followings:
	\begin{enumerate}
		\item For any $\gah_0 \in \CX_0$,
		there exist (small) $T>0$ and a unique solution $\gah(t)\in C([0,T];\CX_0)$ such that 
		\begin{align}
			\No{\rhh(\gah(t))}_{\CY_{0,T}} \le C_0 \No{\gah_0}_{\CX_0}
		\end{align}	
		for all $t\in[0,T]$, where $C_0$ depends only on $w$ and $\si$.
		\item Let $T_\mx>0$ be the maximal time interval of the solution obtained in (1) exists. Then, we have
		\begin{align}
			T_\mx < \I \implies \lim_{t \to T_\mx} \No{\gah(t)}_{\CX_0} = \I.
		\end{align} 
		\end{enumerate}
\end{lemma}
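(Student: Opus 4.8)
The plan is to prove part (1) by a contraction mapping argument carried out at the level of the density function, exploiting the identity \eqref{eq:f}, and then to deduce part (2) from (1) by the usual continuation argument. Fix $\gah_0\in\CX_0$, and for $T>0$ to be chosen consider the complete metric space $\CB_T:=\{\ze\in\CY_{0,T}:\ze(0)=\rhh(\gah_0),\ \|\ze\|_{\CY_{0,T}}\le 2C_1\|\gah_0\|_{\CX_0}\}$ together with the map $\Phi(\ze)(t):=\rhh\K{\U_{w\ast\ze}(t)\,\gah_0\,\U_{w\ast\ze}(t)^*}$; by \eqref{eq:f} a fixed point of $\Phi$ gives a solution of \eqref{eq:NLH}. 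First I would record that for $\ze\in\CB_T$ the self-consistent potential $V:=w\ast\ze$ satisfies $\|\pl^\al V(t)\|_{L^\infty_x}\lesssim\|\ze\|_{\CY_{0,T}}$ for $0\le|\al|\le2$ and $\|\pl^\al V(t)\|_{\CF L^1}\lesssim\|\ze\|_{\CY_{0,T}}$ for $1\le|\al|\le 3$, uniformly on $[0,T]$, by \eqref{eq:ass} and Young's inequality (the Sobolev-type arguments of Lemmas \ref{lem:potential deriv}, \ref{lem:potential deriv 2}, where on a bounded time interval all the $\bt$-powers are harmless); in particular $\U_{w\ast\ze}(t)$ is a well-defined unitary propagator on $L^2(\R^3)$.

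\textbf{The linear estimates.} Next I would bound the six quantities defining $\|\Phi(\ze)\|_{\CY_{0,T}}$. Writing $\ga^\hb(t):=\U_V(t)\gah_0\U_V(t)^* = \U(t)\,\W_V(t)\gah_0\W_V(t)^*\,\U(t)^*$, the estimate $\|\Phi(\ze)(t)\|_{L^1_x}\le\|\gah_0\|_{\FS^1_\hb}$ is immediate from \eqref{eq:density L1 bound} and unitarity; $\|\Phi(\ze)(t)\|_{L^\infty_x}\lesssim\|\sd^\si\ga^\hb(t)\sd^\si\|_{\CB}=\|\sd^\si\W_V(t)\gah_0\W_V(t)^*\sd^\si\|_{\CB}$ by Lemma \ref{lem:Sobolev} (moving $\sd^\si$ past $\U(t)$, which commute); and for the derivative quantities I would use the density-derivative formula $\pl_{x_j}\rhh(\U(t)A\U(t)^*)=\rhh(\U(t)[\pl_{x_j},A]\U(t)^*)$ from \eqref{eq:density deriv formula 0} together with the boundedness parts of Lemmas \ref{lem:free} and \ref{lem:free Fourier}, which reduce everything to operator norms of $[\na,\W_V\gah_0\W_V^*]$ and $[\na,[\na,\W_V\gah_0\W_V^*]]$ conjugated by $\sd^\si$, $\sd^{\si/2}$. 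Those operator norms are controlled on $[0,T]$ by expanding with the commutator formulas \eqref{eq:commutator f W}, \eqref{eq:nabla single commutator}, \eqref{eq:nabla double commutator} and the Duhamel-type identity \eqref{lem:Duhamel}; every resulting term is a time integral over $[0,T]$, carrying a factor $\hb^{-1}$ from the Duhamel formula, of an operator of the form $[\pl^\al V(\ta),\ga^\hb(\ta)]$ with $1\le|\al|\le2$ (possibly sandwiched by $\sd^\si$, $\W_V(\ta)$, $\M(\pm\ta)$). Against these I would apply Lemma \ref{lem:commutator basic}: since the offending $\hb^{-1}$ is compensated by the extra $\hb$ obtained from $\|[\pl^\al V,B]\|_{\FS^r_\hb}\le\|\na\pl^\al V\|_{\CF L^1}\|[x,B]\|_{\FS^r_\hb}=\hb\|\na\pl^\al V\|_{\CF L^1}\|[x/\hb,B]\|_{\FS^r_\hb}$ \emph{whenever} the commutator is with $\na$ or with $\sd=\lg\hb\na\rg$ (these gain an $\hb$), the contributions that reduce to such momentum commutators are $\hb$-uniformly bounded; the remaining contributions — those that would instead require control of the weighted commutator $[x/\hb,\cdot]$, which is \emph{not} part of $\|\cdot\|_{\CX_0}$ — are kept and estimated crudely by $\hb^{-1}\int_0^T\|\pl^\al V(\ta)\|_{L^\infty_x}\|\ga^\hb(\ta)\|_{\FS^r_\hb}\,d\ta\lesssim\hb^{-1}T\,\|\ze\|_{\CY_{0,T}}\,\|\gah_0\|_{\CX_0}$. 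Altogether this yields $\|\Phi(\ze)\|_{\CY_{0,T}}\le C_1\|\gah_0\|_{\CX_0}+C_2\,\hb^{-1}T\,P(\|\ze\|_{\CY_{0,T}})\,\|\gah_0\|_{\CX_0}$ for an $\hb$-independent constant $C_1$ and a polynomial $P$, so that choosing $T$ small — allowed to depend on $\hb$ (shrinking like $\hb$ as $\hb\to0$, which the statement permits) — makes $\Phi$ map $\CB_T$ into itself with the a priori bound $\|\Phi(\ze)\|_{\CY_{0,T}}\le 2C_1\|\gah_0\|_{\CX_0}$.

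\textbf{Contraction, continuity, and the blow-up alternative.} The same computation for $\Phi(\ze_1)-\Phi(\ze_2)$, now also using $\U_{V_1}(t,\ta)-\U_{V_2}(t,\ta)=-\tfrac{i}{\hb}\int_\ta^t\U_{V_1}(t,s)(V_1-V_2)(s)\U_{V_2}(s,\ta)\,ds$, gives $\|\Phi(\ze_1)-\Phi(\ze_2)\|_{\CY_{0,T}}\le\tfrac12\|\ze_1-\ze_2\|_{\CY_{0,T}}$ after possibly shrinking $T$, so $\Phi$ has a unique fixed point $\ze_*\in\CB_T$; $\gah(t):=\U_{w\ast\ze_*}(t)\gah_0\U_{w\ast\ze_*}(t)^*$ then solves \eqref{eq:NLH} and lies in $C([0,T];\CX_0)$, the continuity in $t$ (in particular of $t\mapsto\rhh(\gah(t))\in L^\infty_x$) following from the commutator bounds above and the compactness of $\sd^\si\gah_0\sd^\si$, exactly as in \cite[Proof of Lemma 4.4]{Hadama Hong 2025} and Remark \ref{rmk:continuity}; uniqueness in $C([0,T];\CX_0)$ is inherited from uniqueness of the fixed point via \eqref{eq:f}. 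This proves (1) with $C_0:=2C_1$. For (2), suppose $T_\mx<\I$ and, for contradiction, $\liminf_{t\to T_\mx}\|\gah(t)\|_{\CX_0}=:M<\I$; pick $t_n\uparrow T_\mx$ with $\|\gah(t_n)\|_{\CX_0}\le M+1$ and apply (1) with initial datum $\gah(t_n)$, obtaining a solution on $[t_n,t_n+T']$ with $T'>0$ depending only on $w$, $\si$, $\hb$ and $M+1$, hence independent of $n$; for $n$ large $t_n+T'>T_\mx$, and uniqueness forces this solution to extend $\gah$, contradicting maximality. Therefore $\lim_{t\to T_\mx}\|\gah(t)\|_{\CX_0}=\I$.

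\textbf{Main obstacle.} The delicate point is entirely in the linear estimates of Step (1): one must organize the commutator expansions so that every genuinely dangerous $\hb^{-1}$ is attached to a commutator with $\na$ or $\sd$ — which alone can absorb an $\hb$ via Lemma \ref{lem:commutator basic} — while the residual $\hb^{-1}$'s, for which no such gain is available precisely because $\CX_0$ does not control $[x/\hb,\cdot]$, survive only multiplied by the short time $T$. Extracting from this a clean self-improving inequality with an $\hb$-independent constant $C_0$, the $\hb$-dependence being confined to the smallness threshold for $T$, is the real work; by contrast, the continuity in time and the blow-up alternative are routine.
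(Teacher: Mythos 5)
Your argument is sound in substance and closes by the same basic mechanism as the paper — accept the factor $T/\hb$ produced by the Duhamel/commutator expansions, run a Banach fixed point for $T$ small depending on $\hb$ (which the statement permits), and obtain the blow-up alternative by the standard continuation argument — but the fixed point itself is set up differently. The paper iterates on the \emph{pair} $(\gah,\ze)\in C([0,T];\CX_0)\times\CY_{0,T}$ with the maps $\Ph_1[\gah,\ze]$ (the plain Duhamel integral, no wave operators) and $\Ph_2=\rhh\circ\Ph_1$, so every quantity appearing in $\|\cdot\|_{\CX_0}$ is part of the ball condition, and each of its six estimates (trace norm, $\sd^\si\cdot\sd^\si$ in $\CB$, single and double $\na$-commutators) is of the crude form $\|\gah_0\|_{\CX_0}+CT\hb^{-1}\|\gah\|_{C([0,T];\CX_0)}\|\ze\|_{\CY_{0,T}}$, with essentially no attempt to recover the lost $\hb$. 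You instead iterate on the density alone via the conjugation identity \eqref{eq:f}, which obliges you to re-derive, inside the iteration, the $\CX_0$-type operator bounds for $\U_{w\ast\ze}(t)\gah_0\U_{w\ast\ze}(t)^*$ through the wave-operator commutator identities (Lemma \ref{lem:commutator wave} and Lemmas \ref{lem:commutator nabla W}--\ref{lem:commutator mixed}); this does work, since the single-commutator bounds feed hierarchically into the double-commutator ones and every term costs only $T/\hb$, but it is somewhat more work than the paper's pair formulation, where those bounds are automatic from the ball. One caveat: your intermediate claim that the ``momentum-commutator'' contributions gain an $\hb$ via Lemma \ref{lem:commutator basic} is neither really available here (the $\CX_0$ norm controls no $\Dk{x/\hb,\cdot}$ quantity, and $\Bk{\na,V}=\na V$ gains nothing by itself) nor needed: every such term already carries the factor $T$, which you are allowed to shrink with $\hb$, so the crude $T\hb^{-1}$ bound suffices throughout. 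The remaining ingredients — continuity in time via Remark \ref{rmk:continuity}, uniqueness inherited from the fixed point (with the usual minor care that an arbitrary $C([0,T];\CX_0)$ solution has its density in the contraction ball after shrinking $T$), and the continuation argument for part (2) using that the local time depends only on $w$, $\si$, $\hb$, and $\|\gah(t_n)\|_{\CX_0}$ — match the paper's.
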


\begin{remark}
	The small time $T>0$ given in Lemma \ref{lem:LWP} (1) might depend on $\hb$, but it never causes a problem (see Section \ref{subsec:proof}). 
\end{remark}

\begin{proof}
	\noindent \underline{\textbf{Setup of the proof.}}
	Let $A:= C_0 \No{\gah_0}_{\CX_0}$, where $C_0$ is a sufficiently large constant depending on $w$ and $\si$. Define
	\begin{equation}
	E_{T,A}:= \Ck{(\gah,\ze)\in C([0,T];\CX_0)\times \CY_{0,T} : \|\gah\|_{C([0,T];\CX_0)} \le A,\quad \|\ze\|_{\CY_{0,T}} \le A},
	\end{equation}
	where $T>0$ will be chosen later.
	Set $\Ph_1$ and $\Ph_2$ by
	\begin{align}
		&\Ph_1[\gah,\ze](t):= \U(t)\gah_0\U(t)^* - \frac{i}{\hb}\int_0^t \U(t-\ta) \Bk{w\ast \ze(\ta), \gah(\ta)} \U(\ta-t)d\ta,\\
		&\Ph_2[\gah,\ze](t):= \rhh\K{\Ph_1[\gah,\ze](t)}.
	\end{align}
	
	\noindent \underline{\bf Estimate 1.}
	First, we have
	\begin{align}
		\No{\Ph_1[\gah,\ze]}_{C([0,T];\FS^1_\hb)}
		&\le \No{\gah_0}_{\FS^1_\hb} + \frac{2}{\hb}\int_0^T \No{w}_{L^\I_x} \|\ze(\ta)\|_{L^1_x} \No{\gah(\ta)}_{\FS^1} d\ta \\
		&\le \No{\gah_0}_{\FS^1_\hb} + \frac{C_wT}{\hb} \No{\gah}_{C([0,T];\CX_0)} \|\ze\|_{\CY_{0,T}}.
	\end{align}
	Hence, inequality \eqref{eq:density L1 bound} implies
	\begin{align}
		\No{\Ph_2[\gah,\ze]}_{C([0,T];L^1_x)} &\le \No{\Ph_1[\gah,\ze]}_{C([0,T];\FS^1_\hb)} 
		\le \No{\gah_0}_{\FS^1} + \frac{C_wT}{\hb} \No{\gah}_{C([0,T];\CX_0)} \|\ze\|_{\CY_{0,T}}.
	\end{align}
	
	\noindent \underline{\bf Estimate 2.}
	Second, we have
	\begin{align}
		&\No{\sd^\si \Ph_1[\gah,\ze]\sd^\si }_{C([0,T];\CB)}
		\le \No{\sd^\si \gah_0 \sd^\si}_{\CB} \\
		&\qquad + \frac{2}{\hb}\int_0^T \No{\sd^\si (w\ast \ze)(\ta)\gah(\ta) \sd^\si}_{\CB} d\ta \\
		&\quad \le \No{\sd^\si \gah_0 \sd^\si}_{\CB} + \frac{2T}{\hb} \No{\sd^\si(w\ast \ze)(t) \sd^{-\si}}_{C([0,T];\CB)} \No{\gah}_{C([0,T];\CX_0)}.
	\end{align}
	Note that the fractional Leibniz rule implies
	\begin{align}
		&\No{\sd^\si(w\ast \ze)(t) \sd^{-\si}}_\CB
		\le \No{(w\ast \ze)(t) \sd^{-\si}}_{\CB} + \No{|\hb\na|^\si (w\ast \ze)(t) \sd^{-\si}}_\CB \\
		&\quad \ls \|w\|_{L^\I_x} \|\ze(t)\|_{L^1_x} + \hb^{\si-1/2}\No{|\na|^\si (w\ast \ze)(t)}_{L^6_x}
		\ls \|\ze(t)\|_{L^1_x}.
	\end{align}
	Hence, we have
	\begin{align}
		&\No{\sd^\si \Ph_1[\gah,\ze]\sd^\si }_{C([0,T];\CB)}
		\le \No{\sd^\si \gah_0 \sd^\si}_{\CB} + \frac{C_{w,\si}T}{\hb} \No{\gah}_{C([0,T];\CX_0)}\|\ze\|_{\CY_{0,T}} .
	\end{align}
	Moreover, by Lemma \ref{lem:Sobolev}, we have
	\begin{align}
		\|\Ph_2[\gah,\ze]\|_{C([0,T];L^\I_x)} &\ls \No{\sd^\si \Ph_1[\gah,\ze] \sd^\si}_{C([0,T];L^\I_x)} \\
		&\ls \No{\sd^\si \gah_0 \sd^\si}_{\CB} + \frac{C_{w,\si}T}{\hb} \No{\gah}_{C([0,T];\CX_0)}\|\ze\|_{\CY_{0,T}}.
	\end{align}
	
	\noindent \underline{\bf Estimate 3.}
	By the Jacobi identity for commutators, we have
	\begin{align}
		&\No{\Bk{\na, \Ph_1[\gah,\ze]}}_{C([0,T];\FS^1)}
		\le \No{\Bk{\na,\gah_0}}_{\FS^1_\hb} + \frac{1}{\hb}\int_0^T \No{\Bk{\na, \Bk{w \ast \ze(\ta), \gah(\ta)}}}_{\FS^1_\hb} d\ta \\
		&\qquad \le \No{\Bk{\na,\gah_0}}_{\FS^1_\hb} + \frac{1}{\hb}\int_0^T \K{\No{\Bk{w \ast \ze(\ta), \Bk{\na, \gah(\ta)}}}_{\FS^1_\hb} + \No{ \Bk{\Bk{\na,w \ast \ze(\ta)}, \gah(\ta)}}_{\FS^1_\hb}} d\ta \\
		&\qquad \le \No{\Bk{\na,\gah_0}}_{\FS^1_\hb} + \frac{C_w}{\hb}\int_0^T \|\ze(\ta)\|_{L^1_x} \K{\No{\Bk{\na, \gah(\ta)}}_{\FS^1_\hb} + \No{\gah(\ta)}_{\FS^1_\hb}} d\ta \\
		&\qquad \le \No{\Bk{\na,\gah_0}}_{\FS^1_\hb} + \frac{C_wT}{\hb} \No{\gah}_{C([0,T];\CX_0)}\|\ze\|_{\CY_{0,T}}.
	\end{align}
	Hence, by \eqref{eq:density L1 bound}, we obtain
	\begin{align}
		\No{\na \Ph_2[\gah,\ze]}_{C([0,T];L^1_x)}
		&\le \No{\Bk{\na, \Ph_1[\gah,\ze]}}_{C([0,T];\FS^1)} \\
		&\le \No{\Bk{\na,\gah_0}}_{\FS^1_\hb} + \frac{C_wT}{\hb} \No{\gah}_{C([0,T];\CX_0)}\|\ze\|_{\CY_{0,T}}.
	\end{align}
	
	\noindent \underline{\bf Estimate 4.}
	By the Jacobi identity for the commutator, we have
	\begin{align}
		&\No{\sd^\si\Bk{\na,\Ph_1[\gah,\ze]}\sd^\si }_{C([0,T];\CB)}
		\le \No{\sd^\si \Bk{\na,\gah_0} \sd^\si}_{\CB} \\
		&\qquad \qquad  + \frac{1}{\hb}\int_0^T \No{\sd^\si \Bk{\na, \Bk{w \ast \ze(\ta), \gah(\ta)}}\sd^\si }_{\CB} d\ta \\
		&\qquad \le \No{\sd^\si \Bk{\na,\gah_0} \sd^\si }_{\CB} + \frac{1}{\hb}\int_0^T \Big(\No{\sd^\si \Bk{w \ast \ze(\ta), \Bk{\na, \gah(\ta)}}\sd^\si}_{\CB}\\
		&\qquad \qquad \qquad \qquad \qquad \qquad \qquad 
		\qquad \qquad  + \No{\sd^\si \Bk{\Bk{\na,w \ast \ze(\ta)}, \gah(\ta)}\sd^\si}_{\CB}\Big) d\ta \\
		&\qquad \le \No{\sd^\si\Bk{\na,\gah_0}\sd^\si}_{\CB} + \frac{C_{w,\si}}{\hb}\int_0^T \|\ze(\ta)\|_{L^1_x} \Big(\No{\sd^\si\Bk{\na, \gah(\ta)}\sd^\si}_{\CB} \\
		&\qquad \qquad \qquad \qquad \qquad \qquad \qquad\qquad \qquad + \No{\sd^\si \gah(\ta) \sd^\si}_{\CB}\Big) d\ta \\
		&\qquad \le \No{\sd^\si \Bk{\na,\gah_0} \sd^\si }_{\CB} + \frac{C_{w,\si}T}{\hb} \No{\gah}_{C([0,T];\CX_0)}\|\ze\|_{\CY_{0,T}}.
	\end{align}
	Hence, by Lemma \ref{lem:Sobolev}, we have
	\begin{align}
		\No{\na \Ph_2[\gah,\ze]}_{C([0,T];L^\I_x)}
		&\ls \No{\sd^\si\Bk{\na,\Ph_1[\gah,\ze]}\sd^\si }_{C([0,T];\CB)} \\
		&\le \No{\sd^\si \Bk{\na,\gah_0} \sd^\si }_{\CB} + \frac{C_{w,\si}T}{\hb} \No{\gah}_{C([0,T];\CX_0)}\|\ze\|_{\CY_{0,T}}.
	\end{align}
	
	\noindent \underline{\bf Estimate 5.}
	In the same way as \textbf{Estimate 4}, we have
	\begin{align}
		\No{\sd^{\si} \Bk{\na,\Ph_1[\gah,\ze]}\sd^{\si}}_{\FS^2_\hb}
		\ls \No{\sd^\si \Bk{\na,\gah_0} \sd^\si }_{\FS^2_\hb} + \frac{C_{w,\si}T}{\hb} \No{\gah}_{C([0,T];\CX_0)}\|\ze\|_{\CY_{0,T}}.
	\end{align}
	Hence, by Lemma \ref{lem:free Fourier}, we have
	\begin{align}
		\hb^{3/2} \No{\na \Ph_2[\gah,\ze]}_{C([0,T];\CF L^1_x)}
		&\ls \No{\sd^\si\Bk{\na,\Ph_1[\gah,\ze]}\sd^\si }_{C([0,T];\FS^2_\hb)} \\
		&\le \No{\sd^\si \Bk{\na,\gah_0} \sd^\si }_{\FS^2_\hb} + \frac{C_{w,\si}T}{\hb} \No{\gah}_{C([0,T];\CX_0)}\|\ze\|_{\CY_{0,T}}.
	\end{align}

    \noindent \underline{\bf Estimate 6.}
    Applying the Jacobi identity for the commutator twice, we obtain
    \begin{align}
    	&\No{\sd^{\si/2}\Bk{\na,\Bk{\na,\Ph_1[\gah,\ze]}}\sd^{\si/2} }_{C([0,T];\FS^2_\hb)}
    	\le \No{\sd^{\si/2} \Bk{\na,\Bk{\na,\gah_0}} \sd^{\si/2}}_{\FS^2_\hb} \\
    	&\qquad \qquad  + \frac{1}{\hb}\int_0^T \No{\sd^{\si/2} \Bk{\na, \Bk{\na, \Bk{w \ast \ze(\ta), \gah(\ta)}}}\sd^{\si/2} }_{\FS^2_\hb} d\ta \\
      	&\qquad \le \No{\sd^{\si/2}\Bk{\na,\Bk{\na,\gah_0}}\sd^{\si/2}}_{\FS^2_\hb} \\
      	&\qquad \qquad + \frac{C_w}{\hb}\int_0^T \|\ze(\ta)\|_{L^1_x} \Big\{\No{\sd^{\si/2}\Bk{\na,\Bk{\na, \gah(\ta)}}\sd^{\si/2}}_{\FS^2_\hb} \\
      	&\qquad \qquad \qquad \qquad
      	+ \No{\sd^{\si/2}\Bk{\na, \gah(\ta)}\sd^{\si/2}}_{\FS^2_\hb} 
      	+ \No{\sd^{\si/2} \gah(\ta) \sd^{\si/2}}_{\FS^2_\hb}\Big\} d\ta \\
    	&\qquad \le \No{\sd^{\si/2} \Bk{\na, \Bk{\na,\gah_0}} \sd^{\si/2} }_{\FS^2_\hb} + \frac{C_wT}{\hb} \No{\gah}_{C([0,T];\CX_0)}\|\ze\|_{\CY_{0,T}}.
    \end{align}
    Hence, it follows from Lemma \ref{lem:free Fourier} that
    \begin{align}
    	\No{\na \Ph[\gah,\ze]}_{\CF L^1} &\ls \No{\sd^{\si/2}\Bk{\na,\Bk{\na,\Ph_1[\gah,\ze]}}\sd^{\si/2} }_{C([0,T];\FS^2_\hb)} \\
    	&\ls \No{\sd^{\si/2} \Bk{\na, \Bk{\na,\gah_0}} \sd^{\si/2} }_{\FS^2_\hb} + \frac{C_wT}{\hb} \No{\gah}_{C([0,T];\CX_0)}\|\ze\|_{\CY_{0,T}}.
    \end{align}
    
    \noindent \underline{\bf Conclusion.}
    Collecting from \textbf{Estimate 1} to \textbf{Estimate 6}, we have
    \begin{align}
    	&\No{\Ph_1[\gah,\ze]}_{C([0,T];\CX_0)} \le \No{\gah_0}_{\CX_0} + \frac{C_{w,\si} T}{\hb} \No{\gah}_{C([0,T];\CX_0)}\|\ze\|_{\CY_{0,T}}, \\
    	&\No{\Ph_2[\gah,\ze]}_{\CY_{0,T}} \le C_{w,\si} \No{\gah_0}_{\CX_0} + \frac{C_{w,\si} T}{\hb} \No{\gah}_{C([0,T];\CX_0)}\|\ze\|_{\CY_{0,T}}.
    \end{align}
    Therefore, by the standard argument with the Banach fixed point theorem, we obtain the desired result.
\end{proof}

\subsection{Proof of Theorem \ref{th:main} and Remark \ref{rmk:modified scattering}}\label{subsec:proof}
Let $C\ge 1$ and small $R>0$ be constants given in Proposition \ref{prop:a priori}.
Let $C_0\ge 1$ be a constant given in Lemma \ref{lem:LWP}.
Define $\ep_0:= R/(2CC_0)$.
Then, under the assumptions in Theorem \ref{th:main}, we obtain the maximal solution $\gah(t) \in C([0,T_\mx);\CX_0)$ given in Lemma \ref{lem:LWP}.
Now, we assume
\begin{align}
	T_*:= \sup \Ck{0<T < T_\mx : \No{\rhh(\gah)}_{\CY^{a,b}_T} \le 2CC_0 \No{\gah_0}_{\CX^\si_\hb}}<\I.
\end{align}
Note that
$$
\Ck{T>0 : \No{\rhh(\gah)}_{\CY^{a,b}_T} \le 2CC_0  \No{\gah_0}_{\CX^\si_\hb}}\neq\varnothing
$$
because Lemma \ref{lem:LWP} implies, when $T\le 1$, 
\begin{equation}
	\No{\rhh(\gah)}_{\CY^{a,b}_T} \le 2 \No{\rhh(\gah)}_{\CY_{0,T}} \le 2 C_0 \No{\gah_0}_{\CX_0} \le  2CC_0  \No{\gah_0}_{\CX^\si_\hb}.
\end{equation}
Since we have the continuity (see also Remark \ref{rmk:continuity})
\begin{align}
	&\rhh\K{\gah(t)} \in C([0,T]; L^1_x \cap L^\I_x), \quad 
	\na \rhh\K{\gah(t)} \in C([0,T]; L^1_x \cap \CF L^1),\\ 
	&\na^2 \rhh\K{\gah(t)} \in C([0,T]; L^2_x),
\end{align}
it follows that $\No{\rhh(\gah)}_{\CY^{a,b}_{T_*}} \le 2CC_0\No{\gah_0}_{\CX^\si_\hb}$.
Since $\No{\gah_0}_{\CX^\si_\hb}\le R/(2CC_0)$, we have $\No{\rhh(\gah)}_{\CY^{a,b}_{T_*}}\le R$.
Hence, by Proposition \ref{prop:a priori}, we obtain
\begin{align}
	\No{\rhh(\gah)}_{\CY_{T_*}^{a,b}} = \No{\rhh\K{\U_{w\ast \rhh(\gah)}(t) \gah_0 \U_{w\ast\rhh(\gah)}(t)^*}}_{\CY_{T_*}^{a,b}}
	\le C \No{\gah_0}_{\CX^{\si}_\hb}.
\end{align}
However, it contradicts the maximality of $T_*$.
Therefore, we obtain $T_*=\I$ and complete Theorem \ref{th:main}.

Finally, we give a proof of Remark \ref{rmk:modified scattering}.
Since
\begin{equation}
	i\hb\pl_t e^{i\Ps(t,i\hb\na_x)} \W_V(t) = e^{i\Ps(t,-i\hb\na_x)} \Ck{-V(t,-it\hb\na_x)+\M(-t)V(t,-it\hb\na_x) \M(t)} \W_V(t),
\end{equation}
we have
\begin{equation}
	\begin{aligned}
	&e^{i\Ps(t,i\hb\na_x)} \W_V(t) \\
	&\quad = \frac{i}{\hb}\int_0^t e^{i\Ps(\ta,-i\hb\na_x)} \Ck{V(\ta,-i\ta\hb\na_x) - \M(-\ta)V(\ta,-i\ta\hb\na_x) \M(\ta)} \W_V(\ta) d\ta.
	\end{aligned}
\end{equation}
Therefore, by Lemmas \ref{lem:easy}, \ref{lem:key estimate}, \ref{lem:potential deriv}, and \ref{lem:potential deriv 2}, we have
\begin{equation}\label{eq:modified wave convergence}
\begin{aligned}
	&\No{\Ck{e^{i\Ps(t,-i\hb\na_x)}\W_V(t) - e^{i\Ps(s,-i\hb\na_x)}\W_V(s)}\bx^{-1}}_\CB \\
	&\quad \le \frac{1}{\hb}\int_s^t \No{\Ck{V(\ta,-i\ta\hb\na_x) - \M(-\ta)V(\ta,-i\ta\hb\na_x) \M(\ta)} \bx^{-1}}_\CB \log(\ta+2)d\ta \\
	&\quad \ls \frac{1}{\hb} \int_s^t \No{\na V(\ta)}_{\CF L^1_x}^{1/3} \K{\No{V(t)}_{L^\I_x}^{2/3} + |t\hb|^{2/3}\No{\na V(\ta)}_{\CF L^1}^{2/3}}\log(\ta+2)d\ta \\
	&\quad \ls \frac{1}{\hb}\int_s^t \frac{1}{\bta^{4/3-a/2-\ep}} d\ta \to 0 \quad \mbox{as } t,s \to \I.
\end{aligned}
\end{equation}
From the above, we obtain
\begin{align}
	&\No{e^{i\Ps(t,-i\hb\na_x)} \U(t)^* \gah(t) \U(t) e^{-i\Ps(t,-i\hb\na_x)} - e^{i\Ps(s,-i\hb\na_x)} \U(s)^* \gah(s) \U(s) e^{-i\Ps(s,-i\hb\na_x)} }_{\CB} \\
	&\quad \ls \No{\Ck{e^{i\Ps(t,-i\hb\na_x)}\W_V(t) - e^{i\Ps(s,-i\hb\na_x)}\W_V(s)}\bx^{-1}}_\CB
	\No{\bx^\si \gah_0 \bx^\si}_{\CB}  \to 0 \mbox{ as } t,s \to \I.
\end{align}
By the completeness of $\CB(L^2(\R^3))$, we conclude that there exists $\gah_+ \in \CB(L^2(\R^3))$ such that
\begin{align}
	&\No{\gah_+ - e^{i\Ps(t,-i\hb\na_x)} \U(t)^* \gah(t) \U(t) e^{-i\Ps(t,-i\hb\na_x)}}_{\CB} 
	 \to 0 \mbox{ as } t,s \to \I.
\end{align}
Since
$$\sup_{t \ge 0}\|e^{i\Ps(t,-i\hb\na_x)} \U(t)^* \gah(t) \U(t) e^{-i\Ps(t,-i\hb\na_x)}\|_{\FS^1} = \|\gah_0\|_{\FS^1}<\I,$$
we have $\gah_+ \in \FS^1_\hb$.
\qed

\begin{remark}\label{rmk:final}
	You can find $1/\hb$--factor in \eqref{eq:modified wave convergence}. This is the reason we cannot prove the modified scattering uniformly with respect to $\hb$.
\end{remark}

\section*{Acknowledgments}
This work would not have come to fruition without the collaborative research \cite{Hadama Hong 2025} with Professor Y. Hong at Chung-Ang University.
The author is deeply grateful to Mr. A. Hoshiya at the University of Tokyo for reading an early draft and providing numerous insightful comments.
The author also wishes to express sincere thanks to Professors K. Nakanishi and N. Kishimoto for their many helpful suggestions and constructive feedback.
The author was supported by JSPS KAKENHI Grant Number 24KJ1338.

\appendix

\section{Semi-classically natural norm}\label{subsec:natural}
\subsection{Formal rules to rescale the usual Schatten--$r$ norm}
The Schatten--$r$ norm is one of the most natural norms to study \eqref{eq:NLH} for fixed $\hb\in(0,1]$.
However, if we try to work under the semi-classical setting, we need to rescale it.

As explained in Section \ref{subsec:importance}, in our mind, there is semi-classical limit from the Hartree to the Vlasov equation.
Hence, we are interested in the family of initial data $(\gah_0)_{\hb\in(0,1]}$ such that
$\Wig[\gah_0] \to f_0$ as $\hb \to 0$ in an appropriate topology.
One of the typical choices of such $(\gah_0)_{\hb\in(0,1]}$ is given by the Toeplitz quantization
\begin{equation}
	\gah_0=\Tp[f_0]= \frac{1}{(2\pi\hb)^3} \iint_{\R^3\times\R^3} |\ph_{(q,p)}^\hb\rg \lg \ph^\hb_{(q,p)}| f_0(q,p) dqdp,
\end{equation}
where $\ph^\hb_{(q,p)}(x):= e^{-|x-q|^2/(2\hb)+iq(x-p)/\hb}$.
To make $(\Tp[f_0])_{\hb\in(0,1]}$ a natural family of the initial data, we find some (formal) rules to obtain semi-classically scaled Schatten--$r$ norms from the usual ones. The list of rules is as follows:
\begin{itemize}
	\item[(1)] $\|\ga_0\|_{\FS^r}$ should be replaced by $\|\gah_0\|_{\FS^r_\hb}:= (2\pi\hb)^{3/r}\|\gah_0\|_{\FS^r}$,
	\item[(2)] $\||x|^s \ga_0 |x|^s\|_{\FS^r}$ should be replaced by $\||x|^s \gah_0|x|^s \|_{\FS^r_\hb}$, 
	\item[(3)] $\||\na|^s \ga_0 |\na|^s\|_{\FS^r}$ should be replaced by $\||\hb\na|^s \gah_0 |\hb\na|^s\|_{\FS^r_\hb}$, 
	\item[(4)] Commutator $[x,\ga_0]$ should be replaced by $[x/\hb,\gah_0]$. 
	\item[(5)] Commutator $[\na_x,\ga_0]$ can remain as it is. 
\end{itemize}

\begin{example}
	More complicated norms are also replaced according to the above rules.
	For example, if you use the norm $\|\lg\na\rg[x,\ga_0]\lg\na\rg\|_{\FS^r}$ for fixed $\hb\in(0,1]$, then you need to use the rescaled norm
	$(2\pi\hb)^{3/r}\|\sd[x/\hb,\ga_0]\sd\|_{\FS^r}$ in the semi-classical regime.
	Similarly, we can check that $\|\cdot\|_{\CX^\si_\hb}$ defined by \eqref{eq:initial data} is semi-classically natural.
\end{example}

\begin{remark}
	It is well-known that two Schatten classes $\FS^{r_1}$ and $\FS^{r_2}$ with $r_1<r_2$ have the continuous inclusion $\FS^{r_1} \subset \FS^{r_2}$. More precisely, we have
	\begin{equation}
		\|A\|_{\FS^{r_2}} \le \|A\|_{\FS^{r_1}}.
	\end{equation}
	In the semi-classical regime, we also have continuous inclusion $\FS^{r_1}_\hb\subset\FS^{r_2}_\hb$; however, this inclusion depends on $\hbar$. More precisely, we have
	\begin{equation}
		\|A\|_{\FS^{r_2}_\hb} \le (2\pi\hb)^{3/r_2-3/r_1}\|A\|_{\FS^{r_1}_\hb}
	\end{equation}
	and $(2\pi\hb)^{3/r_2-3/r_1} \to \I$ as $\hb\to0$. Unfortunately, this $\hb$--dependence cannot be removed. 
	Therefore, under the semi-classical setting, we should understand there is no inclusion relation.
	This is a similar situation when we deal with usual $L^r(\R^N)$.
\end{remark}

\subsection{How did we obtain the formal rules (1)-(5)?}
Here, we only give a heuristic explanation.
First, we have
\begin{equation}\label{eq:toep Lr}
	\sup_{\hb\in(0,1]}(2\pi\hb)^\lm \No{\Tp[f_0]}_{\FS^r} \ls \No{f_0}_{L^r_{q,p}} \\
\end{equation}
for any $r \in [1,\I]$ and $\lm\ge 3/r$ (see, for example, \cite[Lemma 5.5]{Hadama Hong 2025}).
The estimate \eqref{eq:toep Lr} is optimal in $\hbar$. Namely, if we replace $(2\pi\hb)^{3/r}$ by $(2\pi\hb)^\lm$ with $\lm<3/r$, then \eqref{eq:toep Lr} fails.
Moreover, if we choose $\lm>3/r$, then we find
\begin{equation}
	(2\pi\hb)^\lm \No{\Tp[f_0]}_{\FS^r} \ls (2\pi\hb)^{\lm-3/r}\|f_0\|_{L^r_{q,p}} \to 0 \mbox{ as } \hb\to0
\end{equation}
for any $f_0 \in L^r_{q,p}(\R^{3+3})$. This means the distribution vanishes on the quantum side even if it is quite large on the classical side. Since it breaks the quantum-classical correspondence, we find $\|\gah_0\|_{\FS^r_\hb}:=(2\pi\hb)^{3/r} \No{\gah_0}_{\FS^r}$ is the only acceptable choice for the norm.
Similarly, we have
\begin{equation}
	\begin{aligned}
		&(2\pi\hb)^{3/r}\No{|\hb\na|^s \Tp[f_0] |\hb\na|^s}_{\FS^r} \ls \No{|q|^{2s} f_0}_{L^r_{q,p}}, \\
		&(2\pi\hb)^{3/r} \No{|x|^s \Tp[f_0] |x|^s}_{\FS^r} \ls \No{|p|^{2s} f_0}_{L^r_{q,p}}
	\end{aligned}
\end{equation}
for any $s \in \R$, $r \in [1,\I]$ and $\lm\ge 3/r$ (see, for example, \cite[Lemma 5.5]{Hadama Hong 2025}).
By the same reasoning, we find that $(2\pi\hb)^{3/r}\||\hb\na|^s \gah_0 |\hb\na|^s\|_{\FS^r_\hb}$ and $(2\pi\hb)^{3/r} \||x|^s\gah_0|x|^s\|_{\FS^r_\hb}$ are the only reasonable choices of the norm.
This is the explanation for (1), (2), and (3).

Next, we consider how to rescale the commutator. Note that we have
\begin{equation}\label{eq:app deriv}
	\begin{aligned}
		&\Dk{\frac{x}{i\hb},\Tp[f_0]}= \Tp[\na_p f_0],\quad \Bk{\na,\Tp[f_0]} = \Tp[\na_q f_0].
	\end{aligned}
\end{equation}
The identities \eqref{eq:app deriv} tell us that the commutator with $x/\hb$ on the quantum side is translated to the $\na_p$ on the classical side. Similarly, the commutator with $\na_x$ on the quantum side is translated to the $\na_q$ on the classical side.
This is the reasoning of rules (4) and (5).


\begin{thebibliography}{99}
	\bibitem{Amour et al 2013a} L. Amour, M. Khodja, and J. Nourrigat, \emph{The classical limit of the Heisenberg and time-dependent Hartree–Fock equations: the Wick symbol of the solution}, Math. Res. Lett. \textbf{20} (2013), no. 1, 119--139.
	
	\bibitem{Amour et al 2013b} L. Amour, M. Khodja, and J. Nourrigat, \emph{The semiclassical limit of the time dependent Hartree--Fock equation: the Weyl symbol of the solution}, Anal. PDE \textbf{6} (2013), no. 7, 1649–1674.
	
	\bibitem{Athanassoulis et al 2011} A. Athanassoulis, T. Paul, F. Pezzotti, and M. Pulvirenti, \emph{Strong semiclassical approximation of Wigner functions for the Hartree dynamics}, Atti Accad. Naz. Lincei Rend. Lincei Mat. Appl. \textbf{22} (2011), no. 4, 525--552.
	
	
	\bibitem{Benedikter et al 2014} N. Benedikter, M. Porta, and B. Schlein, \emph{Mean-field evolution of fermionic systems}, Comm. Math. Phys. \emph{331} (2014), no. 3, 1087--1131.
	
	\bibitem{Benedikter et al 2016 derivation} N. Benedikter, V. Jak\v{s}i\'{c}, M. Porta and B. Schlein, \emph{Mean-field evolution of fermionic mixed states}, Comm. Pure Appl. Math. \textbf{69} (2016), no. 12, 2250--2303.
	
	\bibitem{Benedikter et al 2016 semiclassical} N. Benedikter, M. Porta, C. Saffirio, and B. Schlein, \emph{From the Hartree dynamics to the Vlasov equation}, Arch. Ration. Mech. Anal. \textbf{221} (2016), no. 1, 273--334.
	
	\bibitem{Bez et al 2019} N. Bez, Y. Hong, S. Lee, S. Nakamura and Y. Sawano, \emph{On the Strichartz estimates for orthonormal systems of initial data with regularity}, Adv. Math. \textbf{354} (2019), 106736, 37 pp.
		
	\bibitem{Bove et al 1974} A. Bove, G. Da Prato, and G. Fano,
	\emph{An existence proof for the Hartree-Fock time-dependent problem with bounded two-body interaction},
	Comm. Math. Phys. {\bf 37} (1974), 183--191.
	
	\bibitem{Bove et al 1976}
	A. Bove, G. Da Prato, and G. Fano, \emph{On the Hartree-Fock time-dependent problem},
	Comm. Math. Phys. {\bf 49} (1976), 25--33.
	
	\bibitem{Chadam 1976}
	J.M. Chadam, \emph{The time-dependent Hartree-Fock equations with Coulomb two-body interaction}, Comm. Math. Phys. {\bf 46} (1976), 99--104.
	
	\bibitem{Chen et al 2017} T. Chen, Y. Hong, and N. Pavlovi\'{c}, \emph{Global well-posedness of the NLS system for infinitely many fermions}, Arch. Ration. Mech. Anal. \textbf{224} (2017), no. 1, 91--123.
	
	\bibitem{Chen et al 2018} T. Chen, Y. Hong, and N. Pavlovi\'{c}, \emph{On the scattering problem for infinitely many fermions in dimensions $d \geq 3$ at positive temperature},
	Ann. Inst. H. Poincar\'{e} Anal. Non Lin\'{e}aire {\bf 35} (2018), no. 2, 393--416.
				
	\bibitem{Chong et al 2022} J. J. Chong, L. Lafleche, and C. Saffirio, \emph{Global-in-time semiclassical regularity for the Hartree-Fock equation}, J. Math. Phys. \textbf{63} (2022), no. 8, Paper No. 081904, 9 pp.
	
	\bibitem{Chong et al 2023} J. J. Chong, L. Lafleche, and C. Saffirio, \emph{On the $L^2$ rate of convergence in the limit from the Hartree to the Vlasov-Poisson equation}, J. \'{E}c. polytech. Math. \textbf{10} (2023), 703--726.
	
	\bibitem{Chong et al 2024} J. J. Chong, L. Lafleche, and C. Saffirio, \emph{From many-body quantum dynamics to the Hartree-Fock and Vlasov equations with singular potentials},
	J. Eur. Math. Soc. \textbf{26} (2024), no. 12, 4923--5007.
	
	\bibitem{Dunford Pettis 1940} N. Dunford and B. J. Pettis, \emph{Linear operations on summable functions}, Trans. Amer. Math. Soc. \textbf{47} (1940), 323--392.
	
	\bibitem{Figalli et al 2012} A. Figalli, M. Ligab\`{o}, and T. Paul, \emph{Semiclassical limit for mixed states with singular and rough potentials}, Indiana Univ. Math. J. 61 (2012), no. 1, 193--222.
	
	\bibitem{Frank et al 2014} R. L. Frank, M. Lewin, E.H. Lieb and R. Seiringer, \emph{Strichartz inequality for orthonormal functions}, J. Eur. Math. Soc. (JEMS) \textbf{16} (2014), no.7, 1507--1526.
	
	\bibitem{Frank Sabin 2017} R. L. Frank and J. Sabin, \emph{Restriction theorems for orthonormal functions, Strichartz inequalities}, and uniform Sobolev estimates.
	Amer. J. Math. {\bf 139} (2017), no. 6, 1649--1691.
	
	\bibitem{Gasser et al 1998} I. Gasser, R. Illner, P. A. Markowich, and C. Schmeiser, \emph{Semiclassical, $t\to\infty$ asymptotics and dispersive effects for Hartree-Fock systems}, RAIRO Modél. Math. Anal. Num\'{e}r. \textbf{32} (1998), no. 6, 699--713.
	
	\bibitem{Golse Paul 2017} F. Golse and T. Paul, \emph{The Schrödinger equation in the mean-field and semiclassical regime}, Arch. Ration. Mech. Anal. \textbf{223} (2017), no. 1, 57--94.
	
	\bibitem{Golse Paul 2019} F. Golse and T. Paul, \emph{Empirical measures and quantum mechanics: applications to the mean-field limit}, Comm. Math. Phys. \textbf{369} (2019), no. 3, 1021--1053.
	
	\bibitem{Golse 2016} F. Golse, C. Mouhot, and T. Paul, \emph{On the mean field and classical limits of quantum mechanics}, Comm. Math. Phys. \textbf{343} (2016), no 1, 165--205.
	
	\bibitem{Grafakos book} L. Grafakos, \emph{Classical Fourier analysis, Third edition}, Grad. Texts in Math., \textbf{249}, Springer, New York, 2014. xviii+638 pp.
	
	\bibitem{Graffi 2003} S. Graffi, A. Martinez, and M. Pulvirenti, \emph{Mean-field approximation of quantum systems and classical limit}, Math. Models Methods Appl. Sci. \textbf{13} (2003), no 1, 59--73.
	
	\bibitem{Hadama 2023} S. Hadama, \emph{Asymptotic stability of a wide class of stationary solutions for the Hartree and Schrödinger equations for infinitely many particles}, Ann. Henri Lebesgue, \textbf{8} (2025), pp. 181--218.
	
	\bibitem{Hadama Hong 2024} S. Hadama and Y. Hong, \emph{Global well-posedness of the nonlinear Hartree equation for infinitely many particles with singular interaction}, J. Funct. Anal. \textbf{289}, Issue 9, 111102, (2025).
	
	\bibitem{Hadama Hong 2025} S. Hadama and Y. Hong, \emph{Semi-classical limit of quantum scattering states for the nonlinear Hartree equation}, arXiv Preprint (2025).
		
	\bibitem{Hayashi Naumkin 1998} N. Hayashi and P. I. Naumkin, \emph{Asymptotics for large time of solutions to the nonlinear Schr\"odinger and Hartree equations}, Amer. J. Math. \textbf{120} (1998), no.2, 369--389.
	
	\bibitem{Hayashi Naumkin 1998 b} N. Hayashi and P. I. Naumkin,
	\emph{Remarks on scattering theory and large time asymptotics of solutions to Hartree type equations with a long range potential},
	SUT J. Math. \textbf{34} (1998), no. 1, 13--24.
	
	\bibitem{Hayashi et al 1998} N. Hayashi, P. I. Naumkin, and T. Ozawa, \emph{Scattering theory for the Hartree equation}, SIAM J. Math. Anal. \textbf{29} (1998), no.5, 1256--1267.
	
	\bibitem{Hayashi Tsutsumi 1987} N. Hayashi and Y. Tsutsumi, \emph{Scattering theory for Hartree type equations},
	Ann. Inst. H. Poincar\'{e} Phys. Th\'{e}or. \textbf{46} (1987), no. 2, 187--213.
	
	\bibitem{Hoshiya 2023} A. Hoshiya, 
	\emph{Orthonormal Strichartz estimates for Schrödinger operator and their applications to infinitely many particle systems},
	arXiv Preprint (2023), arXiv:2312.08314.
	
	\bibitem{Hoshiya 2024} A. Hoshiya, 
	\emph{Orthonormal Strichartz estimate for dispersive equations with potentials},
	J. Funct. Anal. \textbf{286} (2024), no. 11, Paper No. 110425, 63 pp.
	
	\bibitem{Hoshiya 2024 b} A. Hoshiya,
	\emph{Uniform resolvent and orthonormal Strichartz estimates for repulsive Hamiltonian},
	arXiv Preprint (2024), arXiv:2407.05707.
	
	\bibitem{Hytonen et al book} T. Hyt\"{o}nen, J. van Neerven, M. Veraar, and L. Weis, \emph{Analysis in Banach spaces. Vol. I. Martingales and Littlewood--Paley theory}, Ergeb. Math. Grenzgeb. (3), \textbf{63}, Springer, Cham, 2016. xvi+614 pp.
		
	\bibitem{Lafleche 2019} L. Lafleche, \emph{Propagation of moments and semiclassical limit from Hartree to Vlasov equation}, J. Stat. Phys. \textbf{177} (2019), no. 1, 20--60.
	
	\bibitem{Lafleche 2021} L. Lafleche, \emph{Global semiclassical limit from Hartree to Vlasov equation for concentrated initial data}, Ann. Inst. H. Poincar\'{e} C Anal. Non Lin\'{e}aire \textbf{38} (2021), no. 6, 1739--1762.
	
	\bibitem{Lafleche 2024} L. Lafleche, \emph{On quantum Sobolev inequalities},
	J. Funct. Anal. \textbf{286} (2024), no. 10, Paper No. 110400, 40 pp.
	
	\bibitem{Lafleche Saffirio 2023} L. Lafleche and C. Saffirio, \emph{Strong semiclassical limits from Hartree and Hartree-Fock to Vlasov-Poisson equations},
	Anal. PDE \textbf{16} (2023), no. 4, 891--926.
	
	\bibitem{Lewin Sabin 2014} M. Lewin and J. Sabin, \emph{The Hartree equation for infinitely many particles, II: Dispersion and scattering in 2D}, Anal. PDE \textbf{7} (2014), no.6, 1339--1363.
	
	\bibitem{Lewin Sabin 2015} M. Lewin and J. Sabin, \emph{The Hartree equation for infinitely many particles I. Well-posedness theory}, Comm. Math. Phys. \textbf{334} (2015), no.1, 117--170.
			
	\bibitem{Lewin Sabin 2020} M. Lewin and J. Sabin, \emph{The Hartree and Vlasov equations at positive density}, Comm. Partial Differential Equations \textbf{45} (2020), no. 12, 1702--1754.
	
	\bibitem{Lions Paul 1993} P.-L. Lions and T. Paul, \emph{Sur les mesures de Wigner}, Rev. Mat. Iberoamericana \textbf{9} (1993), no. 3, 553--618.
		
	\bibitem{Markowich Mauser 1993} P. A. Markowich and N. J. Mauser, \emph{The classical limit of a self-consistent quantum–Vlasov equation in 3D}, Math. Models Methods Appl. Sci. \textbf{3} (1993), no. 1, 109--124.
	
	\bibitem{Nakanishi 2002} K. Nakanishi, 
	\emph{Modified wave operators for the Hartree equation with data, image and convergence in the same space},
	Commun. Pure Appl. Anal. \textbf{1} (2002), no. 2, 237--252.
	
	\bibitem{Nakanishi 2002 b} K. Nakanishi,
	\emph{Modified wave operators for the Hartree equation with data, image and convergence in the same space. II},
	Ann. Henri Poincar\'{e} \textbf{3} (2002), no. 3, 503--535.
	
	\bibitem{Nguyen You 2023}  T. T. Nguyen and C. You,
	\emph{Plasmons for the Hartree equations with Coulomb interaction},
	Probab. Math. Phys. \textbf{6} (2025), No. 3, 913--960.

	
	\bibitem{Nguyen You 2024} T. T. Nguyen and C. You, \emph{Modified scattering for long-range Hartree equations of infinite rank near vacuum}, arXiv Preprint (2024), arXiv:2408.15860.
	
	\bibitem{O'Neil 1963} R. O'Neil, \emph{Convolution operators and $L(p,q)$ spaces}, Duke Math. J. \textbf{30}, (1963) 129--142.
	
	\bibitem{Pezzotti Pulvirenti 2009} F. Pezzotti and M. Pulvirenti, \emph{Mean-field limit and semiclassical expansion of a quantum particle system}, Ann. Henri Poincar\'{e} \textbf{10} (2009), no 1, 145--187.
	
	\bibitem{Pusateri Sigal 2021} F. Pusateri and I. M. Sigal,
	\emph{Long-time behaviour of time-dependent density functional theory},
	Arch. Ration. Mech. Anal. {\bf 241} (2021), no. 1, 447--473.
	
	\bibitem{Saffirio 2020a} C. Saffirio, \emph{From the Hartree equation to the Vlasov-Poisson system: strong convergence for a class of mixed states}, SIAM J. Math. Anal. \textbf{52} (2020), no. 6, 5533--5553.
	
	\bibitem{Saffirio 2020b} C. Saffirio, \emph{Semiclassical limit to the Vlasov equation with inverse power law potentials}, Comm. Math. Phys. \textbf{373} (2020), no. 2, 571--619.
	
	\bibitem{Seiler Simon 1975} E. Seiler and B. Simon, \emph{Bounds in the Yukawa${}_2$ quantum field theory: upper bound on the pressure,
	Hamiltonian bound and linear lower bound}, Comm. Math. Phys. \textbf{45}, 99--114 (1975).
	
	\bibitem{Simon book} B. Simon, \emph{Trace ideals and their applications, Second edition},
	Math. Surveys Monogr., \textbf{120}, American Mathematical Society, Providence, RI, 2005. viii+150 pp.
	
	\bibitem{Smith 2024} M. Smith, \emph{Phase mixing for the Hartree equation and Landau damping in the semiclassical limit}, arXiv Preprint (2024), arXiv:2412.14842v2.
		
	\bibitem{You 2024} C. You, \emph{Phase mixing estimates for the nonlinear Hartree equation of infinite rank}, arXiv Preprint (2024), arXiv:2408.15972. 
	
	\bibitem{Zagatti 1992} S. Zagatti, \emph{The Cauchy problem for Hartree-Fock time-dependent equations},
	Ann. Inst. H. Poincar\'{e} Phys. Th\'{e}or. {\bf 56} (1992), 357--374.
\end{thebibliography}
\end{document}